\newcommand{\R}{\mathbb{R}}
\newcommand{\W}{\mathcal{W}}
\newcommand{\N}{\mathbb{N}}
\newcommand{\E}{\mathbb{E}}
\newcommand{\und}{\underline}
\newcommand{\C}{\mathbb{C}}
\newcommand{\K}{\mathbb{K}}
\newcommand\numberthis{\addtocounter{equation}{1}\tag{\theequation}}
\renewcommand\Re{\operatorname{\mathrm{Re}}}
\newcommand{\F}[0]{\ensuremath{\mathcal{F}}}
\newcommand{\one}[0]{\ensuremath{\mathbbm{1}}}
\renewcommand{\P}{\mathbb{P}}
\newcommand{\lf}{\lfloor}
\newcommand{\rf}{\rfloor}
\newcommand{\Y}{\mathcal{X}}
\newcommand{\fl}[1]{\lfloor #1 \rfloor_{h}}
\newcommand{\Id}{\mathrm{Id}}
\newtheorem{theorem}{Theorem}[section]
\newtheorem{lemma}[theorem]{Lemma}
\newtheorem{remark}[theorem]{Remark}
\newtheorem{cor}[theorem]{Corollary}
\newtheorem{definition}[theorem]{Definition}
\newtheorem{prop}[theorem]{Proposition}
\begin{document}
\title{Strong convergence for explicit space-time \\discrete 
numerical approximation  methods\\
for 
stochastic Burgers  equations}

\author{ Arnulf Jentzen, Diyora Salimova, and Timo Welti \\
	\small  ETH Z\"urich (Switzerland)}

\maketitle

\begin{abstract}
In this paper we propose and analyze explicit space-time discrete  numerical approximations for additive space-time white noise driven stochastic partial differential equations (SPDEs) with non-globally monotone nonlinearities such as the stochastic Burgers equation with space-time white noise.
The main result of this paper proves that the proposed  explicit space-time discrete approximation method converges strongly to the solution process of the stochastic Burgers equation with space-time white noise.
To the best of our knowledge, the main result of this work is the first result in the literature which establishes strong convergence for a space-time discrete approximation method in the case of the stochastic Burgers equations with space-time white noise.
\end{abstract}

\tableofcontents

\section{Introduction}
Numerical approximations for infinite-dimensional SEEs with superlinearly growing nonlinearities have been intensively studied in the literature 
(cf., e.g., \cite{j08b,Doersek2012,KamraniBloemker2017,Kamrani2015,HairerMatetski2016,CoxWelti2016,YangZhang2017,CuiHongLiu2017,CuiHongLiuZhou2017} and the references mentioned therein).
In applications one is often interested in statistical quantities of the solution process of the considered SEE and, in view of this, one is especially interested in strong and weak numerical approximations for the considered SEE (cf., e.g., Heinrich~\cite{Heinrich1998,Heinrich2001}, Giles~\cite{Giles2008},  and Creutzig et al.~\cite{Creutzig2009}).
It has been established in the literature that the linear-implicit Euler scheme, the explicit Euler scheme,
and the exponential Euler scheme
converge, in general, neither numerically weakly nor strongly 
in the case of  such SEEs; cf., e.g., Hutzenthaler, Jentzen, \& Kloeden~\cite[Theorem~2.1]{hjk11} and Hutzenthaler, Jentzen, \& Kloeden~\cite[Theorem~2.1]{HutzenthalerJentzenKloeden2013}. Fully drift-implicit Euler schemes, in contrast, do converge strongly in the case of several SEEs with superlinearly growing nonlinearities;
cf., e.g.,
Hu~\cite[Theorem~2.4]{Hu1996}
and
Higham, Mao, \& Stuart~\cite[Theorem~3.3]{Higham2002}
for finite-dimensional SEEs
and cf., e.g.,
Gy{\"o}ngy \& Millet~\cite[Theorem~2.10]{gm05}, Brze{\'z}niak, Carelli, \& Prohl~\cite[Theorem~7.1]{Brzezniak2013}, Kov{\'a}cs, Larsson, \& Lindgren~\cite[Theorem~1.1]{kll2015},
and
Furihata et al.~\cite[Theorem~5.4]{FurihataKovacsLarssonLindgren2016}
for infinite-dimensional SEEs.
In order to implement these schemes, a nonlinear equation has to be solved  approximatively in each time step and this results in additional computational effort (see, e.g.,  Hutzenthaler, Jentzen, \& Kloeden~\cite[Figure~4]{HutzenthalerJentzenKloeden2012}). Moreover, it has not yet been shown in the literature that these approximate implementations of fully drift-implicit Euler methods converge strongly. Lately, a series of appropriately modified versions of the explicit Euler method has been introduced and proven to converge strongly for some SEEs with superlinearly growing nonlinearities; cf., e.g., Hutzenthaler, Jentzen, \& Kloeden~\cite{HutzenthalerJentzenKloeden2012}, Wang \& Gan~\cite{WangGan2013},
Hutzenthaler \& Jentzen~\cite{Hutzenthaler2015}, Tretyakov \& Zhang~\cite{TretyakovZhang2013},
and Sabanis~\cite{Sabanis2013,Sabanis2013E}
for finite-dimensional SEEs
and cf., e.g.,
Gy{\"o}ngy, Sabanis, \& {\v{S}}i{\v{s}}ka~\cite{GoengySabanisS2015},
Jentzen \& Pu{\v s}nik~\cite{jp2015},  Becker \& Jentzen~\cite{BeckerJentzen2016},  and Hutzenthaler et al.~\cite{Salimova2016}
for infinite-dimensional SEEs.
These modified versions are  easily realizable, explicit, and  truncate/tame superlinearly growing nonlinearities  in order to prevent strong divergence. 
However, except for Becker \& Jentzen~\cite{BeckerJentzen2016} and Hutzenthaler et al.~\cite{Salimova2016}, each of the above mentioned 
 strong convergence results applies
only to trace class noise driven SEEs and excludes SEEs driven by the more irregular space-time white noise.
In \cite{BeckerJentzen2016}  a coercivity/Lyapunov-type condition
has been employed  to obtain
strong convergence  for stochastic Allen-Cahn equations
with additive space-time white noise; 
cf.\  \cite[(85), Lemma~6.2, and Corollaries~6.16--6.17]{BeckerJentzen2016}.
However, the machinery in  \cite{BeckerJentzen2016}
assumes the coercivity/Lyapunov-type coefficient in the coercivity/Lyapunov-type condition to be a constant
(cf.\   \cite[(85)]{BeckerJentzen2016} with \eqref{eq:coer:burgers} below)
and,
therefore, applies only to temporal semi-discrete approximation methods for  stochastic Allen-Cahn equations 
but excludes a series of important additive space-time white noise driven 
SEEs 
such as stochastic Burgers equations with space-time white noise. The approach in Hutzenthaler et al.~\cite{Salimova2016}, in turn, does not require the coercivity/Lyapunov-type coefficient to be a constant and allows it to be a function of the noise process. Nevertheless, the article~\cite{Salimova2016} imposes some serious restrictions on the coercivity/Lyapunov-type coefficient,  which are satisfied in the case of stochastic Kuramoto-Sivashinsky equations (see, e.g.,  \cite[Lemma~5.2 and Theorem~4.6]{Salimova2016}) but not in the case of stochastic Burgers equations (see, e.g., Lemma~\ref{coer:burgers} below). More precisely, the composition of the coercivity/Lyapunov-type coefficient and  the driving Ornstein-Uhlenbeck process needs to admit suitable exponential integrability properties (cf., e.g., \cite[Theorem~4.6 and Corollary~5.8]{Salimova2016}) because the  coercivity/Lyapunov-type coefficient is employed in a Gronwall-type argument (cf., e.g., \cite[Corollary~3.2 and Corollary~2.6]{Salimova2016}), which, in turn, requires a suitable exponential term to be integrable.
To the best of our knowledge, there exists neither a strong nor a weak temporal numerical approximation result for stochastic Burgers equations with space-time white noise in the 
scientific literature. It is a key contribution of this paper to relax the restrictions  on the coercivity/Lyapunov-type coefficient in \cite{BeckerJentzen2016} and  \cite{Salimova2016} so that strong convergence for numerical approximations for stochastic Burgers equations with space-time white noise can be achieved.
In order to obtain a strong convergence result for stochastic Burgers equations, we prove  that suitable exponential integrability properties of the composition of the coercivity/Lyapunov-type coefficient and a transformed driving Ornstein-Uhlenbeck process also yield strong convergence (see  Theorem~\ref{thm:strong} and Proposition~\ref{prop:exists} below). Additional important ingredients in the proof of the convergence result are  transformations of semigroups for solutions of SPDEs (see  Proposition~\ref{prop:transform_SG} below) and  Fernique's theorem (see, e.g., Proposition~\ref{thm:fernique} below). 

To illustrate the main result of this paper (see Theorem~\ref{thm:strong} below) we specialize it to the case of stochastic Burgers equations. This is the subject of the following theorem.

\begin{theorem}
	\label{thm:intro}
Let  $ T \in (0,\infty)$, 
$\varrho \in (\nicefrac{1}{8}, \nicefrac{1}{4})$,
$ \chi \in  (0, \nicefrac{\varrho }{2 } - \nicefrac{1}{16}] $, $ H = L^2((0,1); \R)$, 
let $ A \colon D(A) \subseteq H \to H $ be the Laplace operator with Dirichlet boundary conditions on $H$,
let $ ( H_r, \left< \cdot , \cdot \right>_{ H_r }, \left\| \cdot \right\|_{ H_r } ) $, $ r \in \R $, be a family of interpolation spaces associated to $ -A $,
let
$\xi \in H_{\nicefrac{1}{2}} $,
let
$F \colon H_{\nicefrac{1}{8}} \to H_{-\nicefrac{1}{2}} $,
$(e_n)_{n \in \N = \{1,2,3, \ldots\}} \colon \N  \to H $,
$(P_n)_{n \in \N} \colon \N \to L(H) $,
and
$(h_n)_{n \in \N} \colon \N \to (0, T] $
be functions which satisfy
for all $v \in H_{\nicefrac{1}{8}}$, $n \in \N$ that
$F(v)= -\frac{1}{2}(v^2)'$,
$ e_n =  (\sqrt{2} \sin(n \pi x) )_{x \in (0,1)}$,
$ P_n(v) = \sum_{k = 1 }^n \langle e_k, v \rangle_H e_k $,
and
$ \limsup_{ m \to \infty} h_m =0$,
let $ ( \Omega, \F, \P ) $ be a probability space,
let $(W_t)_{t \in [0, T]}$ be an $\Id_H$-cylindrical $( \Omega, \F, \P )$-Wiener process,
let $ \Y^n, \mathcal{O}^n \colon [0, T] \times \Omega \to P_n(H)$, $ n \in \N$, be stochastic processes,  and assume that for all  $ n \in \N $, $t \in [0, T]$ it holds $\P$-a.s.~that    $\mathcal{O}_t^n  = \int_0^t P_n \, e^{(t-s)A}  \, dW_s$
and
\begin{equation}
\label{eq:intro}
 \Y_t^n = P_n \, e^{ t A } \xi + \int_0^t P_n \,  e^{  ( t - s ) A } \, \one_{ \{ \| \Y_{ \lf s \rf_{h_n} }^n \|_{ H_{\varrho} } + \| \mathcal{O}_{ \lf s \rf_{h_n} }^n +P_n \, e^{ \lf s \rf_{ h_n } A } \xi \|_{ H_{\varrho} } \leq | h_n|^{ - \chi } \}} \, F \big(  \Y_{ \lf s \rf_{ h_n } }^n \big) \, ds + \mathcal{O}_t^n. 
\end{equation} Then
\begin{enumerate}[(i)]
	\item \label{item:intro:exists} there exists an up-to-indistinguishability  unique stochastic process $ X\colon [0, T] \times \Omega \to H_{\varrho}$ with continuous sample paths which satisfies that for all $t \in [0,T]$ it holds $\P$-a.s.\ that 
	\begin{equation}
	X_t = e^{ t A }   \xi  + \int_0^t e^{  ( t - s ) A}  \, F (  X_s ) \, ds  + \int_0^t e^{(t-s)A} \, dW_s
	\end{equation}
and 
	\item \label{item:intro:conv} it holds for all $p \in (0, \infty)$ that
	\begin{align}
	\limsup_{n \to \infty} \sup_{t \in [0,T]} \E \big[ \| X_t -\Y_t^n \|_H^p \big] = 0.
	\end{align} 
\end{enumerate}

\end{theorem}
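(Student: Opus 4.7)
The plan is to realise Theorem~\ref{thm:intro} as a direct specialisation of the abstract Proposition~\ref{prop:exists} (for existence and uniqueness) and Theorem~\ref{thm:strong} (for strong convergence) to the Burgers data $(A, F, \xi, (e_n)_{n\in\N}, (P_n)_{n\in\N}, (h_n)_{n\in\N})$ in the space $H_\varrho$. The bulk of the work is therefore verification of the abstract hypotheses.

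First I would collect the standard analytic properties of the Burgers nonlinearity. The map $F(v)=-\tfrac{1}{2}(v^2)'$ sends $H_\varrho$ continuously into $H_{-1/2}$ (for $\varrho>\nicefrac{1}{8}$ this follows from a Sobolev-type embedding of $H_\varrho$ into an algebra paired with duality) and is locally Lipschitz on bounded balls of $H_\varrho$. Combined with the parabolic smoothing of $e^{tA}$ and with $\varrho<\nicefrac{1}{4}$, this implies that both the deterministic convolution and the Ornstein-Uhlenbeck process $\mathcal{O}^n$ take values in $H_\varrho$ with continuous paths, uniformly in $n\in\N$, so that \eqref{eq:intro} makes sense in $H_\varrho$.

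Second, and most importantly, I would establish the coercivity/Lyapunov-type inequality \eqref{eq:coer:burgers} referred to in the introduction, which is the content of Lemma~\ref{coer:burgers}. Concretely, one needs a suitable Lyapunov functional $V$ on $H$ (built from the $H_{\nicefrac{1}{2}}$-seminorm squared to exploit the dissipation of $A$) together with a coefficient function $\alpha(\cdot)$ such that the natural dissipation identity produces an estimate of the form $\langle Av+F(v),V'(v)\rangle+\tfrac{1}{2}\tr(V''(v))\leq \alpha(v)\,V(v)+c$ with $\alpha$ growing only \emph{polynomially} in $\|v\|_{H_\varrho}$. The key algebraic input is the Dirichlet boundary identity $\int_0^1 (v^2)'\, v\, dx=0$, together with Sobolev interpolation to absorb the quadratic contribution $\langle F(v), v\rangle_{H_{1/2}}$ against the $\|v\|_{H_1}^2$-dissipation while keeping a power of $\|v\|_{H_\varrho}$ in $\alpha$. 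This is precisely the feature that forced the relaxation of the constant-coefficient assumption in \cite{BeckerJentzen2016} and of the strong exponential-integrability assumption in \cite{Salimova2016}.

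Third, I would verify the exponential-integrability hypothesis of Theorem~\ref{thm:strong} for $\alpha$ composed with a transformed Ornstein-Uhlenbeck process. Using the semigroup transformation of Proposition~\ref{prop:transform_SG}, this check reduces to a statement about $\mathcal{O}^n$ itself. Fernique's theorem (Proposition~\ref{thm:fernique}) applied to the centred Gaussian variable $\sup_{t\in[0,T]}\|\mathcal{O}^n_t\|_{H_\varrho}$ yields a uniform bound $\E\exp(\varepsilon \sup_{t\in[0,T]}\|\mathcal{O}^n_t\|_{H_\varrho}^2)<\infty$ for some $\varepsilon>0$; since $\alpha$ has merely polynomial growth in $\|\cdot\|_{H_\varrho}$, any polynomial of $\sup_{t\in[0,T]}\|\mathcal{O}^n_t\|_{H_\varrho}$ is exponentially integrable of every order, which is what Theorem~\ref{thm:strong} requires. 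The constraint $\chi\in(0,\nicefrac{\varrho}{2}-\nicefrac{1}{16}]$ then calibrates the truncation threshold $|h_n|^{-\chi}$ in \eqref{eq:intro} against the modulus of continuity of $\mathcal{O}^n$ in $H_\varrho$, ensuring that the indicator event in \eqref{eq:intro} has vanishing contribution as $n\to\infty$. With these three ingredients, Proposition~\ref{prop:exists} yields the continuous $H_\varrho$-valued mild solution $X$ (giving (i)), and Theorem~\ref{thm:strong} produces the uniform $L^p$-convergence in (ii). The main obstacle is clearly the Lyapunov inequality of Lemma~\ref{coer:burgers} with a \emph{polynomially growing} (rather than constant) coefficient, which is the genuinely new analytic contribution required by Burgers.
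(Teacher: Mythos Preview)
Your high-level plan (verify coercivity, check exponential integrability via Fernique, feed into Theorem~\ref{thm:strong}) is the right route, but several structural details are off in ways that would make the argument fail as written.

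First, Proposition~\ref{prop:exists} does \emph{not} provide existence and uniqueness of the mild solution $X$; it constructs the auxiliary noise processes $O,\tilde{\mathcal{O}}^n,\mathbb{O}^n$ and the shift parameter $\eta$ needed to apply Theorem~\ref{thm:strong}. Item~(i) is obtained from an external result (Bl\"omker \& Jentzen~\cite[Theorem~3.1 and Subsection~4.3]{BloemkerJentzen2013}); item~(ii) then follows from Corollary~\ref{cor:burgers:short} (via Corollary~\ref{burgers:cor:last} and Proposition~\ref{abs:prop:last}).

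Second, the coercivity/Lyapunov structure in Lemma~\ref{coer:burgers} is not what you describe. The Lyapunov function is simply $\|\cdot\|_H^2$, not an $H_{1/2}$-energy, and the crucial coefficient is a function of the \emph{noise shift} $w$, not of the solution: one proves
\[
\langle v, F(v+w)\rangle_H \le \phi(w)\,\|v\|_H^2 + \tfrac{3}{4}\|v\|_{H_{1/2}}^2 + \Phi(w),
\]
with $\phi(w)=\gamma+\gamma\sup_{x\in(0,1)}|\underline{w}(x)|^2$. After subtracting the (transformed) Ornstein--Uhlenbeck process $\mathbb{O}^n$ this yields a pathwise Gronwall estimate with random rate $\phi(\mathbb{O}^n)$; there is no second-order trace term because the noise is additive.

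Third, and this is the real gap, your Fernique step is too optimistic. Since $\phi$ is \emph{quadratic} in the sup norm and enters the bound through $\exp\bigl(\int_0^T p\,\phi(\mathbb{O}^n_u)\,du\bigr)$, one genuinely needs a quadratic-Gaussian exponential moment, which Fernique supplies only when the constant in front of the square is small enough (cf.\ the constraint~\eqref{abstract:gamma} in Proposition~\ref{prop:abstract}). The claim that ``any polynomial is exponentially integrable of every order'' misses that the exponent itself is quadratic, not that a polynomial moment suffices. The paper resolves this by the $\eta$-shift of Proposition~\ref{prop:transform_SG}: replacing $\mathcal{O}^n$ by the $(A-\eta)$-convolution $\mathbb{O}^n$ decreases the sup-norm variance (Lemma~\ref{abs:phi(w)_finite} and Lemma~\ref{lem:conv:series} give $\sum_k k^{4\beta}/(\lambda_k+\eta)\to 0$ as $\eta\to\infty$), so for any fixed $p$ one can choose $\eta$ large enough to meet the Fernique threshold. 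Without this device the argument would only give convergence in $L^p$ for $p$ below a fixed, data-dependent ceiling.
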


In the framework of Theorem~\ref{thm:intro} we note that the stochastic process $ X $ in \eqref{item:intro:exists} in Theorem~\ref{thm:intro} is a mild solution process of the stochastic Burgers equation
\begin{align}
\tfrac{\partial}{\partial t} X_t(x) = \tfrac{\partial^2}{\partial x^2} X_t(x) - X_t(x) \cdot \tfrac{\partial}{\partial x} X_t(x) + \tfrac{\partial}{\partial t} W_t(x)
\end{align}
with  $X_0(x) = \xi(x)$ and $X_t(0)= X_t(1)=0$
for  $t \in [0,T]$, $x \in (0,1)$.
Observe that~\eqref{item:intro:exists} in Theorem~\ref{thm:intro} follows, e.g., from  Bl\"omker \& Jentzen~\cite[Theorem~3.1 and Subsection~4.3]{BloemkerJentzen2013}, while~\eqref{item:intro:conv} in Theorem~\ref{thm:intro} is an immediate consequence of Corollary~\ref{cor:burgers:short} below. 
Moreover, we note that the scheme proposed in~\eqref{eq:intro} is a modified version of the scheme proposed in \cite{Salimova2016} for stochastic Kuramoto-Sivashinsky equations (cf.  \cite[(90)]{Salimova2016} with \eqref{eq:intro} above).

The remainder of this paper is organized as follows. In Section~\ref{sec:a_priori} the required a priori moment bounds for the proposed scheme are established. In Section~\ref{sec:main} the error analysis is first performed in the pathwise sense to obtain pathwise convergence in Proposition~\ref{prop:main_det} and pathwise a priori bounds in Proposition~\ref{prop:main_det:2}. Combining these pathwise results  allows
us to accomplish strong convergence in Theorem~\ref{thm:strong} for a large class of SEEs on general separable $\R$-Hilbert spaces. In Section~\ref{sec:abstract} we verify the assumptions of Theorem~\ref{thm:strong} in the case of more concrete SPDEs on the Hilbert space $L^2((0,1);\R)$ and we prove strong convergence in Proposition~\ref{abs:prop:last}. To derive Proposition~\ref{abs:prop:last} we also employ the properties of stochastic convolution processes in  Proposition~\ref{prop:exists}, which  are obtained using  Fernique's theorem (see Section~\ref{sec:fernique}). In Section~\ref{sec:examples} we apply Proposition~\ref{abs:prop:last} in the case of stochastic Burgers and stochastic Allen-Cahn equations and establish strong convergence in Corollary~\ref{cor:burgers:short} and in Corollary~\ref{cor:cahn:short}, respectively.
\subsection{Notation}
\label{sec:notation}

Throughout this article the following notation is frequently used.
For every set $A$ we denote by $\mathcal{P}(A)$ the power set of $A$,
we denote by $\#_{A} \in \N_0 \cup \{\infty\}$ the number of elements of $A$,
and we denote by $\mathcal{P}_0(A)$ the set given by $\mathcal{P}_0(A) = \{ B \in \mathcal{P}(A) \colon \#_B < \infty \}$.
For all measurable spaces $( A, \mathcal{A})$ and $( B, \mathcal{B})$ we denote by $ \mathcal{M}(\mathcal{A}, \mathcal{B})$ the set of all $\mathcal{A} \slash \mathcal{B}$-measurable functions.
For every set $A \in \mathcal{B}(\R)$ we denote by $\lambda_A \colon \mathcal{B}(A) \to [0, \infty]$ the Lebesgue-Borel measure on $ ( A, \mathcal{B}(A) ) $.
For every measure space $(\Omega, \F, \mu)$, every measurable space $(S, \mathcal{S})$, every set $R$, and every function $f \colon \Omega \to R$ we denote by $\left[ f \right]_{\mu, \mathcal{S}} $ the set given by $\left[f \right]_{\mu, \mathcal{S}} = \left\{ g \in \mathcal{M}(\F, \mathcal{S}) \colon ( \exists \, A \in \F \colon \mu(A)=0 \,\, \text{and} \,\, \{ \omega \in \Omega \colon f(\omega) \neq g(\omega)\} \subseteq A ) \right\}$.
For every $ h \in (0, \infty)$ we denote by $ \lf \cdot \rf_h \colon \R \to \R$ the function which satisfies for all $t \in \R$ that $\lf t \rf_h = \max( (-\infty, t] \cap \{0, h, -h, 2h, -2h, \ldots\} )$.
We denote by $\und{(\cdot)} \colon \bigl\{ [v]_{\lambda_{(0,1)}, \mathcal{B}(\R)} \in \mathcal{P} \bigl( \mathcal{M}(\mathcal{B}((0,1)), \mathcal{B}(\R)) \bigr)\colon v \in \mathcal{C}( (0,1), \R ) \bigr\} \to \mathcal{C}((0,1), \R)$  the function which satisfies for all $v \in \mathcal{C}( (0,1), \R ) $  that $\und{[v]_{\lambda_{(0,1)}, \mathcal{B}(\R)}}=v$.
For all real numbers $\theta \in (0,1)$ and $p \in [1, \infty)$ we denote by $ \left\| \cdot \right\|_{\W^{\theta, p}((0,1), \R)} \colon \mathcal{M}(\mathcal{B}((0,1)) ,\mathcal{B}(\R)) \to [0,\infty]$  the function which satisfies for all $v \in \mathcal{M}(\mathcal{B}((0,1)) ,\mathcal{B}(\R))$ that
\begin{align}
	\|v\|_{\W^{\theta, p}((0,1), \R)} = \left[ \int_0^1 |v(x)|^p \, dx + \int_0^1 \int_0^1 \frac{|v(x)-v(y)|^p}{|x-y|^{1+ \theta p}} \, dx \, dy\right]^{\nicefrac{1}{p}}.
\end{align}

\section{A priori bounds}
\label{sec:a_priori}

In Proposition~\ref{prop:priori_bound} and Corollary~\ref{cor:a_priori} below we establish general
a priori bounds which will be used in Section~\ref{sec:main} to obtain a~priori bounds for the proposed approximation method.
Before we state Proposition~\ref{prop:priori_bound} and Corollary~\ref{cor:a_priori},
we present in five auxiliary lemmas, Lemmas~\ref{lem:fund:calc}--\ref{lemma:lyapunov3},
elementary  results which are used in the proofs of Proposition~\ref{prop:priori_bound} and Corollary~\ref{cor:a_priori}.
In particular,
we prove a~priori bounds for general Lyapunov-type functions
in Lemmas~\ref{lemma:lyapunov2}--\ref{lemma:lyapunov3},
whereas in Proposition~\ref{prop:priori_bound} and Corollary~\ref{cor:a_priori}
we establish a priori bounds for a particular choice for the Lyapunov-type function.

\subsection{On strong and mild solutions of semilinear evolution equations}
\label{subsec:strong_mild}

The next elementary and well-known result, Lemma~\ref{lem:fund:calc}, presents a version of the fundamental theorem of calculus and the chain rule.
It is employed in the proof of Lemma~\ref{lemma:fund:gen} below.

\begin{lemma}
\label{lem:fund:calc}
Let $(V, \left\| \cdot \right\|_V)$ be a nontrivial $\R$-Banach space, let $(W, \left\|\cdot\right\|_W)$ be an $\R$-Banach space, let $U \subseteq V$ be an open set,  let $f \in \mathcal{C}^1(U,W)$, $a \in \R$, $b \in (a,\infty)$,
let $ x \colon [ a, b ] \to U $ be a function,
and let $y \colon [a,b] \to V$ be a strongly $\mathcal{B}([a,b]) \slash (V, \left\| \cdot \right\|_V)$-measurable function which satisfies for all $t \in [a,b]$ that
 $\int_a^b \|y_s\|_V \, ds < \infty$ and 
\begin{align}
x_t = x_a + \int_a^t y_s \, ds.
\end{align}
Then 
\begin{enumerate}[(i)]
	\item it holds that the function $[a,b] \ni t \mapsto f'(x_t) \, y_t \in W$ is strongly $\mathcal{B}([a,b]) \slash (W, \left\| \cdot \right\|_W)$-measurable,
	\item it holds that $\int_a^b \| f'(x_s) \, y_s \|_W \, ds < \infty$, and 
	\item it holds for all $t_0 \in [a,b]$, $t \in [t_0,b]$ that
	\begin{align}
	f(x_t) = f(x_{t_0}) + \int_{t_0}^t f'(x_s) \, y_s \, ds.
	\end{align}
\end{enumerate}
\end{lemma}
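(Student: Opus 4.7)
My plan is as follows. First, since $x$ is the indefinite Bochner integral of $y$, $x$ is continuous on $[a,b]$, so $K := x([a,b]) \subseteq U$ is compact. A Lebesgue-number-style compactness argument (cover $K$ by open balls inside $U$ on each of which the continuous function $v \mapsto \|f'(v)\|_{L(V,W)}$ is bounded, then extract a finite subcover) then produces an $\eta \in (0,\infty)$ such that $K_\eta := \{v \in V \colon \mathrm{dist}_V(v,K) < \eta\} \subseteq U$ and such that $M := \sup_{v \in K_\eta} \|f'(v)\|_{L(V,W)} < \infty$.

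Items (i) and (ii) are then almost immediate. The norm-continuity of $f' \colon U \to L(V,W)$ together with the continuity of $x$ yields that $[a,b] \ni t \mapsto f'(x_t) \in L(V,W)$ is operator-norm continuous; approximating the strongly measurable function $y$ by simple functions and using that a norm-continuous operator-valued function sends strongly measurable inputs to strongly measurable outputs establishes (i). The bound $\sup_{t \in [a,b]} \|f'(x_t)\|_{L(V,W)} \leq M$ combined with the assumption $\int_a^b \|y_s\|_V \, ds < \infty$ then gives (ii).

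For (iii) I would reduce to a scalar statement via duality: by the Hahn--Banach theorem it suffices to show, for every $\phi \in W^*$, that $\tilde f(x_t) = \tilde f(x_{t_0}) + \int_{t_0}^t \tilde f'(x_s)\, y_s \, ds$, where $\tilde f := \phi \circ f \in \mathcal{C}^1(U,\R)$ and $\tilde f'(v) = \phi \circ f'(v)$. Setting $\psi(s) := \tilde f(x_s)$, I would argue as follows. By the Lebesgue differentiation theorem for Bochner integrals, $h^{-1}(x_{s+h}-x_s) \to y_s$ in $V$ for almost every $s \in [a,b]$, and hence the Fr\'echet chain rule for the $\mathcal{C}^1$ map $\tilde f$ shows that $\psi$ is differentiable a.e.\ with $\psi'(s) = \tilde f'(x_s)\, y_s$. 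Moreover, the uniform continuity of $x$ on $[a,b]$ furnishes $\delta \in (0,\infty)$ with $|s_1 - s_2| < \delta$ implying $\|x_{s_2}-x_{s_1}\|_V < \eta$, whence the segment from $x_{s_1}$ to $x_{s_2}$ lies in $K_\eta$ and the integral form of Taylor's theorem applied along this segment delivers the local estimate $|\psi(s_2)-\psi(s_1)| \leq \|\phi\|_{W^*}\, M \int_{s_1}^{s_2} \|y_u\|_V\, du$. Combined with the absolute continuity of $u \mapsto \int_a^u \|y_r\|_V\, dr$, this shows $\psi$ is absolutely continuous on $[a,b]$, and the classical fundamental theorem of calculus for real-valued absolutely continuous functions then yields $\psi(t)-\psi(t_0) = \int_{t_0}^t \tilde f'(x_s)\, y_s \, ds$, as required.

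The main technical obstacle is Step~1, namely producing the neighborhood $K_\eta \subseteq U$ with a uniform bound on $\|f'\|$: in an infinite-dimensional Banach space a compact set need not possess a compact neighborhood, so one cannot simply invoke the extreme-value theorem on a compact superset of $K$ in $U$, and instead a Lebesgue-number-style covering of $K$ by open sets on each of which $\|f'\|$ is controlled must be used. A related subtlety is that $W$ is allowed to be an arbitrary Banach space without the Radon--Nikod\'ym property, which is why the proof of (iii) is routed through $W^*$ rather than attempting direct $W$-valued differentiation of $f \circ x$.
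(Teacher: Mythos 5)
The paper does not actually supply a proof of Lemma~\ref{lem:fund:calc}; it is stated as an ``elementary and well-known'' version of the fundamental theorem of calculus and chain rule and is used directly in the proof of Lemma~\ref{lemma:fund:gen}. There is therefore no in-paper argument to compare against, so I assess your proposal on its own.

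Your proof is correct, and the overall architecture is the standard one. Step~1 (a uniform bound $M$ on $\|f'\|_{L(V,W)}$ on an $\eta$-neighborhood $K_\eta$ of the compact trajectory $K=x([a,b])$) is sound; the phrase ``Lebesgue-number-style'' is doing real work here, since one must extract the finite subcover \emph{before} taking the Lebesgue number so that the bound over $K_\eta$ is a finite maximum. Your derivations of (i) (norm-continuity of $t\mapsto f'(x_t)$ applied to simple-function approximations of $y$) and (ii) ($\|f'(x_s)y_s\|_W\leq M\|y_s\|_V$ integrable) are correct. For (iii), the Hahn--Banach reduction to scalar $\tilde f=\phi\circ f$, the Lebesgue differentiation theorem for Bochner integrals to get $\psi'(s)=\tilde f'(x_s)y_s$ a.e., the mean-value estimate along segments inside $K_\eta$ for intervals of length $<\delta$ to establish absolute continuity of $\psi$, and the classical fundamental theorem of calculus for real absolutely continuous functions together give a complete argument; and you correctly observe that the $W^*$-detour sidesteps any assumption of the Radon--Nikod\'ym property on $W$. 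One could alternatively prove (iii) directly in $W$ by a telescoping/mean-value estimate over a fine partition of $[t_0,t]$, comparing $f(x_{s_{k+1}})-f(x_{s_k})$ with $f'(x_{s_k})(x_{s_{k+1}}-x_{s_k})$ and controlling the error by the modulus of continuity of $f'$ on $K_\eta$ together with the total variation $\int\|y_s\|_V\,ds$; this avoids duality and the a.e.\ differentiability machinery but is essentially equivalent in content. Your proof is a valid filling of the gap the paper leaves to the reader.
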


\begin{lemma}
\label{lemma:fund:gen}
Let $(V, \left\|\cdot \right\|_V)$ be a separable $\R$-Banach space,
let $A \in L(V)$, $T \in (0, \infty)$,
let $Y \colon [0,T] \to V $ be a function,
and let $Z \colon [0,T] \to V $ be a $\mathcal{B}([0,T]) $/$ \mathcal{B}(V) $-measurable function which satisfies for all $t \in [0, T]$ that $\sup_{s \in [0,T]} \|Z_s\|_V <\infty$ and $Y_t = \int_0^t e^{(t-s)A} \, Z_s \, ds$. Then 
\begin{enumerate}[(i)]
\item it holds  that $Y$ is continuous and
\item it holds for all $t \in [0,T]$  that  $Y_t = \int_0^t A Y_s + Z_s \, ds$.
\end{enumerate}
\end{lemma}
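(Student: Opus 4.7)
The plan is to prove (i) by a standard dominated convergence argument and (ii) by Fubini, reducing everything to the fact that $A \in L(V)$ makes the operator semigroup $(e^{tA})_{t \in \R}$ norm-continuous and differentiable.

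For (i), I would fix $0 \leq t_1 \leq t_2 \leq T$ and split
\begin{align*}
Y_{t_2} - Y_{t_1} = \int_0^{t_1} \bigl( e^{(t_2-s)A} - e^{(t_1-s)A} \bigr) Z_s \, ds + \int_{t_1}^{t_2} e^{(t_2-s)A} Z_s \, ds.
\end{align*}
Since $A$ is bounded, $\sup_{r \in [-T,T]} \| e^{rA} \|_{L(V)} \leq e^{T\|A\|_{L(V)}}$, so the second term is bounded in norm by $(t_2 - t_1)\, e^{T\|A\|_{L(V)}}\, \sup_{s \in [0,T]} \|Z_s\|_V$, which tends to $0$. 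For the first term, the integrand is dominated by $2\, e^{T\|A\|_{L(V)}} \sup_s \|Z_s\|_V$ and converges pointwise to $0$ because $r \mapsto e^{rA}$ is norm-continuous; dominated convergence yields continuity of $Y$.

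For (ii), the idea is to compute $\int_0^t A Y_s \, ds$ by swapping the order of integration. Continuity of $Y$ from (i) together with boundedness of $A$ gives that $s \mapsto A Y_s$ is continuous and bounded, so the outer integral is well-defined. Expanding the definition of $Y_s$ and applying Fubini's theorem (justified by joint measurability of $(s,r) \mapsto A e^{(s-r)A} Z_r$ on the triangle $\{0 \leq r \leq s \leq t\}$, separability of $V$, and the uniform bound $\|A e^{(s-r)A} Z_r\|_V \leq \|A\|_{L(V)} e^{T\|A\|_{L(V)}} \sup_u \|Z_u\|_V$), one obtains
\begin{align*}
\int_0^t A Y_s \, ds = \int_0^t \int_r^t A e^{(s-r)A} \, ds \, Z_r \, dr.
\end{align*}
The inner integral equals $e^{(t-r)A} - \Id$ by the fundamental theorem of calculus applied to $s \mapsto e^{(s-r)A}$, whose derivative is $A e^{(s-r)A}$; this step can be formalized via Lemma~\ref{lem:fund:calc} with $f(x) = e^{xA}$ acting as an element of the Banach space $L(V)$. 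Substituting back,
\begin{align*}
\int_0^t A Y_s \, ds = \int_0^t \bigl( e^{(t-r)A} - \Id \bigr) Z_r \, dr = Y_t - \int_0^t Z_r \, dr,
\end{align*}
and rearranging yields the desired identity $Y_t = \int_0^t (AY_s + Z_s) \, ds$.

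The only mildly delicate point is the measurability bookkeeping for the Fubini step and for applying Lemma~\ref{lem:fund:calc} in the space $L(V)$; separability of $V$ (and hence of the relevant subspace of $L(V)$ traced out by the curve $r \mapsto e^{rA}$) together with the global boundedness $\sup_s \|Z_s\|_V < \infty$ takes care of both. All other computations are straightforward manipulations with the norm-convergent exponential series of $A$.
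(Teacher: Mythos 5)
Your argument is correct but follows a genuinely different route from the paper's.  For~(i) the paper observes that $Y_t = e^{tA} \int_0^t e^{-sA} Z_s\, ds$ and reads off continuity directly from the norm-continuity of $r \mapsto e^{rA}$ and the boundedness of the integrand, whereas you split the increment $Y_{t_2}-Y_{t_1}$ and invoke dominated convergence; both are fine, the paper's observation is just slightly slicker and is also the key step it reuses for~(ii).  For~(ii) the paper applies the chain rule result Lemma~\ref{lem:fund:calc} to the function $f(t,v)=e^{tA}v$ along the curve $t\mapsto\bigl(t,\int_0^t e^{-sA}Z_s\,ds\bigr)$, which directly produces $Y_t = \int_0^t AY_s + Z_s\,ds$; you instead compute $\int_0^t AY_s\,ds$ via Fubini on the triangle $\{0\le r\le s\le t\}$ and the identity $\int_0^{u}Ae^{rA}\,dr = e^{uA}-\Id$.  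What each approach buys: the paper's route avoids Fubini and any joint-measurability bookkeeping by reducing everything to the already-established one-variable chain rule; your route is a more classical "Duhamel integration by parts" that is self-contained modulo Bochner–Fubini and the fundamental theorem of calculus in $L(V)$.  One small caveat in your write-up: the claim that $\int_0^t AY_s\,ds$ is well-defined because $s\mapsto AY_s$ is continuous and bounded uses part~(i), so the logical order must be (i) before (ii) (which you do respect), and you should note explicitly that $A$ commutes with the Bochner integral (i.e.\ $A\int = \int A$) because $A\in L(V)$ — this is the step that lets you move $A$ inside before applying Fubini.  With these remarks, the proof is complete.
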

\begin{proof}[Proof of Lemma~\ref{lemma:fund:gen}]
First, note that for all $t \in [0,T]$ it holds that $Y_t =e^{tA} \int_0^t e^{-sA} \, Z_s \, ds$.  The assumption that $ A \in L(V)$ and the assumption that $\sup_{s \in [0,T]} \|Z_s\|_V <\infty$ hence prove that $Y$ is continuous. Moreover,  Lemma~\ref{lem:fund:calc} (with $V= \R \times V$, $W = V$, $U= \R \times V$,  $f= ( \R \times V \ni (t,v) \mapsto e^{tA} \,v \in V)$, $a=0$, $b=T$, $x = ([0,T] \ni t \mapsto (t, \int_0^t e^{-sA} \, Z_s \, ds) \in \R \times V)$, $ y = ([0,T] \ni t \mapsto (1, e^{-tA} \, Z_t) \in \R \times V)$, $t_0 =0$ in the notation of Lemma~\ref{lem:fund:calc}) ensures for all $t \in [0,T]$ that
\begin{align}
\begin{split}
Y_t = e^{tA} \int_0^t e^{-sA} \, Z_s \, ds = \int_0^t A e^{sA} \int_0^s e^{-uA} \, Z_u \, du \, ds+ \int_0^t e^{sA} \, e^{-sA} \, Z_s \, ds = \int_0^t AY_s +Z_s \, ds.
\end{split}
\end{align}
The proof of Lemma~\ref{lemma:fund:gen} is thus completed. 
\end{proof}

\begin{lemma}
\label{lemma:lyapunov1}
Consider the notation in Subsection~\ref{sec:notation},
let $ ( V,  \left\| \cdot \right\|_V ) $ be a separable $\mathbb{R}$-Banach space,
let $T \in (0, \infty)$, $\eta \in [0, \infty)$, $ h \in (0, T] $, $ A \in L(V)$,
and let $ Z \colon [0, T] \to \R $,
$Y, O, \mathbb{O} \colon [0, T] \to V $,
and $F \colon V \to V $
be functions which satisfy
for all $t \in [0,T]$  that $\eta O \in \mathcal{C}([0, T], V)$,    $ \mathbb{O}_t = O_t - \int_0^t e^{(t-s)(A-\eta)} \, \eta  O_s \, ds$,   and
\begin{equation}
Y_t = e^{ t A}\, ( Y_0 - O_0 )  + \int_0^t e^{ ( t - s ) A } \, Z_{ \fl{s} }  F \big(  Y_{ \fl{s} } \big) \, ds + O_t.
\end{equation}
Then 
\begin{enumerate}[(i)]
\item  it holds that the functions $ [0,T] \ni t \mapsto Y_{t} -\mathbb{O}_t \in V $ and  $ [0,T] \ni t \mapsto \eta Y_{t} \in V $ are continuous and
\item it holds for all  $ t \in [0,T] $ that $ Y_t- \mathbb{O}_t= Y_0 - \mathbb{O}_0 +  \int_0^t (A-\eta) (Y_s - \mathbb{O}_s)+  Z_{ \fl{s} } F\big(Y_{ \fl{s} }\big) +  \eta Y_s  \, ds $. 
\end{enumerate}
\end{lemma}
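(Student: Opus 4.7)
The plan is to decompose $Y_t - \mathbb{O}_t$ into three pieces to which Lemma~\ref{lemma:fund:gen} can be applied separately. Substituting the definition of $\mathbb{O}_t$ into $Y_t - \mathbb{O}_t$ and using the equation satisfied by $Y$ gives $Y_t - \mathbb{O}_t = \Phi_t + \Psi_t + \Xi_t$, where I would set $\Phi_t := e^{tA}(Y_0 - O_0)$, $\Psi_t := \int_0^t e^{(t-s)A} Z_{\fl{s}} F(Y_{\fl{s}}) \, ds$, and $\Xi_t := \int_0^t e^{(t-s)(A-\eta)} \eta O_s \, ds$. Note the identities $\Phi_t + \Psi_t = Y_t - O_t$, $\Xi_t = O_t - \mathbb{O}_t$, and $\mathbb{O}_0 = O_0$ (so $Y_0 - \mathbb{O}_0 = Y_0 - O_0$), which will be used at the very end.

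Next I would verify the hypotheses of Lemma~\ref{lemma:fund:gen} for $\Psi$ and $\Xi$. The integrand $s \mapsto Z_{\fl{s}} F(Y_{\fl{s}})$ is a step function taking only finitely many values (one per grid point $kh \in [0,T]$), hence $\mathcal{B}([0,T])$/$\mathcal{B}(V)$-measurable and bounded on $[0,T]$; the integrand $s \mapsto \eta O_s$ is continuous by the standing assumption $\eta O \in \mathcal{C}([0,T], V)$, hence measurable and bounded. Since $A, A - \eta \in L(V)$, applying Lemma~\ref{lemma:fund:gen} to $\Psi$ with operator $A$ and to $\Xi$ with operator $A - \eta$ yields continuity of $\Psi$ and $\Xi$ together with
\begin{align*}
\Psi_t = \int_0^t \bigl[ A \Psi_s + Z_{\fl{s}} F(Y_{\fl{s}}) \bigr] \, ds
\qquad \text{and} \qquad
\Xi_t = \int_0^t \bigl[ (A-\eta) \Xi_s + \eta O_s \bigr] \, ds.
\end{align*}
Continuity of $\Phi$ and the identity $\Phi_t = (Y_0 - O_0) + \int_0^t A \Phi_s \, ds$ follow directly from $A \in L(V)$ and the fundamental theorem of calculus applied to $t \mapsto e^{tA}(Y_0 - O_0)$.

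Summing the three resulting integral identities and using the substitutions $\Phi_s + \Psi_s = Y_s - O_s$, $\Xi_s = O_s - \mathbb{O}_s$, and $Y_0 - O_0 = Y_0 - \mathbb{O}_0$, the claim (ii) reduces to the algebraic identity
\begin{align*}
A(Y_s - O_s) + (A-\eta)(O_s - \mathbb{O}_s) + \eta O_s = A(Y_s - \mathbb{O}_s) + \eta \mathbb{O}_s = (A-\eta)(Y_s - \mathbb{O}_s) + \eta Y_s,
\end{align*}
which is immediate. For (i), continuity of $t \mapsto Y_t - \mathbb{O}_t = \Phi_t + \Psi_t + \Xi_t$ is then clear; continuity of $\eta Y_t$ is trivial when $\eta = 0$ and, for $\eta > 0$, follows by rewriting $Y_t = \Phi_t + \Psi_t + O_t$ and invoking the assumed continuity of $\eta O$. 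The main obstacle I anticipate is the bookkeeping needed to confirm that the step-function integrand $s \mapsto Z_{\fl{s}} F(Y_{\fl{s}})$ satisfies the measurability and boundedness hypotheses of Lemma~\ref{lemma:fund:gen}; once these are in place, the algebraic assembly of the three pieces is routine.
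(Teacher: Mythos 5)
Your proof is correct and follows essentially the same strategy as the paper's: both isolate the exponential shift term $E_t=\Xi_t=\int_0^t e^{(t-s)(A-\eta)}\eta O_s\,ds$ coming from the definition of $\mathbb{O}$, apply Lemma~\ref{lemma:fund:gen} to the relevant convolution integrals, and then recombine via the same algebraic simplification. The only cosmetic difference is that you split $Y-\mathbb{O}$ into three explicit pieces $\Phi+\Psi+\Xi$ and treat $\Phi_t=e^{tA}(Y_0-O_0)$ separately via the fundamental theorem of calculus, whereas the paper applies Lemma~\ref{lemma:fund:gen} directly to $Y-O-e^{\cdot A}(Y_0-O_0)$ and then states the resulting integral identity for $Y-O$ in one stroke (absorbing the $\Phi$ term implicitly); your version simply makes that bookkeeping step explicit. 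Your observation that $s\mapsto Z_{\fl{s}}F(Y_{\fl{s}})$ is a finitely-valued step function, hence measurable and bounded without any continuity assumption on $F$, is the right justification for invoking Lemma~\ref{lemma:fund:gen}, and your two-case treatment of the continuity of $\eta Y$ (trivial for $\eta=0$, via $Y=\Phi+\Psi+O$ and the assumed continuity of $\eta O$ for $\eta>0$) is exactly the point of phrasing the claim in terms of $\eta Y$ rather than $Y$.
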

\begin{proof}[Proof of Lemma~\ref{lemma:lyapunov1}]
Throughout this proof let $  \mathbb{A} \in L(V) $
be the linear operator given by $\mathbb{A}= A-\eta$
and let $\bar{Y}, E \colon [0,T] \to V $
be the functions which satisfy for all $ t \in [0,T] $  that
$\bar{Y}_t = Y_t - \mathbb{O}_t$ and $E_t =  \int_0^t e^{(t-s)\mathbb{A}} \, \eta  O_s \, ds$.
Note that the assumption that $\eta O \in \mathcal{C}([0, T], V)$ and  Lemma~\ref{lemma:fund:gen} (with $V=V$, $A=\mathbb{A}$, $T=T$, $Y=E$, $Z=\eta O$ in the notation of Lemma~\ref{lemma:fund:gen}) prove that $E$ is continuous and that for all $t \in [0, T]$ it holds that 
\begin{align}
\label{eq:diff:E}
E_t = \int_0^t \mathbb{A} E_s + \eta O_s \, ds.
\end{align} 
Moreover, observe that for all $t \in [0,T]$ it holds that 
\begin{align}
Y_t - O_t = e^{ t A } \, (Y_0 -O_0) 
+ \int_0^t e^{  ( t - s ) A} \, Z_{ \fl{s} }  F\big(Y_{ \fl{s} }\big) \, ds .
\end{align}
This and Lemma~\ref{lemma:fund:gen} (with $V=V$, $A=A$, $T=T$, $Y=([0,T] \ni t \mapsto (Y_t-O_t-e^{tA}\, (Y_0-O_0)) \in V)$,
$Z=([0,T] \ni t \mapsto Z_{ \fl{t} }  F(Y_{ \lf t \rf_{h } }) \in V) $
in the notation of Lemma~\ref{lemma:fund:gen})  prove that for all $ t \in [0,T] $ it holds that
$(Y- O) \in \mathcal{C}([0, T], V)$ and
\begin{align}
\label{eq:diff:Y-O}
\begin{split}
Y_t - O_t = Y_0- O_0 + \int_0^t A (Y_s -O_s)+  Z_{ \fl{s} } F\big(Y_{ \fl{s} }\big) \, ds.
\end{split}
\end{align}
This and the fact that  $E, \eta O \in \mathcal{C}([0, T], V)$ ensure that the functions $ [0,T] \ni t \mapsto \bar{Y}_t \in V $ and  $ [0,T] \ni t \mapsto \eta Y_{t} \in V $ are continuous. In the next step we combine \eqref{eq:diff:E} and \eqref{eq:diff:Y-O} to obtain that for all $t \in [0,T]$ it holds that
\begin{align}
\begin{split}
\bar{Y}_t & = \bar{Y}_0 +  \int_0^t A (Y_s -O_s)+  Z_{ \fl{s} } F\big(Y_{ \fl{s} }\big)  \, ds + E_t \\
& = \bar{Y}_0 +  \int_0^t A (Y_s -O_s)+  Z_{ \fl{s} } F\big(Y_{ \fl{s} }\big) + \mathbb{A} E_s + \eta O_s \, ds   \\
& = \bar{Y}_0 +  \int_0^t \mathbb{A} (Y_s -O_s)+  Z_{ \fl{s} } F\big(Y_{ \fl{s} }\big) + \mathbb{A} E_s + \eta O_s  + \eta(Y_s - O_s) \, ds \\
 & = \bar{Y}_0 +  \int_0^t \mathbb{A} (Y_s -O_s+ E_s)+  Z_{ \fl{s} } F\big(Y_{ \fl{s} }\big) +  \eta Y_s  \, ds \\
  & = \bar{Y}_0 +  \int_0^t \mathbb{A} \bar{Y}_s +  Z_{ \fl{s} } F\big(Y_{ \fl{s} }\big) +  \eta Y_s  \, ds.
\end{split}
\end{align}
The proof of Lemma~\ref{lemma:lyapunov1} is thus completed.
\end{proof}


\subsection{General a priori bounds}
\label{subsec:gen_a_priori}

\begin{lemma}
\label{lemma:lyapunov2}
Consider the notation in Subsection~\ref{sec:notation},
let $ ( V,  \left\| \cdot \right\|_V ) $ be a nontrivial separable $\mathbb{R}$-Banach space,
let $T \in (0, \infty)$, $\eta \in [0, \infty)$, $ h \in (0, T] $, $ A \in L(V)$, $\mathbb{V} \in \mathcal{C}^1(V,  \R)$, $F \in \mathcal{C}(V, V)$,
and let
$ Z \colon [0, T] \to \R $,  $Y, O, \mathbb{O} \colon [0, T] \to V $, and $\phi,  f \colon V \to \R $
be functions which 
satisfy for all  $t \in [0,T]$  that $f \circ \mathbb{O} \in \mathcal{C}([0,T], \R)$, $\eta O \in \mathcal{C}([0, T], V)$,    $ \mathbb{O}_t = O_t - \int_0^t e^{(t-s)(A-\eta)} \, \eta  O_s \, ds$,   and
\begin{equation}
Y_t = e^{ t A} \, ( Y_0 - O_0 )  + \int_0^t e^{ ( t - s ) A } \, Z_{ \fl{s} }  F \big(  Y_{ \fl{s} } \big) \, ds + O_t.
\end{equation}
Then 
\begin{enumerate}[(i)]
\item it holds that the functions $ [0,T] \ni t \mapsto Y_{t} -\mathbb{O}_t \in V $ and  $ [0,T] \ni t \mapsto \eta Y_{t} \in V $ are continuous,
\item it holds that $\sup_{s\in [0,T]} \big\|F\big(  Y_s - \mathbb{O}_s +
\mathbb{O}_{ \fl{s} } \big)\big\|_V < \infty$, and 
\item it holds for all $ t \in [0, T]$  that
\begin{align}
&e^{- \int_0^t  \phi( \mathbb{O}_{\fl{s} } ) +  f(\mathbb{O}_s) \, ds} \, \mathbb{V}( Y_t - \mathbb{O}_t )  = \mathbb{V}( Y_0 - \mathbb{O}_0 ) \nonumber \\
&  +  \int_0^t e^{- \int_0^s \phi( \mathbb{O}_{\fl{u} } )  + f(\mathbb{O}_u) \, du} \, \mathbb{V}' ( Y_s - \mathbb{O}_s) \! \left[ (A-\eta) ( Y_s - \mathbb{O}_s) + Z_{ \fl{s} } F\big(  Y_s - \mathbb{O}_s +
\mathbb{O}_{ \fl{s} } \big) + \eta Y_s \right]  ds \nonumber \\
& + \int_0^t  e^{- \int_0^s \phi( \mathbb{O}_{\fl{u} } )  +  f(\mathbb{O}_u) \, du} \, Z_{ \fl{s} } \mathbb{V}' ( Y_s - \mathbb{O}_s)\! \left[
F\big( Y_{ \fl{s} }\big) - F\big(  Y_s - \mathbb{O}_s +
\mathbb{O}_{ \fl{s} } \big)\right]  ds  \\
&- \int_0^t \big[ \phi\big( \mathbb{O}_{\fl{s} } \big) +  f(\mathbb{O}_s)\big]  e^{- \int_0^s \phi( \mathbb{O}_{\fl{u} } )  + f(\mathbb{O}_u)\, du} \, \mathbb{V}( Y_s - \mathbb{O}_s)\, ds \nonumber.
\end{align}
\end{enumerate}
\end{lemma}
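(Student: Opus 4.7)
The plan is to use Lemma~\ref{lemma:lyapunov1} to obtain item~(i) together with an integral representation of $Y - \mathbb{O}$, then establish~(ii) by a compactness argument, and finally combine both ingredients via a product-rule computation based on Lemma~\ref{lem:fund:calc} to prove~(iii). The continuity and measurability hypotheses that arise should all follow from the structure already built up in Subsection~\ref{subsec:strong_mild}.

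I would first invoke Lemma~\ref{lemma:lyapunov1} to obtain~(i) and the identity
\begin{align}
Y_t - \mathbb{O}_t = Y_0 - \mathbb{O}_0 + \int_0^t (A-\eta)(Y_s - \mathbb{O}_s) + Z_{\fl{s}} F\big(Y_{\fl{s}}\big) + \eta Y_s \, ds
\end{align}
for all $t \in [0,T]$. For~(ii), continuity of $Y - \mathbb{O}$ makes the set $\{Y_s - \mathbb{O}_s : s \in [0,T]\}$ a compact subset of $V$, while $\{\mathbb{O}_{\fl{s}} : s \in [0,T]\}$ is finite because $\fl{\cdot}$ takes only finitely many values on $[0,T]$. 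Their Minkowski sum is thus a finite union of compact sets, hence compact, and its image under the continuous map $F$ is bounded in $V$, proving~(ii).

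For~(iii), I would introduce the auxiliary function $g_t := \exp\!\big(-\int_0^t \phi(\mathbb{O}_{\fl{s}}) + f(\mathbb{O}_s) \, ds\big)$. Since $\phi \circ \mathbb{O}_{\fl{\cdot}}$ takes only finitely many values (hence is bounded measurable) and $f \circ \mathbb{O}$ is continuous by hypothesis, $g$ is continuous and satisfies $g_t = 1 - \int_0^t g_s [\phi(\mathbb{O}_{\fl{s}}) + f(\mathbb{O}_s)] \, ds$. Applying Lemma~\ref{lem:fund:calc} to the $\mathcal{C}^1$-map $\R \times V \ni (r,v) \mapsto r\, \mathbb{V}(v) \in \R$ along the path $t \mapsto (g_t, Y_t - \mathbb{O}_t)$, whose component derivatives are supplied by the two displayed identities above, yields the product-rule identity
\begin{align}
g_t\, \mathbb{V}(Y_t - \mathbb{O}_t) &= \mathbb{V}(Y_0 - \mathbb{O}_0) + \int_0^t g_s\, \mathbb{V}'(Y_s - \mathbb{O}_s)\big[(A-\eta)(Y_s - \mathbb{O}_s) + Z_{\fl{s}} F\big(Y_{\fl{s}}\big) + \eta Y_s\big] \, ds \nonumber \\
&\quad - \int_0^t g_s\, \big[\phi(\mathbb{O}_{\fl{s}}) + f(\mathbb{O}_s)\big]\, \mathbb{V}(Y_s - \mathbb{O}_s) \, ds.
\end{align}
The final step is algebraic: adding and subtracting $Z_{\fl{s}} F(Y_s - \mathbb{O}_s + \mathbb{O}_{\fl{s}})$ inside the $\mathbb{V}'$-bracket splits the drift integral into exactly the two integrals appearing in the statement.

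The main obstacle I anticipate is the bookkeeping needed to verify the measurability and $L^1$-integrability hypotheses of Lemma~\ref{lem:fund:calc}, particularly for the $\eta Y_s$ contribution. Writing $\eta Y_s = \eta(Y_s - \mathbb{O}_s) + \eta\mathbb{O}_s$ and unfolding $\mathbb{O}_s = O_s - \int_0^s e^{(s-u)(A-\eta)} \eta O_u \, du$ together with the hypothesis $\eta O \in \mathcal{C}([0,T], V)$ (and a continuity argument of the type used in Lemma~\ref{lemma:fund:gen} for the convolution term) supplies the required continuity of $\eta Y$; combined with the uniform $F$-bound from~(ii) and the continuity of $g$, all integrands appearing in the product-rule computation are either continuous or bounded measurable with finitely many values, making the integrability conditions routine to check.
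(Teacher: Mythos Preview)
Your proposal is correct and follows essentially the same route as the paper: invoke Lemma~\ref{lemma:lyapunov1} for (i) and the integral representation of $\bar Y = Y-\mathbb{O}$, deduce (ii) from continuity of $\bar Y$, finiteness of $\{\mathbb{O}_{\fl{s}}\}$, and $F\in\mathcal{C}(V,V)$, and then apply the chain rule of Lemma~\ref{lem:fund:calc} followed by the add--subtract trick with $F(\bar Y_s+\mathbb{O}_{\fl{s}})$. The only cosmetic difference is that the paper applies Lemma~\ref{lem:fund:calc} once to the map $\R\times V\ni(t,v)\mapsto e^{-t}\,\mathbb{V}(v)$ along the path $t\mapsto\bigl(\int_0^t \phi(\mathbb{O}_{\fl{s}})+f(\mathbb{O}_s)\,ds,\;\bar Y_t\bigr)$, whereas you first derive the integral identity for $g_t$ and then use $(r,v)\mapsto r\,\mathbb{V}(v)$; both yield the same formula.
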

\begin{proof}[Proof of Lemma~\ref{lemma:lyapunov2}]
Throughout this proof let $\bar{Y} \colon [0,T] \to V $ be the function which satisfies for all $ t \in [0,T] $  that $\bar{Y}_t = Y_t - \mathbb{O}_t$.
Note that Lemma~\ref{lemma:lyapunov1} (with $V=V$, $T = T$, $\eta = \eta$, $h=h$, $ A = A$, $ Z =Z$,  $Y=Y$, $O=O$, $\mathbb{O}=\mathbb{O}$, $F =F$ in the notation of Lemma~\ref{lemma:lyapunov1}) and the assumption that $F \in \mathcal{C}(V,V)$ establish 
that for all $t \in [0,T]$ it holds that $\bar{Y} \in \mathcal{C}([0,T],V)$, $\eta Y \in \mathcal{C}([0,T],V)$, $\sup_{s\in [0,T]} \big\|F\big(  Y_s - \mathbb{O}_s +
\mathbb{O}_{ \fl{s} } \big)\big\|_V < \infty$, and 
\begin{align}
\begin{split}
\bar{Y}_t  = \bar{Y}_0 +  \int_0^t (A-\eta) \bar{Y}_s +  Z_{ \fl{s} } F\big(Y_{ \fl{s} }\big) +  \eta Y_s  \, ds.
\end{split}
\end{align}
This and Lemma~\ref{lem:fund:calc} (with $V= \R \times V$, $W=\R$, $U= \R \times V$, $f= (\R \times V \ni (t,v) \mapsto e^{-t} \,\mathbb{V}(v) \in \R)$, $a=0$, $b=T$, $x = ([0,T] \ni t \mapsto (\int_0^t  \phi( \mathbb{O}_{\fl{s} } ) +  f(\mathbb{O}_s) \, ds, \bar{Y}_t) \in \R \times V)$, $y = ([0,T] \ni t \mapsto ( \phi( \mathbb{O}_{\fl{t} } ) +  f(\mathbb{O}_t), (A-\eta) \bar{Y}_t +  Z_{ \fl{t} } F(Y_{ \fl{t} }) +  \eta Y_t ) \in \R \times V)$, $t_0=0$ in the notation of Lemma~\ref{lem:fund:calc}) ensure for all $ t \in [0,T] $ that
\begin{align}
\begin{split}
& e^{- \int_0^t  \phi( \mathbb{O}_{\fl{s} } ) +  f(\mathbb{O}_s) \, ds} \, \mathbb{V}( \bar{Y}_t ) \\
&=\mathbb{V}(\bar{Y}_0 ) +  \int_0^t  e^{- \int_0^s \phi( \mathbb{O}_{\fl{u} } )  +  f(\mathbb{O}_u) \, du} \, \mathbb{V}' (\bar{Y}_s) \! \left[ (A-\eta)\bar{Y}_s + Z_{ \fl{s} }
F\big( Y_{ \fl{s} }\big) + \eta Y_s \right]  ds \\
& \quad - \int_0^t \big[ \phi\big( \mathbb{O}_{\fl{s} } \big) +  f(\mathbb{O}_s) \big]  e^{- \int_0^s \phi( \mathbb{O}_{\fl{u} } )  +  f(\mathbb{O}_u) \, du} \, \mathbb{V}(\bar{Y}_s)\, ds \\
& = \mathbb{V}(\bar{Y}_0 ) +  \int_0^t e^{- \int_0^s \phi( \mathbb{O}_{\fl{u} } )  +  f(\mathbb{O}_u) \, du} \, \mathbb{V}' (\bar{Y}_s)\! \left[ (A-\eta) \bar{Y}_s + Z_{ \fl{s} } F\big( \bar{Y}_s +
\mathbb{O}_{ \fl{s} } \big) + \eta Y_s \right]  ds \\
& \quad + \int_0^t  e^{- \int_0^s \phi( \mathbb{O}_{\fl{u} } )  +  f(\mathbb{O}_u) \, du} \, Z_{ \fl{s} } \mathbb{V}' (\bar{Y}_s)\! \left[
F\big( Y_{ \fl{s} }\big) - F\big( \bar{Y}_s +
\mathbb{O}_{ \fl{s} } \big)\right]  ds \\
&\quad - \int_0^t \big[ \phi\big( \mathbb{O}_{\fl{s} } \big) + f(\mathbb{O}_s)\big]   e^{- \int_0^s \phi( \mathbb{O}_{\fl{u} } )  +  f(\mathbb{O}_u)\, du} \, \mathbb{V}(\bar{Y}_s)\, ds .
\end{split}
\end{align}
The proof of Lemma~\ref{lemma:lyapunov2} is thus completed.
\end{proof}

\begin{lemma}
\label{lemma:lyapunov3}
Consider the notation in Subsection~\ref{sec:notation},
let $ ( V, \left\| \cdot \right\|_V ) $ be a nontrivial separable $\mathbb{R}$-Banach space,
let $T \in (0, \infty)$, $\eta \in [0, \infty)$, $ h \in (0, T] $, $ A \in L(V)$,
$\mathbb{V} \in \mathcal{C}^1(V, [0, \infty))$,
$F \in \mathcal{C}(V, V)$,
$\varphi \in \mathcal{C}(V, [0,\infty))$, 
and let
$ Z \colon [0, T] \to [0,1] $,  $Y, O, \mathbb{O} \colon [0, T] \to V $, and  $\phi, \Phi, f,  g \colon V \to  [0, \infty)$
be functions which satisfy for all $v, w \in V $, $t \in [0,T]$  that $f \circ \mathbb{O}$, $ g \circ \mathbb{O} \in \mathcal{C}([0,T], [0, \infty))$, $\eta O \in \mathcal{C}([0, T], V)$, $ \mathbb{V}'(v)F(v+w) \leq \phi(w) \mathbb{V}(v) + \varphi(v)+ \Phi(w)$, $\eta \mathbb{V}'(v) (v+w) \leq f(w) \mathbb{V}(v) + g(w)$,  $ \mathbb{O}_t = O_t - \int_0^t e^{(t-s)(A-\eta)} \, \eta  O_s \, ds$,   and
\begin{equation}
Y_t = e^{ t A}\, ( Y_0 - O_0 )  + \int_0^t e^{ ( t - s ) A } \, Z_{ \fl{s} }  F \big(  Y_{ \fl{s} } \big) \, ds + O_t.
\end{equation}
Then
\begin{enumerate}[(i)]
\item it holds that the function $ [0,T] \ni t \mapsto Y_{t} -\mathbb{O}_t \in V $ is continuous,
\item it holds that $\sup_{s\in [0,T]} \big\|F\big(  Y_s - \mathbb{O}_s +
\mathbb{O}_{ \fl{s} } \big)\big\|_V < \infty$, and 
\item  it holds for all $ t \in [0, T]$  that
\begin{align}
&\mathbb{V}( Y_t - \mathbb{O}_t )  \leq e^{ \int_0^t  \phi( \mathbb{O}_{\fl{s} } ) +  f(\mathbb{O}_s) \, ds} \, \mathbb{V}(Y_0 - \mathbb{O}_0) \nonumber \\
& +   \int_0^t e^{ \int_s^t \phi( \mathbb{O}_{\fl{u} } )  +  f(\mathbb{O}_u) \, du} \left[ \mathbb{V}' (Y_s - \mathbb{O}_s)  (A-\eta)(Y_s - \mathbb{O}_s)+ \varphi( Y_s - \mathbb{O}_s)+ \Phi\big(\mathbb{O}_{ \fl{s} } \big)  + g(\mathbb{O}_s) \right]  ds \nonumber \\
& + \int_0^t  e^{ \int_s^t \phi( \mathbb{O}_{\fl{u} } )  + f(\mathbb{O}_u) \, du} \, Z_{ \fl{s} } \mathbb{V}' (Y_s - \mathbb{O}_s)\! \left[
F\big( Y_{ \fl{s} }\big) - F\big( Y_s - \mathbb{O}_s +
\mathbb{O}_{ \fl{s} } \big)\right]  ds.
\end{align}
\end{enumerate}
\end{lemma}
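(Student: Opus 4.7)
The plan is to reduce Lemma~\ref{lemma:lyapunov3} to Lemma~\ref{lemma:lyapunov2} by using the two pointwise Lyapunov-type inequalities to bound the right-hand side of the identity provided by that lemma. Items~(i) and~(ii) follow verbatim from the corresponding items of Lemma~\ref{lemma:lyapunov2}, whose hypotheses are a subset of those made here (with target space $\R$ rather than $[0,\infty)$ for $\mathbb{V}$ and without the sign and growth assumptions on $\phi,\Phi,f,g,\varphi$). So the substantive content is item~(iii).

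First I would introduce the shorthand $\bar Y_s = Y_s - \mathbb{O}_s$ and write down the identity of Lemma~\ref{lemma:lyapunov2} as the starting point. In that identity the integrand on the second line contains the expression
$\mathbb{V}'(\bar Y_s)\!\left[(A-\eta)\bar Y_s + Z_{\fl{s}} F(\bar Y_s + \mathbb{O}_{\fl{s}}) + \eta Y_s\right]$,
which I would split into its three natural summands. The first summand, $\mathbb{V}'(\bar Y_s)(A-\eta)\bar Y_s$, is left untouched and reappears on the right-hand side of the target estimate. To the second summand I would apply the first Lyapunov hypothesis $\mathbb{V}'(v)F(v+w)\le \phi(w)\mathbb{V}(v)+\varphi(v)+\Phi(w)$ with $v=\bar Y_s$ and $w=\mathbb{O}_{\fl{s}}$, obtaining
\begin{equation*}
Z_{\fl{s}} \mathbb{V}'(\bar Y_s) F(\bar Y_s + \mathbb{O}_{\fl{s}}) \le Z_{\fl{s}}\bigl[\phi(\mathbb{O}_{\fl{s}})\mathbb{V}(\bar Y_s) + \varphi(\bar Y_s) + \Phi(\mathbb{O}_{\fl{s}})\bigr].
\end{equation*}
Since $Z_{\fl{s}}\in[0,1]$ and each of $\phi,\varphi,\Phi,\mathbb{V}$ is non-negative, the factor $Z_{\fl{s}}$ may be dropped in front of each summand. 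To the third summand I would apply the second Lyapunov hypothesis $\eta\mathbb{V}'(v)(v+w)\le f(w)\mathbb{V}(v)+g(w)$ with $v=\bar Y_s$ and $w=\mathbb{O}_s$ (so that $v+w=Y_s$), yielding $\eta\mathbb{V}'(\bar Y_s)Y_s\le f(\mathbb{O}_s)\mathbb{V}(\bar Y_s)+g(\mathbb{O}_s)$. The last term on the right-hand side of the Lemma~\ref{lemma:lyapunov2} identity, involving $F(Y_{\fl{s}})-F(\bar Y_s+\mathbb{O}_{\fl{s}})$, is carried along unchanged.

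The key observation then is a cancellation: the upper bounds produced above contribute $\bigl[\phi(\mathbb{O}_{\fl{s}})+f(\mathbb{O}_s)\bigr]\mathbb{V}(\bar Y_s)$ to the integrand, which is exactly the quantity appearing with a minus sign in the last line of the Lemma~\ref{lemma:lyapunov2} identity. These two contributions cancel, leaving only the benign non-negative terms $\varphi(\bar Y_s)$, $\Phi(\mathbb{O}_{\fl{s}})$, $g(\mathbb{O}_s)$, the drift term $\mathbb{V}'(\bar Y_s)(A-\eta)\bar Y_s$, and the difference term. Multiplying both sides of the resulting inequality by the positive factor $\exp\!\bigl(\int_0^t \phi(\mathbb{O}_{\fl{s}})+f(\mathbb{O}_s)\,ds\bigr)$ and absorbing it into the exponentials under the integrals (using $\int_0^t - \int_0^s = \int_s^t$) produces exactly the inequality claimed in item~(iii).

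I do not expect a serious obstacle; the argument is bookkeeping around the identity from Lemma~\ref{lemma:lyapunov2}. The one place requiring attention is the sign tracking when removing the factor $Z_{\fl{s}}\in[0,1]$: this step relies crucially on the non-negativity assumptions $\phi,\Phi,\varphi,f,g\ge 0$ and $\mathbb{V}\ge 0$, which are precisely the strengthenings of Lemma~\ref{lemma:lyapunov2}'s hypotheses that convert the equality into an inequality in the desired direction.
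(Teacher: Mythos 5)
Your proposal matches the paper's proof essentially line by line: invoke Lemma~\ref{lemma:lyapunov2} to obtain the identity for $e^{-\int_0^t\phi+f}\,\mathbb{V}(\bar Y_t)$, bound the two drift summands by the two Lyapunov hypotheses (dropping $Z_{\fl{s}}\in[0,1]$ by non-negativity), observe that $\bigl[\phi(\mathbb{O}_{\fl{s}})+f(\mathbb{O}_s)\bigr]\mathbb{V}(\bar Y_s)$ cancels against the subtracted term, and multiply through by the positive exponential. The one-line remark about why $Z_{\fl{s}}$ may be dropped is exactly the point the paper makes, so the argument is correct and not a genuinely different route.
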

\begin{proof}[Proof of Lemma~\ref{lemma:lyapunov3}]
Throughout this proof let $\bar{Y} \colon [0,T] \to V $ be the function which satisfies for all $ t \in [0,T] $  that $\bar{Y}_t = Y_t - \mathbb{O}_t$
and let $  \mathbb{A} \in L(V)$  be the linear operator given by $\mathbb{A}= A-\eta$.
Note that Lemma~\ref{lemma:lyapunov2} (with $ V=V$, $T=T$, $\eta=\eta$, $ h=h$, $ A =A$, $\mathbb{V} = (V \ni v \mapsto \mathbb{V}(v) \in \R)$, $F=F$, $ Z=([0,T] \ni t \mapsto Z_t \in \R)$,  $Y=Y$, $O=O$, $\mathbb{O}= \mathbb{O}$, $\phi=(V \ni v \mapsto \phi(v) \in \R)$, $f=(V \ni v \mapsto f(v) \in \R)$ in the notation of Lemma~\ref{lemma:lyapunov2}) ensures that for all $ t \in [0,T] $ it holds that $\bar{Y} \in \mathcal{C}([0,T], V)$, $\sup_{s\in [0,T]} \big\|F\big(  Y_s - \mathbb{O}_s +
\mathbb{O}_{ \fl{s} } \big)\big\|_V < \infty$, and
\begin{align}
\label{eq:lyapunov:fund}
\begin{split}
& e^{- \int_0^t  \phi( \mathbb{O}_{\fl{s} } ) + f(\mathbb{O}_s) \, ds} \, \mathbb{V}( \bar{Y}_t ) \\
& = \mathbb{V}(\bar{Y}_0 ) +  \int_0^t e^{- \int_0^s \phi( \mathbb{O}_{\fl{u} } )  +  f(\mathbb{O}_u) \, du} \, \mathbb{V}' (\bar{Y}_s) \!\left[ \mathbb{A} \bar{Y}_s + Z_{ \fl{s} } F\big( \bar{Y}_s +
\mathbb{O}_{ \fl{s} } \big) + \eta Y_s \right]  ds \\
& \quad + \int_0^t  e^{- \int_0^s \phi( \mathbb{O}_{\fl{u} } )  +  f(\mathbb{O}_u) \, du} \, Z_{ \fl{s} } \mathbb{V}' (\bar{Y}_s) \! \left[
F\big( Y_{ \fl{s} }\big) - F\big( \bar{Y}_s +
\mathbb{O}_{ \fl{s} } \big)\right]  ds \\
& \quad - \int_0^t \big[ \phi\big( \mathbb{O}_{\fl{s} } \big) +  f(\mathbb{O}_s)\big]  e^{- \int_0^s \phi( \mathbb{O}_{\fl{u} } )  +  f(\mathbb{O}_u)\, du} \, \mathbb{V}(\bar{Y}_s)\, ds .
\end{split}
\end{align}
Furthermore, the assumption that $\forall \, v, w \in V \colon \mathbb{V}'(v)F(v+w) \leq \phi(w) \mathbb{V}(v) + \varphi(v)+ \Phi(w)$  implies for all $s \in [0, T]$ that
\begin{align}
\begin{split}
& \mathbb{V}' (\bar{Y}_s) Z_{ \fl{s} } F\big( \bar{Y}_s +
\mathbb{O}_{ \fl{s} } \big) =  Z_{ \fl{s} } \mathbb{V}' (\bar{Y}_s) F\big( \bar{Y}_s +
\mathbb{O}_{ \fl{s} } \big) \\
&\leq Z_{ \fl{s} } \! \left[ \phi\big( 
\mathbb{O}_{ \fl{s} } \big) \mathbb{V}( \bar{Y}_s )  + \varphi( \bar{Y}_s)+ \Phi\big(\mathbb{O}_{ \fl{s} } \big)\right] \leq \phi\big( 
\mathbb{O}_{ \fl{s} } \big) \mathbb{V}( \bar{Y}_s )  + \varphi( \bar{Y}_s)+ \Phi\big(\mathbb{O}_{ \fl{s} } \big).
\end{split}
\end{align}
This together with \eqref{eq:lyapunov:fund} proves for all $t \in [0, T]$ that
\begin{align}
\begin{split}
& e^{- \int_0^t  \phi( \mathbb{O}_{\fl{s} } ) +  f(\mathbb{O}_s) \, ds} \,\mathbb{V}( \bar{Y}_t ) \leq \mathbb{V}(\bar{Y}_0 ) +  \int_0^t e^{- \int_0^s \phi( \mathbb{O}_{\fl{u} } )  +  f(\mathbb{O}_u) \, du} \, \mathbb{V}' (\bar{Y}_s)  \mathbb{A} \bar{Y}_s \, ds  \\
& \quad +  \int_0^t e^{- \int_0^s \phi( \mathbb{O}_{\fl{u} } )  + f(\mathbb{O}_u) \, du} \left[   \phi\big( 
\mathbb{O}_{ \fl{s} } \big) \mathbb{V}( \bar{Y}_s )  + \varphi( \bar{Y}_s)+ \Phi\big(\mathbb{O}_{ \fl{s} } \big) + \eta \mathbb{V}' (\bar{Y}_s) \big( \bar{Y}_s + \mathbb{O}_s\big) \right]  ds \\
&\quad + \int_0^t  e^{- \int_0^s \phi( \mathbb{O}_{\fl{u} } )  +  f(\mathbb{O}_u) \, du} \, Z_{ \fl{s} } \mathbb{V}' (\bar{Y}_s)\! \left[
F\big( Y_{ \fl{s} }\big) - F\big( \bar{Y}_s +
\mathbb{O}_{ \fl{s} } \big)\right]  ds \\
& \quad- \int_0^t \big[ \phi\big( \mathbb{O}_{\fl{s} } \big) +  f(\mathbb{O}_s) \big]   e^{- \int_0^s \phi( \mathbb{O}_{\fl{u} } )  +  f(\mathbb{O}_u)\, du} \, \mathbb{V}(\bar{Y}_s)\, ds.
\end{split}
\end{align}
The assumption that $ \forall \, v, w \in V \colon \eta \mathbb{V}'(v) (v+w) \leq f(w) \mathbb{V}(v) + g(w)$ hence establishes  for all $t \in [0, T]$ that
\begin{align}
\begin{split}
&e^{- \int_0^t  \phi( \mathbb{O}_{\fl{s} } ) +  f(\mathbb{O}_s) \, ds} \, \mathbb{V}( \bar{Y}_t ) \leq \mathbb{V}(\bar{Y}_0 ) +  \int_0^t e^{- \int_0^s \phi( \mathbb{O}_{\fl{u} } )  +  f(\mathbb{O}_u) \, du} \, \mathbb{V}' (\bar{Y}_s)  \mathbb{A} \bar{Y}_s \, ds \\
& \quad +  \int_0^t e^{- \int_0^s \phi( \mathbb{O}_{\fl{u} } )  +  f(\mathbb{O}_u) \, du} \left[   \phi\big( 
\mathbb{O}_{ \fl{s} } \big) \mathbb{V}( \bar{Y}_s )  + \varphi( \bar{Y}_s )+ \Phi\big(\mathbb{O}_{ \fl{s} } \big) + f(\mathbb{O}_s) \mathbb{V} (\bar{Y}_s) +   g (\mathbb{O}_s) \right]  ds \\
& \quad + \int_0^t  e^{- \int_0^s \phi( \mathbb{O}_{\fl{u} } )  +  f(\mathbb{O}_u) \, du} \, Z_{ \fl{s} } \mathbb{V}' (\bar{Y}_s) \!\left[F\big( Y_{ \fl{s} }\big) - F\big( \bar{Y}_s +
\mathbb{O}_{ \fl{s} } \big)\right]  ds \\
& \quad - \int_0^t \big[ \phi\big( \mathbb{O}_{\fl{s} } \big) +  f(\mathbb{O}_s) \big]   e^{- \int_0^s \phi( \mathbb{O}_{\fl{u} } )  + f(\mathbb{O}_u)\, du} \, \mathbb{V}(\bar{Y}_s)\, ds\\
& = \mathbb{V}(\bar{Y}_0 ) +   \int_0^t e^{- \int_0^s \phi( \mathbb{O}_{\fl{u} } )  +  f(\mathbb{O}_u) \, du} \left[ \mathbb{V}' (\bar{Y}_s)  \mathbb{A} \bar{Y}_s + \varphi( \bar{Y}_s )+ \Phi\big(\mathbb{O}_{ \fl{s} } \big)  +  g(\mathbb{O}_s) \right]  ds \\
& \quad + \int_0^t  e^{- \int_0^s \phi( \mathbb{O}_{\fl{u} } )  + f(\mathbb{O}_u) \, du} \, Z_{ \fl{s} } \mathbb{V}' (\bar{Y}_s) \!\left[F\big( Y_{ \fl{s} }\big) - F\big( \bar{Y}_s +\mathbb{O}_{ \fl{s} } \big)\right]  ds.
\end{split}
\end{align}
This assures for all $t \in [0, T]$ that
\begin{align}
\begin{split}
\mathbb{V}( \bar{Y}_t ) & \leq e^{ \int_0^t  \phi( \mathbb{O}_{\fl{s} } ) + f(\mathbb{O}_s) \, ds} \, \mathbb{V}(\bar{Y}_0 )  \\
& \quad +   \int_0^t e^{ \int_s^t \phi( \mathbb{O}_{\fl{u} } )  +  f(\mathbb{O}_u) \, du} \left[ \mathbb{V}' (\bar{Y}_s)  \mathbb{A} \bar{Y}_s + \varphi( \bar{Y}_s )+ \Phi\big(\mathbb{O}_{ \fl{s} } \big)  + g(\mathbb{O}_s) \right]  ds\\
& \quad + \int_0^t  e^{ \int_s^t \phi( \mathbb{O}_{\fl{u} } )  +  f(\mathbb{O}_u) \, du} \, Z_{ \fl{s} } \mathbb{V}' (\bar{Y}_s) \!\left[
F\big( Y_{ \fl{s} }\big) - F\big( \bar{Y}_s +
\mathbb{O}_{ \fl{s} } \big)\right]  ds.
\end{split}
\end{align}
The proof of Lemma~\ref{lemma:lyapunov3} is thus completed.
\end{proof}


\subsection{A priori bounds based on a coercivity-type assumption}

\begin{prop}[A priori bounds]
\label{prop:priori_bound}
Consider the notation in Subsection~\ref{sec:notation},
let $ ( H, \left< \cdot , \cdot \right>_H, \left\| \cdot \right\|_H ) $ be a separable $\mathbb{R}$-Hilbert space,
let $ \mathbb{H} \subseteq H$ be a nonempty orthonormal basis of $ H $,
let $\beta, T \in (0, \infty)$, $\eta, \theta , \vartheta, \kappa, \chi, \varphi \in [0, \infty)$, $\alpha \in \R$, $\rho \in (-\infty,1-\alpha)$, $ \varrho \in [\rho, \rho +1]$, $\psi \in (-\infty, 2-2\varphi)$, $ h \in (0, T] $, $F \in \mathcal{C}(H, H)$, $ A \in L(H) $,
let $\lambda \colon \mathbb{H} \to \R $,
$ Y, O, \mathbb{O} \colon [0, T] \to H $,
and
$\phi, \Phi \colon H \to [0,\infty) $
be functions which satisfy
$\eta O \in \mathcal{C}([0, T], H)$,
$\sup_{b \in \mathbb{H}} \lambda_b < \min\{\eta, \kappa\}$,
and
$ \forall \, b \in \mathbb{H} \colon A b = \lambda_b \,  b $,
let $ ( H_r, \left< \cdot , \cdot \right>_{ H_r }, \left\| \cdot \right\|_{ H_r } ) $, $ r \in \R $, be a family of interpolation spaces associated to $ \kappa- A  $ (cf., e.g., \cite[Section~3.7]{sy02}),
and assume for all $v, w \in H $, $t \in [0,T]$  that
$\left< v, F( v + w ) \right>_H \leq \frac{1}{2} \phi(w) \| v \|^2_H+ \varphi \|(\eta-A)^{\nicefrac{1}{2}} v \|^2_{ H }+ \frac{1}{2}\Phi( w )$,
$\| F(v)\|_{H_{-\alpha}}^2 \leq \theta \max\{ 1, \| v\|_{ H_{\varrho} }^{2 + \vartheta} \} $,
$ \|(\eta-A)^{\nicefrac{-1}{2}} [F(v) - F(w) ]\|_{H}^2\leq \theta \max\{ 1, \|v\|_{ H_{\varrho} }^{\vartheta} \} \|v-w\|_{ H_{\rho} }^2 + \theta \, \|v-w\|^{2 + \vartheta}_{ H_{\rho}}$,
$ \mathbb{O}_t = O_t - \int_0^t e^{(t-s)(A-\eta)} \, \eta  O_s \, ds$,
and
\begin{equation}
\label{eq:scheme_continuous}
Y_t = e^{ t A} ( Y_0 - O_0 )  + \int_0^t e^{ ( t - s ) A } \, \one_{[0, h^{ - \chi }]} \big( \big\|  Y_{ \fl{s} } \big\|_{H_{\varrho}} + \big\|  O_{ \fl{s} } \big\|_{H_{\varrho}}  \big)  F \big(  Y_{ \fl{s} } \big) \, ds + O_t.
\end{equation}
Then
\begin{enumerate}[(i)]
\item it holds that the functions $ [0,T] \ni t \mapsto Y_{t} -\mathbb{O}_t \in H $ and  $ [0,T] \ni t \mapsto \eta \mathbb{O}_{t} \in H $ are continuous and
\item  it holds for all $ t \in [0, T]$ that
\begin{align}
&\| Y_t - \mathbb{O}_t \|_H^2  +  \psi \int_0^t e^{ \int_s^t \phi( \mathbb{O}_{\fl{u} } )  +2\eta (1+\beta) \, du} \, \|(\eta - A)^{\nicefrac{1}{2}}( Y_s - \mathbb{O}_s) \|_{H}^2 \, ds  \nonumber \\
& \leq   e^{ \int_0^t  \phi( \mathbb{O}_{\fl{s} } ) + 2\eta (1+\beta) \, ds} \, \|Y_0 - O_0\|_H^2  +  \int_0^t e^{ \int_s^t \phi( \mathbb{O}_{\fl{u} } )  +2\eta (1+\beta) \, du} \, \Big[  \Phi\big(\mathbb{O}_{ \fl{s} } \big)  + \tfrac{\eta}{2 \beta} \|\mathbb{O}_s\|_H^2 \nonumber   \\
& +  \tfrac{ \theta  e^{h \kappa(2+\vartheta)} [ 1+ (\kappa + \sqrt{\eta} +\sqrt{\eta}|\kappa-\eta|e^{ h\eta} ) \|(\kappa-A)^{ \rho - \varrho } \|_{L(H)} + \sqrt{\theta} \|(\kappa-A)^{ \min\{\alpha+\rho,0\}} \|_{L(H)}+  \sqrt{\eta}  ]^{2+\vartheta}\left|\max \{1 , \smallint\nolimits_{0}^T   \|  \sqrt{\eta} O_u \|_{H_{\varrho}} \, du \}\right|^{2+\vartheta} }{(1- \varphi - \nicefrac{\psi}{2})( 1-\max\{\alpha+\rho,0\})^{2+\vartheta}} \nonumber \\
&  \cdot \max \!\big\{h^2,  h^{2(\varrho - \rho -\chi)},h^{ 2(1-\max\{\alpha+\rho,0\} -( 1 + \nicefrac{\vartheta}{2} ) \chi)} , h \smallint\nolimits_{0}^T   \|  \sqrt{ \eta} O_u \|_{H_{\varrho}}^2 \, du \big\} \\
& \cdot \left|\max \!\big\{ h^{-\chi}, h, h^{ 1-\max\{\alpha+\rho,0\} -( 1 + \nicefrac{\vartheta}{2} ) \chi} , \smallint\nolimits_{0}^T   \|  \sqrt{ \eta} O_u \|_{H_{\varrho}} \, du \big\}\right|^{\vartheta}\Big] \, ds. \nonumber
\end{align}
\end{enumerate}
\end{prop}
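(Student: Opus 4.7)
The plan is to invoke Lemma~\ref{lemma:lyapunov3} with the quadratic Lyapunov functional $\mathbb{V}(v) = \|v\|_H^2$, so that $\mathbb{V}'(v)w = 2\langle v, w\rangle_H$. With this choice, the two structural hypotheses of Lemma~\ref{lemma:lyapunov3} reduce, respectively, to the coercivity estimate assumed here (yielding the functions $\phi$ and $\Phi$ of the proposition together with the auxiliary $\varphi(v) = 2\varphi\,\|(\eta-A)^{\nicefrac{1}{2}}v\|_H^2$) and, via Young's inequality applied to $2\eta\langle v, v+w\rangle_H$, to $f\equiv 2\eta(1+\beta)$ and $g(w) = \nicefrac{\eta}{(2\beta)}\,\|w\|_H^2$. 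The scheme~\eqref{eq:scheme_continuous} has the form required by the lemma with the choice $Z_t = \one_{[0, h^{-\chi}]}(\|Y_{\fl{t}}\|_{H_\varrho} + \|O_{\fl{t}}\|_{H_\varrho}) \in [0,1]$, so the conclusion of Lemma~\ref{lemma:lyapunov3} supplies an integro-differential inequality of the same shape as the one to be proved; the remaining work consists of identifying the linear term, absorbing the nonlinear correction, and reading off the stated constants.

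The spectral hypothesis $\sup_{b\in\mathbb{H}}\lambda_b < \eta$ makes $\eta - A$ positive self-adjoint, so with $\bar Y_s := Y_s - \mathbb{O}_s$ one has $\mathbb{V}'(\bar Y_s)(A-\eta)\bar Y_s = -2\,\|(\eta-A)^{\nicefrac{1}{2}}\bar Y_s\|_H^2$. Combined with the $\varphi$-contribution this supplies a dissipative term $-2(1-\varphi)\,\|(\eta-A)^{\nicefrac{1}{2}}\bar Y_s\|_H^2$ inside the temporal integral. The strategy is to reserve a fraction $\psi$ of this dissipation for the left-hand side (the origin of the $\psi$-integral on the left of the stated bound) and to exploit the remaining coefficient $2(1-\varphi-\nicefrac{\psi}{2})$ to absorb the nonlinear correction $2\langle \bar Y_s,\, F(Y_{\fl{s}}) - F(\bar Y_s+\mathbb{O}_{\fl{s}})\rangle_H$ via the duality pairing between $H_{\nicefrac{1}{2}}$ and $H_{-\nicefrac{1}{2}}$ (with respect to $\eta-A$) and Young's inequality.

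The main obstacle is controlling $\|\bar Y_s - \bar Y_{\fl{s}}\|_{H_\rho}$, which is the difference of the two arguments of $F$ inside the correction term. The plan is to decompose this increment using the mild formulation~\eqref{eq:scheme_continuous} and the defining identity for $\mathbb{O}$ into three contributions: (a) a semigroup jump acting on $\bar Y_{\fl{s}}$, whose $H_\varrho$-norm is bounded by $h^{-\chi}$ on the support of the indicator, yielding the scale $h^{\varrho-\rho-\chi}$; (b) a short drift integral over $[\fl{s}, s]$ whose integrand has $H_{-\alpha}$-norm at most $\sqrt{\theta}\,h^{-(1+\nicefrac{\vartheta}{2})\chi}$ by the polynomial growth hypothesis combined with the indicator, yielding the scale $h^{1-\max\{\alpha+\rho,0\}-(1+\nicefrac{\vartheta}{2})\chi}$ after integrating the $(s-u)^{-\max\{\alpha+\rho,0\}}$ smoothing factor; and (c) an Ornstein-Uhlenbeck increment, handled by writing $\eta O = \sqrt{\eta}\cdot(\sqrt{\eta}\,O)$ and invoking the $L^1$- and $L^2$-in-time bounds $\int_0^T\|\sqrt{\eta}\,O_u\|_{H_\varrho}\,du$ and $\int_0^T\|\sqrt{\eta}\,O_u\|_{H_\varrho}^2\,du$. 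Inserting these three scales into the local Lipschitz estimate on $F$ (which contributes both a $\|\cdot\|_{H_\rho}$-linear and a $\|\cdot\|_{H_\rho}^{1+\nicefrac{\vartheta}{2}}$-growing term, the first of which carries the factor $\|Y_{\fl{s}}\|_{H_\varrho}^{\nicefrac{\vartheta}{2}} \leq h^{-\chi\vartheta/2}$), taking the maximum over the four competing powers of $h$, and finally applying Young's inequality to absorb the remaining factor $\|(\eta-A)^{\nicefrac{1}{2}}\bar Y_s\|_H$ into the reserved dissipation coefficient $2(1-\varphi-\nicefrac{\psi}{2})$, reproduces exactly the displayed prefactor; the exponential $e^{h\kappa(2+\vartheta)}$ and the operator norms $\|(\kappa-A)^{\,\cdot\,}\|_{L(H)}$ encode the conversion between the $\kappa$-based interpolation norms (used in the definition of $H_\varrho$, $H_\rho$, $H_{-\alpha}$) and the $\eta$-based norms (arising from the Lyapunov calculation).
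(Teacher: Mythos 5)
Your proposal reproduces the paper's argument essentially step by step: Lemma~\ref{lemma:lyapunov3} applied with $\mathbb{V}(v)=\|v\|_H^2$, the Young-inequality choices $f\equiv 2\eta(1+\beta)$ and $g(w)=\nicefrac{\eta}{(2\beta)}\,\|w\|_H^2$, the split of the dissipation $-2(1-\varphi)\,\|(\eta-A)^{\nicefrac{1}{2}}\bar Y_s\|_H^2$ into a $\psi$-piece retained on the left and a $(2-2\varphi-\psi)$-piece used together with the $H_{\nicefrac{1}{2}}$/$H_{-\nicefrac{1}{2}}$ duality and Young's inequality to absorb the correction term, the three-part increment decomposition, and the final collection of the competing powers of $h$. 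One inaccuracy in part (a) of your increment decomposition: you claim the semigroup jump acts on $\bar Y_{\fl{s}}$ and that this quantity is bounded in $H_\varrho$ by $h^{-\chi}$ on the support of the indicator, but $\bar Y_{\fl{s}} = Y_{\fl{s}} - O_{\fl{s}} + \int_0^{\fl{s}}e^{(\fl{s}-u)(A-\eta)}\,\eta O_u\,du$, and the indicator only caps $\|Y_{\fl{s}}\|_{H_\varrho} + \|O_{\fl{s}}\|_{H_\varrho}$. The paper's jump term acts on $Y_{\fl{s}} - O_{\fl{s}}$ (which the indicator does bound by $h^{-\chi}$), and the extra convolution part is what gets folded into your contribution (c). This would be caught and repaired automatically once the decomposition is written out in full, since only $Y-O$ obeys the clean one-step recurrence coming from \eqref{eq:scheme_continuous}; the resulting scale $h^{\varrho-\rho-\chi}$ and all the other quantitative estimates you identify -- the $h^{-(1+\nicefrac{\vartheta}{2})\chi}$ cap on $\|F(Y_{\fl{s}})\|_{H_{-\alpha}}$, the $(s-u)^{-\max\{\alpha+\rho,0\}}$ smoothing integral, the $L^1$- and $L^2$-in-time control of $\sqrt\eta\,O$, and the final Young absorption into $2(1-\varphi-\nicefrac{\psi}{2})$ -- match the paper's proof.
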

\begin{proof}[Proof of Proposition~\ref{prop:priori_bound}]
Throughout this proof let $ Z \colon [0, T] \to [0, 1] $ be the function which satisfies for all $ s \in [0, T]$  that $ Z_s = \one_{[0, h^{ - \chi }]} \! \left( \| Y_{s } \|_{H_{\varrho}} + \| O_{s } \|_{H_{\varrho}}  \right)$, let  $ \mathbb{A} \in L(H) $ be the linear operator given by $\mathbb{A}= A-\eta$, and let $\bar{Y} \colon [0,T] \to H $ be the function which satisfies for all $ t \in [0,T] $  that $\bar{Y}_t = Y_t - \mathbb{O}_t$. Observe that the Cauchy-Schwartz inequality and the fact that $\forall \, a , b \in \R , \, \varepsilon \in (0,\infty) \colon a b \leq \varepsilon  a^2 + \frac{ b^2 }{ 4 \varepsilon }$ prove for all $v, w \in H$ that
\begin{align}
\label{eq:eta:prod}
\begin{split}
2\eta  \langle v, v+w \rangle_H & = 2\eta \|v\|_H^2 +2\eta \langle v, w \rangle_H \leq 2 \eta \|v\|_H^2 +2 \eta \|v\|_H \|w\|_H \\
&\leq 2 \eta \|v\|_H^2 +2 \eta \beta \|v\|_H^2 + \tfrac{\eta}{2\beta} \|w\|_H^2 = 2\eta(1+\beta)\|v\|_H^2 + \tfrac{\eta}{2\beta} \|w\|_H^2 .
\end{split}
\end{align}
In addition, the assumption that  $\eta O \in \mathcal{C}([0, T], H)$ and Lemma~\ref{lemma:fund:gen} (with $V=H$, $A=\mathbb{A}$, $T=T$, $Y= ([0,T] \ni t \mapsto \int_0^t e^{(t-s)\mathbb{A}} \, \eta O_s \, ds \in H)$, $Z= \eta O$ in the notation of Lemma~\ref{lemma:fund:gen}) ensure that $\eta \mathbb{O} \in \mathcal{C}([0,T],H)$. This, \eqref{eq:eta:prod}, and Lemma~\ref{lemma:lyapunov3} (with 
$ V=H$, $T=T$, $\eta=\eta$, $ h=h $, $ A=A$, $\mathbb{V}= (H \ni v \mapsto \|v\|_H^2 \in [0, \infty)) \in \mathcal{C}^1(H, [0, \infty))$, $F =F$, $\varphi=(H \ni v \mapsto 2\varphi \|(\eta-A)^{\nicefrac{1}{2}} v \|^2_{ H } \in [0, \infty)) $, $ Z=Z$,  $Y=Y$, $O=O$, $\mathbb{O}=\mathbb{O}$, $\phi=\phi$, $\Phi=\Phi$, $f= (H \ni v \mapsto 2\eta(1+\beta) \in [0, \infty))$, $g= (H \ni v \mapsto\nicefrac{\eta}{(2 \beta)} \|v\|_H^2 \in [0, \infty))$ in the notation of Lemma~\ref{lemma:lyapunov3}) establish that for all  $ t \in [0, T]$ it holds that $\bar{Y}  \in \mathcal{C}([0,T],H)$ and
\begin{align}
\label{eq:prop:lemma}
&\| \bar{Y}_t \|_H^2  \leq e^{ \int_0^t  \phi( \mathbb{O}_{\fl{s} } ) + 2\eta (1+\beta) \, ds} \, \|\bar{Y}_0 \|_H^2  \nonumber \\
&+   \int_0^t e^{ \int_s^t \phi( \mathbb{O}_{\fl{u} } )  + 2\eta (1+\beta) \, du} \left[ 2 \langle \bar{Y}_s , \mathbb{A} \bar{Y}_s \rangle_H + 2 \varphi \|(\eta-A)^{\nicefrac{1}{2}} \bar{Y}_s \|_{H}^2+ \Phi\big(\mathbb{O}_{ \fl{s} } \big)  + \tfrac{\eta}{2 \beta} \|\mathbb{O}_s\|_H^2 \right]  ds \nonumber\\
& + 2 \int_0^t  e^{ \int_s^t \phi( \mathbb{O}_{\fl{u} } )  + 2\eta (1+\beta) \, du} \, Z_{ \fl{s} } \big<\bar{Y}_s, 
F\big( Y_{ \fl{s} }\big) - F\big( \bar{Y}_s +
\mathbb{O}_{ \fl{s} } \big) \big>_H \,  ds \\
& =  e^{ \int_0^t  \phi( \mathbb{O}_{\fl{s} } ) + 2\eta (1+\beta) \, ds} \, \|\bar{Y}_0\|_H^2 +  \int_0^t e^{ \int_s^t \phi( \mathbb{O}_{\fl{u} } )  +2\eta (1+\beta) \, du} \left[  - \psi \| (-\mathbb{A})^{\nicefrac{1}{2}} \bar{Y}_s \|_{H}^2+ \Phi\big(\mathbb{O}_{ \fl{s} } \big)  + \tfrac{\eta}{2 \beta} \|\mathbb{O}_s\|_H^2 \right]  ds \nonumber  \\
&  +  \int_0^t \! e^{ \int_s^t \phi( \mathbb{O}_{\fl{u} } )  + 2\eta (1+\beta) \, du} \bigl[ -(2- 2\varphi -\psi) \| (-\mathbb{A})^{\nicefrac{1}{2}} \bar{Y}_s\|_{H}^2 +  2 Z_{ \fl{s} } \big\langle \bar{Y}_s,
F\big( Y_{ \fl{s} }\big) - F\big( \bar{Y}_s +
\mathbb{O}_{ \fl{s} } \big)  \big\rangle_H  \bigr]\, ds \nonumber.
\end{align}
Next note that the fact that $2- 2\varphi -\psi > 0$ and the Cauchy-Schwartz inequality  show  for all  $ s \in [0,T] $ that
\begin{align}
& -(2- 2\varphi -\psi) \| (-\mathbb{A})^{\nicefrac{1}{2}} \bar{Y}_s\|_{H}^2 +  2 Z_{ \fl{s} }  \big\langle \bar{Y}_s,
 F\big( Y_{ \fl{s} }\big) - F\big( \bar{Y}_s +
 \mathbb{O}_{ \fl{s} } \big)  \big\rangle_H \nonumber \\
 & \leq    Z_{ \fl{s} } \!\left[-(2- 2\varphi -\psi) \|(-\mathbb{A})^{\nicefrac{1}{2}} \bar{Y}_s\|_{H}^2 + 2  \big< (-\mathbb{A})^{\nicefrac{1}{2}} \bar{Y}_s,  ( - \mathbb{A} )^{\nicefrac{-1}{2}}\big[ F\big(Y_{ \fl{s} }\big)- F\big( \bar{Y}_s +
 \mathbb{O}_{\fl{s} } \big) \big] \big>_H \right] \\
 & \leq Z_{ \fl{s} }\!\left[ -(2- 2\varphi -\psi) \| (-\mathbb{A})^{\nicefrac{1}{2}} \bar{Y}_s\|_{H}^2 + 2  \| (-\mathbb{A})^{\nicefrac{1}{2}} \bar{Y}_s\|_{H}  \big\|(-\mathbb{A})^{\nicefrac{-1}{2}} \big[F\big(Y_{ \fl{s} }\big)- F\big( \bar{Y}_s +
 \mathbb{O}_{\fl{s} } \big) \big]\big\|_{H} \right]. \nonumber
\end{align}
The fact that $\forall \, a , b \in \R , \, \varepsilon \in (0,\infty) \colon 2 a b \leq \varepsilon  a^2 + \frac{ b^2 }{  \varepsilon }$ hence proves for all $ s \in [0, T]$ that
\begin{equation}
\begin{split}
& -(2- 2\varphi -\psi) \| (-\mathbb{A})^{\nicefrac{1}{2}} \bar{Y}_s\|_{H}^2 +  2 Z_{ \fl{s} } \big\langle \bar{Y}_s,
F\big( Y_{ \fl{s} }\big) - F\big( \bar{Y}_s +
\mathbb{O}_{ \fl{s} } \big)  \big\rangle_H \\
&\leq \tfrac{1}{(2- 2\varphi -\psi)} \,  Z_{ \fl{s} } \big\|(-\mathbb{A})^{\nicefrac{-1}{2}} \big[ F\big(Y_{ \fl{s} }\big)- F\big( \bar{Y}_s +
\mathbb{O}_{\fl{s} } \big)  \big]\big\|_{H}^2.
\end{split}
\end{equation}
This together with \eqref{eq:prop:lemma} ensures for all $t \in [0, T]$ that
\begin{align}
\label{eq:estimate0}
\begin{split}
&\| \bar{Y}_t \|_H^2  +  \psi \int_0^t e^{ \int_s^t \phi( \mathbb{O}_{\fl{u} } )  +2\eta (1+\beta) \, du} \,  \| (-\mathbb{A})^{\nicefrac{1}{2}} \bar{Y}_s \|_{H}^2  \, ds \\
& \leq  e^{ \int_0^t  \phi( \mathbb{O}_{\fl{s} } ) + 2\eta (1+\beta) \, ds} \, \|\bar{Y}_0\|_H^2+  \int_0^t e^{ \int_s^t \phi( \mathbb{O}_{\fl{u} } )  +2\eta (1+\beta) \, du} \,\big[  \Phi\big(\mathbb{O}_{ \fl{s} } \big)  + \tfrac{\eta}{2 \beta} \|\mathbb{O}_s\|_H^2 \big] \, ds  \\
& \quad +  \tfrac{1}{(2- 2\varphi -\psi)}  \int_0^t  e^{ \int_s^t \phi( \mathbb{O}_{\fl{u} } )  + 2\eta (1+\beta) \, du} \, Z_{ \fl{s} } \big\|(-\mathbb{A})^{\nicefrac{-1}{2}} \big[ F\big(Y_{ \fl{s} }\big)- F\big( \bar{Y}_s +
\mathbb{O}_{\fl{s} } \big)  \big]\big\|_{H}^2 \, ds.
\end{split}
\end{align}
Furthermore, the assumption that  $ \forall \, v, w \in H \colon \| (-\mathbb{A})^{\nicefrac{-1}{2}} [F(v) - F(w)] \|_{H}^2\leq \theta \max\{ 1, \|v\|_{ H_{\varrho} }^{\vartheta} \}\| v-w\|_{ H_{\rho} }^2 + \theta \, \| v-w\|^{2 + \vartheta}_{ H_{\rho}}$  shows for all $ s \in [0, T]$  that
\begin{equation}
\label{eq:estimate1}
\begin{split}
& Z_{ \fl{s} } \big\|(-\mathbb{A})^{\nicefrac{-1}{2}} \big[ F\big(Y_{ \fl{s} }\big)- F\big( \bar{Y}_s +
\mathbb{O}_{\fl{s} } \big)  \big]\big\|_{H}^2 \\
& \leq Z_{ \fl{s} } \theta  \left[  \max\Big\{ 1, \big\| Y_{ \fl{s} }\big\|_{ H_{\varrho} }^{\vartheta} \Big\} \, \big\|  \bar{Y}_{ \fl{s} } - \bar{Y}_s \big\|_{ H_{\rho}}^2 +   \big\| \bar{Y}_{ \fl{s} } - \bar{Y}_s \big\|_{ H_{\rho} }^{2 + \vartheta} \right] \\
& \leq Z_{ \fl{s} } \theta   \,\big\|  \bar{Y}_{ \fl{s} } - \bar{Y}_s \big\|_{ H_{\rho}}^2 \left[ \max \{1, h^{-\vartheta \chi}  \} +  \big\|  \bar{Y}_{ \fl{s} } - \bar{Y}_s \big\|_{ H_{\rho}}^{\vartheta} \right].
\end{split}
\end{equation}
 Moreover, observe that for all $ s \in [0, T]$ it holds that 
\begin{equation}
\label{eq:priori:Z}
\begin{split}
&Z_{ \fl{s} }  \| Y_{ \fl{s} }- O_{ \fl{s} } - Y_s + O_s \|_{ H_{\rho} } \\
&= Z_{ \fl{s} } \Big\| \left( e^{ ( s - \fl{s} ) A }- \operatorname{Id}_H \right) \big(Y_{ \fl{s} }- O_{ \fl{s} }\big) + \smallint_{\fl{s}}^s e^{  ( s - u ) A } \, F\big( Y_{ \fl{s} } \big) \, du \Big\|_{ H_{\rho}} \\ 
&\leq  Z_{ \fl{s} } \Big[ \big\|\left( e^{ ( s - \fl{s} ) A }- \operatorname{Id}_H \right)  \big(Y_{ \fl{s} }- O_{ \fl{s} }\big) \big\|_{ H_{\rho}} + \smallint_{\fl{s}}^s \big\| e^{  ( s - u ) A } \, F\big( Y_{ \fl{s} } \big) \big\|_{ H_{\rho}} \, du \Big] \\ 
&\leq  Z_{ \fl{s} } \big\| (\kappa-A)^{\rho -\varrho} \left( e^{ ( s - \fl{s} ) A }- \Id_H \right) \! \big\|_{L(H)} \big\| Y_{ \fl{s} }- O_{ \fl{s} }\big\|_{ H_{\varrho}} \\
& \quad + Z_{ \fl{s} } \smallint_{\fl{s}}^s \big\|(\kappa-A)^{\rho+\alpha} e^{  ( s - u ) A } \big\|_{L(H)} \big\| F\big( Y_{ \fl{s} } \big) \big\|_{ H_{-\alpha}} \, du.
\end{split}
\end{equation}
Note that
the fact that
$ \forall \, q \in [ 0, 1 ], \, t \in ( 0, \infty ) \colon
\bigl( 
\| (\kappa-A)^{-q} ( e^{ t (A-\kappa) } - \operatorname{Id}_H ) \|_{L(H)} \leq t^q $
and
$ \| (\kappa-A)^{q} \, e^{ t (A-\kappa) } \|_{L(H)} \leq t^{-q}
\bigr) $
(cf., e.g., Lemma~11.36 in Renardy \& Rogers~\cite{RenardyRogers1993})
and the fact that $\forall \, x \in \R \colon |e^x -1| \leq |x| e^{|x|}$  imply that for all $ s \in [0, T] \backslash \{0, h, 2h, 3h, \ldots\}$, $u \in [\lf s \rf_h, s)$, $r \in \R$ it holds that 
\begin{align}
\label{eq:priori:smooth}
\begin{split}
&\big\| (\kappa-A)^{\rho -\varrho} \left( e^{ ( s - \fl{s} ) (A-r) }- \operatorname{Id}_H \right) \! \big\|_{L(H)} \\
&\leq \big\| (\kappa-A)^{\rho -\varrho} \left( e^{ ( s - \fl{s} ) (A-\kappa) }- \operatorname{Id}_H \right) \! \big\|_{L(H)} + \big\| (\kappa-A)^{\rho -\varrho} \left( e^{ ( s - \fl{s} ) (A-r) }- e^{ ( s - \fl{s} ) (A-\kappa) } \right) \! \big\|_{L(H)} \\
& \leq \left| s - \fl{s} \right|^{ \varrho - \rho } + \big\| (\kappa-A)^{\rho -\varrho} \, e^{ ( s - \fl{s} ) (A-\kappa) }  \big\|_{L(H)} |e^{ ( s - \fl{s} )(\kappa -r)}-1| \\
& \leq h^{ \varrho - \rho } + ( s - \fl{s} )|\kappa -r| e^{ ( s - \fl{s} )|\kappa -r|} \|(\kappa-A)^{ \rho - \varrho } \|_{L(H)} \\
& \leq h^{ \varrho - \rho } +  h |\kappa -r| e^{ h|\kappa -r|} \|(\kappa-A)^{ \rho - \varrho } \|_{L(H)}
\end{split}
\end{align}
and
\begin{align}
\begin{split}
 \big\|(\kappa-A)^{\rho+\alpha} e^{  ( s - u ) A } \big\|_{L(H)} & \leq  \big\|(\kappa-A)^{\rho+\alpha} e^{  ( s - u ) (A-\kappa) } \big\|_{L(H)} e^{(s-u)\kappa}\\
 & \leq ( s - u )^{ - \max\{\alpha+\rho,0\}} \|(\kappa-A)^{ \min\{\alpha+\rho,0\}} \|_{L(H)} e^{h \kappa}.
\end{split}
\end{align}
This, \eqref{eq:priori:Z}, \eqref{eq:priori:smooth}, and the assumption that $\forall \, v \in H  \colon \| F(v)\|_{H_{-\alpha}}^2 \leq \theta \max\{ 1, \| v\|_{ H_{\varrho} }^{2 + \vartheta} \}$  yield that for all $ s \in [0, T] \backslash \{0, h, 2h, 3h, \ldots\}$ it holds that 
\begin{align*}
&Z_{ \fl{s} }  \| Y_{ \fl{s} }- O_{ \fl{s} } - Y_s + O_s \|_{ H_{\rho} }  \leq Z_{ \fl{s} } h^{-\chi} \left[ h^{ \varrho - \rho } + h \kappa e^{h \kappa} \|(\kappa-A)^{ \rho - \varrho } \|_{L(H)}\right] \\
& \quad + Z_{ \fl{s} } e^{h \kappa} \|(\kappa-A)^{ \min\{\alpha+\rho,0\}} \|_{L(H)} \smallint_{ \fl{s}}^s \left( s - u \right)^{ - \max\{\alpha+\rho,0\}} \big\| F\big( Y_{ \fl{s} } \big) \big\|_{ H_{-\alpha}} \, du \\ 
& \leq h^{ \varrho - \rho -\chi } + h^{1-\chi} \kappa e^{h \kappa} \|(\kappa-A)^{ \rho - \varrho } \|_{L(H)}  \\
& \quad + Z_{ \fl{s} } e^{h\kappa} \|(\kappa-A)^{ \min\{\alpha+\rho,0\}} \|_{L(H)} \cdot  \tfrac{\left| s - \fl{s} \right|^{ 1-\max\{\alpha+\rho,0\}}}{1-\max\{\alpha+\rho,0\}} \sqrt{\theta} \max\big\{ 1, \big\| Y_{ \fl{s} }\big\|_{ H_{\varrho} }^{1 + \nicefrac{\vartheta}{2}} \big\} \\
& \leq h^{ \varrho - \rho -\chi } + h^{1-\chi} \kappa e^{h \kappa} \|(\kappa-A)^{ \rho - \varrho } \|_{L(H)}  \\
&\quad +  e^{h\kappa} \|(\kappa-A)^{ \min\{\alpha+\rho,0\}} \|_{L(H)} \cdot  \tfrac{h^{ 1-\max\{\alpha+\rho,0\}}}{1-\max\{\alpha+\rho,0\}} \sqrt{\theta} \max\big\{ 1,h^{-(1+\nicefrac{\vartheta}{2})\chi} \big\} \numberthis  \label{eq:estimate2} \\
& \leq \tfrac{e^{h \kappa}}{ 1-\max\{\alpha+\rho,0\}} \Big[  h^{\varrho - \rho -\chi}+  h^{1-\chi} \kappa  \|(\kappa-A)^{ \rho - \varrho } \|_{L(H)} \\
&\quad + \sqrt{\theta} \|(\kappa-A)^{ \min\{\alpha+\rho,0\}} \|_{L(H)}\, h^{ 1-\max\{\alpha+\rho,0\} } \max\{ 1, h^{-(1+\nicefrac{\vartheta}{2})\chi} \}\Big] \\
& \leq  \frac{ e^{h \kappa}(1+ \kappa  \|(\kappa-A)^{ \rho - \varrho } \|_{L(H)} + \sqrt{\theta} \|(\kappa-A)^{ \min\{\alpha+\rho,0\}} \|_{L(H)} )  }{( 1-\max\{\alpha+\rho,0\})}\\
& \quad \cdot  \max\! \left\{  h^{\varrho - \rho -\chi}, h^{1-\chi}, h^{  1-\max\{\alpha+\rho,0\} } , h^{ 1-\max\{\alpha+\rho,0\} -(1+\nicefrac{\vartheta}{2})\chi}   \right\}  \\
& \leq  \frac{ e^{h \kappa}(1+ \kappa  \|(\kappa-A)^{ \rho - \varrho } \|_{L(H)} + \sqrt{\theta} \|(\kappa-A)^{ \min\{\alpha+\rho,0\}} \|_{L(H)} ) \max\! \left\{ h, h^{\varrho - \rho -\chi}, h^{ 1-\max\{\alpha+\rho,0\} -(1+\nicefrac{\vartheta}{2}) \chi}   \right\}      }{( 1-\max\{\alpha+\rho,0\})} .
\end{align*}
Moreover,
the fact that
$ \forall \, t \in [ 0, \infty ) \colon
\| e^{ t \mathbb{A} } \|_{L(H)} \leq 1 $
and \eqref{eq:priori:smooth} prove  for all $ s \in [0, T]\backslash \{0, h, 2h, 3h, \ldots\}$ that 
\begin{align*}
& \Big\| \smallint_0^s e^{(s-u)\mathbb{A}} \, \eta O_u \, du - \smallint_0^{\lf s \rf_h} e^{(\lf s \rf_h-u)\mathbb{A}} \, \eta O_u \, du  \Big\|_{H_{\rho}}\\
&= \Big\| \left(e^{(s-\fl{s})\mathbb{A}}- \Id_H\right) \smallint_0^{\lf s \rf_h} e^{(\lf s \rf_h-u)\mathbb{A}} \, \eta O_u \, du + \smallint_{\lf s \rf_h}^s e^{(s-u)\mathbb{A}} \, \eta O_u \, du \Big\|_{H_{\rho}} \\
& \leq \|(\kappa-A)^{\rho- \varrho} \left(e^{(s-\fl{s})\mathbb{A}}- \Id_H\right) \big\|_{L(H)} \smallint_0^{\lf s \rf_h} \big\| e^{(\lf s \rf_h-u)\mathbb{A}} \big\|_{L(H)} \, \| \eta O_u \|_{H_{\varrho}} \, du \numberthis \\
& \quad + \smallint_{\lf s \rf_h}^s  \|(\kappa-A)^{\rho-\varrho}  \|_{L(H)} \|e^{(s-u)\mathbb{A}} \|_{L(H)} \|   \eta O_u \|_{H_{\varrho}} \, du  \\
& \leq \big[ h^{ \varrho - \rho } +  h |\kappa -\eta| e^{ h|\kappa -\eta|} \|(\kappa-A)^{ \rho - \varrho } \|_{L(H)} \big] \smallint_0^{\lf s \rf_h} \| \eta O_u \|_{H_{\varrho}} \, du + \|(\kappa-A)^{\rho-\varrho}  \|_{L(H)} \smallint_{\lf s \rf_h}^s   \|   \eta O_u \|_{H_{\varrho}} \, du \\
&\leq \big[ h^{ \varrho - \rho } +  h |\kappa -\eta| e^{ h|\kappa -\eta|} \|(\kappa-A)^{ \rho - \varrho } \|_{L(H)} \big] \smallint_0^T \| \eta O_u \|_{H_{\varrho}} \, du + \|(\kappa-A)^{\rho-\varrho}  \|_{L(H)} \smallint_{\lf s \rf_h}^s   \|   \eta O_u \|_{H_{\varrho}} \, du.
\end{align*}
This and \eqref{eq:estimate2} ensure for all $s \in [0, T]$ that
\begin{align*}
& Z_{ \fl{s} }  \| \bar{Y}_{ \fl{s} } - \bar{Y}_s  \|_{ H_{\rho} } = Z_{ \fl{s} }   \Big\|  Y_{ \fl{s} }- O_{ \fl{s} } + \smallint_0^{\lf s \rf_h} e^{(\lf s \rf_h-u)\mathbb{A}} \, \eta O_u \, du - Y_s + O_s - \smallint_0^s e^{(s-u)\mathbb{A}} \, \eta O_u \, du \Big\|_{ H_{\rho} } \\
& \leq Z_{ \fl{s} }  \| Y_{ \fl{s} }- O_{ \fl{s} } - Y_s + O_s \|_{ H_{\rho} } \\
& \quad + Z_{ \fl{s} } \Big\|\smallint_0^s e^{(s-u)\mathbb{A}} \, \eta O_u \, du - \smallint_0^{\lf s \rf_h} e^{(\lf s \rf_h-u)\mathbb{A}} \, \eta O_u \, du  \Big\|_{H_{\rho}} \\
& \leq \frac{ e^{h \kappa}(1+ \kappa  \|(\kappa-A)^{ \rho - \varrho } \|_{L(H)} + \sqrt{\theta} \|(\kappa-A)^{ \min\{\alpha+\rho,0\}} \|_{L(H)} ) \max\! \left\{ h, h^{\varrho - \rho -\chi}, h^{ 1-\max\{\alpha+\rho,0\} -(1+\nicefrac{\vartheta}{2}) \chi}   \right\}      }{( 1-\max\{\alpha+\rho,0\})}\\
& \quad +  \big[ h^{ \varrho - \rho } +  h |\kappa -\eta| e^{ h|\kappa -\eta|} \|(\kappa-A)^{ \rho - \varrho } \|_{L(H)} \big] \smallint_0^T \| \eta O_u \|_{H_{\varrho}} \, du + \|(\kappa-A)^{\rho-\varrho}  \|_{L(H)} \smallint_{\lf s \rf_h}^s   \|   \eta O_u \|_{H_{\varrho}} \, du\\
& \leq \frac{ e^{h \kappa}(1+ \kappa  \|(\kappa-A)^{ \rho - \varrho } \|_{L(H)} + \sqrt{\theta} \|(\kappa-A)^{ \min\{\alpha+\rho,0\}} \|_{L(H)} )   \max\! \left\{ h, h^{\varrho - \rho -\chi}, h^{ 1-\max\{\alpha+\rho,0\} -(1+\nicefrac{\vartheta}{2}) \chi}   \right\}   }{( 1-\max\{\alpha+\rho,0\})}\\
& \quad + e^{h \kappa} \big[ h^{ \varrho - \rho } +  h |\kappa -\eta| e^{ h\eta} \|(\kappa-A)^{ \rho - \varrho } \|_{L(H)} \big] \smallint_0^T \| \eta O_u \|_{H_{\varrho}} \, du \numberthis \label{eq:bar:Y} \\
& \quad + e^{h \kappa}\sqrt{\eta} \|(\kappa-A)^{\rho-\varrho}  \|_{L(H)} \smallint_{\lf s \rf_h}^s   \|   \sqrt{\eta} O_u \|_{H_{\varrho}} \, du \\
& \leq \tfrac{ e^{h \kappa} [1+ \kappa  \|(\kappa-A)^{ \rho - \varrho } \|_{L(H)} + \sqrt{\theta} \|(\kappa-A)^{ \min\{\alpha+\rho,0\}} \|_{L(H)} + (1+ |\kappa-\eta|e^{ h\eta} \|(\kappa-A)^{\rho-\varrho}  \|_{L(H)})  \smallint_0^T \| \eta O_u \|_{H_{\varrho}} \, du + \sqrt{\eta} \|(\kappa-A)^{\rho-\varrho}  \|_{L(H)} ]   }{( 1-\max\{\alpha+\rho,0\})} \\
&\quad  \cdot  \max \!\big\{h,  h^{\varrho - \rho -\chi},h^{ 1-\max\{\alpha+\rho,0\} -( 1 + \nicefrac{\vartheta}{2} ) \chi} , \smallint\nolimits_{\lf s \rf_h}^s   \|  \sqrt{ \eta} O_u \|_{H_{\varrho}} \, du \big\} .
\end{align*}
Next note that for all $a \in [1, \infty)$, $b,c \in [0, \infty)$ it holds that
\begin{align}
\begin{split}
|ab|^2 (c+ |ab|^{\vartheta}) \leq a^{2+\vartheta}b^2 (c+ b^{\vartheta}) \leq 2 a^{2+\vartheta} b^2 \max\{ c, b^{\vartheta}\}  .
\end{split}
\end{align}
This, \eqref{eq:estimate1}, and \eqref{eq:bar:Y} show for all $s \in [0, T]$ that
\begin{align}
\label{eq:F:term}
\begin{split}
& Z_{ \fl{s} } \big\| (-\mathbb{A})^{\nicefrac{-1}{2}} \big[F\big(Y_{ \fl{s} }\big)- F\big(\bar{Y}_s + \mathbb{O}_{ \fl{s} }\big) \big] \big\|_{ H }^2  \\ 
& \leq \tfrac{2 \theta e^{h \kappa(2+\vartheta)} [ 1+ (\kappa + \sqrt{\eta}) \|(\kappa-A)^{ \rho - \varrho } \|_{L(H)} + \sqrt{\theta} \|(\kappa-A)^{ \min\{\alpha+\rho,0\}} \|_{L(H)} + (1+ |\kappa-\eta|e^{ h\eta} \|(\kappa-A)^{\rho-\varrho}  \|_{L(H)})   \smallint_0^T \| \eta O_u \|_{H_{\varrho}} \, du   ]^{2+\vartheta}   }{( 1-\max\{\alpha+\rho,0\})^{2+\vartheta}}\\
& \quad \cdot \left|\max \!\big\{h,  h^{\varrho - \rho -\chi},h^{ 1-\max\{\alpha+\rho,0\} -( 1 + \nicefrac{\vartheta}{2} ) \chi} , \smallint\nolimits_{\lf s \rf_h}^s   \|  \sqrt{ \eta} O_u \|_{H_{\varrho}} \, du \big\} \right|^2 \\
& \quad \cdot \left|\max \!\big\{1, h^{-\chi}, h,  h^{\varrho - \rho -\chi},h^{ 1-\max\{\alpha+\rho,0\} -( 1 + \nicefrac{\vartheta}{2} ) \chi} , \smallint\nolimits_{\lf s \rf_h}^s   \|  \sqrt{ \eta} O_u \|_{H_{\varrho}} \, du \big\}\right|^{\vartheta}\\
&= \tfrac{2 \theta e^{h \kappa(2+\vartheta)} [ 1+ (\kappa + \sqrt{\eta}) \|(\kappa-A)^{ \rho - \varrho } \|_{L(H)} + \sqrt{\theta} \|(\kappa-A)^{ \min\{\alpha+\rho,0\}} \|_{L(H)} + (1+ |\kappa-\eta|e^{ h\eta} \|(\kappa-A)^{\rho-\varrho}  \|_{L(H)})   \smallint_0^T \| \eta O_u \|_{H_{\varrho}} \, du   ]^{2+\vartheta}   }{( 1-\max\{\alpha+\rho,0\})^{2+\vartheta}}\\
& \quad \cdot \left|\max \!\big\{h,  h^{\varrho - \rho -\chi},h^{ 1-\max\{\alpha+\rho,0\} -( 1 + \nicefrac{\vartheta}{2} ) \chi} , \smallint\nolimits_{\lf s \rf_h}^s   \|  \sqrt{ \eta} O_u \|_{H_{\varrho}} \, du \big\} \right|^2 \\
& \quad \cdot \left|\max \!\big\{ h^{-\chi}, h, h^{ 1-\max\{\alpha+\rho,0\} -( 1 + \nicefrac{\vartheta}{2} ) \chi} , \smallint\nolimits_{\lf s \rf_h}^s   \|  \sqrt{ \eta} O_u \|_{H_{\varrho}} \, du \big\}\right|^{\vartheta}.
\end{split}
\end{align}
Observe that H\"olders inequality implies for all $s \in [0,T]$ that
\begin{align}
\begin{split}
&\left|\max \!\big\{h,  h^{\varrho - \rho -\chi},h^{ 1-\max\{\alpha+\rho,0\} -( 1 + \nicefrac{\vartheta}{2} ) \chi} , \smallint\nolimits_{\lf s \rf_h}^s   \|  \sqrt{ \eta} O_u \|_{H_{\varrho}} \, du \big\} \right|^2 \\
& \cdot \left|\max \!\big\{ h^{-\chi}, h, h^{ 1-\max\{\alpha+\rho,0\} -( 1 + \nicefrac{\vartheta}{2} ) \chi} , \smallint\nolimits_{\lf s \rf_h}^s   \|  \sqrt{ \eta} O_u \|_{H_{\varrho}} \, du \big\}\right|^{\vartheta}\\
&\leq \left| \max \!\Big\{h,  h^{\varrho - \rho -\chi},h^{ 1-\max\{\alpha+\rho,0\} -( 1 + \nicefrac{\vartheta}{2} ) \chi} , \sqrt{h} \sqrt{ \smallint\nolimits_{\lf s \rf_h}^s   \|  \sqrt{ \eta} O_u \|_{H_{\varrho}}^2 \, du} \Big\} \right|^2 \\
&\quad \cdot \left|\max \!\big\{ h^{-\chi}, h, h^{ 1-\max\{\alpha+\rho,0\} -( 1 + \nicefrac{\vartheta}{2} ) \chi} , \smallint\nolimits_{\lf s \rf_h}^s   \|  \sqrt{ \eta} O_u \|_{H_{\varrho}} \, du \big\}\right|^{\vartheta}\\
&\leq \max \!\big\{h^2,  h^{2(\varrho - \rho -\chi)},h^{ 2(1-\max\{\alpha+\rho,0\} -( 1 + \nicefrac{\vartheta}{2} ) \chi)} , h \smallint\nolimits_{0}^T   \|  \sqrt{ \eta} O_u \|_{H_{\varrho}}^2 \, du \big\}  \\
&\quad \cdot \left|\max \!\big\{ h^{-\chi}, h, h^{ 1-\max\{\alpha+\rho,0\} -( 1 + \nicefrac{\vartheta}{2} ) \chi} , \smallint\nolimits_{0}^T   \|  \sqrt{ \eta} O_u \|_{H_{\varrho}} \, du \big\}\right|^{\vartheta}.
\end{split}
\end{align}
This and \eqref{eq:F:term} establish for all $s \in [0, T]$ that
\begin{align}
\label{eq:estimate3}
\begin{split}
&  Z_{ \fl{s} } \big\| (-\mathbb{A})^{\nicefrac{-1}{2}} \big[F\big(Y_{ \fl{s} }\big)- F\big(\bar{Y}_s + \mathbb{O}_{ \fl{s} }\big) \big] \big\|_{ H }^2  \\
& \leq \tfrac{2 \theta  e^{h \kappa(2+\vartheta)} [ 1+ (\kappa + \sqrt{\eta}) \|(\kappa-A)^{ \rho - \varrho } \|_{L(H)} + \sqrt{\theta} \|(\kappa-A)^{ \min\{\alpha+\rho,0\}} \|_{L(H)} + (1+ |\kappa-\eta|e^{ h\eta} \|(\kappa-A)^{\rho-\varrho}  \|_{L(H)})   \smallint_0^T \| \eta O_u \|_{H_{\varrho}} \, du   ]^{2+\vartheta}   }{( 1-\max\{\alpha+\rho,0\})^{2+\vartheta}}\\
& \quad \cdot  \max \!\big\{h^2,  h^{2(\varrho - \rho -\chi)},h^{ 2(1-\max\{\alpha+\rho,0\} -( 1 + \nicefrac{\vartheta}{2} ) \chi)} , h \smallint\nolimits_{0}^T   \|  \sqrt{ \eta} O_u \|_{H_{\varrho}}^2 \, du \big\}  \\
&\quad \cdot \left|\max \!\big\{ h^{-\chi}, h, h^{ 1-\max\{\alpha+\rho,0\}-( 1 + \nicefrac{\vartheta}{2} ) \chi} , \smallint\nolimits_{0}^T   \|  \sqrt{ \eta} O_u \|_{H_{\varrho}} \, du \big\}\right|^{\vartheta}.
\end{split}
\end{align}
Combining \eqref{eq:estimate3} with \eqref{eq:estimate0} yields that for all $t \in [0, T]$ it holds that
\begin{align}
\begin{split}
&\| \bar{Y}_t \|_H^2  +  \psi \int_0^t e^{ \int_s^t \phi( \mathbb{O}_{\fl{u} } )  +2\eta (1+\beta) \, du}\,  \| (-\mathbb{A})^{\nicefrac{1}{2}} \bar{Y}_s \|_{H}^2  \, ds \\
& \leq  e^{ \int_0^t  \phi( \mathbb{O}_{\fl{s} } ) + 2\eta (1+\beta) \, ds} \, \|\bar{Y}_0\|_H^2+  \int_0^t e^{ \int_s^t \phi( \mathbb{O}_{\fl{u} } )  +2\eta (1+\beta) \, du} \left[  \Phi\big(\mathbb{O}_{ \fl{s} } \big)  + \tfrac{\eta}{2 \beta} \|\mathbb{O}_s\|_H^2 \right]  ds  \\
&  + \tfrac{2 \theta  e^{h \kappa(2+\vartheta)} [ 1+ (\kappa + \sqrt{\eta}) \|(\kappa-A)^{ \rho - \varrho } \|_{L(H)} + \sqrt{\theta} \|(\kappa-A)^{ \min\{\alpha+\rho,0\}} \|_{L(H)} + (1+ |\kappa-\eta|e^{ h\eta} \|(\kappa-A)^{\rho-\varrho}  \|_{L(H)})   \smallint_0^T \| \eta O_u \|_{H_{\varrho}} \, du   ]^{2+\vartheta}   }{(2- 2\varphi -\psi)( 1-\max\{\alpha+\rho,0\})^{2+\vartheta}}    \\
&   \cdot \max \!\big\{h^2,  h^{2(\varrho - \rho -\chi)},h^{ 2(1-\max\{\alpha+\rho,0\} -( 1 + \nicefrac{\vartheta}{2} ) \chi)} , h \smallint\nolimits_{0}^T   \|  \sqrt{ \eta} O_u \|_{H_{\varrho}}^2 \, du \big\}  \\
& \cdot \left|\max \!\big\{ h^{-\chi}, h, h^{ 1-\max\{\alpha+\rho,0\} -( 1 + \nicefrac{\vartheta}{2} ) \chi} , \smallint\nolimits_{0}^T   \|  \sqrt{ \eta} O_u \|_{H_{\varrho}} \, du \big\}\right|^{\vartheta} \int_0^t  e^{ \int_s^t \phi( \mathbb{O}_{\fl{u} } )  + 2\eta (1+\beta) \, du} \, ds.
\end{split}
\end{align}
This assures for all $ t \in [0, T]$ that
\begin{align*}
&\| \bar{Y}_t \|_H^2  +  \psi \int_0^t e^{ \int_s^t \phi( \mathbb{O}_{\fl{u} } )  +2\eta (1+\beta) \, du} \, \| (-\mathbb{A})^{\nicefrac{1}{2}} \bar{Y}_s \|_{H}^2  \, ds \\
&\leq  e^{ \int_0^t  \phi( \mathbb{O}_{\fl{s} } ) + 2\eta (1+\beta) \, ds} \, \|\bar{Y}_0\|_H^2  +  \int_0^t e^{ \int_s^t \phi( \mathbb{O}_{\fl{u} } )  +2\eta (1+\beta) \, du} \, \Big[  \Phi\big(\mathbb{O}_{ \fl{s} } \big)  + \tfrac{\eta}{2 \beta} \|\mathbb{O}_s\|_H^2    \\
&  +  \tfrac{ \theta  e^{h \kappa(2+\vartheta)} [ 1+ (\kappa + \sqrt{\eta}) \|(\kappa-A)^{ \rho - \varrho } \|_{L(H)} + \sqrt{\theta}\|(\kappa-A)^{ \min\{\alpha+\rho,0\}} \|_{L(H)} + (1+ |\kappa-\eta|e^{ h\eta} \|(\kappa-A)^{\rho-\varrho}  \|_{L(H)})  \sqrt{\eta}  ]^{2+\vartheta} }{(1- \varphi -\nicefrac{\psi}{2})( 1-\max\{\alpha+\rho,0\})^{2+\vartheta}}  \\
& \cdot \left|\max \{1 , \smallint\nolimits_{0}^T   \|  \sqrt{\eta} O_u \|_{H_{\varrho}} \, du \}\right|^{2+\vartheta}  \max \!\big\{h^2,  h^{2(\varrho - \rho -\chi)},h^{ 2(1-\max\{\alpha+\rho,0\} -( 1 + \nicefrac{\vartheta}{2} ) \chi)} , h \smallint\nolimits_{0}^T   \|  \sqrt{ \eta} O_u \|_{H_{\varrho}}^2 \, du \big\}  \\
&  \cdot \left|\max \!\big\{ h^{-\chi}, h, h^{ 1-\max\{\alpha+\rho,0\} -( 1 + \nicefrac{\vartheta}{2} ) \chi} , \smallint\nolimits_{0}^T   \|  \sqrt{ \eta} O_u \|_{H_{\varrho}} \, du \big\}\right|^{\vartheta}\Big] \, ds \numberthis \\
&=  e^{ \int_0^t  \phi( \mathbb{O}_{\fl{s} } ) + 2\eta (1+\beta) \, ds} \, \|\bar{Y}_0\|_H^2  +  \int_0^t e^{ \int_s^t \phi( \mathbb{O}_{\fl{u} } )  +2\eta (1+\beta) \, du} \, \Big[  \Phi\big(\mathbb{O}_{ \fl{s} } \big)  + \tfrac{\eta}{2 \beta} \|\mathbb{O}_s\|_H^2    \\
&  +  \tfrac{ \theta  e^{h \kappa(2+\vartheta)} [ 1+ (\kappa + \sqrt{\eta} +\sqrt{\eta}|\kappa-\eta|e^{ h\eta} ) \|(\kappa-A)^{ \rho - \varrho } \|_{L(H)} + \sqrt{\theta}\|(\kappa-A)^{ \min\{\alpha+\rho,0\}} \|_{L(H)} +  \sqrt{\eta}  ]^{2+\vartheta}\left|\max \{1 , \smallint\nolimits_{0}^T   \|  \sqrt{\eta} O_u \|_{H_{\varrho}} \, du \}\right|^{2+\vartheta} }{(1- \varphi -\nicefrac{\psi}{2})( 1-\max\{\alpha+\rho,0\})^{2+\vartheta}}  \\
& \cdot \max \!\big\{h^2,  h^{2(\varrho - \rho -\chi)},h^{ 2(1-\max\{\alpha+\rho,0\} -( 1 + \nicefrac{\vartheta}{2} ) \chi)} , h \smallint\nolimits_{0}^T   \|  \sqrt{ \eta} O_u \|_{H_{\varrho}}^2 \, du \big\}  \\
&  \cdot \left|\max \!\big\{ h^{-\chi}, h, h^{ 1-\max\{\alpha+\rho,0\} -( 1 + \nicefrac{\vartheta}{2} ) \chi} , \smallint\nolimits_{0}^T   \|  \sqrt{ \eta} O_u \|_{H_{\varrho}} \, du \big\}\right|^{\vartheta}\Big] \, ds.
\end{align*}
The proof of Proposition~\ref{prop:priori_bound} is thus completed.
\end{proof}

The next result, Corollary~\ref{cor:a_priori}, follows immediately from Proposition~\ref{prop:priori_bound} above.

\begin{cor}
\label{cor:a_priori}
Consider the notation in Subsection~\ref{sec:notation},
let $ ( H, \left< \cdot , \cdot \right>_H, \left\| \cdot \right\|_H ) $ be a separable $\mathbb{R}$-Hilbert space,
let $ \mathbb{H} \subseteq H$ be a nonempty orthonormal basis of $ H $,
let $\beta, T \in (0, \infty)$, $\eta, \theta , \vartheta, \kappa \in [0, \infty)$, $ \varphi \in [0,1)$, $\alpha \in \R$, $\rho \in [-\alpha,1-\alpha)$, $ \varrho \in [\rho, \rho +1]$, $\chi \in [0, \nicefrac{(2-2\alpha-2\rho)}{(1+\vartheta)} ]$, 
$ h \in (0, \min\{1,T\}] $,
$F \in \mathcal{C}(H, H)$,
$ A \in L(H) $,
let $\lambda \colon \mathbb{H} \to \R $,
$ Y, O, \mathbb{O} \colon [0, T] \to H $,
and
$\phi, \Phi \colon H \to [0,\infty) $
be functions which satisfy
$\eta O \in \mathcal{C}([0, T], H)$,
$\sup_{b \in \mathbb{H}} \lambda_b < \min\{\eta, \kappa\}$,
and
$ \forall \, b \in \mathbb{H} \colon A b = \lambda_b \,  b $,
let $ ( H_r, \left< \cdot , \cdot \right>_{ H_r }, \left\| \cdot \right\|_{ H_r } ) $, $ r \in \R $, be a family of interpolation spaces associated to $ \kappa- A  $,
and assume for all $v, w \in H $, $t \in [0,T]$  that
$\left< v, F( v + w ) \right>_H \leq \frac{1}{2} \phi(w) \| v \|^2_H+ \varphi \|(\eta-A)^{\nicefrac{1}{2}} v \|^2_{ H }+ \frac{1}{2}\Phi( w )$,
$\| F(v)\|_{H_{-\alpha}}^2 \leq \theta \max\{ 1, \| v\|_{ H_{\varrho} }^{2 + \vartheta} \} $,
$ \|(\eta-A)^{\nicefrac{-1}{2}} [F(v) - F(w) ]\|_{H}^2\leq \theta \max\{ 1, \|v\|_{ H_{\varrho} }^{\vartheta} \} \|v-w\|_{ H_{\rho} }^2 + \theta \, \|v-w\|^{2 + \vartheta}_{ H_{\rho}}$,
$ \mathbb{O}_t = O_t - \int_0^t e^{(t-s)(A-\eta)} \, \eta  O_s \, ds$,
and
\begin{equation}
Y_t =  \int_0^t e^{ ( t - s ) A } \, \one_{[0, h^{ - \chi }]} \big( \big\|  Y_{ \fl{s} } \big\|_{H_{\varrho}} + \big\|  O_{ \fl{s} } \big\|_{H_{\varrho}}  \big)  F \big(  Y_{ \fl{s} } \big) \, ds + O_t.
\end{equation}
Then it holds for all $ t \in [0, T]$ that $\eta \mathbb{O} \in \mathcal{C}([0,T],H)$ and
\begin{align}
\begin{split}
&\| Y_t - \mathbb{O}_t \|_H^2  \leq    \int_0^t e^{ \int_s^t \phi( \mathbb{O}_{\fl{u} } )  +2\eta (1+\beta) \, du} \, \Big[  \Phi\big(\mathbb{O}_{ \fl{s} } \big)  + \tfrac{\eta}{2 \beta} \|\mathbb{O}_s\|_H^2    \\
&\quad + \tfrac{ \theta  e^{ \kappa(2+\vartheta)} [ 1+ (\kappa + \sqrt{\eta} +\sqrt{\eta}|\kappa-\eta|e^{ \eta} ) \|(\kappa-A)^{ \rho - \varrho } \|_{L(H)} + \sqrt{\theta} +  \sqrt{\eta} ]^{2+\vartheta} \left|\max \{1 , \smallint\nolimits_{0}^T   \|  \sqrt{\eta} O_u \|_{H_{\varrho}} \, du \}\right|^{2+\vartheta}}{(1- \varphi)( 1-\alpha - \rho)^{2+\vartheta}}   \\
& \quad \cdot  \max \!\big\{  h^{2(\varrho - \rho -\chi)},h^{ 2(1-\alpha - \rho -( 1 + \nicefrac{\vartheta}{2} ) \chi)} , h \smallint\nolimits_{0}^T   \|  \sqrt{ \eta} O_u \|_{H_{\varrho}}^2 \, du \big\} \left|\max \!\big\{ h^{-\chi},  \smallint\nolimits_{0}^T   \|  \sqrt{ \eta} O_u \|_{H_{\varrho}} \, du \big\}\right|^{\vartheta}\Big] ds.
\end{split}
\end{align}
\end{cor}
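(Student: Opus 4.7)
The plan is to read off Corollary~\ref{cor:a_priori} as a direct specialization of Proposition~\ref{prop:priori_bound}, with all the work being bookkeeping to simplify the constants. First, I would note that the hypotheses of the Corollary are strictly stronger than those of the Proposition: the restriction $\rho \in [-\alpha, 1-\alpha)$ is contained in $\rho \in (-\infty, 1-\alpha)$, the restriction $\varphi \in [0,1)$ permits the choice $\psi = 0 \in (-\infty, 2-2\varphi)$, and the restriction $h \in (0, \min\{1, T\}]$ sits inside $h \in (0, T]$. Hence Proposition~\ref{prop:priori_bound} applies, and in particular conclusion~(i) immediately gives $\eta \mathbb{O} \in \mathcal{C}([0,T], H)$.

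Next I would apply the Proposition's inequality with $\psi = 0$, which removes the integral term on the left-hand side and replaces $(1-\varphi-\nicefrac{\psi}{2})$ by $(1-\varphi)$ in the denominator. The scheme in the Corollary is the scheme in the Proposition with initial value $Y_0 = O_0$, so the term $e^{\int_0^t \phi(\mathbb{O}_{\lfloor s \rfloor_h}) + 2\eta(1+\beta)\,ds} \|Y_0 - O_0\|_H^2$ vanishes identically. This already reduces the bound to an integral of the correct form.

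The remaining task is to absorb the $h$-dependent factors using $h \leq 1$ and the structural assumption $\rho \geq -\alpha$. The latter forces $\min\{\alpha+\rho, 0\} = 0$ and $\max\{\alpha+\rho, 0\} = \alpha + \rho$, so $\|(\kappa-A)^{\min\{\alpha+\rho,0\}}\|_{L(H)} = 1$ and $1 - \max\{\alpha+\rho,0\} = 1-\alpha-\rho$; likewise $e^{h\kappa(2+\vartheta)} \leq e^{\kappa(2+\vartheta)}$ and $e^{h\eta} \leq e^{\eta}$ under $h \leq 1$. The hypothesis $\chi \in [0, \nicefrac{(2-2\alpha-2\rho)}{(1+\vartheta)}]$ is exactly what is needed to guarantee that $1-\alpha-\rho - (1+\nicefrac{\vartheta}{2})\chi \geq 0$, and since $\varrho - \rho \leq 1$ one also has $\varrho - \rho - \chi \leq 1$. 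Combined with $h \leq 1$, these inequalities yield $h^2 \leq \max\{h^{2(\varrho-\rho-\chi)}, h^{2(1-\alpha-\rho-(1+\nicefrac{\vartheta}{2})\chi)}\}$ in the outer max, and $h, h^{1-\alpha-\rho-(1+\nicefrac{\vartheta}{2})\chi} \leq 1 \leq h^{-\chi}$ in the inner max, allowing me to drop the redundant entries.

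The only non-routine step — and thus the main obstacle — is the verification that the exponent $1 - \alpha - \rho - (1+\nicefrac{\vartheta}{2})\chi$ is non-negative under the stated range of $\chi$; once this is checked, everything else is algebraic simplification, and the Corollary's bound follows by direct comparison with the Proposition's bound term by term.
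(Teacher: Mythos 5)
Your overall plan is exactly right and matches the paper's intent (the paper says the corollary "follows immediately" from Proposition~\ref{prop:priori_bound}, and the specialization $\psi=0$, $Y_0=O_0$, $\min\{\alpha+\rho,0\}=0$, $\max\{\alpha+\rho,0\}=\alpha+\rho$, $e^{h\kappa(2+\vartheta)}\leq e^{\kappa(2+\vartheta)}$, $e^{h\eta}\leq e^{\eta}$ is the whole argument). But the final justification you offer for dropping the extra terms from the inner maximum is wrong, and the precise role of the hypothesis on $\chi$ is misidentified.

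You claim that the range $\chi \in [0, \nicefrac{(2-2\alpha-2\rho)}{(1+\vartheta)}]$ "is exactly what is needed to guarantee that $1-\alpha-\rho-(1+\nicefrac{\vartheta}{2})\chi \geq 0$", and you then drop $h^{1-\alpha-\rho-(1+\nicefrac{\vartheta}{2})\chi}$ from the inner max by arguing it is $\leq 1 \leq h^{-\chi}$. This is false: the exponent $1-\alpha-\rho-(1+\nicefrac{\vartheta}{2})\chi$ is nonnegative if and only if $\chi\leq\nicefrac{(2-2\alpha-2\rho)}{(2+\vartheta)}$, which is a strictly smaller threshold than $\nicefrac{(2-2\alpha-2\rho)}{(1+\vartheta)}$, so the hypothesis permits $\chi$ for which this exponent is negative (e.g.\ $\alpha=\nicefrac{1}{2}$, $\rho=0$, $\vartheta=1$, $\chi=\nicefrac{1}{2}$ gives exponent $-\nicefrac{1}{4}$). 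What is actually needed, and what does follow from the hypothesis, is the weaker inequality $1-\alpha-\rho-(1+\nicefrac{\vartheta}{2})\chi\geq-\chi$, i.e.\ $\nicefrac{\vartheta}{2}\cdot\chi\leq 1-\alpha-\rho$; this holds because $\chi\leq\nicefrac{(2-2\alpha-2\rho)}{(1+\vartheta)}$ gives $\nicefrac{\vartheta}{2}\cdot\chi\leq\nicefrac{\vartheta}{(1+\vartheta)}\cdot(1-\alpha-\rho)<1-\alpha-\rho$ when $\vartheta>0$, and trivially when $\vartheta=0$. With this corrected inequality one gets $h^{1-\alpha-\rho-(1+\nicefrac{\vartheta}{2})\chi}\leq h^{-\chi}$ directly from $h\leq 1$, which is all the inner max requires. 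Note also that the hypothesis $\rho<1-\alpha$ is what guarantees $\nicefrac{\vartheta}{2}\cdot\chi<1-\alpha-\rho$ is a nonvacuous inequality; your comparison with $1$ was never the operative bound.

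The rest of your argument (the outer max, the initial-condition cancellation, the $\min/\max$ simplifications from $\rho\geq-\alpha$, and the $h\leq 1$ monotonicities) is correct.
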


\section{Main result}
\label{sec:main}

In the main result of this article, Theorem~\ref{thm:strong} below,
we establish strong convergence for an explicit space-time discrete numerical approximation scheme
for a large class of SEEs.
Before presenting Theorem~\ref{thm:strong},
we provide a few elementary and well-known results in Lemmas~\ref{lem:fast_convergence}--\ref{lem:subsequence} below.
These auxiliary lemmas as well as
a pathwise convergence result (see Proposition~\ref{prop:main_det} below)
and pathwise a~priori bounds (see Proposition~\ref{prop:main_det:2} below)
are required in the proof of Theorem~\ref{thm:strong}.

\subsection{Fast convergence in probability}

\begin{lemma}
\label{lem:fast_convergence}
Let $ ( \Omega, \F, \P ) $ be a probability space, let $ ( E, d ) $ be a metric space, and let $ X_n \colon \Omega \to E $, $ n \in \N_0 $, be strongly $ \F $/$ ( E, d ) $-measurable functions which satisfy $ \sum_{ n = 1 }^{ \infty } \E\big[\! \min\!\big\{ 1 , d( X_n, X_0 ) \big\} \big] < \infty $. Then it holds that $\{ \limsup_{n \to \infty} d(X_n, X_0)=0\} \in \F$ and $\P \bigl( \limsup_{n \to \infty} d(X_n, X_0)=0 \bigr)=1$. 
\end{lemma}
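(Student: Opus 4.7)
The plan is to execute a standard Borel--Cantelli argument applied to the truncated distances, after first verifying measurability. Since each $X_n$, $n \in \N_0$, is strongly $\F/(E,d)$-measurable and $d\colon E \times E \to [0,\infty)$ is continuous (hence Borel measurable), the function $\Omega \ni \omega \mapsto d(X_n(\omega), X_0(\omega)) \in [0,\infty)$ is $\F/\mathcal{B}([0,\infty))$-measurable for every $n \in \N$. It follows that $\limsup_{n\to\infty} d(X_n, X_0)$ is $\F$-measurable and therefore $\{\limsup_{n\to\infty} d(X_n, X_0) = 0\} \in \F$.

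Next I would apply Markov's inequality to the truncated distances. For every $\varepsilon \in (0,1)$ and every $n \in \N$ we have
\begin{equation*}
\P\bigl(d(X_n, X_0) > \varepsilon\bigr) = \P\bigl(\min\{1, d(X_n, X_0)\} > \varepsilon\bigr) \leq \varepsilon^{-1} \, \E\bigl[\min\{1, d(X_n, X_0)\}\bigr].
\end{equation*}
Summing over $n \in \N$ and using the standing hypothesis of the lemma, I obtain $\sum_{n=1}^\infty \P(d(X_n, X_0) > \varepsilon) < \infty$ for every $\varepsilon \in (0,1)$.

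The first Borel--Cantelli lemma then yields, for each $\varepsilon \in (0,1)$, that $\P\bigl(\{\omega \in \Omega \colon d(X_n(\omega), X_0(\omega)) > \varepsilon \text{ for infinitely many } n \in \N\}\bigr) = 0$, equivalently $\P(\limsup_{n\to\infty} d(X_n, X_0) \leq \varepsilon) = 1$. Intersecting these full-measure events over the countable collection $\varepsilon \in \{1/k \colon k \in \N\}$ yields $\P(\limsup_{n\to\infty} d(X_n, X_0) = 0) = 1$, which together with the measurability observation above completes the proof.

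There is no genuine obstacle here: the argument is entirely routine once the measurability of $d(X_n, X_0)$ is recorded, and the only subtlety is that Markov's inequality must be applied to the bounded random variable $\min\{1, d(X_n, X_0)\}$ (rather than directly to $d(X_n, X_0)$, which may have infinite expectation), which is exactly why the truncation appears in the hypothesis.
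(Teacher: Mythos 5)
Your proof is correct and follows essentially the same route as the paper's: Markov's inequality on the truncated distances, the first Borel--Cantelli lemma for each fixed $\varepsilon$, and an intersection over a countable collection of $\varepsilon$'s. The only cosmetic difference is that you establish measurability of the event via continuity of $d$ and measurability of $\limsup_n d(X_n,X_0)$, whereas the paper instead exhibits the event directly as a countable intersection/union of sets of the form $\{d(X_m,X_0)<\varepsilon\}$.
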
 
\begin{proof}[Proof of Lemma~\ref{lem:fast_convergence}] 
Note that the assumption that $ \sum_{ n = 1 }^{ \infty } \E\big[ \!\min\!\big\{ 1 , d( X_n, X_0 ) \big\} \big] < \infty $ and Markov's inequality ensure  for all $ \varepsilon \in ( 0, 1 ] $ that 
\begin{equation} 
\begin{split} 
\sum_{ n = 1 }^{ \infty } \P\bigl( d( X_n, X_0 ) \geq \varepsilon \bigr) & = \sum_{ n = 1 }^{ \infty } \P\bigl( \min\{ 1, d( X_n, X_0 ) \} \geq \varepsilon \bigr) \\ 
& \leq \frac{1}{\varepsilon} \sum_{ n = 1 }^{ \infty }  \E\big[ \!\min\{ 1, d( X_n, X_0 ) \} \big]  < \infty . 
\end{split} 
\end{equation} 
The Borel-Cantelli lemma  hence implies for all $ \varepsilon \in ( 0, 1 ] $  that 
\begin{equation} 
\P\bigg( \bigcap_{n=1}^{\infty} \bigcup_{m=n}^{\infty} \left\{ d( X_m , X_0 ) \geq \varepsilon \right\} \bigg) = 0 . 
\end{equation} 
This proves for all $ \varepsilon \in ( 0, 1 ] $  that 
\begin{equation} 
\label{eq:fast_conv}
\P\bigg( \bigcup_{n=1}^{\infty} \bigcap_{m=n}^{\infty} \left\{ d( X_m , X_0 ) < \varepsilon \right\}  \bigg)  = 1 . 
\end{equation} 
Moreover, note that
\begin{align}
\begin{split}
&\bigg\{\! \limsup_{n \to \infty} d(X_n, X_0)=0\bigg\} = \bigcap_{\varepsilon \in (0,\infty) \cap \mathbb{Q}} \bigcup_{n=1}^{\infty} \bigcap_{m=n}^{\infty} \left\{ d( X_m , X_0 ) < \varepsilon \right\} \in  \mathcal{F}.
\end{split}
\end{align}
Equation~\eqref{eq:fast_conv} hence shows that 
\begin{equation} 
\begin{split} 
\P\biggl( \limsup_{n \to \infty} d(X_n, X_0)=0 \biggr) & = \P\bigg( \bigcap_{\varepsilon \in (0,\infty) \cap \mathbb{Q}} \bigcup_{n=1}^{\infty} \bigcap_{m=n}^{\infty} \left\{ d( X_m , X_0 ) < \varepsilon \right\} \bigg) \\ 
& = \lim_{ \varepsilon \searrow 0   } \P\bigg( \bigcup_{n=1}^{\infty} \bigcap_{m=n}^{\infty} \left\{ d( X_m , X_0 ) < \varepsilon \right\} \bigg) = 1 . 
\end{split} 
\end{equation} 
The proof of Lemma~\ref{lem:fast_convergence} is thus completed. 
\end{proof}

\subsection{A characterization of convergent sequences in topological spaces}
\begin{lemma}
\label{lem:subsequence}
Let $ ( E, \mathcal{E} ) $ be a topological space and let $ e \colon \N_0 \to E $ be a function. Then the following seven statements are equivalent: \begin{enumerate}[(i)] 
\item \label{item:con} It holds that $ e_n \in E $, $ n \in \N $, converges in $ ( E, \mathcal{E} ) $ to $  e_0 $. 
\item \label{item:infty:inc} For every function $ k \colon \N \to \N $ with $  \liminf_{ n \to \infty } k(n) = \infty $ there exists a strictly increasing function $  l \colon \N \to \N $  such that $  e_{ k( l(n) ) } \in E $, $ n \in \N $, converges in $ ( E, \mathcal{E} ) $ to $  e_0 $.
\item \label{item:infty:infty} For every function $ k \colon \N \to \N $ with $  \liminf_{ n \to \infty } k(n) = \infty $ there exists a function $  l \colon \N \to \N $ with $  \liminf_{ n \to \infty } l(n) = \infty $ such that $  e_{ k( l(n) ) } \in E $, $ n \in \N $, converges in $ ( E, \mathcal{E} ) $ to $  e_0 $.
\item \label{item:infty:exist} For every function $ k \colon \N \to \N $ with $  \liminf_{ n \to \infty } k(n) = \infty $ there exists a function $  l \colon \N \to \N $ such that $  e_{ k( l(n) ) } \in E $, $ n \in \N $, converges in $ ( E, \mathcal{E} ) $ to $  e_0 $. 
\item \label{item:inc:inc} For every strictly increasing function $ k \colon \N \to \N $ there exists a strictly increasing function $  l \colon \N \to \N $ such that $  e_{ k( l(n) ) } \in E $, $ n \in \N $, converges in $ ( E, \mathcal{E} ) $ to $  e_0 $. 
\item \label{item:inc:infty} For every strictly increasing function $ k \colon \N \to \N $ there exists a function $  l \colon \N \to \N $ with $  \liminf_{ n \to \infty }$ $l(n) = \infty $ such that $  e_{ k( l(n) ) } \in E $, $ n \in \N $, converges in $ ( E, \mathcal{E} ) $ to $  e_0 $. 
\item \label{item:inc:exist} For every strictly increasing function $ k \colon \N \to \N $ there exists a function $  l \colon \N \to \N $ such that $  e_{ k( l(n) ) } \in E $, $ n \in \N $, converges in $ ( E, \mathcal{E} ) $ to $  e_0 $. 
\end{enumerate} 
\end{lemma}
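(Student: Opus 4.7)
The plan is to establish the seven equivalences via a cycle of implications that covers all seven statements, namely
\[ (\ref{item:con}) \Longrightarrow (\ref{item:infty:inc}) \Longrightarrow (\ref{item:infty:infty}) \Longrightarrow (\ref{item:infty:exist}) \Longrightarrow (\ref{item:inc:exist}) \Longrightarrow (\ref{item:con}), \]
together with the side-chain $(\ref{item:infty:inc}) \Longrightarrow (\ref{item:inc:inc}) \Longrightarrow (\ref{item:inc:infty}) \Longrightarrow (\ref{item:inc:exist})$, which anchors $(\ref{item:inc:inc})$ and $(\ref{item:inc:infty})$ into the same equivalence class.

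Most of the individual implications are purely logical. The chains $(\ref{item:infty:inc}) \Rightarrow (\ref{item:infty:infty}) \Rightarrow (\ref{item:infty:exist})$ and $(\ref{item:inc:inc}) \Rightarrow (\ref{item:inc:infty}) \Rightarrow (\ref{item:inc:exist})$, as well as the sideways arrows $(\ref{item:infty:inc}) \Rightarrow (\ref{item:inc:inc})$, $(\ref{item:infty:infty}) \Rightarrow (\ref{item:inc:infty})$, $(\ref{item:infty:exist}) \Rightarrow (\ref{item:inc:exist})$, follow by unwinding quantifiers: every strictly increasing map $\N \to \N$ has $\liminf = \infty$, every such map is \textit{a fortiori} a function, and a universal hypothesis ranging over ``$k$ with $\liminf k = \infty$'' is weaker than one ranging over ``strictly increasing $k$.'' Each implication is then immediate by matching the weaker witness to the stronger hypothesis.

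For $(\ref{item:con}) \Rightarrow (\ref{item:infty:inc})$, I would simply take $l = \mathrm{id}_{\N}$ as the witness. Fix an open $U \ni e_0$; by $(\ref{item:con})$ there is $N \in \N$ with $e_m \in U$ for all $m \geq N$, and the assumption $\liminf_{n \to \infty} k(n) = \infty$ then furnishes $M \in \N$ with $k(n) \geq N$ for all $n \geq M$, so $e_{k(n)} \in U$ eventually and therefore $e_{k(n)} \to e_0$ in $(E, \mathcal{E})$.

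The decisive implication is $(\ref{item:inc:exist}) \Rightarrow (\ref{item:con})$, which I would handle by contrapositive. If $(e_n)_{n \in \N}$ fails to converge to $e_0$ in $(E, \mathcal{E})$, then there exists an open neighborhood $U$ of $e_0$ such that the set $A = \{ n \in \N \colon e_n \notin U \}$ is infinite; enumerating $A$ in strictly increasing order produces a strictly increasing function $k \colon \N \to \N$ with $e_{k(n)} \notin U$ for every $n \in \N$. For \emph{any} function $l \colon \N \to \N$ whatsoever the composed indices $k(l(n))$ still lie in $A$, so $e_{k(l(n))} \notin U$ for all $n$, and hence $(e_{k(l(n))})_{n \in \N}$ cannot converge to $e_0$, contradicting $(\ref{item:inc:exist})$. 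The main obstacle of any real substance is this contrapositive step; the rest of the argument is bookkeeping of quantifiers, and even the contrapositive reduces to the elementary fact that an infinite subset of $\N$ admits a strictly increasing enumeration.
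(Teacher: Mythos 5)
Your proof is correct and is essentially the same as the paper's. The paper also reduces everything to the single nontrivial implication $(\ref{item:inc:exist}) \Rightarrow (\ref{item:con})$ after treating the other arrows as immediate quantifier manipulations, and it establishes that implication by the equivalent argument-by-contradiction (take a neighborhood $A$ of $e_0$ avoided infinitely often, enumerate those indices as a strictly increasing $k$, and observe that no $l$ can rescue convergence of $e_{k(l(\cdot))}$); your contrapositive phrasing and your explicit verification of $(\ref{item:con}) \Rightarrow (\ref{item:infty:inc})$ via $l = \mathrm{id}_\N$ are minor presentational differences only.
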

\begin{proof}[Proof of Lemma~\ref{lem:subsequence}]
Consider the notation in Subsection~\ref{sec:notation}.
It is clear that $ (\eqref{item:con}  \Rightarrow \eqref{item:infty:inc}) $, $ (\eqref{item:infty:inc} \Rightarrow \eqref{item:infty:infty}) $, $ (\eqref{item:infty:infty} \Rightarrow \eqref{item:infty:exist}) $,   $ (\eqref{item:inc:inc} \Rightarrow \eqref{item:inc:infty}) $,  and $ (\eqref{item:inc:infty} \Rightarrow \eqref{item:inc:exist}) $. The fact that $ (\eqref{item:infty:exist} \Rightarrow \eqref{item:inc:exist})$ and the fact that $(\eqref{item:infty:inc} \Rightarrow \eqref{item:inc:inc}) $ hence ensure that it is sufficient to prove that $  ( \eqref{item:inc:exist} \Rightarrow \eqref{item:con}  ) $ in order  to complete the proof of Lemma~\ref{lem:subsequence}. We show $  ( \eqref{item:inc:exist} \Rightarrow \eqref{item:con}  ) $ by a contradiction and for this we assume $   \big(     \eqref{item:inc:exist} \wedge     (\neg \eqref{item:con} )   \big) $ in the following.
Observe that $  (\neg \eqref{item:con}) $ assures that there exists a set $ A \in \mathcal{E} $ with $ e_0 \in A $ and $  \#_{ \{ n \in \N \colon e_n \notin A \} } = \infty $.
This implies that there exists a strictly increasing function $ k \colon \N \to \N $ such that for all $ n \in \N $ it holds that 
\begin{equation} \label{eq:subsequence_contra}  
e_{ k(n) } \notin A  . 
\end{equation} 
Next note that $ \eqref{item:inc:exist} $ ensures that there exists a function $ l \colon \N \to \N $ such that $  e_{ k( l(n) ) } \in E $, $ n \in \N $, converges in $ ( E, \mathcal{E} ) $ to $ e_0 $. This proves that there exists a natural number $ N \in \N $ such that for all $ n \in \{ N , N + 1, \dots \} $ it holds that $  e_{ k( l(n) ) } \in A $. In particular, we obtain that $  e_{ k( l(N) ) } \in A $. This contradicts to \eqref{eq:subsequence_contra}. The proof of Lemma~\ref{lem:subsequence} is thus completed.
\end{proof}

\subsection{Pathwise convergence}

\begin{prop}\label{prop:main_det}
Consider the notation in Subsection~\ref{sec:notation},
let $ ( H, \left< \cdot , \cdot \right>_H, \left\| \cdot \right\|_H ) $ be a separable $\mathbb{R}$-Hilbert space,
let $ \mathbb{H} \subseteq H$ 
be a nonempty orthonormal basis of $ H $,
let $ \kappa \in [0, \infty)$,
let
$\lambda \colon \mathbb{H} \to \R $
be a function which satisfies
$\inf_{b \in \mathbb{H}} \lambda_b > -\kappa$,
let $  A \colon D(A) \subseteq H \to H $ be the linear operator which satisfies
$ D(A) = \{ v \in H \colon \sum_{b \in \mathbb{H}} | \lambda_b \langle b , v \rangle_H |^2 < \infty \} $
and
$ \forall \, v \in D(A) \colon A v = \sum_{b \in \mathbb{H}} - \lambda_b \langle b , v \rangle_H b$,
let $ ( H_r, \left< \cdot , \cdot \right>_{ H_r }, \left\| \cdot \right\|_{ H_r } ) $, $ r \in \R $,
be a family of interpolation spaces associated to $ \kappa- A $,
let $ \alpha \in [ 0, 1)$,   $\varrho \in [0, 1-\alpha)$, $ T , \chi \in (0,\infty)$,   $F \in \mathcal{C}(H_{\varrho}, H_{-\alpha})$, $ X, O \in \mathcal{C}(  [0, T], H_{ \varrho}) $,
let
$ ( \mathbb{H}_n )_{ n \in \N } \colon \N \to \mathcal{P}_0( \mathbb{H} ) $,
$ ( P_n )_{ n \in \N } \colon \N \to L( H ) $,
$(h_n)_{n \in \N} \colon \N \to (0, T] $,
and
$ \Y^n, \mathcal{O}^n \colon [0,T] \to H_{\varrho} $, $ n \in \N $,
be functions,
and assume for all  $ v \in H $, $ n \in \N $, $ t \in [0,T] $, $r \in (0,\infty)$ that
$P_n(v) = \sum_{ b \in \mathbb{H}_n } \langle b, v \rangle_H b$,
$\sup (\{\|F(x)-F(y)\|_{H_{-\alpha}}/\|x-y\|_{H_{\varrho}} \colon x, y \in H_{\varrho}, x\neq y, \max\{\|x\|_{H_{\varrho}},\|y\|_{H_{\varrho}}\} \leq r \}) < \infty$,
$ \liminf_{ m \to \infty } \inf( \{\lambda_b \colon b \in \mathbb{H} \backslash \mathbb{H}_m \} \cup \{\infty\}  ) = \infty $,
$ \limsup_{m\to \infty}  ( h_m + \sup_{s \in [0, T]} \| O_s - \mathcal{O}_s^m\|_{H_{\varrho}} ) = 0$,
$ X_t = \int_0^t e^{ ( t - s ) A } \, F( X_s ) \, ds + O_t$,
and 
\begin{align}
\Y_t^n  = \int_0^t P_n \,  e^{  ( t - s ) A } \, \one_{[0,|h_n|^{-\chi}]} \big(  \big\| \Y_{ \lf s \rf_{h_n} }^n \big\|_{ H_{\varrho} } +  \big\|  \mathcal{O}_{ \lf s \rf_{h_n} }^n \big\|_{ H_{\varrho} }  \big) \, F \big(  \Y_{ \lf s \rf_{ h_n } }^n \big) \, ds  +\mathcal{O}_t^n.
\end{align}
Then  it holds that $ \limsup_{n \to \infty} \sup_{t \in [0, T]} \| X_t- \Y_t^n \|_{H_\varrho}=0 $.
\end{prop}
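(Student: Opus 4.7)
The plan is to prove the pathwise convergence by a bootstrap/stopping argument that freezes $\Y^n$ in a ball of $H_\varrho$ large enough to contain $X$, combined with a weakly-singular Gronwall estimate on the error. As a first step, set
\begin{equation*}
R := 1 + \sup_{t \in [0,T]} \|X_t\|_{H_\varrho} + \sup_{t \in [0,T]} \|O_t\|_{H_\varrho},
\end{equation*}
which is finite by continuity of $X$ and $O$ into $H_\varrho$. Using the local Lipschitz assumption on $F$ applied with $r = 3R$, introduce
\begin{equation*}
L := \sup \bigl\{ \tfrac{\|F(x)-F(y)\|_{H_{-\alpha}}}{\|x-y\|_{H_\varrho}} : x,y \in H_\varrho, \, x \neq y, \, \max(\|x\|_{H_\varrho},\|y\|_{H_\varrho}) \leq 3R \bigr\}
\end{equation*}
and $M := L \cdot 3R + \|F(0)\|_{H_{-\alpha}}$, both finite. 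By the two convergence hypotheses on $h_n$ and on $\mathcal{O}^n - O$, pick $N \in \N$ so that for all $n \geq N$ one has $\sup_{t \in [0,T]}\|\mathcal{O}_t^n\|_{H_\varrho} \leq R$ and $|h_n|^{-\chi} \geq 3R$. Define the bootstrap level
\begin{equation*}
\tau_n := \sup\{t \in [0,T] : \max\{\|\Y_{kh_n}^n\|_{H_\varrho} : k \in \N_0, \, kh_n \leq t\} \leq 2R\}
\end{equation*}
(with $\sup \emptyset := 0$); on $[0,\tau_n]$ the indicator $\one_{[0,|h_n|^{-\chi}]}(\|\Y^n_{\lf s\rf_{h_n}}\|_{H_\varrho} + \|\mathcal{O}^n_{\lf s\rf_{h_n}}\|_{H_\varrho})$ equals $1$, since its argument is bounded by $2R + R = 3R \leq |h_n|^{-\chi}$.

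The main step is to decompose, for $t \in [0, \tau_n]$ and $n \geq N$,
\begin{align*}
X_t - \Y_t^n &= (O_t - \mathcal{O}_t^n) + (\Id_H - P_n) \smallint_0^t e^{(t-s)A} F(X_s)\, ds \\
&\quad + \smallint_0^t P_n e^{(t-s)A} \bigl[F(X_s) - F(X_{\lf s \rf_{h_n}})\bigr]\,ds \\
&\quad + \smallint_0^t P_n e^{(t-s)A} \bigl[F(X_{\lf s \rf_{h_n}}) - F(\Y^n_{\lf s \rf_{h_n}})\bigr]\,ds
\end{align*}
and to bound each piece in $H_\varrho$ via the analytic-semigroup smoothing $\|(\kappa-A)^\gamma e^{tA}\|_{L(H)} \leq c_\gamma t^{-\gamma}$, which together with $\alpha + \varrho < 1$ produces an integrable weakly-singular kernel. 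For the spectral projection term, pick $\delta \in (0, 1-\alpha-\varrho)$ and use $\|(\Id_H - P_n)(\kappa - A)^{-\delta}\|_{L(H)} \leq \mu_n^{-\delta}$, where $\mu_n := \kappa + \inf(\{\lambda_b : b \in \mathbb{H}\setminus\mathbb{H}_n\} \cup \{\infty\}) \to \infty$ by hypothesis. For the temporal jitter term, use uniform continuity of $s \mapsto F(X_s) \in H_{-\alpha}$, which follows from continuity of $X$ into $H_\varrho$ and of $F \colon H_\varrho \to H_{-\alpha}$. The noise difference is controlled by $\varepsilon_n^{\mathrm{noise}} := \sup_s \|O_s - \mathcal{O}_s^n\|_{H_\varrho} \to 0$. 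Collecting the three vanishing contributions into a single quantity $\varepsilon_n \to 0$ and using the local Lipschitz bound on the remaining term, we obtain for $t \in [0, \tau_n]$
\begin{equation*}
\|X_t - \Y_t^n\|_{H_\varrho} \leq \varepsilon_n + c_\varrho L \smallint_0^t (t-s)^{-(\alpha+\varrho)} \sup_{u \in [0,s]} \|X_u - \Y_u^n\|_{H_\varrho}\,ds,
\end{equation*}
after which Henry's generalized Gronwall inequality for weakly-singular Volterra kernels gives $\sup_{t \in [0,\tau_n]}\|X_t - \Y_t^n\|_{H_\varrho} \leq C \varepsilon_n$.

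To close the bootstrap, note that $\sup_t \|X_t\|_{H_\varrho} \leq R - 1$, so for all sufficiently large $n$ the previous bound forces $\sup_{t \in [0,\tau_n]}\|\Y_t^n\|_{H_\varrho} < R$; in particular the grid values $\|\Y^n_{kh_n}\|_{H_\varrho}$ stay strictly below $2R$, which by definition of $\tau_n$ rules out $\tau_n < T$ and yields $\tau_n = T$, from which the claim follows. The step I expect to be the main obstacle is precisely this bootstrap closure: because $\Y^n$ need not be continuous in $t$ (as $\mathcal{O}^n$ need not be continuous) and because the truncation indicator can in principle switch off, one has to read the scheme only at grid points and use the local Lipschitz constant $L$ on a ball large enough to contain both $X$ and any bootstrap-controlled $\Y^n$. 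The introduction of the extra exponent $\delta > 0$ in the projection estimate, which upgrades pointwise spectral convergence $P_n \to \Id_H$ into uniform-in-$t$ convergence of the projected convolution, together with the careful interaction between $\lf s \rf_{h_n}$, the truncation indicator, and the Gronwall argument, are the most delicate ingredients in the proof.
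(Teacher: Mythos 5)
Your proof is correct and is in essence a self-contained rendition of the argument that the paper outsources. The paper's own proof establishes only two elementary semigroup facts --- the weakly singular smoothing bound $\sup_{s\in(0,T]}\bigl(s^{\varrho+\alpha}\|e^{sA}\|_{L(H_{-\alpha},H_\varrho)}\bigr)<\infty$ and the projection convergence $\limsup_{n\to\infty}\int_0^T\|(\Id_{H_\varrho}-P_n)e^{sA}\|_{L(H_{-\alpha},H_\varrho)}\,ds=0$ (the latter obtained exactly via the small extra exponent $\varepsilon\in(0,1-\varrho-\alpha)$, matching your $\delta$) --- and then delegates the remainder to Proposition~3.3 in Hutzenthaler et al.~\cite{Salimova2016}, a black-box pathwise convergence result for truncated explicit schemes driven by a singular evolution family, with the local Lipschitz modulus $\Psi$, the grid $h_n$, the truncation radius $|h_n|^{-\chi}$, and the noise perturbation $\sup_s\|O_s-\mathcal{O}^n_s\|_{H_\varrho}$ supplied as data. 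Your proposal unpacks that citation: the bootstrap level $\tau_n$ read off at grid points, the four-term error decomposition, the singular kernel bounds, the spectral gap $\mu_n^{-\delta}\to 0$, uniform continuity of $s\mapsto F(X_s)$, and Henry's generalized Gronwall inequality are exactly the ingredients such a proposition rests on, and you make the use of each hypothesis --- $\alpha+\varrho<1$, $\chi>0$, the local Lipschitz condition, the exhaustion of the spectrum, and the grid structure $\lf\cdot\rf_{h_n}$ which keeps the truncation indicator piecewise constant and hence measurable even though $\mathcal{O}^n$ (hence $\Y^n$) need not be continuous --- explicit. What the paper's route buys is brevity and reuse; what yours buys is transparency. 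One point deserves a touch more care in the write-up: you should run the Gronwall estimate on the closed interval $[0,\tau_n]$, because if $\tau_n<T$ then $\tau_n$ must itself be a grid point with $\|\Y^n_{\tau_n}\|_{H_\varrho}>2R$ (every grid point strictly below $\tau_n$ is $\leq 2R$ by definition of the supremum, so the offending grid point is $\tau_n$ itself), and the contradiction requires the estimate precisely at $t=\tau_n$. This is legitimate because the truncation indicator equals $1$ for every $s\in[0,\tau_n)$, and the single endpoint $s=\tau_n$ has Lebesgue measure zero, so the integral representation of $\Y^n_{\tau_n}$ with indicator dropped is unaffected; but this should be said rather than implied.
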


\begin{proof}[Proof of Proposition~\ref{prop:main_det}]
Note that
the fact that
$ \forall \, r \in [ 0, 1 ], \, t \in ( 0, \infty ) \colon
\| (\kappa-A)^{r} \, e^{ t (A-\kappa) } \|_{L(H)} \leq t^{-r} $
(cf., e.g., Lemma~11.36 in Renardy \& Rogers~\cite{RenardyRogers1993})
proves for all $ n \in \N $, $ t \in [0, T] $, $ \varepsilon \in [0, 1 - \varrho - \alpha) $  that 
\begin{align} 
\label{eq:sup_s_alpha_finite} 
\begin{split} 
\sup_{ s \in (0, T] } \big( s^{ ( \varrho + \alpha ) } \| e^{ s A } \|_{ L( H_{ - \alpha }, H_{ \varrho } ) } \big) & = \sup_{ s \in (0, T] } \big( s^{ ( \varrho + \alpha ) } \| (\kappa-A)^{ ( \varrho + \alpha ) } \, e^{ s A } \|_{ L(H) } \big) \\ 
& \leq e^{T \kappa} \cdot \sup_{ s \in (0, T] } \big( s^{ ( \varrho + \alpha ) } \| (\kappa-A)^{ ( \varrho + \alpha ) } \, e^{ s (A-\kappa) } \|_{ L(H) } \big) \leq e^{T \kappa} < \infty 
\end{split} 
\end{align} 
and 
\begin{align} 
\begin{split} 
& \int_0^t \| ( \Id_{H_{\varrho}} - P_n ) \, e^{ s A } \|_{ L( H_{ - \alpha } , H_{ \varrho } ) } \, ds \leq \int_0^t \| \Id_{H_{\varrho+\varepsilon}} - P_n|_{H_{\varrho+\varepsilon}} \|_{ L( H_{ \varrho + \varepsilon } , H_{ \varrho } ) } \, \| e^{ s A } \|_{ L( H_{ - \alpha }, H_{ \varrho + \varepsilon } ) } \, ds \\ 
& = \| ( \kappa- A )^{ - \varepsilon } ( \Id_H - P_n ) \|_{ L( H ) } \int_0^t \| (\kappa - A )^{ ( \varrho + \varepsilon + \alpha ) } \, e^{ s A } \|_{ L(H) } \, ds \\ 
& = \| ( \kappa- A )^{ -1 } ( \Id_H - P_n ) \|_{ L( H ) }^{\varepsilon} \int_0^t e^{s \kappa} \,  \| (\kappa - A )^{( \varrho + \varepsilon + \alpha ) }\, e^{ s (A-\kappa) } \|_{ L(H) } \, ds \\ 
& \leq e^{T \kappa} \, \| (\kappa- A )^{ - 1 } ( \Id_H - P_n ) \|_{ L( H ) }^{ \varepsilon }  \int_0^t s^{ - ( \varrho + \varepsilon + \alpha ) } \, ds \\ 
& \leq \frac{  e^{T \kappa} \,  \| ( \kappa - A )^{ - 1 } ( \Id_H - P_n ) \|_{ L( H ) }^{ \varepsilon }  \, T^{ ( 1 - \varrho - \varepsilon - \alpha ) } }{ ( 1 - \varrho -  \varepsilon -\alpha) } . 
\end{split} 
\end{align} 
This and the assumption that $ \liminf_{ n \to \infty} \inf( \{\lambda_b \colon b \in \mathbb{H} \backslash \mathbb{H}_n \} \cup \{\infty\} ) = \infty $ imply that
\begin{align}
\label{eq:sup_Id_H_0} 
\limsup_{ n \to \infty } \left( \int_0^T \| (\Id_{H_{\varrho}} - P_n ) e^{ s A } \|_{ L( H_{ - \alpha } , H_{ \varrho } ) } \, ds \right) = 0 . 
\end{align} 
Combining the fact that $\limsup_{ n \to \infty} h_n =0 $, the fact that $ \limsup_{n\to \infty} \sup_{t \in [0, T]} \| O_t - \mathcal{O}_t^n\|_{H_{\varrho}} = 0$, \eqref{eq:sup_s_alpha_finite}, \eqref{eq:sup_Id_H_0}, and the fact that $ \limsup_{ n \to \infty } \| P_n |_{H_{\varrho}} \|_{ L( H_{ \varrho } ) } = 1 < \infty $ with, e.g., Proposition~3.3 in Hutzenthaler et al.~\cite{Salimova2016} (with $ V = H_{ \varrho } $, $ W = H_{ - \alpha } $, $ T = T $, $ \chi = \chi $, $\Upsilon = \sup_{t \in (0,T]}(t^{(\varrho+\alpha)}\|e^{tA}\|_{L(H_{-\alpha},H_{\varrho})})$, $ \alpha = \varrho + \alpha $, $ (P_n)_{n \in \N} = ( H_{ \varrho } \ni v \mapsto P_n( v ) \in H_{ \varrho } )_{n \in \N} $, $ (h_n)_{n \in \N} = (h_n)_{n \in \N} $, $ F = F $, $\Psi= ([0,\infty] \ni r \mapsto \sup (\{ 0 \} \cup \{\|F(x)-F(y)\|_{H_{-\alpha}}/\|x-y\|_{H_{\varrho}} \colon x, y \in H_{\varrho}, x\neq y, \max\{\|x\|_{H_{\varrho}},\|y\|_{H_{\varrho}}\} \leq r \}) \in [0,\infty])$, $X=X$, $O=O$, $(\mathcal{X}_n)_{n \in \N} = (\mathcal{X}_n)_{n \in \N}$, $(\mathcal{O}_n)_{n \in \N} = (\mathcal{O}_n)_{n \in \N}$, $ S = ((0,T] \ni t \mapsto ( H_{ - \alpha } \ni v \mapsto e^{ t A } v \in H_{ \varrho } ) \in L(H_{-\alpha},H_{\varrho}) ) $  in the notation of Proposition~3.3 in Hutzenthaler et al.~\cite{Salimova2016}) shows that $ \limsup_{n \to \infty} \sup_{t \in [0, T]} \| X_t- \Y_t^n \|_{H_\varrho}=0 $. The proof of Proposition~\ref{prop:main_det} is thus completed. 
\end{proof}

\subsection{Pathwise a priori bounds}

\begin{prop}\label{prop:main_det:2}
Consider the notation in Subsection~\ref{sec:notation},
let $ ( H, \left< \cdot , \cdot \right>_H, \left\| \cdot \right\|_H ) $ be a separable $\mathbb{R}$-Hilbert space,
let $ \mathbb{H} \subseteq H$ be a nonempty orthonormal basis of $ H $,
let $\eta , \theta , \vartheta,  \kappa \in [0, \infty)$,
let $\lambda \colon \mathbb{H} \to \R $
be a function which satisfies
$\inf_{b \in \mathbb{H}} \lambda_b > -\min\{\eta, \kappa\}$,
let $ A \colon D(A) \subseteq H \to H $ be the linear operator which satisfies
$ D(A) = \{ v \in H \colon \sum_{b \in \mathbb{H}} | \lambda_b \langle b , v \rangle_H |^2 < \infty \} $
and
$ \forall \, v \in D(A) \colon  A v = \sum_{b \in \mathbb{H}} - \lambda_b \langle b , v \rangle_H b$,
let $ ( H_r, \left< \cdot , \cdot \right>_{ H_r }, \left\| \cdot \right\|_{ H_r } ) $, $ r \in \R $,
be a family of interpolation spaces associated to $ \kappa- A $,
let $\varphi \in [0,1)$, $ \alpha \in [ 0, \nicefrac{1}{2}]$,  $\rho \in [0, 1- \alpha)$, $\varrho \in (\rho, 1-\alpha)$, $ \beta, T \in (0,\infty)$, $ \chi \in ( 0, \min\{  \nicefrac{ ( 1 - \alpha - \rho ) }{( 1 +  \vartheta ) }, \nicefrac{( \varrho - \rho ) }{( 1 + \nicefrac{\vartheta}{2} ) } \}] $, $p \in [2, \infty)$, $F \in \mathcal{C}(H_{\varrho}, H_{-\alpha})$,
let
$ \phi, \Phi \colon H_1 \to  [0,\infty) $,
$ ( \mathbb{H}_n )_{ n \in \N } \colon \N \to \mathcal{P}_0( \mathbb{H} ) $,
$ ( P_n )_{ n \in \N } \colon \N \to L( H_{ -1 } ) $,
and
$(h_n)_{n \in \N} \colon \N \to (0, T] $
be functions,
let
$ \Y^n, \mathbb{O}^n \colon [0,T] \to H_{\varrho} $, $ n \in \N $,
and
$ \mathcal{O}^n \colon [0,T] \to P_n(H) $, $ n \in \N $,
be functions,
and assume
for all $u \in H$, $ n \in \N $, $ v, w \in P_n(H) $, $ t \in [0,T] $ that
$P_n(u) = \sum_{ b \in \mathbb{H}_n } \langle b, u \rangle_H b$,
$ \left< v, P_n F( v + w ) \right>_H \leq \phi( w ) \| v \|^2_H + \varphi \| (\eta-A)^{\nicefrac{1}{2}} v \|^2_{ H } + \Phi( w ) $,
$\| F(v)\|_{H_{-\alpha}}^2 \leq \theta \max\{ 1, \| v\|_{ H_{\varrho} }^{2 + \vartheta} \} $,
$ \|(\eta-A)^{\nicefrac{-1}{2}} [F(v) - F(w) ]\|_{H}^2\leq \theta \max\{ 1, \|v\|_{ H_{\varrho} }^{\vartheta} \} \|v-w\|_{ H_{\rho} }^2 + \theta \, \|v-w\|^{2 + \vartheta}_{ H_{\rho}}$,
$\eta \mathcal{O}^n \in \mathcal{C}([0, T], P_n( H ) )$,
$\mathbb{O}_t^n = \mathcal{O}_t^n - \int_0^t e^{(t-s)(A-\eta)} \, \eta \mathcal{O}_s^n \, ds$,
and 
\begin{align}
\Y_t^n  = \int_0^t P_n \,  e^{  ( t - s ) A } \, \one_{[0,|h_n|^{-\chi}]} \big(  \big\| \Y_{ \lf s \rf_{h_n} }^n \big\|_{ H_{\varrho} } +  \big\|  \mathcal{O}_{ \lf s \rf_{h_n} }^n \big\|_{ H_{\varrho} }  \big) \, F \big(  \Y_{ \lf s \rf_{ h_n } }^n \big) \, ds  +\mathcal{O}_t^n.
\end{align}
Then
\begin{enumerate}[(i)]
\item \label{item:Pn} it holds for all $n \in \N$ that  $ \Y^n( [0,T] ) \cup \mathbb{O}^n( [0,T] ) \subseteq P_n( H )$ and
\item  \label{item:bound} it holds for all $t \in [0,T]$, $n \in \N$ with $ h_n \leq 1 $  that $\eta \mathbb{O}^n \in \mathcal{C}([0, T], H_{\varrho})$ and 
\begin{align*}
\| \Y^n_t \|_H^p 
&\leq 2^{p-1}\| \mathbb{O}^n_t \|_H^p  +  2^{p-1}  t^{\nicefrac{p}{2}-1} \bigg[1 +  \tfrac{ \theta  e^{ \kappa(2+ \vartheta)} [ 1+ (\kappa + \sqrt{\eta} +\sqrt{\eta}|\kappa-\eta|e^{ \eta} ) \|(\kappa-A)^{ \rho - \varrho } \|_{L(H)} + \sqrt{\theta} +  \sqrt{\eta} ]^{2+\vartheta}  }{(1- \varphi)( 1-\alpha - \rho)^{2+\vartheta}}\bigg]^{\nicefrac{p}{2}}     \\
& \quad \cdot \int_0^t e^{ \int_s^t p \, \phi( \mathbb{O}_{\lf u \rf_{h_n} }^n )  +p\eta (1+\beta) \, du}  \Big[  2 \Phi\big(\mathbb{O}_{ \lf s \rf_{h_n} }^n \big)  + \tfrac{\eta}{2 \beta} \|\mathbb{O}_s^n\|_H^2 \numberthis \\
& \quad + \left|\max \!\big\{ 1, \smallint\nolimits_{0}^T   \|   \sqrt{\eta} \mathcal{O}_u^n \|_{H_{\varrho}} \, du \big\}\right|^{2+ 2\vartheta} \max \!\big\{ 1,  \smallint\nolimits_{0}^T   \|  \sqrt{ \eta} \mathcal{O}_u^n \|_{H_{\varrho}}^2 \, du \big\}  \Big]^{\nicefrac{p}{2}} ds.
\end{align*}
\end{enumerate}

\end{prop}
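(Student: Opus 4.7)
The strategy is to apply Corollary~\ref{cor:a_priori} inside the finite-dimensional subspace $P_n(H)$ and then post-process the resulting $L^2$-bound into an $L^p$-bound on $\mathcal{X}^n$. Part~\eqref{item:Pn} follows from the observation that $P_n$ is the orthogonal projection onto $\operatorname{span}(\mathbb{H}_n)$ and $\mathbb{H}_n\subseteq\mathbb{H}$ consists of eigenvectors of $A$, so $P_n$ commutes with $e^{tA}$ and with $e^{t(A-\eta)}$ for every $t\ge 0$ and satisfies $\|P_n\|_{L(H_r)}\le 1$ for every $r\in\R$. Since $\mathcal{O}^n_t\in P_n(H)$ by hypothesis, the integrand in the scheme for $\mathcal{X}^n$ and the correction integral $\int_0^t e^{(t-s)(A-\eta)}\eta\mathcal{O}^n_s\,ds$ both stay in $P_n(H)$, and hence $\mathcal{X}^n_t,\mathbb{O}^n_t\in P_n(H)$ for all $t\in[0,T]$.

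For part~\eqref{item:bound}, I would apply Corollary~\ref{cor:a_priori} with the dictionary $H\rightsquigarrow P_n(H)$, $\mathbb{H}\rightsquigarrow\mathbb{H}_n$, $A\rightsquigarrow A|_{P_n(H)}\in L(P_n(H))$, $F\rightsquigarrow P_n\circ F|_{P_n(H)}$, $Y\rightsquigarrow\mathcal{X}^n$, $O\rightsquigarrow\mathcal{O}^n$, $\mathbb{O}\rightsquigarrow\mathbb{O}^n$, $h\rightsquigarrow h_n$, and, to absorb the missing $\tfrac12$-factors in the coercivity-type assumption, $\phi\rightsquigarrow 2\phi$, $\Phi\rightsquigarrow 2\Phi$. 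The growth and Lipschitz-type hypotheses on $F$ transfer to $P_n\circ F$ unchanged because $\|P_n\|_{L(H_{-\alpha})}\le 1$, and continuity of $\eta\mathbb{O}^n$ into $H_\varrho$ is automatic since $P_n(H)$ is finite-dimensional. This yields an $L^2$-bound on $\|\mathcal{X}^n_t-\mathbb{O}^n_t\|_H^2$ whose integrand carries the product
\[
M_1 M_2 := \max\!\bigl\{h_n^{2(\varrho-\rho-\chi)},\,h_n^{2(1-\alpha-\rho-(1+\nicefrac{\vartheta}{2})\chi)},\,h_n K\bigr\}\cdot\bigl|\max\{h_n^{-\chi},J\}\bigr|^{\vartheta},
\]
where $J:=\int_0^T\|\sqrt{\eta}\,\mathcal{O}^n_u\|_{H_\varrho}\,du$ and $K:=\int_0^T\|\sqrt{\eta}\,\mathcal{O}^n_u\|_{H_\varrho}^2\,du$.

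The simplification of $M_1 M_2$ under the hypotheses $h_n\le 1$, $\chi\le(1-\alpha-\rho)/(1+\vartheta)$, and $\chi\le(\varrho-\rho)/(1+\nicefrac{\vartheta}{2})$ is the next step. Expanding $M_1 M_2$ as the maximum of its six cross terms, I would verify that every $h_n$-exponent is nonnegative; the only non-obvious case is $h_n K\cdot h_n^{-\chi\vartheta}=h_n^{1-\chi\vartheta}K$, whose exponent is nonnegative because $\chi(1+\vartheta)\le 1-\alpha-\rho\le 1$ forces $\chi\vartheta\le 1-\chi\le 1$. This yields $M_1 M_2\le\max\{1,K\}\cdot|\max\{1,J\}|^{\vartheta}$, and combined with the prefactor $|\max\{1,J\}|^{2+\vartheta}$ already carried by Corollary~\ref{cor:a_priori} this produces the integrand $|\max\{1,J\}|^{2+2\vartheta}\max\{1,K\}$ appearing in the statement.

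To conclude, I would factor out the $J,K$-free portion $\tilde C$ of the large constant in Corollary~\ref{cor:a_priori} and replace it by $1+\tilde C$ in front of the entire bracketed integrand (using $1+\tilde C\ge\max\{1,\tilde C\}$), then lift from power $2$ to power $p$ via Jensen's inequality $(\int_0^t f\,ds)^{p/2}\le t^{p/2-1}\int_0^t f^{p/2}\,ds$ (which simultaneously converts the exponent $2\phi+2\eta(1+\beta)$ into $p\phi+p\eta(1+\beta)$), and finally close the argument with the triangle inequality $\|\mathcal{X}^n_t\|_H^p\le 2^{p-1}(\|\mathcal{X}^n_t-\mathbb{O}^n_t\|_H^p+\|\mathbb{O}^n_t\|_H^p)$. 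I expect the main obstacle to be precisely the algebraic bookkeeping of $M_1 M_2$: one has to notice and use the hidden inequality $\chi\vartheta\le 1$ coming from $\chi(1+\vartheta)\le 1-\alpha-\rho$ in order to control the $h_n K\cdot h_n^{-\chi\vartheta}$-cross-term and thereby make the bound uniform in $h_n$.
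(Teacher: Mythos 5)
Your proposal matches the paper's proof essentially step for step: apply Corollary~\ref{cor:a_priori} on $P_n(H)$ with $\phi\rightsquigarrow 2\phi$, $\Phi\rightsquigarrow 2\Phi$; exploit $h_n\leq 1$ together with the exponent inequalities $\varrho-\rho-(1+\nicefrac{\vartheta}{2})\chi\geq 0$, $1-\alpha-\rho-(1+\vartheta)\chi\geq 0$, and $1-\vartheta\chi\geq 0$ to simplify the two $\max$-products; factor the $\mathcal{O}^n$-free constant as $1+\tilde{C}$; then pass from $L^2$ to $L^p$ via $|a+b|^{\nicefrac{p}{2}}\leq 2^{\nicefrac{p}{2}-1}(|a|^{\nicefrac{p}{2}}+|b|^{\nicefrac{p}{2}})$ together with H\"older's $(\int_0^t f\,ds)^{\nicefrac{p}{2}}\leq t^{\nicefrac{p}{2}-1}\int_0^t f^{\nicefrac{p}{2}}\,ds$ and the elementary bound $\|x+y\|_H^2\leq 2\|x\|_H^2+2\|y\|_H^2$. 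Your identification of $1-\vartheta\chi\geq 0$ (via $\chi(1+\vartheta)\leq 1-\alpha-\rho\leq 1$) as the one non-obvious ingredient is exactly the inequality the paper records and uses; the only differences from the paper are presentational (order of the triangle inequality versus the $L^p$-lift, and the paper's harmless preliminary WLOG that $\mathbb{H}_n\neq\emptyset$).
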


\begin{proof}[Proof of Proposition~\ref{prop:main_det:2}]
Without loss of generality we assume for all $n \in \N$ that $\mathbb{H}_n \neq \emptyset$.
Throughout this proof let $ \tilde{\phi}_n \colon P_n( H ) \to [0,\infty) $, $ n \in \N $, and $\tilde{\Phi}_n \colon P_n( H ) \to [0,\infty) $, $ n \in \N $, be the functions which satisfy for all  $ n \in \N $, $ v \in P_n( H ) $ that $\tilde{\phi}_n( v ) = 2 \cdot \phi( v )$ and  $\tilde{\Phi}_n( v ) = 2 \cdot \Phi( v )$.
Note that for all $n \in \N$ it holds that $P_n(H)$ is a finite-dimensional $\R$-vector space and $ \Y^n( [0,T] ) \cup \mathbb{O}^n( [0,T] ) \subseteq P_n( H )$. Corollary~\ref{cor:a_priori} (with 
$ H = P_n( H ) $,
$ \mathbb{H} = \mathbb{H}_n $,
$\beta=\beta$, $T =T$,  $\eta=\eta$, $ \theta= \theta$, $\vartheta = \vartheta$, $\kappa= \kappa$,
$ \varphi= \varphi$, $\alpha = \alpha$,  $\rho= \rho$, $ \varrho = \varrho$,
$\chi = \chi $, 
$ h = h_n$,
$ F = ( P_n( H ) \ni v \mapsto P_n F( v ) \in P_n(H) ) \in \mathcal{C}( P_n( H ), P_n( H ) ) $,
$ A = ( P_n( H ) \ni v \mapsto A v \in P_n( H ) ) \in L( P_n( H ) ) $, 
$ Y = ( [0,T] \ni t\mapsto \Y^n_t \in P_n( H ) ) $, 
$ O = \mathcal{O}^n $, 
$ \mathbb{O} = ( [0,T] \ni t \mapsto \mathbb{O}^n_t \in P_n( H ) ) $,
$ \phi = \tilde{\phi}_n $, $ \Phi = \tilde{\Phi}_n $
for  $ n \in \{ m \in \N \colon h_m \leq 1 \}$  in the notation of Corollary~\ref{cor:a_priori}) hence proves that for all  $t \in [0, T]$, $ n \in \N  $ with $h_n \leq 1$ it holds that $\eta \mathbb{O}^n \in \mathcal{C}([0, T], H_{\varrho})$ and
\begin{align*}
\label{eq:thm_application}
& \| \Y^n_t - \mathbb{O}^n_t \|_H^2  \leq \int_0^t e^{ \int_s^t \tilde{\phi}_n( \mathbb{O}_{\lf u \rf_{h_n} }^n )  +2\eta (1+\beta) \, du} \Big[  \tilde{\Phi}_n\big(\mathbb{O}_{ \lf s \rf_{h_n} }^n \big)  + \tfrac{\eta}{2 \beta} \|\mathbb{O}_s^n\|_H^2    \\
& +  \tfrac{ \theta  e^{\kappa(2+\vartheta)} [ 1+ (\kappa + \sqrt{\eta} +\sqrt{\eta}|\kappa-\eta|e^{ \eta} ) \|(\kappa-A)^{ \rho - \varrho } \|_{L(H)} + \sqrt{\theta} +  \sqrt{\eta} ]^{2+\vartheta} \left|\max \{ 1, \smallint\nolimits_{0}^T   \|   \sqrt{\eta} \mathcal{O}_u^n \|_{H_{\varrho}} \, du \}\right|^{2+ \vartheta}}{(1- \varphi )( 1-\alpha - \rho)^{2+\vartheta}}   \\
& \cdot  \max \!\big\{  |h_n|^{2(\varrho - \rho -\chi)},|h_n|^{ 2(1-\alpha - \rho -( 1 + \nicefrac{\vartheta}{2} ) \chi)} , h_n \smallint\nolimits_{0}^T   \|  \sqrt{ \eta} \mathcal{O}_u^n \|_{H_{\varrho}}^2 \, du \big\}   \left|\max \!\big\{ |h_n|^{-\chi}, \smallint\nolimits_{0}^T   \|  \sqrt{ \eta} \mathcal{O}_u^n \|_{H_{\varrho}} \, du \big\}\right|^{\vartheta}\Big] ds \\
& = \int_0^t e^{ \int_s^t 2\phi( \mathbb{O}_{\lf u \rf_{h_n} }^n )  +2\eta (1+\beta) \, du} \Big[  2 \Phi\big(\mathbb{O}_{ \lf s \rf_{h_n} }^n \big)  + \tfrac{\eta}{2 \beta} \|\mathbb{O}_s^n\|_H^2  \numberthis  \\
& +  \tfrac{\theta  e^{ \kappa(2+ \vartheta)} [ 1+ (\kappa + \sqrt{\eta} +\sqrt{\eta}|\kappa-\eta|e^{ \eta} ) \|(\kappa-A)^{ \rho - \varrho } \|_{L(H)} + \sqrt{\theta} +  \sqrt{\eta} ]^{2+\vartheta} \left|\max \{ 1, \smallint\nolimits_{0}^T   \|   \sqrt{\eta} \mathcal{O}_u^n \|_{H_{\varrho}} \, du \}\right|^{2+ \vartheta} }{(1- \varphi)( 1-\alpha - \rho)^{2+\vartheta}}  \\
& \cdot \max \!\big\{ |h_n|^{2(\varrho - \rho -\chi)},|h_n|^{ 2(1-\alpha - \rho -( 1 + \nicefrac{\vartheta}{2} ) \chi)} , h_n \smallint\nolimits_{0}^T   \|  \sqrt{ \eta} \mathcal{O}_u^n \|_{H_{\varrho}}^2 \, du \big\}  \left|\max \!\big\{ |h_n|^{-\chi}, \smallint\nolimits_{0}^T   \|  \sqrt{ \eta} \mathcal{O}_u^n \|_{H_{\varrho}} \, du \big\}\right|^{ \vartheta}\Big] ds.
\end{align*}
Moreover, the assumption that $ \chi \in ( 0, \min\{  \nicefrac{ ( 1 - \alpha - \rho ) }{( 1 +  \vartheta ) }, \nicefrac{( \varrho - \rho ) }{( 1 + \nicefrac{\vartheta}{2} ) } \}] $ implies that
\begin{equation}
1 - \alpha - \rho -( 1 +  \vartheta ) \chi\geq 0, \qquad 1-\vartheta \chi \geq 0, \qquad \text{and} \qquad   \varrho - \rho -( 1 + \nicefrac{\vartheta}{2} ) \chi\geq 0.
\end{equation}
This ensures for all $ n \in \N $  with $ h_n \leq 1 $ that
\begin{equation}
\begin{split}
& \max \!\big\{ |h_n|^{2(\varrho - \rho -\chi)},|h_n|^{ 2(1-\alpha - \rho -( 1 + \nicefrac{\vartheta}{2} ) \chi)} , h_n \smallint\nolimits_{0}^T   \|  \sqrt{ \eta} \mathcal{O}_u^n \|_{H_{\varrho}}^2 \, du \big\}  \left|\max \!\big\{ |h_n|^{-\chi},  \smallint\nolimits_{0}^T   \|  \sqrt{ \eta} \mathcal{O}_u^n \|_{H_{\varrho}} \, du \big\}\right|^{ \vartheta}\\
& = \max \!\big\{ |h_n|^{2(\varrho - \rho -(1+\nicefrac{\vartheta}{2})\chi)},|h_n|^{ 2(1-\alpha - \rho -( 1 + \vartheta ) \chi)} , |h_n|^{1-\vartheta \chi} \smallint\nolimits_{0}^T   \|  \sqrt{ \eta} \mathcal{O}_u^n \|_{H_{\varrho}}^2 \, du \big\} \\
& \quad \cdot \left|\max \!\big\{ 1, |h_n|^{\chi} \smallint\nolimits_{0}^T   \|  \sqrt{ \eta} \mathcal{O}_u^n \|_{H_{\varrho}} \, du \big\}\right|^{ \vartheta} \\
& \leq \max \!\big\{ 1,  \smallint\nolimits_{0}^T   \|  \sqrt{ \eta} \mathcal{O}_u^n \|_{H_{\varrho}}^2 \, du \big\}  \left|\max \!\big\{ 1, \smallint\nolimits_{0}^T   \|  \sqrt{ \eta} \mathcal{O}_u^n \|_{H_{\varrho}} \, du \big\}\right|^{ \vartheta}.
\end{split}
\end{equation}
The fact that $\forall \, x,y \in H \colon \|x+y\|_H^2 \leq 2\|x\|_H^2 + 2\|y\|_H^2$ and \eqref{eq:thm_application} hence show that for all $t \in [0,T]$, $n \in \N$ with $ h_n \leq 1 $ it holds that
\begin{align}
\begin{split}
\| \Y^n_t \|_H^2  
&\leq 2 \, \| \mathbb{O}^n_t \|_H^2   +  2\int_0^t e^{ \int_s^t 2\phi( \mathbb{O}_{\lf u \rf_{h_n} }^n )  +2\eta (1+\beta) \, du} \Big[  2 \Phi\big(\mathbb{O}_{ \lf s \rf_{h_n} }^n \big)  + \tfrac{\eta}{2 \beta} \|\mathbb{O}_s^n\|_H^2    \\
&\quad +  \tfrac{ \theta  e^{ \kappa(2+ \vartheta)} [ 1+ (\kappa + \sqrt{\eta} +\sqrt{\eta}|\kappa-\eta|e^{ \eta} ) \|(\kappa-A)^{ \rho - \varrho } \|_{L(H)} + \sqrt{\theta} +  \sqrt{\eta}]^{2+\vartheta} \left|\max \{ 1, \smallint\nolimits_{0}^T   \|   \sqrt{\eta} \mathcal{O}_u^n \|_{H_{\varrho}} \, du \}\right|^{2+ \vartheta} }{(1- \varphi)( 1-\alpha - \rho)^{2+\vartheta}} \\
& \quad \cdot  \max \!\big\{ 1,  \smallint\nolimits_{0}^T   \|  \sqrt{ \eta} \mathcal{O}_u^n \|_{H_{\varrho}}^2 \, du \big\}  \left|\max \!\big\{ 1, \smallint\nolimits_{0}^T   \|  \sqrt{ \eta} \mathcal{O}_u^n \|_{H_{\varrho}} \, du \big\}\right|^{ \vartheta}\Big] ds\\
& = 2 \, \| \mathbb{O}^n_t \|_H^2   +  2\int_0^t e^{ \int_s^t 2\phi( \mathbb{O}_{\lf u \rf_{h_n} }^n )  +2\eta (1+\beta) \, du} \Big[  2 \Phi\big(\mathbb{O}_{ \lf s \rf_{h_n} }^n \big)  + \tfrac{\eta}{2 \beta} \|\mathbb{O}_s^n\|_H^2    \\
& \quad +  \tfrac{ \theta  e^{ \kappa(2+ \vartheta)} [ 1+ (\kappa + \sqrt{\eta} +\sqrt{\eta}|\kappa-\eta|e^{ \eta} ) \|(\kappa-A)^{ \rho - \varrho } \|_{L(H)} + \sqrt{\theta} +  \sqrt{\eta} ]^{2+\vartheta}  }{(1- \varphi)( 1-\alpha - \rho)^{2+\vartheta}} \\
& \quad  \cdot \left|\max \!\big\{ 1, \smallint\nolimits_{0}^T   \|   \sqrt{\eta} \mathcal{O}_u^n \|_{H_{\varrho}} \, du \big\}\right|^{2+ 2\vartheta} \max \!\big\{ 1,  \smallint\nolimits_{0}^T   \|  \sqrt{ \eta} \mathcal{O}_u^n \|_{H_{\varrho}}^2 \, du \big\}  \Big] ds \\
& \leq 2 \, \| \mathbb{O}^n_t \|_H^2   +  2 \bigg[1 +  \tfrac{ \theta  e^{ \kappa(2+ \vartheta)} [ 1+ (\kappa + \sqrt{\eta} +\sqrt{\eta}|\kappa-\eta|e^{ \eta} ) \|(\kappa-A)^{ \rho - \varrho } \|_{L(H)} + \sqrt{\theta} +  \sqrt{\eta}]^{2+\vartheta}  }{(1- \varphi)( 1-\alpha - \rho)^{2+\vartheta}}\bigg]     \\
& \quad \cdot \int_0^t e^{ \int_s^t 2\phi( \mathbb{O}_{\lf u \rf_{h_n} }^n )  +2\eta (1+\beta) \, du} \Big[  2 \Phi\big(\mathbb{O}_{ \lf s \rf_{h_n} }^n \big)  + \tfrac{\eta}{2 \beta} \|\mathbb{O}_s^n\|_H^2 \\
& \quad + \left|\max \!\big\{ 1, \smallint\nolimits_{0}^T   \|   \sqrt{\eta} \mathcal{O}_u^n \|_{H_{\varrho}} \, du \big\}\right|^{2+ 2\vartheta} \max \!\big\{ 1,  \smallint\nolimits_{0}^T   \|  \sqrt{ \eta} \mathcal{O}_u^n \|_{H_{\varrho}}^2 \, du \big\}  \Big] ds.
\end{split}
\end{align}
This, the assumption that $p \in [2, \infty)$, the fact that $\forall \, a, b \in \R \colon |a+b|^{\nicefrac{p}{2}} \leq 2^{\nicefrac{p}{2}-1} |a|^{\nicefrac{p}{2}} + 2^{\nicefrac{p}{2}-1} |b|^{\nicefrac{p}{2}} $, and H\"older's inequality prove  for all $t \in [0,T]$, $n \in \N$ with $ h_n \leq 1 $   that
\begin{align}
\label{eq:thm:bound}
\begin{split}
&\| \Y^n_t \|_H^p 
\leq 2^{p-1}\| \mathbb{O}^n_t \|_H^p   +  2^{p-1} \bigg[1 +  \tfrac{ \theta  e^{ \kappa(2+ \vartheta)} [ 1+ (\kappa + \sqrt{\eta} +\sqrt{\eta}|\kappa-\eta|e^{ \eta} ) \|(\kappa-A)^{ \rho - \varrho } \|_{L(H)} + \sqrt{\theta} +  \sqrt{\eta} ]^{2+\vartheta}  }{(1- \varphi)( 1-\alpha - \rho)^{2+\vartheta}}\bigg]^{\nicefrac{p}{2}}     \\
&  \quad \cdot \bigg[\int_0^t e^{ \int_s^t 2\phi( \mathbb{O}_{\lf u \rf_{h_n} }^n )  +2\eta (1+\beta) \, du} \Big[  2 \Phi\big(\mathbb{O}_{ \lf s \rf_{h_n} }^n \big)  + \tfrac{\eta}{2 \beta} \|\mathbb{O}_s^n\|_H^2 \\
& \quad  + \left|\max \!\big\{ 1, \smallint\nolimits_{0}^T   \|   \sqrt{\eta} \mathcal{O}_u^n \|_{H_{\varrho}} \, du \big\}\right|^{2+ 2\vartheta} \max \!\big\{ 1,  \smallint\nolimits_{0}^T   \|  \sqrt{ \eta} \mathcal{O}_u^n \|_{H_{\varrho}}^2 \, du \big\}  \Big] ds \bigg]^{\nicefrac{p}{2}}\\
&\leq 2^{p-1}\| \mathbb{O}^n_t \|_H^p   +  2^{p-1}  t^{\nicefrac{p}{2}-1} \bigg[1 +  \tfrac{ \theta  e^{ \kappa(2+ \vartheta)} [ 1+ (\kappa + \sqrt{\eta} +\sqrt{\eta}|\kappa-\eta|e^{ \eta} ) \|(\kappa-A)^{ \rho - \varrho } \|_{L(H)} + \sqrt{\theta} +  \sqrt{\eta} ]^{2+\vartheta}  }{(1- \varphi)( 1-\alpha - \rho)^{2+\vartheta}}\bigg]^{\nicefrac{p}{2}}     \\
& \quad \cdot \int_0^t e^{ \int_s^t p \, \phi( \mathbb{O}_{\lf u \rf_{h_n} }^n )  +p\eta (1+\beta) \, du} \Big[  2 \Phi\big(\mathbb{O}_{ \lf s \rf_{h_n} }^n \big)  + \tfrac{\eta}{2 \beta} \|\mathbb{O}_s^n\|_H^2 \\
& \quad + \left|\max \!\big\{ 1, \smallint\nolimits_{0}^T   \|   \sqrt{\eta} \mathcal{O}_u^n \|_{H_{\varrho}} \, du \big\}\right|^{2+ 2\vartheta} \max \!\big\{ 1,  \smallint\nolimits_{0}^T   \|  \sqrt{ \eta} \mathcal{O}_u^n \|_{H_{\varrho}}^2 \, du \big\}  \Big]^{\nicefrac{p}{2}} ds .
\end{split}
\end{align}
The proof of Proposition~\ref{prop:main_det:2} is thus completed.
\end{proof}

\subsection{Strong convergence}
\label{subsec:thm:strong}

\begin{theorem}
\label{thm:strong}
Consider the notation in Subsection~\ref{sec:notation},
let $ ( H, \left< \cdot , \cdot \right>_H, \left\| \cdot \right\|_H ) $ be a separable $\mathbb{R}$-Hilbert space,
let $ ( \Omega, \F, \P ) $ be a probability space,
let $ \mathbb{H} \subseteq H$ be a nonempty orthonormal basis of $ H $,
let $\eta, \theta,  \kappa \in [0, \infty)$,
let $\lambda \colon \mathbb{H} \to \R $
be a function which satisfies $\inf_{b \in \mathbb{H}} \lambda_b > -\min\{\eta, \kappa\}$,
let $ A \colon D(A) \subseteq H \to H $
be the linear operator which satisfies
$ D(A) = \{ v \in H \colon \sum_{b \in \mathbb{H}} | \lambda_b \langle b , v \rangle_H |^2 < \infty \} $
and
$ \forall \, v \in D(A) \colon A v = \sum_{b \in \mathbb{H}} - \lambda_b \langle b , v \rangle_H b$,
let $ ( H_r, \left< \cdot , \cdot \right>_{ H_r }, \left\| \cdot \right\|_{ H_r } ) $, $ r \in \R $,
be a family of interpolation spaces associated to $ \kappa- A $,
let $\varphi \in [0,1)$, $ \alpha \in [ 0, \nicefrac{1}{2}]$,  $\rho \in [0, 1- \alpha)$, $\varrho \in (\rho, 1-\alpha)$, $  \vartheta,T \in (0,\infty)$, $ \chi \in ( 0, \min\{  \nicefrac{ ( 1 - \alpha - \rho ) }{( 1 + 2 \vartheta ) }, \nicefrac{( \varrho - \rho ) }{( 1 + \vartheta) } \}] $, $p \in [2, \infty)$,  $F \in \mathcal{C}(H_{\varrho}, H_{-\alpha} )$,
let
$ \phi, \Phi \colon H_1 \to  [0,\infty) $,
$ ( \mathbb{H}_n )_{ n \in \N } \colon \N \to \mathcal{P}_0( \mathbb{H} ) $,
$ ( P_n )_{ n \in \N } \colon \N \to L( H_{ -1 } ) $,
$(h_n)_{n \in \N} \colon \N \to (0, T] $,
and
$ \mathbb{X}^n, \mathbb{O}^n \colon [0,T]\times \Omega \to H_{\varrho} $, $ n \in \N $,
be functions,
let $ \Y^n \colon [0,T] \times \Omega \to H_{ \varrho } $, $ n \in \N $, be stochastic processes,
let $ \mathcal{O}^n \colon [0,T] \times \Omega \to P_n( H ) $, $ n \in \N $,
and
$ X, O \colon [0, T] \times \Omega \to H_{ \varrho} $
be stochastic processes with continuous sample paths,
and assume
for all $u \in H$, $n \in \N$, $v, w \in P_n(H)$, $ t \in [0,T] $ that
$ P_n(u) = \sum_{ b \in \mathbb{H}_n } \langle b, u \rangle_H b $,
$ \left< v, P_n F( v + w ) \right>_H \leq \phi( w ) \| v \|^2_H + \varphi \| (\eta-A)^{\nicefrac{1}{2}} v \|^2_{ H} + \Phi( w ) $,
$ \left\| F(v) - F(w) \right\|_{ H_{ - \alpha } } \leq \theta \, ( 1 + \| v \|_{ H_{ \rho } }^{ \vartheta } + \|w\|_{H_{\rho}}^{\vartheta}) \, \|v-w\|_{H_{\rho}} $,
$ \liminf_{ m \to \infty } \inf( \{\lambda_b \colon b \in \mathbb{H} \backslash \mathbb{H}_m \} \cup \{\infty\} ) = \infty $,
$\limsup_{m \to \infty}  \E \big[\!\min\!\big\{ 1, \sup_{s \in [0,T]}\| O_s- \mathcal{O}_s^{m} \|_{ H_{ \varrho } } \big\} + h_m \big] = 0$,
$\mathbb{O}_t^n = \mathcal{O}_t^n - \int_0^t e^{(t-s)(A-\eta)} \, \eta \mathcal{O}_s^n \, ds$,
$ \mathbb{X}_t^n  =  \smallint\nolimits_0^t P_n \,  e^{  ( t - s ) A } \, \one_{ \{ \| \Y_{ \lf s \rf_{h_n} }^n \|_{ H_{ \varrho } } +  \| \mathcal{O}_{ \lf s \rf_{h_n} }^n \|_{ H_{ \varrho } }  \leq | h_n|^{ - \chi } \}} \, F \big(  \Y_{ \lf s \rf_{ h_n } }^n \big) \, ds  +\mathcal{O}_t^n$,
$ \limsup_{ m \to \infty} \sup_{ s \in [0,T]} \E \bigl[ \| \mathbb{O}_s^m \|_H^p \bigr] < \infty$,
$ \P\big( X_t = \int_0^t e^{ ( t - s ) A } \, F( X_s ) \, ds + O_t \big)=\P (\mathbb{X}_t^n  = \Y_t^n )=1$,
and
\begin{equation}
\label{eq:integrability}
 \limsup_{ m \to \infty} \E\biggl[ \int_0^T e^{ \int_s^T p\, \phi( \mathbb{O}_{\lf u \rf_{h_m} }^m )  \, du} \max\bigl\{ 1, |\Phi(\mathbb{O}_{ \lf s \rf_{h_m} }^m )|^{\nicefrac{p}{2}}, \|\mathbb{O}_s^m\|_H^p, \smallint\nolimits_{0}^T \| \mathcal{O}_u^m \|_{H_{\varrho}}^{2p+ 2p\vartheta} \, du \bigr\} \, ds\biggr] < \infty.
\end{equation}
Then
\begin{enumerate}[(i)]
\item \label{item:exp} it holds that $\limsup_{n \to \infty}  \E \big[\!\min\!\big\{ 1, \sup_{t \in [0,T]}\| X_t- \mathbb{X}_t^{n} \|_{ H_{ \varrho } } \big\} \big] = 0$, 
\item \label{item:pnorm} it holds that $ \limsup_{ n \to \infty } \sup_{ t \in [0,T] } \E\big[ \| X_t \|^p_H + \| \Y^n_t \|_H^p \big] < \infty $, and
\item \label{item:strong} it holds for all $ q \in (0, p) $ that $ \limsup_{ n \to \infty } \sup_{ t \in [0,T] } \E\big[ \| X_t - \Y_t^n \|_H^q \big] = 0 $.
\end{enumerate}
\end{theorem}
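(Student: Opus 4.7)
My plan is to lift the pathwise results of Propositions~\ref{prop:main_det} and~\ref{prop:main_det:2} to the stochastic setting by means of the subsequence principle (Lemma~\ref{lem:subsequence}) and the fast-convergence principle (Lemma~\ref{lem:fast_convergence}), and then to upgrade the resulting uniform in-probability convergence to $L^q$-convergence via the uniform $p$-th moment bound supplied by Proposition~\ref{prop:main_det:2}.

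For~\eqref{item:exp}, given any strictly increasing $k\colon\N\to\N$, the hypothesis $\limsup_m \E\bigl[\min\{1,\sup_{s\in[0,T]}\|O_s-\mathcal{O}_s^m\|_{H_\varrho}\}+h_m\bigr]=0$ allows me to select a strictly increasing $l\colon\N\to\N$ with
\begin{equation*}
\smallsum_m \E\bigl[\min\{1,\sup\nolimits_{s\in[0,T]}\|O_s-\mathcal{O}_s^{k(l(m))}\|_{H_\varrho}\}+h_{k(l(m))}\bigr]<\infty.
\end{equation*}
Lemma~\ref{lem:fast_convergence} then yields $\sup_s\|O_s-\mathcal{O}_s^{k(l(m))}\|_{H_\varrho}+h_{k(l(m))}\to 0$ almost surely, and on this event of full measure I apply Proposition~\ref{prop:main_det} pathwise to obtain $\sup_t\|X_t-\mathbb{X}_t^{k(l(m))}\|_{H_\varrho}\to 0$ almost surely. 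Bounded convergence produces $\E[\min\{1,\sup_t\|X_t-\mathbb{X}_t^{k(l(m))}\|_{H_\varrho}\}]\to 0$, and Lemma~\ref{lem:subsequence} (applied with $E=[0,\infty)$ endowed with the Euclidean topology) upgrades this subsequential statement to the full limit in~\eqref{item:exp}.

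For~\eqref{item:pnorm}, I first convert the polynomially growing Lipschitz hypothesis on $F$ into the quadratic bounds required by Proposition~\ref{prop:main_det:2}: setting $w=0$ and squaring gives $\|F(v)\|_{H_{-\alpha}}^2\leq C\max\{1,\|v\|_{H_\varrho}^{2+2\vartheta}\}$, while the boundedness of $(\eta-A)^{-\nicefrac{1}{2}}$ from $H_{-\alpha}$ to $H$ (using $\alpha\leq \nicefrac{1}{2}$) together with $\|w\|^{2\vartheta}\leq 2^{2\vartheta}(\|v\|^{2\vartheta}+\|v-w\|^{2\vartheta})$ yields the corresponding bound on $\|(\eta-A)^{-\nicefrac{1}{2}}[F(v)-F(w)]\|_H^2$; the $\chi$-constraints of the two statements match precisely after the replacement $\vartheta\mapsto 2\vartheta$. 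Proposition~\ref{prop:main_det:2}\eqref{item:bound}, applied pathwise, then gives an upper estimate for $\|\Y_t^n\|_H^p$ which, upon taking expectation and invoking~\eqref{eq:integrability} together with $\limsup_m\sup_s\E[\|\mathbb{O}_s^m\|_H^p]<\infty$, yields $\sup_n\sup_t\E[\|\Y_t^n\|_H^p]<\infty$. The bound $\sup_t\E[\|X_t\|_H^p]<\infty$ follows by Fatou's lemma along the almost surely convergent subsequence constructed in the first step, since $\mathbb{X}_t^{k(l(m))}\to X_t$ in $H_\varrho\hookrightarrow H$ a.s.

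For~\eqref{item:strong}, the identity $\P(\mathbb{X}_t^n=\Y_t^n)=1$ and the embedding $H_\varrho\hookrightarrow H$ translate~\eqref{item:exp} into pointwise convergence $\|X_t-\Y_t^n\|_H\to 0$ in probability, uniformly in $t\in[0,T]$; combined with the uniform $p$-th moment bound from~\eqref{item:pnorm}, a standard uniform-integrability argument gives $L^q$-convergence uniformly in $t$ for every $q\in(0,p)$. The main obstacle I anticipate is technical rather than conceptual: faithfully verifying the algebraic form of the hypotheses of Proposition~\ref{prop:main_det:2} from the more natural Lipschitz hypothesis on $F$ in Theorem~\ref{thm:strong}, and keeping careful track of the identification between the deterministic-pathwise object $\mathbb{X}^n$ (to which the pathwise propositions directly apply) and the stochastic process $\Y^n$ (on which the theorem's conclusions are stated).
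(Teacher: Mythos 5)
Your proposal is correct and follows essentially the same route as the paper: subsequence extraction plus Lemma~\ref{lem:fast_convergence}, pathwise application of Propositions~\ref{prop:main_det} and~\ref{prop:main_det:2} on a full-measure event where $\mathbb{X}^n$ and $\Y^n$ agree, dominated convergence, and Lemma~\ref{lem:subsequence} for \eqref{item:exp}; then the a priori bound from Proposition~\ref{prop:main_det:2} together with \eqref{eq:integrability} for \eqref{item:pnorm} and a uniform-integrability upgrade for \eqref{item:strong}. The only minor divergences are cosmetic: you carry out the conversion of the Lipschitz hypothesis on $F$ into the two quadratic bounds with $\vartheta\mapsto 2\vartheta$ by hand, where the paper invokes Lemma~2.4 of Hutzenthaler et al.~\cite{Salimova2016}, and you obtain $\sup_t\E[\|X_t\|_H^p]<\infty$ via Fatou along the a.s.\ convergent subsequence, where the paper cites Proposition~4.5 of~\cite{Salimova2016}, which handles both that bound and the $L^q$-convergence in one stroke.
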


\begin{proof}[Proof of Theorem~\ref{thm:strong}]
Throughout this proof let $ \tilde{\Omega} $ be the set given by
\begin{equation}
\begin{split}
\tilde{\Omega} & =
\Bigl\{\omega \in \Omega \colon  \Bigl(\forall \, m \in \N, s \in  [0, T] \colon \mathbb{X}_{\lf s \rf_{h_m}}^m(\omega)= \Y_{\lf s \rf_{h_m}}^m(\omega) \Bigr)\Bigr\}
\\ & \quad
\cap
\biggl\{  \omega \in \Omega \colon \biggl( \forall \, s \in [0,T] \colon  X_s(\omega) = \int_0^s e^{ ( s - u ) A } \, F( X_u(\omega) ) \, du + O_s(\omega)\biggr)\biggr\},
\end{split}
\end{equation}
let $ \tilde{\Y}^n \colon [0,T] \times \Omega \to H_{\varrho} $, $ n \in \N $,
be the sequence which satisfies for all $n \in \N$, $t \in [0, T]$  that
\begin{align}
\label{eq:tilde_X}
\tilde{\Y}_t^n  =  \int_0^t P_n \,  e^{  ( t - s ) A } \, \one_{ \{ \|\tilde{\Y}_{ \lf s \rf_{h_n} }^n \|_{ H_{ \varrho } } +  \| \mathcal{O}_{ \lf s \rf_{h_n} }^n \|_{ H_{ \varrho } }  \leq | h_n|^{ - \chi } \}} \, F \big(  \tilde{\Y}_{ \lf s \rf_{ h_n } }^n \big) \, ds  +\mathcal{O}_t^n,
\end{align} 
and let $ \tilde{\theta} \in [0, \infty)$, $\tilde{\vartheta} \in (0,\infty)$ 
be the real numbers given by $ \tilde{\vartheta} = 2 \vartheta $ 
and 
\begin{multline}
\tilde{\theta} = 
\max\{1, \|(\eta-A)^{-1} (\kappa-A)\|_{L(H)}\}  \max\!\bigg\{ \big(8 \theta^2 + 2 \, \| F(0) \|_{ H_{ - \alpha } }^2 \big) \max\!\bigg\{ 1, \sup_{
	u \in H_{ \varrho } \backslash \{ 0 \} }\tfrac{ \| u \|_{ H_{ \rho } }^{ 2 + 2 \vartheta } 
}{\| u \|_{ H_{ \varrho } }^{ 2 + 2 \vartheta } } \bigg\}  ,\\
3 \, \theta^2  \bigg[\sup_{ u \in H_{ - \alpha } \backslash\{ 0 \} }\tfrac{ \| u \|_{ H_{  \nicefrac{-1}{2} } }^2 }{\| u \|_{ H_{ - \alpha } }^2 } \bigg] \bigg[1 + \sup_{ u \in H_{ \varrho } \backslash \{ 0 \} }\tfrac{
	\| u \|_{ H_{ \rho } }^{ 2 \vartheta } }{\| u \|_{H_{\varrho}}^{2 \vartheta}} \bigg]
\big(1+2^{\max\{2\vartheta-1, 0\}}\big)  \! \bigg\}.
\end{multline}
Observe that for all $n \in \N$ it holds that $X$, $O$, $\mathbb{X}^n$, $\mathcal{O}^n$, $\mathbb{O}^n$,  $\tilde{\Y}^n$ are stochastic processes with continuous sample paths.
In addition, note that the assumption that $X, O \colon [0, T] \times \Omega \to H_{\varrho}$ are stochastic processes with continuous sample paths
and
the assumption that
$\forall \, t \in [0, T] \colon  \P\big(X_t = \int_0^t e^{ ( t - s ) A } \, F( X_s ) \, ds + O_t\big)=1$
show that
\begin{equation}
\biggl\{  \omega \in \Omega \colon \biggl(\forall \, t \in [0,T] \colon  X_t(\omega) = \int_0^t e^{ ( t - s ) A } \, F( X_s(\omega) ) \, ds + O_t(\omega) \biggr)\biggr\} \in \F
\end{equation}
and
\begin{equation}
\P\biggl(\forall \, t \in [0,T] \colon  X_t = \int_0^t e^{ ( t - s ) A } \, F( X_s ) \, ds + O_t\biggr)=1.
\end{equation}
This and the assumption that
$\forall \, n\in \N, \, t \in [0, T] \colon \P(\mathbb{X}_t^n = \Y_t^n)=1$
yield that $\tilde{\Omega} \in \F$ and $\P(\tilde{\Omega})=1$.
In the next step let $k \colon \N \to \N $ be a strictly increasing function.
The fact that
$\limsup_{n \to \infty}  \E \bigl[\min\bigl\{ 1, \sup_{t \in [0,T]}\| O_t- \mathcal{O}_t^{n} \|_{ H_{ \varrho } } \bigr\} \bigr] = 0$
assures that
\begin{equation}
\limsup_{n \to \infty}  \E \biggl[ \min \biggl\{ 1, \sup_{t \in [0,T]}\| O_t- \mathcal{O}_t^{k(n)} \|_{ H_{ \varrho } } \biggr\} \biggr] = 0.
\end{equation}
This implies that there exists a strictly increasing function $l \colon \N \to \N $ such that
\begin{equation}
\sum_{n=1}^{\infty}  \E \biggl[ \min \biggl\{ 1, \sup_{t \in [0,T]}\| O_t- \mathcal{O}_t^{k(l(n))} \|_{ H_{ \varrho } } \biggr\} \biggr] < \infty.
\end{equation}
Lemma~\ref{lem:fast_convergence}
(with
$(\Omega, \mathcal{F}, \P)= (\Omega, \mathcal{F}, \P)$,
$E=\R$,
$d= ( \R \times \R \ni (x,y) \mapsto | x - y | \in [0,\infty) )$,
$(X_n)_{n \in \N} = ( \Omega \ni \omega \mapsto
\sup_{t \in [0,T]}\| O_t( \omega ) - \mathcal{O}_t^{k(l(n))}( \omega ) \|_{ H_{ \varrho } }  \in \R )_{n \in \N}$,
$X_0 = ( \Omega \ni \omega \mapsto 0 \in \R ) $
in the notation of Lemma~\ref{lem:fast_convergence})
hence proves that
\begin{equation}
\P\biggl(\limsup_{n  \to \infty} \sup_{t \in [0,T]}\| O_t- \mathcal{O}_t^{k(l(n))} \|_{ H_{ \varrho } } = 0\biggr)=1.
\end{equation}
Combining this,
\eqref{eq:tilde_X},
the fact that $\forall \, \omega \in \tilde{\Omega}, \, t \in [0,T] \colon  X_t(\omega) = \int_0^t e^{ ( t - s ) A } \, F( X_s(\omega) ) \, ds + O_t(\omega)$,
and
the fact that $\P(\tilde{ \Omega })=1$
with Proposition~\ref{prop:main_det}
(with $ H = H $,
$ \mathbb{H}= \mathbb{H}$,
$\kappa=\kappa$,
$A=A$,
$\alpha = \alpha$,
$ \varrho = \varrho$,
$T =T$,
$\chi = \chi $,
$ F = F $,
$X=([0,T] \ni t \mapsto X_t(\omega) \in H_{\varrho})$,
$O= ([0,T] \ni t \mapsto O_t(\omega) \in H_{\varrho})$,
$ ( \mathbb{H}_n )_{ n \in \N } =  ( \mathbb{H}_{k(l(n))} )_{ n \in \N }$,
$ ( P_n )_{ n \in \N } = ( H \ni v \mapsto P_{k(l(n))}(v) \in H )_{ n \in \N }$,
$ (h_n)_{n \in \N} = (h_{k(l(n))})_{n \in \N}$,
$ (\Y^n)_{n \in \N} = ( [0,T] \ni t \mapsto \tilde{\Y}^{k(l(n))}_t( \omega ) \in H_{\varrho} )_{n \in \N} $,
$(\mathcal{O}^n)_{n \in \N} = ( [0,T] \ni t \mapsto \mathcal{O}^{k(l(n))}_t( \omega ) \in H_{\varrho} )_{n \in \N} $
for  $\omega \in \{ \varpi \in \Omega \colon \limsup_{n \to \infty} \sup_{t \in [0,T]}\| O_t(\varpi)- \mathcal{O}_t^{k(l(n))}(\varpi) \|_{ H_{ \varrho } } = 0\} \cap \tilde{\Omega}$ in the notation of Proposition~\ref{prop:main_det})
establishes that
\begin{equation}
\P\biggl(\limsup_{n \to \infty} \sup_{t \in [0, T]} \| X_t- \tilde{\Y}_t^{k(l(n))} \|_{H_\varrho}=0\biggr)=1.
\end{equation}
The fact that
$\forall \, \omega \in \tilde{\Omega}, \, t\in [0,T], \, n \in \N \colon \tilde{\Y}^n_t(\omega)  =   \mathbb{X}_t^n(\omega)$
and
the fact that
$\P(\tilde{\Omega})=1$
hence show that
\begin{equation}
\P\biggl(\limsup_{n \to \infty} \sup_{t \in [0, T]} \| X_t- \mathbb{X}_t^{k(l(n))} \|_{H_\varrho}=0\biggr)=1.
\end{equation}
This and Lebesgue's theorem of dominated convergence imply that
\begin{equation}
\limsup_{n \to \infty}  \E \biggl[\min\biggl\{ 1, \sup_{t \in [0,T]}\| X_t- \mathbb{X}_t^{k(l(n))} \|_{ H_{ \varrho } } \biggr\} \biggr] = 0.
\end{equation}
As $k \colon \N \to \N $ was an arbitrary strictly increasing function,
Lemma~\ref{lem:subsequence} proves that
\begin{equation}
\limsup_{n \to \infty}
\E \biggl[\min\biggl\{ 1, \sup_{t \in [0,T]}\| X_t- \mathbb{X}_t^{n} \|_{ H_{ \varrho } } \biggr\} \biggr] = 0.
\end{equation}
This concludes \eqref{item:exp}.
Next
note that, e.g., Lemma~2.4 in Hutzenthaler et al.~\cite{Salimova2016} (with $V=H_{\varrho}$, $\mathcal{V}=H_{\rho}$, $W=H_{-\alpha}$, $\mathcal{W}= H_{\nicefrac{-1}{2}}$, $\epsilon=\theta$, $\theta=(\max\{1, \|(\eta-A)^{-1} (\kappa-A)\|_{L(H)}\})^{-1} \tilde{\theta}$, $\varepsilon= \vartheta$, $\vartheta = \tilde{\vartheta}$, $F=F$ in the notation of Lemma~2.4 in Hutzenthaler et al.~\cite{Salimova2016}) ensures that for all $v, w \in H_{\varrho}$ it holds that 
\begin{align}
\label{eq:F_condition_proof_1}
\begin{split}
\| (\eta -A)^{\nicefrac{-1}{2}} [F(v) - F(w)] \|_{H}^2  & \leq  \|(\eta-A)^{\nicefrac{-1}{2}} (\kappa-A)^{\nicefrac{1}{2}} \|_{L(H)}^2 \|F(v)- F(w)\|_{H_{\nicefrac{-1}{2}}}^2 \\
&\leq \max\{1, \|(\eta-A)^{-1} (\kappa-A)\|_{L(H)}\}  \|F(v)- F(w)\|_{H_{\nicefrac{-1}{2}}}^2 \\
&\leq \tilde{\theta} \max\{1, \|v\|_{H_{\varrho}}^{\tilde{\vartheta}}\}\|v-w\|_{H_{\rho}}^2 + \tilde{\theta} \, \|v-w\|_{H_{\rho}}^{2+\tilde{\vartheta}}
\end{split}
\end{align}
and 
\begin{equation}
\label{eq:F_condition_proof_2}
\|F(v)\|_{H_{-\alpha}}^2 \leq \tilde{\theta} \max\{ 1, \|v\|_{H_{\varrho}}^{2 + \tilde{\vartheta}} \}.
\end{equation}
Furthermore, observe that the assumption that $\chi \in ( 0, \nicefrac{ ( 1 - \alpha - \rho ) }{( 1 + 2 \vartheta) }]\cap (0, \nicefrac{( \varrho - \rho ) }{( 1 + \vartheta) }]$ assures that $\chi \in ( 0, \nicefrac{ ( 1 - \alpha - \rho ) }{( 1 + \tilde{ \vartheta}) }]\cap (0, \nicefrac{( \varrho - \rho ) }{( 1 + \nicefrac{\tilde{\vartheta}}{2} ) }]$. Combining this, \eqref{eq:F_condition_proof_1}, and \eqref{eq:F_condition_proof_2} with  Proposition~\ref{prop:main_det:2} (with
 $ H = H $,
 $ \mathbb{H}= \mathbb{H}$,
$\eta=\eta$,
$ \theta= \tilde{\theta}$, $\vartheta = \tilde{\vartheta}$, $\kappa=\kappa$, $A=A$,
$ \varphi= \varphi$, $\alpha = \alpha$,
$\rho= \rho$, $ \varrho = \varrho$,
$\beta=1$,
$T =T$,
$\chi = \chi $, $p=p$, 
$ F = F $, $ \phi = \phi$, $ \Phi = \Phi $,
$ ( \mathbb{H}_n )_{ n \in \N } =  ( \mathbb{H}_n )_{ n \in \N }$,
$ ( P_n )_{ n \in \N } = ( P_n )_{ n \in \N }$,
$ (h_n)_{n \in \N} = (h_n)_{n \in \N}$,
$ (\Y^n)_{n \in \N} = ( [0,T] \ni t \mapsto \tilde{\Y}^{n}_t( \omega ) \in H_{\varrho} )_{n \in \N} $, 
$ (\mathbb{O}^n)_{n \in \N} = ( [0,T] \ni t \mapsto \mathbb{O}^{n}_t( \omega ) \in H_{\varrho} )_{n \in \N} $, 
$(\mathcal{O}^n)_{n \in \N} = ( [0,T] \ni t \mapsto \mathcal{O}^{n}_t( \omega ) \in P_n(H) )_{n \in \N} $         for  $\omega \in \Omega$ in the notation of Proposition~\ref{prop:main_det:2}) shows that for all $t \in [0, T]$, $ n \in \N $ with $ h_n \leq 1 $ it holds that
\begin{align}
\label{eq:cor_1bound}
\begin{split}
\| \tilde{\Y}^n_t \|_H^p  
&\leq 2^{p-1}\| \mathbb{O}^n_t \|_H^p   +  2^{p-1} \, t^{\nicefrac{p}{2}-1} \bigg[ 1+ \tfrac{ \tilde{\theta}  e^{ \kappa(2+ \tilde{\vartheta})} [ 1+ (\kappa + \sqrt{\eta} +\sqrt{\eta}|\kappa-\eta|e^{ \eta} ) \|(\kappa-A)^{ \rho - \varrho } \|_{L(H)} + |\tilde{\theta}|^{\nicefrac{1}{2}} +  \sqrt{\eta} ]^{2+\tilde{\vartheta}} }{(1- \varphi)( 1-\alpha - \rho)^{2+\tilde{\vartheta}}} \bigg]^{\nicefrac{p}{2}}    \\
& \quad \cdot\int_0^t e^{ \int_s^t p \, \phi( \mathbb{O}_{\lf u \rf_{h_n} }^n )  +2p\eta  \, du} \Big[  2 \Phi\big(\mathbb{O}_{ \lf s \rf_{h_n} }^n \big)  + \tfrac{\eta}{2} \|\mathbb{O}_s^n\|_H^2  \\
& \quad  + \left|\max \!\big\{ 1, \smallint\nolimits_{0}^T   \|  \sqrt{\eta} \mathcal{O}_u^n \|_{H_{\varrho}} \, du \big\}\right|^{2+ 2\tilde{\vartheta}} \max \!\big\{ 1,  \smallint\nolimits_{0}^T   \|  \sqrt{ \eta} \mathcal{O}_u^n \|_{H_{\varrho}}^2 \, du \big\}  \Big]^{\nicefrac{p}{2}} ds.
\end{split}
\end{align}
Next observe that H\"older's inequality implies  for all $n \in \N$  that 
\begin{align}
\begin{split}
& \left|\max \!\big\{ 1, \smallint\nolimits_{0}^T   \|  \sqrt{\eta} \mathcal{O}_u^n \|_{H_{\varrho}} \, du \big\}\right|^{2+ 2\tilde{\vartheta}} \max \!\big\{ 1,  \smallint\nolimits_{0}^T   \|  \sqrt{ \eta} \mathcal{O}_u^n \|_{H_{\varrho}}^2 \, du \big\}  \\
& =  \left|\max \!\big\{ 1, \smallint\nolimits_{0}^T   \|  \sqrt{\eta} \mathcal{O}_u^n \|_{H_{\varrho}} \, du \big\}\right|^{2+ 4\vartheta} \max \!\big\{ 1,  \smallint\nolimits_{0}^T   \|  \sqrt{ \eta} \mathcal{O}_u^n \|_{H_{\varrho}}^2 \, du \big\}  \\
& \leq |\max\{1, \eta\}|^{2+2\vartheta} \left|\max \!\big\{ 1, \smallint\nolimits_{0}^T   \|  \mathcal{O}_u^n \|_{H_{\varrho}} \, du \big\}\right|^{2+ 4\vartheta} \max \!\big\{ 1,  \smallint\nolimits_{0}^T   \|   \mathcal{O}_u^n \|_{H_{\varrho}}^2 \, du \big\} \\
&\leq |\max\{1, \eta\}|^{2+2\vartheta} \left|\max \!\big\{ 1, T \smallint\nolimits_{0}^T   \|   \mathcal{O}_u^n \|_{H_{\varrho}}^2 \, du \big\}\right|^{1+ 2\vartheta} \max \!\big\{ 1,  \smallint\nolimits_{0}^T   \|   \mathcal{O}_u^n \|_{H_{\varrho}}^2 \, du \big\} \\
&\leq |\max\{1, \eta\}|^{2+2\vartheta} \, |\max\{1, T\}|^{1+2\vartheta} \left|\max \!\big\{ 1, \smallint\nolimits_{0}^T   \|   \mathcal{O}_u^n \|_{H_{\varrho}}^2 \, du \big\}\right|^{2+ 2\vartheta} \\ 
&\leq |\max\{1, \eta\}|^{2+2\vartheta} \, |\max\{1, T\}|^{1+2\vartheta} \left|\max \!\big\{ 1, T^{p+p\vartheta-1} \smallint\nolimits_{0}^T   \|   \mathcal{O}_u^n \|_{H_{\varrho}}^{2p+2p\vartheta} \, du \big\}\right|^{\nicefrac{2}{p}} \\ 
&\leq |\max\{1, \eta, T\}|^{5+6\vartheta}  \left|\max \!\big\{ 1,  \smallint\nolimits_{0}^T   \|   \mathcal{O}_u^n \|_{H_{\varrho}}^{2p+2p\vartheta} \, du \big\}\right|^{\nicefrac{2}{p}}.
\end{split}
\end{align}
The fact that $\forall \, a, b, c \in \R \colon |a+b+c|^{\nicefrac{p}{2}} \leq 3^{\nicefrac{p}{2}-1} (|a|^{\nicefrac{p}{2}} + |b|^{\nicefrac{p}{2}} + |c|^{\nicefrac{p}{2}})$ together with \eqref{eq:cor_1bound} hence establishes  for all $t \in [0,T]$, $ n \in \N $ with $ h_n \leq 1 $  that 
\begin{align}
\begin{split}
&\| \tilde{\Y}^n_t \|_H^p 
\leq 2^{p-1}\| \mathbb{O}^n_t \|_H^p   +  2^{p-1} \, t^{\nicefrac{p}{2}-1} \bigg[ 1+ \tfrac{ \tilde{\theta}  e^{ \kappa(2+ 2\vartheta)} [ 1+ (\kappa + \sqrt{\eta} +\sqrt{\eta}|\kappa-\eta|e^{ \eta} ) \|(\kappa-A)^{ \rho - \varrho } \|_{L(H)} + |\tilde{\theta}|^{\nicefrac{1}{2}} +  \sqrt{\eta} ]^{2+2\vartheta} }{(1- \varphi)( 1-\alpha - \rho)^{2+2\vartheta}} \bigg]^{\nicefrac{p}{2}}    \\
 & \quad \cdot\int_0^t e^{ \int_s^t p \, \phi( \mathbb{O}_{\lf u \rf_{h_n} }^n ) +2  p\eta  \, du} \bigg[  2 \Phi\big(\mathbb{O}_{ \lf s \rf_{h_n} }^n \big)  + \tfrac{\eta}{2 } \|\mathbb{O}_s^n\|_H^2\\
& \quad  + |\max\{1, \eta, T\}|^{5+6\vartheta}  \left|\max \!\big\{ 1,  \smallint\nolimits_{0}^T   \|   \mathcal{O}_u^n \|_{H_{\varrho}}^{2p+2p\vartheta} \, du \big\}\right|^{\nicefrac{2}{p}}  \bigg]^{\nicefrac{p}{2}} ds\\
&\leq 2^{p-1}\|\mathbb{O}^n_t \|_H^p   +  2^{p-1} \, |3t|^{\nicefrac{p}{2}-1} \bigg[ 1+ \tfrac{ \tilde{\theta}  e^{ \kappa(2+ 2\vartheta)} [ 1+ (\kappa + \sqrt{\eta} +\sqrt{\eta}|\kappa-\eta|e^{ \eta} ) \|(\kappa-A)^{ \rho - \varrho } \|_{L(H)} + |\tilde{\theta}|^{\nicefrac{1}{2}} +  \sqrt{\eta} ]^{2+2\vartheta} }{(1- \varphi)( 1-\alpha - \rho)^{2+2\vartheta}} \bigg]^{\nicefrac{p}{2}}    \\
& \quad \cdot\int_0^t e^{ \int_s^t p\, \phi( \mathbb{O}_{\lf u \rf_{h_n} }^n ) +2 p\eta \, du} \Big[  2^{\nicefrac{p}{2}} \big| \Phi\big(\mathbb{O}_{ \lf s \rf_{h_n} }^n \big) \big|^{\nicefrac{p}{2}} + \big|\tfrac{\eta}{2 } \big|^{\nicefrac{p}{2}} \|\mathbb{O}_s^n\|_H^p \\
& \quad + |\max\{1, \eta, T\}|^{3p+3p\vartheta} \max \!\big\{ 1, \smallint\nolimits_{0}^T   \|   \mathcal{O}_u^n \|_{H_{\varrho}}^{2p+ 2p\vartheta} \, du \big\}  \Big] ds.
\end{split}
\end{align}
This, the fact that $\forall \, \omega \in \tilde{\Omega}, \, t\in [0,T], \, n \in \N \colon  \tilde{\Y}^n_t(\omega)  =   \mathbb{X}_t^n(\omega)$, and the fact that $\P(\tilde{\Omega})=1$  yield that for all $t \in [0, T]$,  $n \in \N $
with $ h_n \leq 1 $ it holds that 
\begin{align}
\begin{split}
&\E\big[ \| \mathbb{X}^n_t \|_H^p\big] = \E\big[ \| \tilde{\mathcal{X}}^n_t \|_H^p\big]  \\
& \leq   2^{p-1} \, \E\big[\| \mathbb{O}^n_t \|_H^p \big]  +  2^{p-1} \, |3t|^{\nicefrac{p}{2}-1} \bigg[ 1+ \tfrac{ \tilde{\theta}  e^{ \kappa(2+ 2\vartheta)} [  1+ (\kappa + \sqrt{\eta} +\sqrt{\eta}|\kappa-\eta|e^{ \eta} ) \|(\kappa-A)^{ \rho - \varrho } \|_{L(H)} + |\tilde{\theta}|^{\nicefrac{1}{2}} +  \sqrt{\eta} ]^{2+2\vartheta} }{(1- \varphi)( 1-\alpha - \rho)^{2+2\vartheta}} \bigg]^{\nicefrac{p}{2}}    \\
& \quad \cdot \E \bigg[\int_0^T e^{ \int_s^T p \, \phi( \mathbb{O}_{\lf u \rf_{h_n} }^n )  +2p\eta \, du} \Big[  2^{\nicefrac{p}{2}} \big| \Phi\big(\mathbb{O}_{ \lf s \rf_{h_n} }^n \big) \big|^{\nicefrac{p}{2}} + \big|\tfrac{\eta}{2 } \big|^{\nicefrac{p}{2}} \|\mathbb{O}_s^n\|_H^p \\
& \quad + |\max\{1, \eta, T\}|^{3p+3p\vartheta} \max \!\big\{ 1, \smallint\nolimits_{0}^T   \|   \mathcal{O}_u^n \|_{H_{\varrho}}^{2p+ 2p\vartheta} \, du \big\}  \Big] ds \bigg].
\end{split}
\end{align}
The assumption that $\forall \, t \in [0,T], \, n \in \N \colon \P (\mathbb{X}_t^n= \Y_t^n)=1$, the fact that $ \limsup_{ n \to \infty} h_n = 0$,  the assumption that $ \limsup_{ n \to \infty} \sup_{ t \in [0,T]} \E[ \| \mathbb{O}_t^n \|_H^p] < \infty$, and \eqref{eq:integrability} hence ensure that
\begin{equation}
\label{eq:a_priori_X} 
\limsup_{ n \to \infty } \sup_{ t \in [0,T] } \E\big[ \| \Y^n_t \|_H^p \big]= \limsup_{ n \to \infty } \sup_{ t \in [0,T] } \E\big[ \| \mathbb{X}^n_t \|_H^p \big] < \infty.
\end{equation}
Next note that \eqref{item:exp} and the fact $H_{\varrho} \subseteq H$ continuously
imply that
\begin{equation}
\limsup_{n \to \infty} \sup_{t \in [0,T]} \E \big[\!\min\!\big\{ 1,\| X_t- \mathbb{X}_t^{n} \|_{ H } \big\} \big] = 0.
\end{equation}
The assumption that
$\forall \, t \in [0,T], \, n \in \N \colon \P (\mathbb{X}_t^n= \Y_t^n)=1$
hence assures that
\begin{equation}
\limsup_{n \to \infty} \sup_{t \in [0,T]} \E \big[\!\min\!\big\{ 1,\| X_t- \Y_t^{n} \|_{ H } \big\} \big] = 0.
\end{equation}
Combining this with, e.g., Lemma~4.2 in Hutzenthaler et al.~\cite{Salimova2016}
proves for all $\varepsilon \in (0, \infty)$ that
\begin{equation}
\limsup_{n \to \infty} \sup_{t \in [0, T]} \P \bigl(\|X_t - \Y_t^n\|_H \geq \varepsilon \bigr)=0.
\end{equation}
E.g., Proposition~4.5 in Hutzenthaler et al.~\cite{Salimova2016} together with \eqref{eq:a_priori_X}
hence shows for all $q \in (0, p)$ that
$ \sup_{ t \in [0,T] } \E\big[ \| X_t \|^p_H \big] < \infty $
and
\begin{equation}
\limsup_{ n \to \infty } \sup_{ t \in [0,T] } \E\big[ \| X_t - \Y_t^n \|_H^q \big] = 0.
\end{equation}
Combining this with \eqref{eq:a_priori_X} establishes \eqref{item:pnorm} and \eqref{item:strong}.
The proof of Theorem~\ref{thm:strong} is thus completed.
\end{proof}

\section{Fernique's theorem}
\label{sec:fernique}

In this section we present a number of elementary and well-known results
and, in particular, Fernique's theorem, which is crucial for the derivations in Section~\ref{sec:abstract} below. 

\subsection{Uniqueness theorem for measures}

Proposition~\ref{thm:measure_uniqueness} can, e.g., be found 
as Lemma~1.42 in Klenke~\cite{Klenke2008}.

\begin{prop}[Uniqueness theorem for measures]
	\label{thm:measure_uniqueness}
	Consider the notation in Subsection~\ref{sec:notation},
	let 
	$ \Omega $ be a set,
	let
	$
	\mathcal{E} \subseteq \mathcal{P}( \Omega )
	$
	be an $ \cap $-stable subset of $ \mathcal{P}( \Omega ) $ (see, e.g., \cite[Definition~2.1]{JentzenPusnik2016}), and let
	$ 
	\mu_1, \mu_2 \colon \sigma_{ \Omega }( \mathcal{E} ) \to [0,\infty]
	$
	be measures which satisfy that there exists a sequence
	$ \Omega_n \in \{ A \in \mathcal{E} \colon \mu_1( A ) < \infty \} $, $ n \in \N $,
	such that
	$
	\cup_{ n \in \N } \Omega_n = \Omega
	$
	and 
	$ \mu_1|_{ \mathcal{E} } = \mu_2|_{ \mathcal{E} } $.
	Then it holds that $ \mu_1 = \mu_2 $.
\end{prop}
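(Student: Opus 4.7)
The plan is to apply the Dynkin $\pi$-$\lambda$ theorem: first I would localize on sets of finite measure where $\mu_1$ and $\mu_2$ demonstrably agree, and then exhaust $\Omega$ using a disjointification of the sequence $(\Omega_n)_{n \in \N}$. Since $\mathcal{E}$ is $\cap$-stable by hypothesis, $\mathcal{E}$ is a $\pi$-system, which is exactly the structural input needed for the monotone class argument.

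Concretely, for each fixed $n \in \N$ I will introduce the family
\begin{equation*}
\mathcal{D}_n = \bigl\{ A \in \sigma_{\Omega}(\mathcal{E}) \colon \mu_1(A \cap \Omega_n) = \mu_2(A \cap \Omega_n) \bigr\}.
\end{equation*}
First I would verify $\mathcal{E} \subseteq \mathcal{D}_n$: for any $B \in \mathcal{E}$, the set $B \cap \Omega_n$ again lies in $\mathcal{E}$ by $\cap$-stability, and then $\mu_1$ and $\mu_2$ agree on it by the assumption $\mu_1|_{\mathcal{E}} = \mu_2|_{\mathcal{E}}$. Next I would check that $\mathcal{D}_n$ is a $\lambda$-system: closure under $\Omega$ (as a member) and under increasing countable unions follows from monotone continuity of measures, while closure under proper differences crucially uses the finiteness $\mu_1(\Omega_n) = \mu_2(\Omega_n) < \infty$, which legitimizes the subtraction step. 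Dynkin's $\pi$-$\lambda$ theorem then delivers $\mathcal{D}_n = \sigma_{\Omega}(\mathcal{E})$.

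To remove the localization I would disjointify the exhaustion: set $F_1 = \Omega_1$ and, for $n \geq 2$, $F_n = \Omega_n \setminus (\Omega_1 \cup \cdots \cup \Omega_{n-1}) \in \sigma_{\Omega}(\mathcal{E})$, so the $F_n$ are pairwise disjoint with $\bigcup_{n \in \N} F_n = \Omega$ and $F_n \subseteq \Omega_n$. For arbitrary $A \in \sigma_{\Omega}(\mathcal{E})$, I would apply $\mathcal{D}_n = \sigma_{\Omega}(\mathcal{E})$ not to $A$ itself but to the auxiliary element $A \cap \Omega_1^c \cap \cdots \cap \Omega_{n-1}^c \in \sigma_{\Omega}(\mathcal{E})$; this yields $\mu_1(A \cap F_n) = \mu_2(A \cap F_n)$ for every $n$, and countable additivity finishes the argument via
\begin{equation*}
\mu_1(A) = \sum_{n=1}^{\infty} \mu_1(A \cap F_n) = \sum_{n=1}^{\infty} \mu_2(A \cap F_n) = \mu_2(A).
\end{equation*}

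The step that carries real content is the invocation of Dynkin's $\pi$-$\lambda$ theorem; everything else is essentially bookkeeping. The only minor subtlety I anticipate is that $\mathcal{E}$ itself need not contain $\Omega$ (only an exhausting subsequence of finite $\mu_1$-measure), which is precisely what necessitates localization at the level of $\mathcal{D}_n$ followed by reassembly through the disjointification — a direct application of the $\pi$-$\lambda$ theorem to the whole space is not available.
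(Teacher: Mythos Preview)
Your proposal is correct and follows essentially the same route as the paper: localize via the Dynkin systems $\mathcal{D}_E=\{A:\mu_1(A\cap E)=\mu_2(A\cap E)\}$ for $E$ of finite measure in $\mathcal{E}$, apply the $\pi$--$\lambda$ theorem, and then reassemble via the disjointification of the exhausting sequence. The only cosmetic difference is that the paper phrases the final step as a monotone limit along $E_n=\bigcup_{i\leq n}\Omega_i$ rather than as a countable sum over your $F_n$, which amounts to the same thing.
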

\begin{proof}[Proof of Proposition~\ref{thm:measure_uniqueness}]
	Throughout this proof let 
	$ \mathcal{S} \subseteq \mathcal{E} $
	be the set given by
	$ \mathcal{S} = \{ A \in \mathcal{E} \colon \mu_1(A) < \infty \} $
	and let
	$E_n \in \sigma_{\Omega}(\mathcal{E}) $, $ n \in \N_0 $,
	and
	$ \mathcal{D}_E \in \mathcal{P}( \sigma_{ \Omega }( \mathcal{E} ) ) $, 
	$ E \in \mathcal{S} $,
	be the sets which satisfy
	for all $n \in \N $, $ E \in \mathcal{S} $ that
	$E_0= \emptyset$, $E_n = \bigcup_{i=1}^n \Omega_n$, 
	and
	$ \mathcal{D}_E = \{ A \in \sigma_{ \Omega }( \mathcal{E} ) \colon
	\mu_1( A \cap E ) = \mu_2 ( A \cap E ) \} $.
	First, note that for all $ E \in  \mathcal{S} $  
	it holds that
	$ \Omega \in \mathcal{D}_E $.
	Next observe that for all 
	$ E \in  \mathcal{S} $,
	$ A, B \in  \mathcal{D}_E  $ with $ B \subseteq A $ it holds that
	\begin{equation}
	\begin{split}
	\mu_1 ( ( A \backslash B ) \cap E ) 
	&
	= \mu_1 ( A \cap E ) - \mu_1 ( B \cap E )
	\\
	&
	= \mu_2 ( A \cap E ) - \mu_2 ( B \cap E )
	= \mu_2 ( ( A \backslash B ) \cap E ) .
	\end{split}
	\end{equation}
	This shows for all  
	$ E \in  \mathcal{S} $,
	$ A, B \in \mathcal{D}_E $ 
	with 
	$ B \subseteq A $ 
	that 
	$ A \backslash B \in \mathcal{D}_E $.
	Moreover, note that for all 
	sets $ E \in \mathcal{S} $
	and all sequences
	$ A_n \in \mathcal{D}_E $, $ n \in \N $,
	with $ \forall \, i \in \N, \, j \in \N \backslash \{ i \} \colon A_i \cap A_j = \emptyset $
	it holds that
	\begin{equation}
	\begin{split}
	\mu_1 \big( \big( \cup_{n\in \N} A_n \big) \cap E \big)
	=
	\sum_{n=1}^\infty \mu_1 ( A_n \cap E )
	=
	\sum_{n=1}^\infty \mu_2 ( A_n \cap E ) 
	=
	\mu_2 \big ( \big( \cup_{n\in \N} A_n \big) \cap E \big ).
	\end{split}
	\end{equation}
	This proves for all
	sets $ E \in  \mathcal{S} $
	and all sequences
	$ A_n \in \mathcal{D}_E $, $ n \in \N $,
	with $ \forall \, i \in \N, \, j \in \N \backslash \{ i \} \colon A_i \cap A_j = \emptyset $ 
	that $ \cup_{n\in \N} A_n \in \mathcal{D}_E $.
	Therefore, we have established that for all
	$ E \in \mathcal{S} $ it holds that
	$ \mathcal{D}_E $ is a Dynkin system on $\Omega$ (see, e.g., Definition~2.2 in Jentzen \& Pu\v{s}nik~\cite{JentzenPusnik2016}).
	The fact that
	 $ \forall \,  E \in \mathcal{S} \colon \mathcal{E} \subseteq \mathcal{D}_E $
	hence shows for all
	$ E \in \mathcal{S} $ 
	that
	$ \delta_{ \Omega } ( \mathcal{E} ) \subseteq \mathcal{D}_E $ (see, e.g., Definition~2.3 in~\cite{JentzenPusnik2016}).
	Combining this, the assumption that $ \mathcal{E} $ is $ \cap $-stable,
	and, e.g., Theorem~2.5 in~\cite{JentzenPusnik2016} ensures  
	for all $ E \in \mathcal{S} $ that
	$ \sigma_{ \Omega } ( \mathcal{E} ) = \delta_{ \Omega } ( \mathcal{E} )
	\subseteq \mathcal{D}_E \subseteq \sigma_{ \Omega } ( \mathcal{E} ) .$
	This assures for all $ E \in \mathcal{S} $ that
	\begin{equation}
	\label{eq:valid_for_finiteM_sets}
	\sigma_{ \Omega } ( \mathcal{E} ) = \mathcal{D}_E 
	.
	\end{equation}
	Next note that for all $n \in \N$, $m \in \N \backslash \{n\}$ it holds that $E_n = \bigcup_{i=1}^n ((\Omega\backslash E_{i-1}) \cap \Omega_i)$ and  $((\Omega\backslash E_{n-1}) \cap \Omega_n)\cap ((\Omega\backslash E_{m-1}) \cap \Omega_m) = \emptyset $.
	Equation \eqref{eq:valid_for_finiteM_sets} hence proves for all $n \in \N$, $ A \in \sigma_{ \Omega } ( \mathcal{E} ) $ that
	\begin{equation}
	\mu_1( A \cap E_n )
	=
	\sum_{i=1}^n \mu_1\big((A \cap(\Omega\backslash E_{i-1})) \cap \Omega_i\big)
	=
	\sum_{i=1}^n \mu_2\big((A \cap(\Omega\backslash E_{i-1})) \cap \Omega_i\big)
	=
	\mu_2 ( A \cap E_n ).
	\end{equation}
	This implies for all
	$ A \in \sigma_{ \Omega } ( \mathcal{E} ) $ that
	\begin{equation}
	\mu_1( A )
	=
	\lim\nolimits_{n\to \infty} 
	\mu_1 ( A \cap E_n )
	=
	\lim\nolimits_{n\to \infty}
	\mu_2 (A \cap E_n )
	=
	\mu_2 ( A ).
	\end{equation}
	The proof of Proposition~\ref{thm:measure_uniqueness} is thus completed.
\end{proof}

\subsection{Borel sigma-algebras on normed vector spaces}

In this subsection we first recall the Hahn-Banach theorem (see, e.g., Werner~\cite[Theorem~III.1.5]{w05a}).

\begin{prop}[Hahn-Banach theorem; Extension of continuous linear functionals]
	\label{thm:Hahn_Banach}
	Let 
	$ \K \in \{ \R, \C \} $,
	let
	$ ( V , \left\| \cdot \right\|_V ) $
	be a normed $ \K $-vector space,
	let $ U \subseteq V $ be a $ \K $-subspace of $ V $,
	and let $ \phi \in U' $.
	Then there exists a function $ \varphi \in V' $
	such that
	\begin{equation}
	\varphi|_U = \phi  
	\qquad 
	\text{and}
	\qquad
	\left\| \varphi \right\|_{ V' }
	=
	\left\| \phi \right\|_{ U' }
	.
	\end{equation}
\end{prop}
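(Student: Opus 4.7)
The plan is to follow the standard proof of the Hahn-Banach theorem, reducing first to the real case and treating the complex case by a linear algebra argument. I would set $p \colon V \to [0,\infty)$ by $p(v) = \|\phi\|_{U'} \|v\|_V$, which is sublinear (positively homogeneous and subadditive), and satisfies $|\phi(u)| \leq p(u)$ for all $u \in U$. The task is to extend $\phi$ to a $\K$-linear functional on $V$ dominated by $p$; continuity and the equality of operator norms will then follow automatically since $|\varphi(v)| \leq p(v) = \|\phi\|_{U'} \|v\|_V$ gives $\|\varphi\|_{V'} \leq \|\phi\|_{U'}$, while $\varphi|_U = \phi$ gives the reverse inequality.

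For the real case $\K = \R$, I would consider the set $\mathcal{L}$ of all pairs $(W, \psi)$ where $W \subseteq V$ is a $\R$-subspace containing $U$, $\psi \colon W \to \R$ is $\R$-linear, $\psi|_U = \phi$, and $\psi(w) \leq p(w)$ for all $w \in W$, partially ordered by $(W_1, \psi_1) \leq (W_2, \psi_2)$ iff $W_1 \subseteq W_2$ and $\psi_2|_{W_1} = \psi_1$. Every chain admits an upper bound via the union, so Zorn's lemma yields a maximal element $(W_*, \psi_*)$. The heart of the argument is then the one-dimensional extension step: if $W_* \neq V$, pick $x \in V \setminus W_*$ and seek $\alpha \in \R$ such that $\psi_*(w) + t\alpha \leq p(w + tx)$ for all $w \in W_*$, $t \in \R$. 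Splitting into $t > 0$ and $t < 0$ and using positive homogeneity, this reduces to finding $\alpha$ with
\begin{equation}
\sup_{w \in W_*} [\psi_*(w) - p(w - x)] \leq \alpha \leq \inf_{w \in W_*} [p(w + x) - \psi_*(w)],
\end{equation}
and the required inequality between these two quantities follows from subadditivity of $p$ combined with $\psi_*(w_1) + \psi_*(w_2) \leq p(w_1 + w_2)$. Such an $\alpha$ gives a strictly larger element of $\mathcal{L}$, contradicting maximality, so $W_* = V$.

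For the complex case $\K = \C$, the plan is to invoke the standard bijection between continuous $\C$-linear and continuous $\R$-linear functionals on a $\C$-normed space. Writing $\phi_{\R} = \operatorname{Re} \phi \in U_{\R}'$ where $U_{\R}$ denotes $U$ regarded as an $\R$-subspace, one recovers $\phi(u) = \phi_{\R}(u) - i \phi_{\R}(iu)$ and checks $\|\phi_{\R}\|_{U_{\R}'} = \|\phi\|_{U'}$ via the rotation trick $|\phi(u)| = e^{-i\arg\phi(u)}\phi(u) = \phi_{\R}(e^{-i\arg\phi(u)} u)$. Applying the real case produces an $\R$-linear extension $\varphi_{\R} \colon V \to \R$ with $\|\varphi_{\R}\|_{V_{\R}'} = \|\phi_{\R}\|_{U_{\R}'}$, and defining $\varphi(v) = \varphi_{\R}(v) - i \varphi_{\R}(iv)$ yields a $\C$-linear extension with matching norm, again by the rotation trick.

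The main obstacle is the one-dimensional extension inequality: verifying that the two relevant suprema and infima are ordered correctly so that a valid $\alpha$ exists. Everything else is either Zorn's lemma boilerplate or the routine real-to-complex passage; once the dominated extension is in hand, the norm identity $\|\varphi\|_{V'} = \|\phi\|_{U'}$ drops out from the sublinear bound and the fact that $\varphi$ extends $\phi$.
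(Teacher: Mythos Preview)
Your outline is the standard Hahn-Banach argument and is correct. Note that the paper does not actually prove this proposition: it merely states it and remarks that the proof employs the axiom of choice, referring the reader to Werner~\cite[Theorem~III.1.5]{w05a} for the details. Your sketch is precisely what one finds in such references, so there is nothing to compare.
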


The proof of Proposition~\ref{thm:Hahn_Banach}
employs the axiom of choice.
The next result, Corollary~\ref{cor:Hahn_Banach}, is a direct consequence
of the Hahn-Banach theorem.

\begin{cor}[Projections into $ 1 $-dimensional subspaces]
	\label{cor:Hahn_Banach}
	Let 
	$ \K \in \{ \R, \C \} $,
	let
	$ ( V , \left\| \cdot \right\|_V ) $
	be a nontrivial normed $ \K $-vector space,
	and let $ v \in V $.
	Then there exists a function $ \varphi \in V' $
	such that 
	\begin{equation}
	\label{eq:cor_Hahn_Banach}
	\varphi( v )
	=
	\left\| v \right\|_V
	\qquad 
	\text{and}
	\qquad
	\left\| \varphi \right\|_{ V' }
	=
	1
	.
	\end{equation}
\end{cor}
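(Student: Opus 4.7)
The plan is to reduce Corollary~\ref{cor:Hahn_Banach} to Proposition~\ref{thm:Hahn_Banach} (the Hahn-Banach theorem) by constructing, on a suitable one-dimensional subspace $U \subseteq V$, an explicit continuous linear functional $\phi$ that already has the desired values, and then extending $\phi$ to all of $V$ without increasing its norm. I split the argument into the cases $v \neq 0$ and $v = 0$.

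In the case $v \neq 0$, I would let $U = \K v = \{ \alpha v \colon \alpha \in \K \}$ and define $\phi \colon U \to \K$ by $\phi(\alpha v) = \alpha \left\| v \right\|_V$. I would verify directly that $\phi$ is $\K$-linear and that for every $\alpha \in \K$ it holds that $|\phi(\alpha v)| = |\alpha| \left\| v \right\|_V = \left\| \alpha v \right\|_V$, which shows simultaneously that $\phi \in U'$ and that $\left\| \phi \right\|_{U'} = 1$. Proposition~\ref{thm:Hahn_Banach} (applied with this $V$, $U$, and $\phi$) then yields a function $\varphi \in V'$ with $\varphi|_U = \phi$ and $\left\| \varphi \right\|_{V'} = \left\| \phi \right\|_{U'} = 1$, and the identity $\varphi(v) = \phi(v) = \left\| v \right\|_V$ follows by taking $\alpha = 1$.

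In the case $v = 0$, the requirement $\varphi(v) = \left\| v \right\|_V$ reduces to $\varphi(0) = 0$, which is automatic for any linear $\varphi$; the only content is then that $\left\| \varphi \right\|_{V'} = 1$. Here the assumption that $V$ is nontrivial is exactly what I need: I would pick some $w \in V$ with $w \neq 0$ and apply the construction of the previous paragraph with $w$ in place of $v$, obtaining $\varphi \in V'$ with $\left\| \varphi \right\|_{V'} = 1$ (and automatically $\varphi(0) = 0$). Both cases together give \eqref{eq:cor_Hahn_Banach}.

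I do not anticipate a genuine obstacle: the only nontrivial input is the Hahn-Banach theorem itself, which is invoked as a black box via Proposition~\ref{thm:Hahn_Banach}. The point requiring the most care is merely the bookkeeping check that $\phi$ attains the norm $1$ on $U$ (not just $\leq 1$), so that Hahn-Banach returns an extension of norm exactly $1$ rather than only $\leq 1$; this is guaranteed by the equality $|\phi(\alpha v)| = \left\| \alpha v \right\|_V$ for every $\alpha \in \K$, which in turn uses $v \neq 0$ and is the reason the case $v = 0$ must be handled separately via the auxiliary vector $w$.
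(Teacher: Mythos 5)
Your proposal is correct and follows essentially the same two-step argument as the paper: in the case $v \neq 0$, define the functional $\phi(\alpha v) = \alpha \left\| v \right\|_V$ on the one-dimensional subspace $U = \K v$, check $\left\| \phi \right\|_{U'} = 1$, and extend by Proposition~\ref{thm:Hahn_Banach}; in the case $v = 0$, reduce to the previous case via an auxiliary nonzero vector supplied by the nontriviality of $V$.
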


\begin{proof}[Proof	of Corollary~\ref{cor:Hahn_Banach}]
	We show Corollary~\ref{cor:Hahn_Banach} in two steps.
	In the first step we assume that $ v \neq 0 $.
	Let 
	$ U  \subseteq V$
	be the $ \K $-subspace of $ V $
	given by
	$ 
	U 
	= \left\{ \lambda v \in V \colon \lambda \in \K \right\} 
	= \operatorname{span}_V(\{ v \})
	$
	and let
	$ \phi \colon U \to \K $
	be the function which satisfies 
	for all $ \lambda \in \K $ that
	\begin{equation}
	\phi( \lambda v ) = 
	\lambda \left\| v \right\|_V
	.
	\end{equation}
	Proposition~\ref{thm:Hahn_Banach} implies that there exists a function $ \varphi \in V' $ such that
	\begin{equation}
	\varphi|_U = \phi
	\qquad
	\text{and}
	\qquad
	\left\| \varphi \right\|_{ V' }
	=
	\left\| \phi \right\|_{ U' }
	= 
	1
	.
	\end{equation}
	This proves \eqref{eq:cor_Hahn_Banach}
	in the case $ v \neq 0 $.
	In the second step we assume that $ v = 0 $.
	Note that the assumption that $V$ is nontrivial ensures that
	there exists a vector $ u \in V $ such that $ u \neq 0 $.
	The first step hence shows that there exists a function $ \varphi \in V' $
	such that
	\begin{equation}
	\varphi( u ) = \left\| u \right\|_V
	\qquad
	\text{and}
	\qquad 
	\left\| \varphi \right\|_{ V' } = 1
	.
	\end{equation}
	In addition, observe that $ \varphi( v ) = \varphi( 0 ) = 0 = \left\| v \right\|_V $.
	The proof of Corollary~\ref{cor:Hahn_Banach}
	is thus completed.
\end{proof}

The next result, Corollary~\ref{cor:norm},
is an immediate consequence of Corollary~\ref{cor:Hahn_Banach}
above.

\begin{cor}[Norm via the dual space]
	\label{cor:norm}
	Let 
	$ \K \in \{ \R, \C \} $,
	let
	$ 
	( V, \left\| \cdot \right\|_V ) 
	$
	be a nontrivial normed $ \K $-vector space,
	and let $ v \in V $.
	Then 
	\begin{equation}
	\label{eq:cor_norm}
	\left\|  
	v 
	\right\|_V
	=
	\sup_{ 
		\varphi \in V' \backslash \{ 0 \}
	}
	\frac{
		\Re(\varphi( v )) 
	}{
	\left\| \varphi \right\|_{ V' }
}
=
\sup_{ 
	\varphi \in V' \backslash \{ 0 \}
}
\frac{
	\left| \varphi( v ) \right|
}{
\left\| \varphi \right\|_{ V' }
}
.
\end{equation}
\end{cor}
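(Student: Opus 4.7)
The plan is to sandwich both suprema in \eqref{eq:cor_norm} between the same upper and lower bound. The upper bound is routine from the operator-norm definition, and the matching lower bound is exactly what Corollary~\ref{cor:Hahn_Banach} is designed to supply, so the work is already done and only the bookkeeping remains.

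First, I would observe that for every $\varphi \in V' \backslash \{0\}$ the elementary estimate
\[
\Re(\varphi(v)) \leq |\varphi(v)| \leq \|\varphi\|_{V'}\,\|v\|_V
\]
holds by the very definition of $\|\varphi\|_{V'}$. Dividing through by $\|\varphi\|_{V'}$ and taking suprema over $\varphi \in V' \backslash \{0\}$ then yields
\[
\sup_{\varphi \in V' \backslash \{0\}} \frac{\Re(\varphi(v))}{\|\varphi\|_{V'}} \leq \sup_{\varphi \in V' \backslash \{0\}} \frac{|\varphi(v)|}{\|\varphi\|_{V'}} \leq \|v\|_V,
\]
which is the upper half of the sandwich.

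For the lower bound I would invoke Corollary~\ref{cor:Hahn_Banach} to obtain a functional $\varphi_0 \in V'$ with $\varphi_0(v) = \|v\|_V$ and $\|\varphi_0\|_{V'} = 1$; in particular $\varphi_0 \in V' \backslash \{0\}$ since $V$ is nontrivial. Because $\|v\|_V \in [0,\infty)$ is real and nonnegative, it holds that $\Re(\varphi_0(v)) = |\varphi_0(v)| = \|v\|_V$, so
\[
\|v\|_V = \frac{\Re(\varphi_0(v))}{\|\varphi_0\|_{V'}} \leq \sup_{\varphi \in V' \backslash \{0\}} \frac{\Re(\varphi(v))}{\|\varphi\|_{V'}}.
\]
Combining this with the chain of inequalities above forces both suprema to equal $\|v\|_V$, which is precisely \eqref{eq:cor_norm}.

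I do not anticipate any genuine obstacle. The only mild subtlety is that one must treat both scalar fields $\K \in \{\R,\C\}$ and also the degenerate case $v = 0$ uniformly, but both points are already absorbed into the statement of Corollary~\ref{cor:Hahn_Banach}: the functional it produces satisfies $\varphi_0(v) = \|v\|_V$, which is a nonnegative \emph{real} number regardless of $\K$, and remains a valid unit functional when $v = 0$, in which case all three expressions in \eqref{eq:cor_norm} trivially vanish.
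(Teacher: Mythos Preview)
Your proof is correct and is precisely the argument the paper has in mind: it states that Corollary~\ref{cor:norm} is an immediate consequence of Corollary~\ref{cor:Hahn_Banach} without writing out the details. Your sandwich argument---the trivial upper bound from the operator norm together with the lower bound from the norming functional supplied by Corollary~\ref{cor:Hahn_Banach}---is exactly that immediate consequence made explicit.
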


If the normed vector space in
Corollary~\ref{cor:norm} is separable,
then the following result, Corollary~\ref{cor:norm2}, can be obtained.
Corollary~\ref{cor:norm2}
is also an immediate consequence of Corollary~\ref{cor:Hahn_Banach}
above.

\begin{cor}[Norm of a separable normed vector space via the dual space]
	\label{cor:norm2}
	Let 
	$ \K \in \{ \R, \C \} $
	and let
	$ ( V, \left\| \cdot \right\|_V ) $
	be a separable normed $ \K $-vector space.
	Then there exists a sequence
	$  \varphi_n \in  V' $, $n \in \N$,
	which satisfies for all $ v \in V $ that
	\begin{equation}
	\left\|  
	v 
	\right\|_V
	=
	\sup_{ 
		n \in \N
	}
	\Re(\varphi_n( v ))
	=
	\sup_{ 
		n \in \N
	}
	\left| \varphi_n( v ) \right|
	.
	\end{equation}
\end{cor}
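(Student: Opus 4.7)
The plan is to exploit separability to reduce Corollary~\ref{cor:norm} from a supremum over all of $V'$ to a supremum over a countable sub-collection obtained by the Hahn-Banach extension.

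First I would dispense with the degenerate case $V=\{0\}$: then $V'=\{0\}$ and the claim holds with $\varphi_n=0$ for all $n\in\N$, since the only $v\in V$ is $v=0$ and $\sup_{n\in\N}\Re(\varphi_n(0))=0=\|0\|_V$. So I may assume that $V$ is nontrivial.

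Using separability, I would fix a sequence $(v_n)_{n\in\N}$ in $V$ whose range is dense in $V$. Applying Corollary~\ref{cor:Hahn_Banach} to each $v_n$ produces a sequence $(\varphi_n)_{n\in\N}\subseteq V'$ such that for every $n\in\N$
\begin{equation}
\varphi_n(v_n)=\|v_n\|_V
\qquad\text{and}\qquad
\|\varphi_n\|_{V'}=1.
\end{equation}
The upper bound is immediate: for every $v\in V$ and $n\in\N$ it holds that
\begin{equation}
\Re(\varphi_n(v))\leq |\varphi_n(v)|\leq \|\varphi_n\|_{V'}\,\|v\|_V=\|v\|_V,
\end{equation}
so $\sup_{n\in\N}\Re(\varphi_n(v))\leq \sup_{n\in\N}|\varphi_n(v)|\leq \|v\|_V$.

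For the matching lower bound, I would fix $v\in V$ and $\varepsilon\in(0,\infty)$, pick $n\in\N$ with $\|v-v_n\|_V<\varepsilon$ (using density), and estimate
\begin{equation}
\Re(\varphi_n(v))=\Re(\varphi_n(v_n))+\Re(\varphi_n(v-v_n))\geq \|v_n\|_V-\|v-v_n\|_V\geq \|v\|_V-2\varepsilon.
\end{equation}
Taking the supremum over $n$ and then letting $\varepsilon\searrow 0$ yields $\sup_{n\in\N}\Re(\varphi_n(v))\geq \|v\|_V$. Combining the two bounds and the trivial inequality $\Re(\varphi_n(v))\leq |\varphi_n(v)|$ gives both equalities in the statement. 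No step looks hard: the only minor nuisance is treating $V=\{0\}$ and the possibility that some $v_n$ equals $0$ (which is already handled by Corollary~\ref{cor:Hahn_Banach}). The density of $(v_n)_{n\in\N}$ together with $\|\varphi_n\|_{V'}=1$ is doing all of the real work.
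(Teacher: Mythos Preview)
Your proof is correct and follows essentially the same approach as the paper: pick a dense sequence $(v_n)$, apply Corollary~\ref{cor:Hahn_Banach} to obtain norming functionals $\varphi_n$ with $\varphi_n(v_n)=\|v_n\|_V$ and $\|\varphi_n\|_{V'}=1$, and then use a density-and-triangle-inequality argument for the lower bound. The only cosmetic difference is that the paper writes the lower-bound chain starting from $\|v\|_V$ rather than from $\Re(\varphi_n(v))$, but the computation is identical.
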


\begin{proof}[Proof	of Corollary~\ref{cor:norm2}]
	Without loss of generality we assume that $V$ is nontrivial.
	The assumption that $ ( V, \left\| \cdot \right\|_V ) $
	is separable implies that there exists
	a sequence 
	$ v_n \in V $, $ n \in \N $,
	such that the set
	$
	\left\{ v_n \colon n \in \N \right\}
	$
	is dense in $ V $.
	Corollary~\ref{cor:Hahn_Banach}
	hence shows that there exists
	a sequence
	$ \varphi_n \in V' $, $ n \in \N $,
	which satisfies
	for all $ n \in \N $ that
	\begin{equation}
	\varphi_n( v_n )
	=
	\left\| v_n \right\|_V
	\qquad 
	\text{and}
	\qquad
	\left\| \varphi_n \right\|_{ V' }
	= 1
	.
	\end{equation}
	This ensures  for all $ k \in \N $  that
	\begin{equation}
	\left\| v_k \right\|_V
	=
	\sup_{ 
		n \in \N
	}
	|\varphi_n( v_k )|
	.
	\end{equation}
	Next let 
	$ v \in V $ and $ \varepsilon \in ( 0,\infty) $.
	Note that
	\begin{equation}
	\sup_{ n \in \N } 
	|\varphi_n( v )|
	\leq
	\sup_{ n \in \N }
	\left[
	\left\| \varphi_n \right\|_{ V' }
	\left\| v \right\|_V
	\right]
	=
	\left\| v \right\|_V
	.
	\end{equation}
	It thus remains to prove that
	\begin{equation}
	\left\| v \right\|_V
	\leq
	\varepsilon
	+
	\sup_{ n \in \N } 
	\Re(\varphi_n( v ))
	.
	\end{equation}
	For this observe that
	the fact that
	$
	\{ v_n \in V \colon n \in \N \}
	$
	is dense in $ V $ ensures
	that there exists a natural number $ k \in \N $
	such that $ \left\| v - v_k \right\|_V \leq \frac{ \varepsilon }{ 2 } $.
	This implies that
	\begin{equation}
	\begin{split}
	\left\| v \right\|_V
	& \leq
	\left\| v_k \right\|_V
	+
	\left\| v - v_k \right\|_V
	=
	\Re(\varphi_k( v_k ))
	+
	\left\| v - v_k \right\|_V
	\\ &
	=
	\Re(\varphi_k( v ))
	+
	\left\| v - v_k \right\|_V
	+
	\Re(\varphi_k( v_k - v ))
	\\ &
	\leq
	\Re(\varphi_k( v ))
	+
	\left\| v - v_k \right\|_V
	+
	\left\| \varphi_k \right\|_{ V' }
	\left\| v - v_k \right\|_V
	\\ &
	=
	\Re(\varphi_k( v ))
	+
	2 \left\| v - v_k \right\|_V
	\leq
	\sup_{ n \in \N }
	\Re(\varphi_n( v ))
	+
	2 \left\| v - v_k \right\|_V
	\leq
	\sup_{ n \in \N }
	\Re(\varphi_n( v ))
	+
	\varepsilon
	.
	\end{split}
	\end{equation}
	The proof of Corollary~\ref{cor:norm2}
	is thus completed.
\end{proof}

The last result of this subsection, Proposition~\ref{prop:charac_BE_lin} below, follows from Corollary~\ref{cor:norm2} above.  We refer to the statement of Proposition~\ref{prop:charac_BE_lin} as \emph{linear
characterization of the Borel sigma-algebra}.

\begin{prop}[Linear characterization of the Borel sigma-algebra]
	\label{prop:charac_BE_lin}
	Let $ \K \in \{ \R , \C \} $ and
	let $ ( V, \left\| \cdot \right\|_V ) $ be a separable normed $ \K $-vector space.
	Then there exists a sequence $ \varphi_n \in V' $, $ n \in \N $,
	such that
	\begin{equation}
	\begin{split}
	\mathcal{B}( V ) 
	& =
	\sigma_V\!\left( 
	\varphi \colon 
	\varphi \in V'
	\right)
	=
	\sigma_V\!\left( 
	\varphi_n \colon 
	n \in \N
	\right)
	.
	\end{split}
	\end{equation}
\end{prop}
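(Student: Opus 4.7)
The plan is to invoke Corollary~\ref{cor:norm2} to obtain a countable sequence $\varphi_n \in V'$, $n \in \N$, such that for every $v \in V$ one has $\|v\|_V = \sup_{n \in \N} |\varphi_n(v)|$, and then to verify the chain of equalities by establishing a ring of inclusions.

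The inclusion $\sigma_V(\varphi_n \colon n \in \N) \subseteq \sigma_V(\varphi \colon \varphi \in V')$ is immediate from $\{\varphi_n \colon n \in \N\} \subseteq V'$. For $\sigma_V(\varphi \colon \varphi \in V') \subseteq \mathcal{B}(V)$, I would note that every $\varphi \in V'$ is $\|\cdot\|_V$-continuous, hence $\mathcal{B}(V)/\mathcal{B}(\K)$-measurable, so the generating functionals are themselves Borel measurable and the generated sigma-algebra is contained in $\mathcal{B}(V)$.

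The main (and only nontrivial) inclusion is $\mathcal{B}(V) \subseteq \sigma_V(\varphi_n \colon n \in \N)$. The separability of $(V, \|\cdot\|_V)$ yields a countable dense subset $\{v_k \colon k \in \N\} \subseteq V$, and a standard argument shows that $\mathcal{B}(V)$ is generated by the countable family of open balls $\{v \in V \colon \|v - v_k\|_V < r\}$ with $k \in \N$ and $r \in (0,\infty) \cap \Q$. For each fixed $k \in \N$, Corollary~\ref{cor:norm2} applied to the translated vector gives, for all $v \in V$, the identity $\|v - v_k\|_V = \sup_{n \in \N} |\varphi_n(v) - \varphi_n(v_k)|$. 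Since $v \mapsto \varphi_n(v) - \varphi_n(v_k)$ is $\sigma_V(\varphi_n \colon n \in \N)/\mathcal{B}(\K)$-measurable for every $n \in \N$ and countable suprema of real-valued measurable functions are measurable, the map $v \mapsto \|v - v_k\|_V$ is $\sigma_V(\varphi_n \colon n \in \N)/\mathcal{B}([0,\infty))$-measurable; consequently each generating open ball lies in $\sigma_V(\varphi_n \colon n \in \N)$.

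The potential obstacle is the standard fact that in a separable metric space the Borel sigma-algebra coincides with the sigma-algebra generated by balls of rational radii centered at a countable dense set; once this is taken for granted, the rest is a direct application of Corollary~\ref{cor:norm2} together with the measurability of countable suprema, and combining the three inclusions closes the loop and completes the proof.
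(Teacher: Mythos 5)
Your proposal is correct and follows essentially the same route as the paper: both proofs invoke Corollary~\ref{cor:norm2} to express distance-to-point functions as countable suprema of (real parts or moduli of) $\varphi_n$'s, thereby placing all open balls in $\sigma_V(\varphi_n \colon n \in \N)$, and close the chain of inclusions with the continuity of bounded linear functionals. The only cosmetic difference is that the paper works with the full generating family $\{f_v \colon v \in V\}$ (implicitly relying on separability for $\mathcal{B}(V)=\sigma_V(f_v \colon v\in V)$), whereas you make the Lindel\"of step explicit by restricting to balls with centres in a countable dense set and rational radii; this is a presentational, not a substantive, difference.
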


\begin{proof}[Proof	of Proposition~\ref{prop:charac_BE_lin}]
Throughout this proof
	let $ f_v \colon V \to [0,\infty) $,
	$ v \in V $,
	be the functions which satisfy
	for all $ x, v \in V $  that
	\begin{equation}
	f_v( x )
	=
	\left\| x - v \right\|_V
	.
	\end{equation}
	Note that
	\begin{equation}
	\label{eq:Borel-sigma-algebra}
	\mathcal{B}( V ) 
	=
	\sigma_V\!\left(
	f_v \colon v \in V
	\right)
	.
	\end{equation}
	Next observe that 
	Corollary~\ref{cor:norm2}
	shows that there
	exists a sequence 
	$ \varphi_n \in V' $,
	$ n \in \N $,
	which satisfies for all $ v \in V $  that
	\begin{equation}
	\left\| v \right\|_V
	=
	\sup_{ n \in \N }
	\Re(\varphi_n( v ))
	.
	\end{equation}
	This implies that
	\begin{equation}
	\begin{split}
	\mathcal{B}( V ) & \supseteq
	\sigma_V\!\left(
	\varphi_n 
	\colon
	n \in \N 
	\right) \supseteq 	\sigma_V\big(
	( V \ni u \mapsto \Re(\varphi_n(u)) \in \R ) 
	\colon
	n \in \N 
	\big)
	\\ & =  
	\sigma_V\big(
	( V \ni u \mapsto 
	\Re(\varphi_n( u + v ))
	\in \R )
	\colon
	n \in \N , v \in V 
	\big)
	\supseteq
	\sigma_V\!\left(
	f_v
	\colon
	v \in V
	\right)
	.
	\end{split}
	\end{equation}
	Combining this with \eqref{eq:Borel-sigma-algebra} completes the proof of Proposition~\ref{prop:charac_BE_lin}.
\end{proof}

\subsection{Fourier transform of a measure}

In Lemma~\ref{lem:charac_fct2} further below we present a well-known result which states that the Fourier transform of a finite measure on a separable normed $ \R $-vector space determines the measure uniquely. The proof of Lemma~\ref{lem:charac_fct2} employs Proposition~\ref{prop:charac_BE_lin} and Proposition~\ref{thm:measure_uniqueness} above.

\begin{definition}[Image measure/Pushforward measure]
	\label{def:image_measure}
	Let
	$ \left( \Omega, \mathcal{A}, \mu \right) $
	be a measure space, 
	let
	$
	( \tilde{\Omega}, \tilde{\mathcal{A}} 
	)
	$
	be a measurable space,
	and 
	let
	$ f \colon \Omega \to \tilde{ \Omega } $
	be an $ \mathcal{A} $/$ \tilde{ \mathcal{A} } $-measurable function.
	Then we denote by
	$ f( \mu )_{ \tilde{\mathcal{A}} } \colon \tilde{ \mathcal{A} } \to [0,\infty] $
	the function which satisfies
	for all $ A \in \tilde{ \mathcal{A} } $ that
	\begin{equation}
	\big(
	f( \mu )_{ \tilde{\mathcal{A}} }
	\big)( A )
	=
	\mu\!\left(
	f^{ - 1 }( A )
	\right)
	\end{equation}
	and we call $ f( \mu )_{ \tilde{\mathcal{A}} } $
	the image measure of $ \mu $ under $ f $ associated to $ \tilde{\mathcal{A}} $.
\end{definition} 

\begin{prop}[Characteristic function] 
	\label{prop:charac_fct} 
	Let $ d \in \N $ and let $ \mu_k \colon \mathcal{B}( \R^d ) \to [0,\infty] $, $ k \in \{ 1, 2 \} $, be finite measures which satisfy for all $ \xi \in \R^d $  that 
	\begin{equation} 
	\int_{ \R^d } e^{ i \left< \xi, x \right>_{ \R^d } } \, \mu_1( dx ) = \int_{ \R^d } e^{ i \left< \xi, x \right>_{ \R^d } } \, \mu_2( dx ) . 
	\end{equation} 
	Then it holds that $ \mu_1 = \mu_2 $. 
\end{prop}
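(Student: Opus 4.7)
The plan is to convolve each finite measure with a Gaussian kernel, observe that the resulting absolutely continuous measure is determined by the characteristic function alone, and then recover the original measures by an approximate-identity limit combined with Proposition~\ref{thm:measure_uniqueness}.

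For each $\sigma \in (0, \infty)$ I introduce the Gaussian density $g_\sigma \colon \R^d \to (0, \infty)$ given by $g_\sigma(x) = (2\pi\sigma^2)^{-\nicefrac{d}{2}} \exp(-\tfrac{1}{2\sigma^2}\|x\|_{\R^d}^2)$, whose Fourier transform $\int_{\R^d} g_\sigma(x) e^{i\langle \xi, x\rangle_{\R^d}}\,dx = \exp(-\tfrac{\sigma^2}{2}\|\xi\|_{\R^d}^2)$ is again integrable on $\R^d$. Fourier inversion therefore yields the representation $g_\sigma(y - x) = (2\pi)^{-d} \int_{\R^d} e^{-\sigma^2 \|\xi\|_{\R^d}^2 / 2} e^{-i\langle \xi, y - x\rangle_{\R^d}}\,d\xi$. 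Inserting this into $\int_{\R^d} g_\sigma(y - x)\,\mu_k(dx)$ and applying Fubini (justified by finiteness of $\mu_k$ and integrability of $\exp(-\tfrac{\sigma^2}{2}\|\cdot\|_{\R^d}^2)$) rewrites the integral as $(2\pi)^{-d} \int_{\R^d} e^{-i\langle\xi, y\rangle_{\R^d}} e^{-\sigma^2\|\xi\|_{\R^d}^2/2} \hat\mu_k(\xi)\,d\xi$, where $\hat\mu_k(\xi) = \int_{\R^d} e^{i\langle \xi, x\rangle_{\R^d}}\,\mu_k(dx)$. The hypothesis $\hat\mu_1 = \hat\mu_2$ thus gives $\int_{\R^d} g_\sigma(y - x)\,\mu_1(dx) = \int_{\R^d} g_\sigma(y - x)\,\mu_2(dx)$ for every $y \in \R^d$ and $\sigma \in (0, \infty)$.

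Next, for any bounded continuous $f \colon \R^d \to \R$ I would integrate this identity against $f(y)\,dy$ and apply Fubini once more to deduce $\int_{\R^d} (f * g_\sigma)(x)\,\mu_1(dx) = \int_{\R^d} (f * g_\sigma)(x)\,\mu_2(dx)$, where $(f * g_\sigma)(x) = \int_{\R^d} f(y)\, g_\sigma(x - y)\,dy$. Since $(g_\sigma)_{\sigma > 0}$ is an approximate identity, $(f * g_\sigma)(x) \to f(x)$ pointwise as $\sigma \searrow 0$ with $\|f * g_\sigma\|_\infty \leq \|f\|_\infty$; dominated convergence combined with the finiteness of $\mu_k$ then produces $\int_{\R^d} f\,d\mu_1 = \int_{\R^d} f\,d\mu_2$ for every bounded continuous $f$. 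Applying this to the Lipschitz cutoffs $f_n(x) = \min\{1, n \inf_{y \in \R^d \setminus U} \|x - y\|_{\R^d}\}$, which increase pointwise to $\one_U$ for an arbitrary open $U \subseteq \R^d$, monotone convergence delivers $\mu_1(U) = \mu_2(U)$ for every open $U$. The open sets form a $\cap$-stable generator of $\mathcal{B}(\R^d)$ and contain the exhaustion $\R^d = \bigcup_{n \in \N} \{x \in \R^d \colon \|x\|_{\R^d} < n\}$ by sets of finite $\mu_1$-measure, so Proposition~\ref{thm:measure_uniqueness} concludes $\mu_1 = \mu_2$.

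The main obstacle is the careful justification of Fourier inversion and of the two Fubini applications; the underlying Gaussian Fourier identity reduces by tensorisation to the one-dimensional case (derivable via a contour shift, or by observing that both sides solve the same first-order linear ODE in $\xi$ with matching initial data), and the remaining approximation steps then follow a standard template.
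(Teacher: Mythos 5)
The paper does not actually prove Proposition~\ref{prop:charac_fct}; it merely refers the reader to Theorem~15.8 in Klenke~\cite{Klenke2008}, so there is no in-paper proof to compare yours against. Your self-contained argument is the classical Gaussian mollification proof and, as far as I can tell, it is correct at every step: the Fourier inversion identity $g_\sigma(y-x) = (2\pi)^{-d}\int_{\R^d} e^{-\sigma^2\|\xi\|^2/2} e^{-i\langle\xi,y-x\rangle}\,d\xi$ holds by direct Gaussian computation; the first Fubini interchange is justified because $\mu_k$ is finite and $e^{-\sigma^2\|\cdot\|^2/2}$ is Lebesgue-integrable; the second is justified because $\int\int |f(y)|\,g_\sigma(y-x)\,dy\,\mu_k(dx) \leq \|f\|_\infty\,\mu_k(\R^d) < \infty$; dominated convergence passes the limit $\sigma\searrow 0$ through $\mu_k$ since $\|f*g_\sigma\|_\infty\leq\|f\|_\infty$ and $\mu_k$ is finite; monotone convergence with the Lipschitz cutoffs handles arbitrary open $U$; and the open sets form a $\cap$-stable generator of $\mathcal{B}(\R^d)$ with a trivial exhaustion since $\mu_1$ is a finite measure, so Proposition~\ref{thm:measure_uniqueness} applies. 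The route one finds in standard references such as Klenke is essentially this one (Stone--Weierstrass variants also exist), so you have not so much diverged from the paper as filled a hole it intentionally left open; the chief value of writing it out is that it removes a dependency on an external citation while reusing the paper's own uniqueness-of-measures machinery (Proposition~\ref{thm:measure_uniqueness}) in the last step.
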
 

Proposition~\ref{prop:charac_fct} is, e.g., proved as Theorem~15.8 in Klenke~\cite{Klenke2008}. 

\begin{definition}[Characteristic functional]
Let $ ( V, \left\| \cdot \right\|_V ) $ be a normed $ \R $-vector space,
let $ \mathscr{M} $ be the set of all finite measures $ \mu \colon \mathcal{B}( V ) \to [0,\infty] $ on $ ( V, \mathcal{B}( V ) ) $,
and let
$ \mathbb{M} $ be the set of all functions from $ V' $ to $ \C $.
Then we denote by
$ \mathbb{F}_V \colon \mathscr{M} \to \mathbb{M} $
the function which satisfies for all 
$ \mu \in \mathscr{M} $, $ \varphi \in V' $  that 
\begin{equation} 
( \mathbb{F}_V \mu )( \varphi ) = \big( \mathbb{F}_V( \mu ) \big)( \varphi ) = \int_{ V } e^{ i \, \varphi( x ) } \, \mu( d x ) 
\end{equation} 
and for every 
$ \mu \in \mathscr{M} $ we call $ \mathbb{F}_V( \mu ) $ the characteristic functional of $ \mu $. 
\end{definition}

\begin{lemma}[Characteristic functional determines measure uniquely] \label{lem:charac_fct2} Let $ ( V, \left\| \cdot \right\|_V ) $ be a separable normed $ \R $-vector space. Then $ \mathbb{F}_V $ is injective. 
\end{lemma}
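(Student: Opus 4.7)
The plan is to reduce injectivity to the finite-dimensional case covered by Proposition~\ref{prop:charac_fct} via the linear characterization of the Borel sigma-algebra in Proposition~\ref{prop:charac_BE_lin}, and then to conclude by the uniqueness theorem for measures in Proposition~\ref{thm:measure_uniqueness}. So fix finite measures $\mu_1, \mu_2 \colon \mathcal{B}(V) \to [0,\infty]$ with $\mathbb{F}_V(\mu_1) = \mathbb{F}_V(\mu_2)$, and let $(\varphi_n)_{n\in\N} \subseteq V'$ be a sequence with $\mathcal{B}(V) = \sigma_V(\varphi_n \colon n \in \N)$ provided by Proposition~\ref{prop:charac_BE_lin}.

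First I would introduce the class of cylinder sets
\begin{equation}
\mathcal{E} = \bigl\{ (\varphi_{n_1}, \ldots, \varphi_{n_d})^{-1}(B) \colon d \in \N,\ n_1,\ldots,n_d \in \N,\ B \in \mathcal{B}(\R^d) \bigr\} \subseteq \mathcal{P}(V).
\end{equation}
Two routine observations: (a) $\mathcal{E}$ is $\cap$-stable, since the intersection of two cylinder sets can be expressed, after enlarging the index set to the union of the two finite index sets, as the preimage of a product Borel set in the common $\R^d$; and (b) $\sigma_V(\mathcal{E}) = \mathcal{B}(V)$, because every single $\varphi_n$-preimage of a Borel set in $\R$ lies in $\mathcal{E}$, so $\sigma_V(\varphi_n \colon n \in \N) \subseteq \sigma_V(\mathcal{E})$, while the reverse inclusion is immediate from continuity of the coordinate maps $\R^d \to \R$. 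Also $V \in \mathcal{E}$ (take $d=1$, $B = \R$) and $\mu_1(V) < \infty$, so the covering condition in Proposition~\ref{thm:measure_uniqueness} is satisfied trivially by $\Omega_n = V$.

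Next I would show $\mu_1|_{\mathcal{E}} = \mu_2|_{\mathcal{E}}$ by reduction to Proposition~\ref{prop:charac_fct}. Fix $d \in \N$ and $n_1,\ldots,n_d \in \N$, and let $T \colon V \to \R^d$ be the continuous linear map $T(x) = (\varphi_{n_1}(x),\ldots,\varphi_{n_d}(x))$. Set $\nu_k = T(\mu_k)_{\mathcal{B}(\R^d)}$ for $k \in \{1,2\}$; these are finite Borel measures on $\R^d$. For every $\xi \in \R^d$ the functional $\psi_\xi = \sum_{j=1}^d \xi_j \varphi_{n_j}$ belongs to $V'$, and the change-of-variables formula together with the hypothesis $\mathbb{F}_V(\mu_1) = \mathbb{F}_V(\mu_2)$ yields
\begin{equation}
\int_{\R^d} e^{i \langle \xi, y\rangle_{\R^d}}\, \nu_1(dy) = \int_V e^{i\, \psi_\xi(x)}\, \mu_1(dx) = \int_V e^{i\, \psi_\xi(x)}\, \mu_2(dx) = \int_{\R^d} e^{i \langle \xi, y\rangle_{\R^d}}\, \nu_2(dy).
\end{equation}
Proposition~\ref{prop:charac_fct} then gives $\nu_1 = \nu_2$, and hence $\mu_1(T^{-1}(B)) = \mu_2(T^{-1}(B))$ for every $B \in \mathcal{B}(\R^d)$. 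As $(d, n_1, \ldots, n_d, B)$ was arbitrary, this establishes $\mu_1|_{\mathcal{E}} = \mu_2|_{\mathcal{E}}$.

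Finally, Proposition~\ref{thm:measure_uniqueness} applied to the $\cap$-stable generator $\mathcal{E}$ with $\sigma_V(\mathcal{E}) = \mathcal{B}(V)$ and $\Omega_n = V \in \{A \in \mathcal{E} \colon \mu_1(A) < \infty\}$ yields $\mu_1 = \mu_2$, proving injectivity of $\mathbb{F}_V$. No serious obstacle is anticipated: the only step that requires care is the verification that $\mathcal{E}$ is $\cap$-stable and generates $\mathcal{B}(V)$, and this rests entirely on Proposition~\ref{prop:charac_BE_lin}; the separability of $V$ enters only through the availability of a countable family $(\varphi_n)_{n \in \N}$ (so that the cylinder sets form a well-defined set) and via that proposition.
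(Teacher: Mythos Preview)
Your proof is correct and follows essentially the same strategy as the paper: reduce to finite-dimensional pushforwards via Proposition~\ref{prop:charac_fct}, verify agreement on a $\cap$-stable generating class of cylinder sets, and conclude with Proposition~\ref{thm:measure_uniqueness}. The only cosmetic difference is that the paper takes $\mathcal{E}$ to consist of preimages under \emph{all} continuous linear maps $\phi \in L(V,\R^n)$, whereas you work directly with the countable family $(\varphi_n)$ furnished by Proposition~\ref{prop:charac_BE_lin}; both classes generate $\mathcal{B}(V)$ and the argument proceeds identically.
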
 
\begin{proof}[Proof of Lemma~\ref{lem:charac_fct2}]
Consider the notation in Subsection~\ref{sec:notation} and
let $ \mu_1, \mu_2 \colon \mathcal{B}( V ) \to [0,\infty] $
be finite measures on $ ( V, \mathcal{B}( V ) ) $ which satisfy
$ \mathbb{F}_V( \mu_1 ) = \mathbb{F}_V( \mu_2 ) $. Note that for all $ n \in \N $, $ \phi = ( \phi_1, \dots, \phi_n ) \in L( V, \R^n ) $, $ \xi \in \R^n $ it holds that
\begin{equation} 
	\begin{split} 
	\int_{ \R^n } e^{ i \left< \xi, x \right>_{ \R^n } } \, \bigl( \phi( \mu_1 )_{\mathcal{B}(\R^n)} \bigr)( dx ) & = \int_{ V } e^{ i \left< \xi, \phi( v ) \right>_{ \R^n } } \, ( \mu_1 )( dv ) = \bigl( \mathbb{F}_V \mu_1 \bigr)
	\bigl( V \ni v \mapsto \langle \xi , \phi( v ) \rangle_{ \R^n } \in \R \bigr) \\ 
	& = \bigl( \mathbb{F}_V \mu_2 \bigr)\bigl( V \ni v \mapsto \langle \xi , \phi( v ) \rangle_{ \R^n } \in \R \bigr)
	= \int_{ V } e^{ i \left< \xi, \phi( v ) \right>_{ \R^n } } \, ( \mu_2 )( dv ) \\
	& = \int_{ \R^n } e^{ i \left< \xi, x \right>_{ \R^n } } \, \bigl( \phi( \mu_2 )_{\mathcal{B}(\R^n)} \bigr)( dx ) . 
	\end{split} 
	\end{equation} 
	Proposition~\ref{prop:charac_fct} hence implies for all $ n \in \N $, $ \phi \in L( V, \R^n ) $  that 
	\begin{equation} \label{eq:coincide_phi} 
	\phi( \mu_1 )_{\mathcal{B}(\R^n)} = \phi( \mu_2 )_{\mathcal{B}(\R^n)} .
	\end{equation} 
	In the next step let $ \mathcal{E} \subseteq \mathcal{P}( V ) $ be the set given by
	\begin{equation}
	\mathcal{E} = \bigcup_{ n \in \N } \left\{ \phi^{ - 1 }( B ) \in \mathcal{P}( V ) \colon \phi \in L( V, \R^n ) , B \in \mathcal{B}( \R^n ) \right\} . 
	\end{equation} 
	Note that $ \mathcal{E} \subseteq \mathcal{B}( V ) $.
	In addition, observe that \eqref{eq:coincide_phi} shows that 
	\begin{equation} \mu_1|_{ \mathcal{E} } = \mu_2|_{ \mathcal{E} } . 
	\end{equation}
	This, the fact that $ \mathcal{E} $ is $ \cap $-stable, the fact $ V \in \mathcal{E} $, and Proposition~\ref{thm:measure_uniqueness} prove that 
	\begin{equation} \label{eq:mu12_coincide} \mu_1|_{ \sigma_{ V }( \mathcal{E} ) } = \mu_2|_{ \sigma_{ V }( \mathcal{E} ) } . 
	\end{equation} 
	Moreover, observe that Proposition~\ref{prop:charac_BE_lin} establishes that 
	\begin{equation} \sigma_V( \mathcal{E} ) = \mathcal{B}( V ) . 
	\end{equation} 
	Combining this with \eqref{eq:mu12_coincide} completes the proof of Lemma~\ref{lem:charac_fct2}. 
\end{proof}

\subsection{Fernique's theorem}

The proof of Fernique's theorem (see Proposition~\ref{thm:fernique} below) requires the two following well-known auxiliary lemmas, Lemma~\ref{lem:indep} and Lemma~\ref{lem:iid} below.

\begin{lemma}[Independent projections of random variables]
\label{lem:indep}
Let $(V_1, \left\| \cdot \right\|_{V_1})$ and $(V_2, \left\| \cdot \right\|_{V_2})$ be separable normed $ \R $-vector spaces, 
let $(\Omega, \mathcal{F}, \P)$  be a probability space, and let $X_1 \colon \Omega \to V_1$ and  $X_2 \colon \Omega \to V_2$ be 
functions which satisfy 
for all $\varphi_1 \in (V_1)'$, $\varphi_2 \in (V_2)'$ 
that 
$  \varphi_1 \circ X_1 
\colon \Omega \to \R $ and 
$ \varphi_2 \circ X_2 
\colon \Omega \to \R $ are independent random variables. Then it holds that $X_1$ and $X_2$ are independent random variables.
\end{lemma}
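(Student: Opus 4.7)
The plan is to reduce independence of $X_1$ and $X_2$ to the injectivity of the characteristic functional on the product space, that is, to Lemma~\ref{lem:charac_fct2} applied over $V_1 \times V_2$. The first step is to verify $\mathcal{F}/\mathcal{B}(V_i)$-measurability of $X_i$ for $i\in\{1,2\}$: Proposition~\ref{prop:charac_BE_lin} gives $\mathcal{B}(V_i)=\sigma_{V_i}(\varphi\colon\varphi\in(V_i)')$, and the hypothesis that each $\varphi\circ X_i$ is a random variable forces $X_i$ to be measurable into $(V_i,\mathcal{B}(V_i))$.

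Next I would equip $V_1 \times V_2$ with the norm $\|(v_1,v_2)\|=\|v_1\|_{V_1}+\|v_2\|_{V_2}$. This is a separable normed $\R$-vector space, and its Borel sigma-algebra coincides with $\mathcal{B}(V_1)\otimes\mathcal{B}(V_2)$ (a standard consequence of separability). Let $\mu$ be the pushforward measure $(X_1,X_2)(\P)_{\mathcal{B}(V_1\times V_2)}$ and let $\nu=X_1(\P)_{\mathcal{B}(V_1)}\otimes X_2(\P)_{\mathcal{B}(V_2)}$ be the product of the marginals, also regarded as a finite Borel measure on $V_1 \times V_2$. The goal then becomes to prove $\mu=\nu$, since this identity is exactly the independence assertion.

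The key computation compares $\mathbb{F}_{V_1\times V_2}\mu$ with $\mathbb{F}_{V_1\times V_2}\nu$ on an arbitrary $\Phi\in(V_1\times V_2)'$. Writing $\Phi(v_1,v_2)=\varphi_1(v_1)+\varphi_2(v_2)$ with $\varphi_1:=\Phi(\cdot,0)\in(V_1)'$ and $\varphi_2:=\Phi(0,\cdot)\in(V_2)'$, the assumed independence of $\varphi_1\circ X_1$ and $\varphi_2\circ X_2$ together with Fubini's theorem yields
\begin{equation*}
(\mathbb{F}_{V_1\times V_2}\mu)(\Phi)
=\E\!\left[e^{i\varphi_1(X_1)}\right]\E\!\left[e^{i\varphi_2(X_2)}\right]
=(\mathbb{F}_{V_1\times V_2}\nu)(\Phi).
\end{equation*}
Lemma~\ref{lem:charac_fct2} applied to the separable normed $\R$-vector space $V_1\times V_2$ then delivers $\mu=\nu$.

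The main obstacle, and really the only non-routine point, is to cleanly justify the two product-space facts invoked above: separability of $V_1\times V_2$ (by forming products of countable dense subsets in each factor) and the identification $\mathcal{B}(V_1\times V_2)=\mathcal{B}(V_1)\otimes\mathcal{B}(V_2)$, both of which hinge on the separability of $V_1$ and $V_2$. Everything else in the plan—the decomposition $\Phi=\varphi_1\oplus\varphi_2$ via the canonical injections, the two applications of Fubini, and the invocations of results already proved in the excerpt—is immediate.
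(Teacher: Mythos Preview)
Your proposal is correct and follows essentially the same route as the paper: both arguments establish measurability via Proposition~\ref{prop:charac_BE_lin}, then compare the characteristic functionals of the joint law $(X_1,X_2)(\P)$ and the product measure $X_1(\P)\otimes X_2(\P)$ on the separable normed space $V_1\times V_2$, decompose an arbitrary $\Phi\in(V_1\times V_2)'$ as $\Phi=\varphi_1\oplus\varphi_2$, invoke the hypothesis to factorize, and conclude equality of measures from Lemma~\ref{lem:charac_fct2}. Your write-up is in fact slightly more explicit than the paper's about the two product-space facts you flag (separability of $V_1\times V_2$ and $\mathcal{B}(V_1\times V_2)=\mathcal{B}(V_1)\otimes\mathcal{B}(V_2)$), which the paper uses without comment.
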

\begin{proof}[Proof of Lemma~\ref{lem:indep}]
Note that
the assumption that
$ \forall \, \varphi_1 \in (V_1)', \, \varphi_2 \in (V_2)' \colon
(
\varphi_1 \circ X_1 $
and
$ \varphi_2 \circ X_2 $
are
$\mathcal{ F } $/$ \mathcal{ B }( \R )$-measurable$)$
and Proposition~\ref{prop:charac_BE_lin}
show that
$X_1$ is $ \mathcal{ F } $/$\mathcal{ B }( V_1 ) $-measurable
and that
$X_2$ is $ \mathcal{ F } $/$\mathcal{ B }( V_2 ) $-measurable.
Throughout this proof let 
$
  ( \tilde{\Omega}, \tilde{\mathcal{F}}, \tilde{\P} )
$
be the probability space given by
\begin{equation}
  \tilde{\Omega} = V_1 \times V_2, 
  \quad
  \tilde{\mathcal{F}} = \mathcal{B}(V_1) \otimes \mathcal{B}(V_2) , 
  \quad
  \text{and}
  \quad
  \tilde{\P} = X_1(\P)_{\mathcal{B}(V_1)} \otimes X_2(\P)_{\mathcal{B}(V_2)} 
  . 
\end{equation}
Next note that for all $\varphi \in L( \tilde{\Omega} ,\R)$ it holds that
	\begin{align}
	\label{eq:indep}
	\begin{split}
	\big( \mathbb{F}_{\tilde{\Omega}} \, \tilde{\P}\big)(\varphi)
	& = 
	\int_{\tilde{\Omega}} \exp\!\big( i \, \varphi( x_1, x_2 )\big) \, \tilde{\P}(d(x_1,x_2))  
	\\
	&
	=  \int_{\tilde{\Omega}} 
	\exp\!\big( i \, \varphi( x_1, 0 )\big) 
	\exp\!\big( i \, \varphi( 0, x_2 )\big)\,  \big(X_1(\P)_{\mathcal{B}(V_1)} \otimes X_2(\P)_{\mathcal{B}(V_2)} \big)(d(x_1,x_2)) 
	\\
	&=  \int_{V_1} 
	\exp\!\big( i \, \varphi( x_1, 0 )\big) \, X_1(\P)_{\mathcal{B}(V_1)}(dx_1) 
	\int_{V_2} \exp\!\big( i \, \varphi( 0, x_2 )\big) \, X_2(\P)_{\mathcal{B}(V_2)}(dx_2) \\
	& = 
	\E\!\left[e^{ i \, \varphi( X_1, 0 )}\right] \E\!\left[e^{ i \, \varphi( 0, X_2 )}\right].
	\end{split}
	\end{align}
	Moreover, observe that for all $\varphi \in L( \tilde{\Omega} ,\R)$ it holds that $(V_1 \ni v \mapsto \varphi( v, 0 ) \in \R) \in (V_1)' $ 
	and $(V_2 \ni v \mapsto \varphi( 0,v ) \in \R) \in (V_2)' $. 
	This and \eqref{eq:indep} imply for all $\varphi \in L( \tilde{\Omega} ,\R)$ that
	\begin{align}
	\begin{split}
	  \big( \mathbb{F}_{\tilde{\Omega}} \, \tilde{\P} \big)(\varphi) 
	  & = 
    	  \E\!\left[
    	    e^{ i \, \varphi( X_1, 0 )}
    	    \, 
    	    e^{ i \, \varphi( 0, X_2 )}
    	  \right]
	  =
	  \E\!\left[
	    e^{ i \{ \varphi( X_1, 0 ) + \varphi( 0, X_2 ) \} }
	  \right] 
	\\ &
	  =
	  \E\!\left[e^{ i \, \varphi( X_1, X_2 )}\right] 
	  = \bigl(
	    \mathbb{F}_{\tilde{\Omega}}\bigl[ (X_1,X_2)( \P )_{\tilde{\mathcal{F}}} \bigr]
	  \bigr) (\varphi).
	\end{split}
	\end{align}
	Combining this with Lemma~\ref{lem:charac_fct2} yields that
	\begin{align}
  	  X_1(\P)_{\mathcal{B}(V_1)} \otimes X_2(\P)_{\mathcal{B}(V_2)} 
  	  =
	  \tilde{\P}
	  =
  	  (X_1,X_2)(\P)_{\mathcal{B}(V_1) \otimes \mathcal{B}(V_2)} .
	\end{align}
	The proof of Lemma~\ref{lem:indep} is thus completed.
\end{proof}

Lemma~\ref{lem:iid} below demonstrates under suitable hypotheses that an appropriate orthogonal transformation 
of two appropriate independent random variables also results in independent random variables. 
Observe that the columns of the $ 2 \times 2 $-matrix 
\begin{equation}
\label{eq:orthogonal_transform}
    \left(
      \begin{array}{cc}
        \nicefrac{ 1 }{ \sqrt{2} }
      &
        \nicefrac{ 1 }{ \sqrt{2} }
      \\
        \nicefrac{ 1 }{ \sqrt{2} }
      &
        - \nicefrac{ 1 }{ \sqrt{2} }
      \end{array}
    \right)
\end{equation}
constitute an orthonormal basis of $ \R^2 $.
Roughly speaking, the orthogonal transformation 
associated to \eqref{eq:orthogonal_transform} is employed in the next lemma.

\begin{lemma}[Orthogonal transformations of independent random variables]
\label{lem:iid}
Let $(V, \left\| \cdot \right\|_V)$ be a separable normed $ \R $-vector space, 
let $(\Omega, \mathcal{F}, \P)$ be a probability space, 
let $X_1, X_2 \colon \Omega \to V$ be independent random variables 
which satisfy for every $\varphi \in V'$ 
that $ \varphi \circ X_1 
\colon \Omega \to \R $ and 
$ \varphi \circ X_2 
\colon \Omega \to \R $ 
are identically distributed centered Gaussian random variables, 
and let $Y_1, Y_2 \colon \Omega \to V$ satisfy  
\begin{equation}
  Y_1= 2^{-\nicefrac{1}{2}}(X_1 + X_2)
\qquad 
  \text{and}
\qquad 
  Y_2 = 2^{-\nicefrac{1}{2}}(X_1 - X_2)
  .
\end{equation}
Then 
\begin{enumerate}[(i)]
\item 
it holds that $Y_1$ and $Y_2$ are independent random variables and
\item \label{item:equal_dist}
it holds that
$Y_1(\P)_{\mathcal{B}(V)}=Y_2(\P)_{\mathcal{B}(V)}=X_1(\P)_{\mathcal{B}(V)}=X_2(\P)_{\mathcal{B}(V)}$.
\end{enumerate}
\end{lemma}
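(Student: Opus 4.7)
The plan is to reduce both conclusions to one-dimensional statements through linear functionals in $V'$ and verify them by computing characteristic functions. The hypothesis that $\varphi \circ X_1$ and $\varphi \circ X_2$ are identically distributed centered Gaussians for every $\varphi \in V'$ means that the common variance $\sigma_\varphi^2 := \E[|\varphi(X_1)|^2] = \E[|\varphi(X_2)|^2]$ depends only on $\varphi$; combined with the independence of $X_1$ and $X_2$, this is the main input.

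For item~\eqref{item:equal_dist}, fix $\varphi \in V'$. Since $\varphi(X_1)$ and $\varphi(X_2)$ are independent centered Gaussians of common variance $\sigma_\varphi^2$, a direct calculation gives
\begin{equation*}
  \E\!\left[ e^{i \varphi(Y_1)} \right]
  =
  \E\!\left[ e^{i 2^{-1/2} \varphi(X_1)} \right]
  \E\!\left[ e^{i 2^{-1/2} \varphi(X_2)} \right]
  =
  e^{-\sigma_\varphi^2 / 4} \cdot e^{-\sigma_\varphi^2 / 4}
  =
  \E\!\left[ e^{i \varphi(X_1)} \right],
\end{equation*}
and the analogous identity holds for $Y_2$. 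The resulting equality of characteristic functionals $\mathbb{F}_V(Y_j(\P)_{\mathcal{B}(V)}) = \mathbb{F}_V(X_1(\P)_{\mathcal{B}(V)})$, $j \in \{1,2\}$, combined with Lemma~\ref{lem:charac_fct2}, then yields the claimed equality of pushforward measures on $(V, \mathcal{B}(V))$.

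For the independence of $Y_1$ and $Y_2$, I apply Lemma~\ref{lem:indep}, so that it suffices to verify that $\varphi_1(Y_1)$ and $\varphi_2(Y_2)$ are independent for arbitrary $\varphi_1, \varphi_2 \in V'$. For $\xi_1, \xi_2 \in \R$, setting $\psi_\pm := \xi_1 \varphi_1 \pm \xi_2 \varphi_2 \in V'$, the identity
\begin{equation*}
  \xi_1 \varphi_1(Y_1) + \xi_2 \varphi_2(Y_2)
  =
  2^{-1/2} \bigl( \psi_+(X_1) + \psi_-(X_2) \bigr)
\end{equation*}
together with the independence of $X_1, X_2$ and the Gaussian hypothesis yields the joint characteristic function $\exp(-\tfrac14(\sigma_{\psi_+}^2 + \sigma_{\psi_-}^2))$. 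Bilinearly expanding $\sigma_{\psi_\pm}^2 = \xi_1^2 \sigma_{\varphi_1}^2 \pm 2 \xi_1 \xi_2 \E[\varphi_1(X_1) \varphi_2(X_1)] + \xi_2^2 \sigma_{\varphi_2}^2$, the cross-terms cancel, leaving $\exp(-\tfrac12 \xi_1^2 \sigma_{\varphi_1}^2 - \tfrac12 \xi_2^2 \sigma_{\varphi_2}^2)$, which matches the product of the marginal characteristic functions of $\varphi_1(Y_1)$ and $\varphi_2(Y_2)$ computed as above, establishing the required factorisation.

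The only nontrivial point is the cancellation of the cross-terms in $\sigma_{\psi_+}^2 + \sigma_{\psi_-}^2$: this is the algebraic manifestation of the orthogonality of the transformation~\eqref{eq:orthogonal_transform} and is automatic here because $\sigma_\varphi^2$ is independent of the index $j \in \{1,2\}$ by the hypothesis. No further obstacles are anticipated.
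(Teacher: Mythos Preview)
Your proposal is correct and follows essentially the same approach as the paper: both proofs compute the joint characteristic function $\E[\exp(i(\xi_1\varphi_1(Y_1)+\xi_2\varphi_2(Y_2)))]$ by rewriting it in terms of $X_1,X_2$, use independence and the Gaussian hypothesis to obtain an explicit exponential in which the cross-terms cancel, and then invoke Lemma~\ref{lem:charac_fct2} for item~\eqref{item:equal_dist} and Lemma~\ref{lem:indep} for the independence. The only cosmetic difference is that the paper packages both conclusions into a single identity $\E[e^{i\langle\xi,(\varphi_1(Y_1),\varphi_2(Y_2))\rangle}]=\E[e^{i\langle\xi,(\varphi_1(X_1),\varphi_2(X_2))\rangle}]$ and extracts both items from it, whereas you treat \eqref{item:equal_dist} first with a one-variable computation; the substance is identical.
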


\begin{proof}[Proof of Lemma~\ref{lem:iid}]
Observe that the assumption that
$ \forall \, \varphi \in V' \colon
( \varphi \circ X_1
\text{ and }
\varphi \circ X_2 $
are identically distributed random variables$ ) $
proves
for all $\varphi \in V'$ that
\begin{equation}
	  \E\!\left[ 
	    e^{ 
	      i \,
	      \varphi(X_1) 
	    } 
	  \right] 
	= 
	  \E\!\left[ 
	    e^{ 
	      i \,
	      \varphi(X_2) 
	    } 
	  \right]
	  .
\end{equation}
Lemma~\ref{lem:charac_fct2}
hence shows that
\begin{equation} \label{eq:equal_dist_X}
X_1(\P)_{\mathcal{B}(V)}=X_2(\P)_{\mathcal{B}(V)}.
\end{equation}
In addition, note that the hypothesis that $ X_1 $ and $ X_2 $ are independent random variables ensures 
for all $\varphi_1, \varphi_2 \in V'$, $\xi = (\xi_1, \xi_2) \in \R^2 $ that 
	\begin{equation}
	\begin{split}
	  \E\!\left[ 
	    e^{ i \langle \xi, (\varphi_1(Y_1), \varphi_2(Y_2)) \rangle_{\R^2} } 
	  \right] 
	  & =
	  \E\!\left[ 
	    e^{ i \, \xi_1 \varphi_1(Y_1) + i \, \xi_2 \varphi_2(Y_2) } 
	  \right] 
	\\ &
	  =  \E\big[ e^{ i  \,2^{-\nicefrac{1}{2}} \xi_1 \varphi_1 (X_1+X_2) + i \,  2^{-\nicefrac{1}{2}} \xi_2 \varphi_2 (X_1-X_2)}\big] 
	\\
	& 
	  = \E\big[ 
	    e^{ i \, 2^{-\nicefrac{1}{2}} (\xi_1\varphi_1 +\xi_2\varphi_2) ( X_1) + i \, 2^{-\nicefrac{1}{2}} (\xi_1\varphi_1 - \xi_2\varphi_2) (X_2)}
	  \big] 
	\\
	& 
	  = 
	  \E\big[ e^{ i \, 2^{-\nicefrac{1}{2}} (\xi_1\varphi_1 +\xi_2\varphi_2) ( X_1)} \big]  
	  \,
	  \E\big[ e^{ i \, 2^{-\nicefrac{1}{2}} (\xi_1\varphi_1 - \xi_2\varphi_2) (X_2)}
	  \big] .
\end{split}
\end{equation}
Hence, we obtain that for all $\varphi_1, \varphi_2 \in V'$, $\xi = (\xi_1, \xi_2) \in \R^2 $ it holds that 
\begin{equation}
\begin{split}
	&
	  \E\!\left[ 
	    e^{ 
	      i \langle \xi, (\varphi_1(Y_1), \varphi_2(Y_2)) \rangle_{\R^2} 
	    } 
	  \right] 
	\\
	& = 
  	  \exp\!\left( - \tfrac{1}{2} \E\!\left[| 2^{ - \nicefrac{1}{2} } (\xi_1\varphi_1 +\xi_2\varphi_2) ( X_1)|^2\right] \right) 
  	  \exp\!\left( - \tfrac{1}{2} \E\!\left[| 2^{ - \nicefrac{1}{2} } (\xi_1\varphi_1 -\xi_2\varphi_2) ( X_2)|^2\right]\right) 
	\\
	&
	= 
  	  \exp\!\left( - \tfrac{1}{4} \E\!\left[| (\xi_1\varphi_1 +\xi_2\varphi_2) ( X_1)|^2\right] \right) 
  	  \exp\!\left( 
  	    - \tfrac{1}{4} 
  	    \E\!\left[| (\xi_1\varphi_1 -\xi_2\varphi_2) ( X_2)|^2\right]
  	  \right) 
  	\\
  	&
  	=
  	  \exp\!\left( 
  	    - 
  	    \tfrac{ 1 }{ 4 } 
  	    \left\{
	      \E\!\left[
		| ( \xi_1 \varphi_1 + \xi_2 \varphi_2 )( X_1 ) |^2 
	      \right]
	      +
	      \E\!\left[
	        | ( \xi_1 \varphi_1 - \xi_2 \varphi_2 )( X_1 ) |^2
	      \right]
	    \right\}
  	  \right) 
  	\\
  	&
  	=
  	  \exp\!\left( 
  	    - 
  	    \tfrac{ 1 }{ 4 } 
  	    \left\{
  	      2
  	      \,
	      \E\!\left[
		| ( \xi_1 \varphi_1 )( X_1 ) |^2 
	      \right]
	      +
	      2 \,
	      \E\!\left[
	        | ( \xi_2 \varphi_2 )( X_1 ) |^2
	      \right]
	    \right\}
  	  \right) 
  	  .
\end{split}
\end{equation}
This shows for all $\varphi_1, \varphi_2 \in V'$, $\xi = (\xi_1, \xi_2) \in \R^2 $ that 
\begin{equation}
\label{eq:distribution_Fernique_same}
\begin{split}
	  \E\!\left[ 
	    e^{ 
	      i \langle \xi, (\varphi_1(Y_1), \varphi_2(Y_2)) \rangle_{\R^2} 
	    } 
	  \right]
	& = \exp \! \left( - \tfrac{1}{2}\E\!\left[|\xi_1 \varphi_1(X_1)|^2\right]\right) \exp \! \left( - \tfrac{1}{2}\E\!\left[|\xi_2 \varphi_2(X_2)|^2\right]\right) \\
	&= 
	  \E\big[ 
	    e^{ 
	      i \, \xi_1 \varphi_1( X_1 ) 
	    }
	  \big] 
	  \, 
	  \E\big[ 
	    e^{ 
	      i  \, \xi_2 \varphi_2( X_2 ) 
	    }
	  \big] 
	=
	  \E\big[ 
	    e^{ 
	      i \, \xi_1 \varphi_1( X_1 ) 
	      +
	      i  \, \xi_2 \varphi_2( X_2 ) 
	    }
	  \big] 
	\\ &
	= 
	  \E \!\left[ e^{ i  \langle \xi,(\varphi_1(X_1), \varphi_2(X_2))\rangle_{\R^2}}\right].
	\end{split}
	\end{equation}
In particular, this implies
for all $\varphi \in V'$ that
\begin{equation}
\E\!\left[ 
 e^{ 
   i \,
   \varphi(Y_1) 
 } 
\right] 
= 
\E\!\left[ 
 e^{ 
   i \,
   \varphi(X_1) 
 } 
\right]
\qquad \text{and} \qquad
\E\!\left[ 
 e^{ 
   i \,
   \varphi(Y_2) 
 } 
\right] 
= 
\E\!\left[ 
 e^{ 
   i \,
   \varphi(X_2) 
 } 
\right].
\end{equation}
Lemma~\ref{lem:charac_fct2}
hence establishes that
\begin{equation}
  Y_1(\P)_{\mathcal{B}(V)}=X_1(\P)_{\mathcal{B}(V)}
  \qquad 
  \text{and}
  \qquad
  Y_2(\P)_{\mathcal{B}(V)}=X_2(\P)_{\mathcal{B}(V)}
  .
\end{equation}
This and \eqref{eq:equal_dist_X} prove~\eqref{item:equal_dist}.
Next note that
Lemma~\ref{lem:charac_fct2} 
and \eqref{eq:distribution_Fernique_same} show 
for all $\varphi_1, \varphi_2 \in V'$ 
that
\begin{equation}
( \varphi_1 \circ Y_1, \varphi_2 \circ Y_2 )( \P )_{ \mathcal{B}( \R^2 ) }
=
( \varphi_1 \circ X_1, \varphi_2 \circ X_2 )( \P )_{ \mathcal{B}( \R^2 ) }.
\end{equation}
This, the assumption that $ X_1 $ and $ X_2 $ are independent, and~\eqref{item:equal_dist} ensure
for all $ \varphi_1, \varphi_2 \in V' $ that
\begin{equation}
\begin{split}
( \varphi_1 \circ Y_1 )( \P )_{ \mathcal{B}( \R ) } \otimes ( \varphi_2 \circ Y_2 )( \P )_{ \mathcal{B}( \R ) }
& =
( \varphi_1 \circ X_1 )( \P )_{ \mathcal{B}( \R ) } \otimes ( \varphi_2 \circ X_2 )( \P )_{ \mathcal{B}( \R ) }
\\ & =
( \varphi_1 \circ X_1, \varphi_2 \circ X_2 )( \P )_{ \mathcal{B}( \R^2 ) }
=
( \varphi_1 \circ Y_1, \varphi_2 \circ Y_2 )( \P )_{ \mathcal{B}( \R^2 ) }.
\end{split}
\end{equation}
This proves
for every $ \varphi_1, \varphi_2 \in V' $
that
$
  \varphi_1 \circ Y_1
$
and
$
  \varphi_2 \circ Y_2
$
are independent random variables.
Lemma~\ref{lem:indep} hence establishes that $Y_1$ and $Y_2$ are independent random variables. The proof of Lemma~\ref{lem:iid} is thus completed.
\end{proof}

In the next result, Proposition~\ref{thm:fernique} below, we present Fernique's theorem (see, e.g., Theorem~8.2.1 in Stroock~\cite{Stroock2010}).

\begin{prop}[Fernique's theorem]
\label{thm:fernique}
Let $ (V, \left\| \cdot \right\|_V) $ be a separable normed $ \R $-vector space, 
let $ (\Omega, \mathcal{F}, \P) $ be a probability space, 
let $X \colon \Omega \to V$ be a function which satisfies  for all $\varphi \in V'$  that $\varphi \circ X \colon \Omega \to \R $ is a
centered Gaussian random variable, 
and let $ R \in (0, \infty) $ satisfy 
\begin{equation}
  R \geq 
  \inf\!\big( 
    \{ r \in [0, \infty) \colon \P(\|X\|_V \leq r) \geq \nicefrac{9}{10}
  \} \big)
  .
\end{equation}
Then
	\begin{align}\label{eq:th:fern0}
	\begin{split}
	\E \! \left[ \exp\! \left( \frac{\|X\|_V^2}{18 R^2}\right)\right] \leq \sqrt{e} + \sum_{n=0}^{\infty} \bigg[\frac{e}{3}\bigg]^{(2^n)}< 13 < \infty.
	\end{split}
	\end{align}	
\end{prop}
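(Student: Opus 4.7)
The plan is to execute the classical Fernique iteration, with the orthogonal change of variables supplied by Lemma~\ref{lem:iid} as the core ingredient.

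First, I would produce two independent copies of $X$ by passing to the product space $(\Omega^2, \mathcal{F}^{\otimes 2}, \P^{\otimes 2})$ and setting $X_k(\omega_1,\omega_2) := X(\omega_k)$ for $k \in \{1,2\}$. Each $X_k$ is strongly $\mathcal{F}^{\otimes 2}/\mathcal{B}(V)$-measurable by Proposition~\ref{prop:charac_BE_lin} and has the same distribution as $X$, and the two are independent by the product construction. Defining $Y_1 := 2^{-\nicefrac{1}{2}}(X_1 + X_2)$ and $Y_2 := 2^{-\nicefrac{1}{2}}(X_1 - X_2)$, Lemma~\ref{lem:iid} delivers that $Y_1$ and $Y_2$ are again independent and that $Y_1,Y_2,X_1,X_2$ all share a common distribution with $X$.

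Second, for arbitrary $0 \leq s \leq t$ the identities $2X_1 = (X_1+X_2) + (X_1-X_2)$ and $2X_2 = (X_1+X_2) - (X_1-X_2)$ together with the triangle inequality imply that on the event $\{\|Y_1\|_V \leq s\} \cap \{\|Y_2\|_V > t\}$ one has $\|X_k\|_V > (t-s)/\sqrt{2}$ for both $k \in \{1,2\}$. Combined with independence and the distributional identity from the first step, this yields
\begin{align*}
\P(\|X\|_V \leq s)\, \P(\|X\|_V > t)
  &= \P\bigl(\|Y_1\|_V \leq s,\ \|Y_2\|_V > t\bigr) \\
  &\leq \P\bigl(\|X_1\|_V > \tfrac{t-s}{\sqrt{2}},\ \|X_2\|_V > \tfrac{t-s}{\sqrt{2}}\bigr)
  = \P\bigl(\|X\|_V > \tfrac{t-s}{\sqrt{2}}\bigr)^{\!2}.
\end{align*}
Specializing to $s = R$ and iterating along the sequence $\tau_0 := R$, $\tau_{n+1} := R + \sqrt{2}\,\tau_n$ (so that $\tau_n = R\bigl((\sqrt{2})^{n+1}-1\bigr)/(\sqrt{2}-1)$) turns this into $\tfrac{9}{10}\P(\|X\|_V > \tau_{n+1}) \leq \P(\|X\|_V > \tau_n)^2$. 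A one-line induction on $\beta_n := \tfrac{10}{9}\P(\|X\|_V > \tau_n)$ (using $\beta_0 \leq \tfrac{1}{9}$ and $\beta_{n+1}\leq \beta_n^2$) therefore produces
\[
\P(\|X\|_V > \tau_n) \leq \tfrac{9}{10} \cdot 9^{-2^n} \qquad (n \in \N_0).
\]
Since $3\sqrt{2} - 4 > 0$, a direct calculation gives $3R(\sqrt{2})^n \geq \tau_{n-1}$ for every $n \in \N$, and thus $\P(\|X\|_V > 3R(\sqrt{2})^n) \leq 3^{-2^n}$ for every $n \in \N_0$ (with $n=0$ handled by the trivial bound $\P(\|X\|_V > 3R) \leq \P(\|X\|_V > R) \leq \nicefrac{1}{10} \leq \nicefrac{1}{3}$).

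Finally, the partition
\[
\E\!\left[\exp\!\left(\tfrac{\|X\|_V^2}{18R^2}\right)\right]
= \E\!\left[\exp\!\left(\tfrac{\|X\|_V^2}{18R^2}\right)\one_{\{\|X\|_V \leq 3R\}}\right]
+ \sum_{n=0}^\infty \E\!\left[\exp\!\left(\tfrac{\|X\|_V^2}{18R^2}\right)\one_{\{3R(\sqrt{2})^n < \|X\|_V \leq 3R(\sqrt{2})^{n+1}\}}\right]
\]
finishes the proof: the first summand is bounded pointwise by $\exp(9R^2/(18R^2)) = \sqrt{e}$, while each tail contribution is bounded by $\exp\!\bigl(9R^2 \cdot 2^{n+1}/(18R^2)\bigr) \cdot 3^{-2^n} = e^{2^n}\cdot 3^{-2^n} = (e/3)^{2^n}$, yielding \eqref{eq:th:fern0}; the numerical bound $<13$ follows from a crude estimate of the super-exponentially decreasing series. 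The main obstacle is the very first step, where the mild hypothesis (one-dimensional Gaussianity of all projections $\varphi\circ X$) has to be leveraged into a genuine infinite-dimensional distributional identity for $(Y_1,Y_2)$; this is exactly what Lemma~\ref{lem:iid} supplies via the characteristic-functional machinery of Lemma~\ref{lem:charac_fct2} and Proposition~\ref{prop:charac_BE_lin}. Once that identity is at hand, the remaining arithmetic along the geometric grid of scale $\sqrt{2}$ is routine.
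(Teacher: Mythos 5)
Your proposal is correct and takes essentially the same approach as the paper: product-space construction of two independent copies, the orthogonal transform of Lemma~\ref{lem:iid}, the tail inequality $\P(\|X\|_V \leq s)\,\P(\|X\|_V > t) \leq \P(\|X\|_V > \tfrac{t-s}{\sqrt 2})^2$, the doubling iteration along the grid $\tau_{n+1}=R+\sqrt2\,\tau_n$, and the annular decomposition to bound the exponential moment. The only differences are cosmetic (your $\beta_n$ bookkeeping vs.\ the paper's ratio form, and deriving the tail bound via the inverse transform $X_k=2^{-\nicefrac{1}{2}}(Y_1\pm Y_2)$ rather than the reverse triangle inequality on $\|Y_1\pm Y_2\|_V$), plus a slight hand-wave at the final numerical step $\sqrt e + \sum_n(e/3)^{2^n}<13$, which the paper closes by comparing to the geometric series $\sum_n(e/3)^n = \tfrac{3}{3-e}$.
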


\begin{proof}[Proof of Proposition~\ref{thm:fernique}]
	Throughout this proof let
	$
	  ( \tilde{\Omega}, \tilde{\mathcal{F}}, \tilde{\P} )
	$
	be the probability space given by
	$\tilde{\Omega} = \Omega \times \Omega$, 
	$\tilde{\mathcal{F}} = \mathcal{F} \otimes \mathcal{F}$, 
	and
	$\tilde{\P} = \P \otimes \P$, 
	let $ Y_1, Y_2 \colon \tilde{\Omega} \to V $ 
	be the functions which satisfy for all 
	$\omega_1, \omega_2 \in \Omega$ that 
	\begin{equation}
	  Y_1(\omega_1, \omega_2) = X(\omega_1) 
	\qquad 
	  \text{and}
	\qquad
	  Y_2(\omega_1, \omega_2) = X(\omega_2) , 
	\end{equation}
	let $ Z_1, Z_2 \colon \tilde{\Omega} \to V $ 
	be the functions which satisfy 
	\begin{equation}
	  Z_1 = 2^{-\nicefrac{1}{2}}(Y_1 + Y_2) 
	  \qquad
	  \text{and}
	  \qquad
	  Z_2 = 2^{-\nicefrac{1}{2}}(Y_1 - Y_2) , 
	\end{equation}
	and let $t_n \in (0, \infty)$, $n \in \N_0$, be the sequence 
	of real numbers which satisfies for all $n \in \N$ that 
	\begin{equation}
	  t_0 = R  
	  \qquad 
	  \text{and} 
	  \qquad 
	  t_n = R + \sqrt{2} \, t_{ n - 1 } . 
	\end{equation}
	Observe that 
	$ Y_1 $ and $ Y_2 $ are independent random variables.
	In addition, note that
	for every $\varphi \in V'$ it holds that 
	the random variables 
	$ \varphi \circ Y_1 \colon \tilde{\Omega }\to \R $ and 
	$ \varphi\circ Y_2 \colon \tilde{\Omega} \to \R $  
	have the same distribution on $ ( \R, \mathcal{B}(\R) ) $ 
	as the random variable $ \varphi \circ X \colon \Omega \to \R $. 
	Lemma~\ref{lem:iid} 
	hence ensures that  $Z_1$ and $Z_2$ are 
	independent random variables and 
	\begin{equation}
	  Z_1(\tilde{\P})_{\mathcal{B}(V)}=
	  Z_2(\tilde{\P})_{\mathcal{B}(V)}=
	  Y_1(\tilde{\P})_{\mathcal{B}(V)}=X(\P)_{\mathcal{B}(V)}
	  .
	\end{equation}
	This proves  for all $s, t \in (0, \infty)$ with $s \leq t$  that
	\begin{align}
	\begin{split}
	&\P(\|X\|_V \leq s) \, \P(\|X\|_V > t)= \tilde{\P}(\|Z_2\|_V \leq s) \, \tilde{\P}(\|Z_1\|_V > t) \\
	&= \tilde{\P}\big(\{\|Z_2\|_V \leq s\} \cap \{\|Z_1\|_V > t\}\big) \\
	&= \tilde{\P}\big(\{\|Y_1- Y_2\|_V \leq \sqrt{2} \,s \} \cap \{ \|Y_1 +Y_2\|_V > \sqrt{2} \,t \}\big) \\
	& \leq \tilde{\P}\big(\{| \|Y_1\|_V- \|Y_2\|_V | \leq \sqrt{2} \, s \} \cap \{ \|Y_1\|_V +\|Y_2\|_V > \sqrt{2} \, t \}\big) \\
	& \leq \tilde{\P}\big(\!\min\{\|Y_1\|_V, \|Y_2\|_V\} > 2^{-\nicefrac{1}{2}}(t-s)\big) = \big| \P\big(\|X\|_V > 2^{-\nicefrac{1}{2}}(t-s) \big) \big|^2.
	\end{split}
	\end{align}
	This, in turn, implies for all $n \in \N$  that
	\begin{align}
	\P(\|X\|_V \leq R) \,\P(\|X\|_V > t_n) \leq | \P(\|X\|_V > t_{n-1} ) |^2.
	\end{align}
	The fact that  $\P(\|X\|_V \leq R) \geq \nicefrac{9}{10} >0$ hence shows for all $n \in \N$  that
	\begin{align}
	\frac{\P(\|X\|_V > t_n)}{\P(\|X\|_V \leq R) } \leq \left( \frac{\P(\|X\|_V > t_{n-1})}{\P(\|X\|_V \leq R) } \right)^2.
	\end{align}
	This and induction on $n \in \N_0$ establish for all $n \in \N_0$ that
	\begin{align}
	\label{eq:thm:fern1}
	\frac{\P(\|X\|_V > t_n)}{\P(\|X\|_V \leq R) } \leq \left( \frac{\P(\|X\|_V > R)}{\P(\|X\|_V \leq R) } \right)^{(2^n)}.
	\end{align}
	Moreover, induction on $n \in \N_0$ ensures for all $n \in \N_0$ that
	\begin{align}
	t_n = R \cdot \frac{2^{\frac{n+1}{2}}-1}{\sqrt{2}-1} \leq  (\sqrt{2}+1) \, 2^{\frac{n+1}{2}} R \leq 3 \cdot 2^{\frac{n+1}{2}} R .
	\end{align}
	Combining this with \eqref{eq:thm:fern1} 
	and the fact that 
	$\P(\|X\|_V \leq R) \geq \nicefrac{9}{10} \geq 9 \, \P(\|X\|_V > R)$  
	yields that for all $n \in \N_0$ it holds that
	\begin{align}
	\P(\|X\|_V > 3 \cdot 2^{\frac{n}{2}} R ) \leq 3^{-(2^n)}.
	\end{align} 
	The fact that $\nicefrac{e}{3}<1$ hence shows that
	\begin{align}
	\begin{split}
	\E \! \left[ \exp\! \left( \frac{\|X\|_V^2}{18 R^2}\right)\right]
	&\leq \sqrt{e} \, \P(\|X\|_V \leq 3 R) + \sum_{n=0}^{\infty} e^{(2^n)} \, \P \big( 3 \cdot 2^{\frac{n}{2}} R < \|X\|_V \leq 3 \cdot 2^{\frac{n+1}{2}} R\big) \\
	&\leq \sqrt{e}+ \sum_{n=0}^{\infty} e^{(2^n)} \, \P \big( \|X\|_V > 3 \cdot 2^{\frac{n}{2}} R \big)  \leq \sqrt{e} + \sum_{n=0}^{\infty} \bigg[\frac{e}{3}\bigg]^{(2^n)}\\
	& \leq \sqrt{e} + \sum_{n=0}^{\infty} \bigg[\frac{e}{3}\bigg]^{n} = \sqrt{e} + \frac{3}{3-e} < 13 < \infty.
	\end{split}
	\end{align}
	The proof of Proposition~\ref{thm:fernique} is thus completed.
\end{proof}

\section{Abstract examples}
\label{sec:abstract}

In this section we verify the assumptions of Theorem~\ref{thm:strong} above
in the case of more specific SPDEs (see the setting in Subsection~\ref{setting:example} below)
and establish strong convergence in this setting in Proposition~\ref{abs:prop:last} below.
First, we show a result on transformations of semigroups for solutions of SPDEs in Proposition~\ref{prop:transform_SG} below.
Next we combine this with Fernique's theorem (see Proposition~\ref{thm:fernique} above)
and the elementary results in Lemmas~\ref{abs:phi(w)_finite}--\ref{lem:conv:series},
Proposition~\ref{prop:abstract}, and Lemma~\ref{lemma:conv:rate}
to derive certain properties of stochastic convolution processes (see Proposition~\ref{prop:exists} below).
Finally, the latter allow us to apply Theorem~\ref{thm:strong} in order to prove Proposition~\ref{abs:prop:last}.

\subsection{Transformations of semigroups for solutions of SPDEs}

Roughly speaking, Proposition~\ref{prop:transform_SG} below
proves that a mild solution of an SPDE does not depend on a shift of the linear part of the drift coefficient function
if the nonlinear part of the drift coefficient function is shifted accordingly.
This result is achieved under optimal hypotheses in the sense that
the hypotheses of Proposition~\ref{prop:transform_SG}
are required for the mathematical formulation of the statement to be meaningful
(see, in particular, \eqref{eq:assume1}--\eqref{eq:assume2}).
To the best of our knowledge, Proposition~\ref{prop:transform_SG} is the first result in the literature
to establish this assertion under optimal hypotheses,
even in the special case of partial differential equations (where the diffusion coefficient function is zero).

\begin{prop}
\label{prop:transform_SG}
Consider the notation in Subsection~\ref{sec:notation},
let 
$ ( H, \left< \cdot, \cdot \right>_H, \left\| \cdot \right\|_H ) $
and
$ ( U, \left< \cdot, \cdot \right>_U, \left\| \cdot \right\|_U ) $
be separable $ \R $-Hilbert spaces,
let $ \mathbb{H} \subseteq H$ be a nonempty orthonormal basis of $ H $,
let $T \in (0, \infty)$, $ \alpha, \beta, \gamma, \eta, \kappa \in \R $,
let $\lambda \colon \mathbb{H} \to \R $
be a function which satisfies
$\sup_{b \in \mathbb{H}} \lambda_b < \kappa $,
let $ A \colon D(A) \subseteq H \to H $
be the linear operator which satisfies
$ D(A) = \{ v \in H \colon \sum_{b \in \mathbb{H}} | \lambda_b \langle b , v \rangle_H |^2 < \infty \} $
and
$ \forall \, v \in D(A) \colon A v = \sum_{b \in \mathbb{H}} \lambda_b \langle b , v \rangle_H b $,
let 
$ 
( 
H_r 
,
\left< \cdot, \cdot \right>_{ H_r }
,
\left\| \cdot \right\|_{ H_r } 
) 
$, $ r \in \R $,
be a family of interpolation spaces associated 
to $ \kappa - A $,
let $ ( \Omega, \mathcal{F}, \P)$ be a probability space with a normal filtration $( \mathbb{F}_t )_{ t \in [0,T] }  $,
let $ ( W_t )_{ t \in [0,T] } $ be an
$ \operatorname{Id}_U $-cylindrical 
$ ( \Omega, \mathcal{F}, \P, ( \mathbb{F}_t )_{ t \in [0,T] } ) $-Wiener process,
let
$ O \in \mathcal{B}( H_{ \gamma } ) $,
let
$ F \colon O \to H_{ \alpha } $
be a $ \mathcal{B}( O ) $/$ \mathcal{B}( H_{ \alpha } ) $-measurable function,
let
$ \tilde{F} \colon O \to H_{ \min\{ \alpha , \gamma \} } $
be the function which satisfies for all $ v \in O $  that
$
\tilde{F}( v ) = \eta v + F( v )
$,
let
$ B \colon O \to \mathrm{HS}( U, H_{ \beta } ) $
be a $ \mathcal{B}( O ) $/$ \mathcal{B}( \mathrm{HS}( U, H_{ \beta } ) ) $-measurable function,
let
$ \xi \colon \Omega \to O $
be an $ \mathbb{F}_0 $/$ \mathcal{B}( O ) $-measurable function,
and let
$ X \colon [0,T] \times \Omega \to O $ 
be  an $ ( \mathbb{F}_t )_{ t \in [0,T] } $/$ \mathcal{B}( O ) $-predictable stochastic process 
which satisfies for all $ t \in [0,T] $  that
\begin{equation}
\label{eq:assume1}
\P\!\left(
\int_0^t 
\| 
e^{  ( t - s )A } F( X_s ) 
\|_{ H_{ \gamma } } 
+ 
\| 
e^{  ( t - s )A } 
B( X_s ) 
\|^2_{ \mathrm{HS}( U, H_{ \gamma } ) } \, ds < \infty
\right) = 1
\end{equation}
and
\begin{equation}
\label{eq:assume2}
\begin{split}
\left[ 
X_t 
\right]_{ \P, \mathcal{B}( H_{ \gamma } ) }
& =
\left[ 
e^{ tA } \xi 
+
\int_0^t
\mathbbm{1}_{
	\{
	\int_0^t \| e^{  ( t - u )A } F( X_u ) \|_{ H_{ \gamma } } du < \infty 
	\}
}
\,
e^{  ( t - s )A } F( X_s ) \, ds
\right]_{ \P, \mathcal{B}( H_{ \gamma } ) }
\\ & \quad +
\int_0^t
e^{  ( t - s ) A} B( X_s ) \, dW_s
.
\end{split}
\end{equation}
Then it holds for all  $ t \in [0,T] $  that
\begin{equation}
\P\!\left(
\int_0^t 
\| 
e^{  ( t - s )(A- \eta) } \tilde{F}( X_s ) 
\|_{ H_{ \gamma } } 
+ 
\| 
e^{  ( t - s )(A-\eta) } 
B( X_s ) 
\|^2_{ \mathrm{HS}( U , H_{ \gamma } ) } \, ds < \infty
\right) = 1
\end{equation}
and
\begin{equation}
\begin{split}
\left[ 
X_t 
\right]_{ \P, \mathcal{B}( H_{ \gamma } ) }
& =
\left[ 
e^{ t(A-\eta) } \xi 
+
\int_0^t
\mathbbm{1}_{
	\{
	\int_0^t \| e^{  ( t - u )(A-\eta) } \tilde{F}( X_u ) \|_{ H_{ \gamma } } du < \infty 
	\}
}
\,
e^{  ( t - s )(A- \eta)} \tilde{F}( X_s ) \, ds
\right]_{ \P, \mathcal{B}( H_{ \gamma } ) }
\\ & \quad +
\int_0^t
e^{  ( t - s ) (A-\eta)} B( X_s ) \, dW_s
.
\end{split}
\end{equation}
\end{prop}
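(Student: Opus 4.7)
The proof rests on the commutativity identity $e^{(t-s)A} = e^{\eta(t-s)} e^{(t-s)(A-\eta)}$, which holds since $A$ and the scalar operator $\eta \, \mathrm{Id}_H$ commute. Expanding the scalar exponential as $e^{\eta(t-s)} = 1 + \int_s^t \eta \, e^{\eta(t-u)} \, du$ and pushing the scalar factor through the semigroup yields the key identity
\begin{equation}
e^{(t-s)A} = e^{(t-s)(A-\eta)} + \int_s^t \eta \, e^{(t-u)(A-\eta)} \, e^{(u-s)A} \, du
\end{equation}
in $L(H_r)$ for every $r \in \R$ and every $0 \leq s \leq t \leq T$. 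My plan is to substitute this into each of the three terms on the right-hand side of \eqref{eq:assume2}, then apply Fubini (classical for the drift, stochastic for the noise) to reorganize the resulting double integrals.

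For the initial term this gives $e^{tA}\xi = e^{t(A-\eta)}\xi + \int_0^t \eta \, e^{(t-u)(A-\eta)} \, e^{uA}\xi \, du$. For the drift convolution, Fubini's theorem rewrites $\int_0^t \int_s^t \eta \, e^{(t-u)(A-\eta)} e^{(u-s)A} F(X_s) \, du \, ds$ as $\int_0^t \eta \, e^{(t-u)(A-\eta)} \bigl[\int_0^u e^{(u-s)A} F(X_s) \, ds \bigr] du$. For the stochastic convolution, a stochastic Fubini theorem (e.g., Theorem~4.33 in Da Prato \& Zabczyk) analogously rewrites $\int_0^t \int_s^t \eta \, e^{(t-u)(A-\eta)} e^{(u-s)A} B(X_s) \, du \, dW_s$ as $\int_0^t \eta \, e^{(t-u)(A-\eta)} \bigl[\int_0^u e^{(u-s)A} B(X_s) \, dW_s \bigr] du$. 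Summing the three rearranged contributions, the bracket at each $u \in [0,t]$ is precisely the right-hand side of \eqref{eq:assume2} at time $u$, which by hypothesis coincides $\P$-a.s.~in $\mathcal{B}(H_\gamma)$ with $X_u$. The correction therefore collapses to $\int_0^t \eta \, e^{(t-u)(A-\eta)} X_u \, du$, and rearranging the drift terms produces the claimed identity with $\tilde F(X_s) = \eta X_s + F(X_s)$.

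The main obstacle will be the rigorous justification of the stochastic Fubini step: one must verify that the two-parameter process $(u,s) \mapsto \one_{\{s \leq u \leq t\}} \, \eta \, e^{(t-u)(A-\eta)} e^{(u-s)A} B(X_s)$ is jointly measurable and a.s.~satisfies the integrability $\int_0^t \int_0^u \|\eta \, e^{(t-u)(A-\eta)} e^{(u-s)A} B(X_s)\|_{\mathrm{HS}(U,H_\gamma)}^2 \, ds \, du < \infty$. This reduces via the uniform operator bound $\sup_{r \in [0,T]} \|e^{r(A-\eta)}\|_{L(H_\gamma)} < \infty$ (a consequence of the $C_0$-semigroup property) to the hypothesis \eqref{eq:assume1}. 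In parallel one has to verify the conclusion's integrability $\int_0^t \|e^{(t-s)(A-\eta)} \tilde F(X_s)\|_{H_\gamma} ds < \infty$ a.s.: the $F$-part follows by the same bounded-ratio argument, while the $\eta X$-part is obtained \emph{a posteriori} from the derived representation, whose three constituent integrals are each a.s.~finite by construction. Careful bookkeeping of the indicator functions in \eqref{eq:assume2} and in the conclusion then shows that both indicators equal $1$ on a common full-probability event, upgrading the equivalence-class identities to a genuine $\P$-a.s.~equation in $H_\gamma$.
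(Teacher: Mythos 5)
Your approach is genuinely different from the paper's. The paper observes that $e^{-\eta t}X_t$ is a mild It\^{o} process with evolution family $e^{(t-s)(A-\eta)}$, mild drift $e^{-\eta t}F(X_t)$, and mild diffusion $e^{-\eta t}B(X_t)$, and then applies the mild It\^{o} formula of Da~Prato, Jentzen, \& R\"ockner to the linear map $\psi(t,x)=e^{\eta t}x$; the shifted representation drops out in one stroke, with the integrability and measurability bookkeeping absorbed into the cited theorem (whose proof proceeds by spectral approximation of $A$). Your route---decomposing $e^{(t-s)A}=e^{(t-s)(A-\eta)}+\int_s^t\eta\,e^{(t-u)(A-\eta)}e^{(u-s)A}\,du$, Fubini-rearranging, and recognising the inner bracket as $X_u$---re-derives the content of that formula for this specific $\psi$ from scratch. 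That is more elementary and makes the origin of the $\eta X$ correction transparent, at the cost of justifying Fubini swaps under only the pathwise hypotheses of the proposition.

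Two technical points in your sketch need repair. First, your reduction of the Fubini hypothesis to \eqref{eq:assume1} via the bound $\sup_{r\in[0,T]}\|e^{r(A-\eta)}\|_{L(H_\gamma)}<\infty$ does not close: after applying that bound you are left with $\int_0^t\int_0^u\|e^{(u-s)A}B(X_s)\|^2_{\mathrm{HS}(U,H_\gamma)}\,ds\,du$, and \eqref{eq:assume1} only yields a.s.~finiteness of the inner integral for each \emph{fixed} $u$, which is strictly weaker than its being a.s.~integrable over $u\in[0,t]$. The correct reduction exploits the scalar commutation more fully: since $e^{(t-u)(A-\eta)}\,e^{(u-s)A}=e^{\eta(u-t)}\,e^{(t-s)A}$, the double integral is a bounded scalar multiple of $T\int_0^t\|e^{(t-s)A}B(X_s)\|^2_{\mathrm{HS}(U,H_\gamma)}\,ds$, and \emph{that} is finite a.s.~by \eqref{eq:assume1} evaluated at the single time $t$ in question; the same identity fixes the drift double integral. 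Second, Theorem~4.33 in Da~Prato \& Zabczyk is not the right reference: it requires a mixed moment condition on the two-parameter integrand which is simply not available under the purely pathwise hypothesis \eqref{eq:assume1}. You need a stochastic Fubini theorem valid under almost-sure integrability alone (such as Veraar's version) or a stopping-time localisation that upgrades a.s.~finiteness to square-integrability before invoking the classical statement. Once these two points are patched, the plan is complete and constitutes a valid alternative to the paper's mild-It\^{o}-formula argument.
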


\begin{proof}[Proof of Proposition~\ref{prop:transform_SG}]
Throughout this proof let $ \psi, \psi_1 \colon [0,T] \times H_{ \gamma } \to H_{ \gamma } $
and
$ \psi_2 \colon [0,T] \times H_{ \gamma } \to L( H_{ \gamma } ) $
be the functions which satisfy
for all $ (t, x) \in [0,T] \times H_{ \gamma } $,
$ v \in H_{ \gamma } $
that
\begin{equation}
\psi( t, x ) 
= 
e^{ \eta t } x
,
\quad
\psi_1( t, x )
=
\tfrac{ \partial }{ \partial t } \psi( t, x )
=
\eta 
\,
\psi( t, x )
,
\quad
\text{and}
\quad
\psi_2( t, x ) \, v
=
\tfrac{ \partial }{ \partial x } \psi( t, x ) \, v
=
\psi( t, v )
.
\end{equation}
Next observe that
\eqref{eq:assume1}--\eqref{eq:assume2}
imply that for all $ t \in [0,T] $ it holds that
\begin{equation}
\begin{split}
&
\P\!\left(	\int_0^t 
\| 
e^{ ( t - s )( A - \eta )  } 
e^{ - \eta s }
F( X_s ) 
\|_{ H_{ \gamma } } 
+ 
\| 
e^{ ( t - s ) ( A - \eta )  } 
e^{ - \eta s }
B( X_s ) 
\|^2_{ \mathrm{HS}( U, H_{ \gamma } ) } \, ds < \infty \right)
\\ & =
\P \!\left(
e^{ - \eta t }
\int_0^t 
\| 
e^{ ( t - s ) A  } 
F( X_s ) 
\|_{ H_{ \gamma } } \, ds 
+ 
e^{ - 2 \eta t }
\int_0^t 
\| 
e^{ ( t - s )  A  } 
B( X_s ) 
\|^2_{ \mathrm{HS}( U, H_{ \gamma } ) } \, ds
< \infty \right)
\\ & =
\P \!\left(
\int_0^t 
\| 
e^{ ( t - s ) A  } 
F( X_s ) 
\|_{ H_{ \gamma } }
+
\| 
e^{ ( t - s )  A  } 
B( X_s ) 
\|^2_{ \mathrm{HS}( U, H_{ \gamma } ) } \, ds
< \infty \right) =1
\end{split}
\end{equation}
and
\begin{equation}
\begin{split}
\left[ 
e^{ - \eta t } X_t 
\right]_{ \P, \mathcal{B}( H_{ \gamma } ) }
& =
\left[ 
e^{ t(A-\eta) } \xi 
+
\int_0^t
\mathbbm{1}_{
	\{
	\int_0^t \| e^{  ( t - u )A }
	F( X_u ) \|_{ H_{ \gamma } } du < \infty 
	\}
}
\,
e^{  ( t - s )(A- \eta)}
e^{ - \eta s }  F( X_s ) \, ds
\right]_{ \P, \mathcal{B}( H_{ \gamma } ) }
\\ & \quad +
\int_0^t
e^{  ( t - s ) (A-\eta)}
e^{ - \eta s }  B( X_s ) \, dW_s .
\end{split}
\end{equation}
Note that this establishes that the stochastic process
$
\big( [0,T] \times \Omega \ni (t,\omega) \mapsto  e^{ - \eta t } X_t(\omega) \in H_{\gamma} \big)
$
is an $ ( \Omega, \mathcal{F}, \P, ( \mathbb{F}_t )_{ t \in [0,T] } ) $-mild It\^{o} process with evolution family
$ \big( \{ (t_1, t_2) \in [0,T]^2 \colon t_1 < t_2 \} \ni ( s , t ) \mapsto e^{  ( t - s )( A - \eta ) } \in L(H_{\min\{\alpha,\beta,\gamma\}}, H_{\gamma}) \big)
$,
mild drift
$ \big( 
[0,T] \times \Omega \ni (t, \omega) \mapsto	e^{ - \eta t } F( X_t(\omega) ) \in  H_{\min\{\alpha,\beta,\gamma\}} \big)
$,
and mild diffusion 
$ \big( 
[0,T] \times \Omega \ni (t, \omega) \mapsto
e^{ - \eta t } B( X_t(\omega) ) \in \mathrm{HS}(U, H_{\min\{\alpha,\beta,\gamma\}}) \big)
$ (see Definition~1 in Da~Prato, Jentzen, \& R\"ockner~\cite{DaPratoJentzenRoeckner_Mild}).
The mild It\^{o} formula in Theorem~1 in Da~Prato, Jentzen, \& R\"ockner~\cite{DaPratoJentzenRoeckner_Mild} 
hence proves that
for all $ t \in [0,T] $ it holds that
\begin{gather}
\P\! \left( \int_0^t
\big\|
	\psi_1 \bigl( 
	s, 
	e^{  ( t - s )( A - \eta ) } 
	e^{ - \eta s } 
	X_s
	\bigr)
\big\|_{ H_{ \gamma } }
\,
ds
< \infty \right)=1
,\\
 \P \! \left(\int_0^t
\big\|
	\psi_2\!\left( 
	s, 
	e^{ ( t - s ) ( A - \eta ) } 
	e^{ - \eta s } 
	X_s 
	\right)
	e^{
		 ( t - s ) ( A - \eta )
	}
	e^{ - \eta s } 
	F( X_s )
\big\|_{ H_{ \gamma } }
\,
ds
< \infty \right)=1
,\\
\P \! \left( \int_0^t
\big\|
	\psi_2\!\left( 
	s, 
	e^{ ( t - s ) ( A - \eta )  } 
	e^{ - \eta s } 
	X_s
	\right)
	e^{
		 ( t - s ) ( A - \eta )
	}
	e^{ - \eta s } 
	B( X_s )
\big\|_{ \mathrm{HS}( U, H_{ \gamma } ) }^2
\,
ds
< \infty \right)=1 ,
\end{gather}
and
\begin{equation}
\begin{split}
& \left[ X_t \right]_{ \P, \mathcal{B}( H_{ \gamma } ) }
=
\left[ \psi\!\left( 
t, e^{ - \eta t } X_t 
\right) \right]_{ \P, \mathcal{B}( H_{ \gamma } ) }
\\ & = 
\bigg[ \psi\!\left( 0, 
e^{  t ( A - \eta )}
e^{ - \eta \cdot 0 } X_0 
\right) 
+
 \int_0^t \mathbbm{1}_{\{ \int_0^t \|\psi_1( 
	u, 
	e^{  ( t - u )( A - \eta ) } 
	e^{ - \eta u } 
	X_u
	)\|_{H_{\gamma}} du < \infty\}}
	\,
	\psi_1\!\left( 
s, 
e^{  ( t - s )( A - \eta ) } 
e^{ - \eta s } 
X_s
\right)
ds  \\
& \quad + \int_0^t \mathbbm{1}_{\{ \int_0^t \|	\psi_2( 
	u, 
	e^{ ( t - u ) ( A - \eta ) } 
	e^{ - \eta u } 
	X_u ) \,
	e^{
		( t - u ) ( A - \eta )
	}
	e^{ - \eta u } 
	F( X_u )\|_{H_{\gamma}} du < \infty\}} 	\\
& \qquad \quad \cdot \psi_2\!\left( 
s, 
e^{ ( t - s ) ( A - \eta ) } 
e^{ - \eta s } 
X_s 
\right)
e^{
	( t - s ) ( A - \eta )
}
e^{ - \eta s } 
F( X_s ) \, ds \bigg]_{ \P, \mathcal{B}( H_{ \gamma } ) } \\
& \quad +
\int_0^t
\psi_2\!\left( 
s, 
e^{ ( t - s ) ( A - \eta )  } 
e^{ - \eta s } 
X_s
\right)
e^{
	( t - s ) ( A - \eta )
}
e^{ - \eta s } 
B( X_s ) \,
dW_s
.
\end{split}
\end{equation}
This ensures that for all $t \in [0,T]$ it holds that
\begin{equation}
\label{eq:trans:first}
\P\! \left( \int_0^t
\big\|
e^{ ( t - s ) ( A - \eta )  }
\eta
X_s
\big\|_{ H_{ \gamma } }
+
\big\|
	e^{
	 ( t - s ) ( A - \eta )
}
F( X_s )
\big\|_{ H_{ \gamma } }
+
\big\|
e^{
	 ( t - s ) ( A - \eta )
}
B( X_s )
\big\|_{ \mathrm{HS}( U, H_{ \gamma } ) }^2
\,
ds
< \infty \right)=1
\end{equation}
and
\begin{equation}
\label{eq:trans:last}
\begin{split}
&
\left[ X_t \right]_{ \P, \mathcal{B}( H_{ \gamma } ) }
=
\bigg[e^{  t( A - \eta ) } 
\xi  + \int_0^t \mathbbm{1}_{\{ \int_0^t
	\|
	e^{ ( t - u ) ( A - \eta )  }
	\eta
	X_u\|_{ H_{ \gamma } }
	du
	< \infty \}}
\,
e^{ ( t - s ) ( A - \eta )  }
\eta
X_s
\,
ds
\\ & \quad
+ \int_0^t \mathbbm{1}_{\{ \int_0^t
	\|
	e^{ ( t - u ) ( A - \eta )  }
	F(X_u)\|_{ H_{ \gamma } }
	du
	< \infty \}} \,
e^{ ( t - s ) ( A - \eta )  }
F(X_s)
\,
ds 
  \bigg]_{ \P, \mathcal{B}( H_{ \gamma } ) }
+
\int_0^t
e^{  ( t - s )( A - \eta ) }
B( X_s ) \,
dW_s
.
\end{split}
\end{equation}
Moreover, \eqref{eq:trans:first} shows for all $t \in [0,T]$ that
\begin{equation}
\begin{split}
&
\P \biggl(
\int_0^t
\big\|
e^{ ( t - s ) ( A - \eta )  }
\tilde{F}(X_s)\big\|_{ H_{ \gamma } } 
\,
ds
< \infty \biggr)
= 
\P \biggl(
\int_0^t
\bigl\|
e^{ ( t - s ) ( A - \eta )  } \big[
\eta X_s + F(X_s) \bigr]\big\|_{ H_{ \gamma } } \, ds < \infty \biggr) \\
& \geq
\P \biggl(
\int_0^t
\big\|
e^{ ( t - s ) ( A - \eta )  }
\eta
X_s\big\|_{ H_{ \gamma } }
+ \big\|
e^{ ( t - s ) ( A - \eta )  }  F(X_s) \big\|_{ H_{ \gamma } } \, ds < \infty \biggr)
= 1.
\end{split}
\end{equation}
Combining this with \eqref{eq:trans:first}--\eqref{eq:trans:last} completes the proof of Proposition~\ref{prop:transform_SG}.
\end{proof}

\subsection{Setting}\label{setting:example}
Consider the notation in Subsection~\ref{sec:notation},
let
$ T, c_0, \gamma, \theta, \vartheta  \in (0,\infty)$,
$ \alpha \in [ 0, \nicefrac{1}{2}]$,
$\varphi \in [0,1)$,
$\rho \in [0, \nicefrac{1}{4})$,
$\varrho \in (\rho, \nicefrac{1}{4})$,
$ \chi \in  (0, \nicefrac{( \varrho - \rho ) }{( 1 + \vartheta) }] $,
$( H, \left< \cdot , \cdot \right>_H, \left\| \cdot \right\|_H ) = (L^2(\lambda_{(0,1)}; \R), \langle \cdot , \cdot \rangle_{L^2(\lambda_{(0,1)}; \R)}, \left\| \cdot \right\|_{L^2(\lambda_{(0,1)}; \R)} )$,
let
$(e_n)_{n \in \N} \colon \N \to H $
and
$(\lambda_n)_{n \in \N} \colon \N \to (0, \infty)$
be the functions which satisfy
for all $ n \in \N $ that
$ e_n = [ (\sqrt{2} \sin(n \pi x) )_{x \in (0,1)}]_{\lambda_{(0,1)} , \mathcal{B}(\R)}$
and
$ \lambda_n = c_0 \pi^2 n^2 $,
let
$ A \colon D(A) \subseteq H \to H $
be the linear operator which satisfies
$ D(A) = \{ v \in H \colon \sum_{k = 1}^\infty | \lambda_k \langle e_k , v \rangle_H |^2 < \infty \} $
and
$ \forall \, v \in D(A) \colon A v = \sum_{k = 1}^\infty - \lambda_k \langle e_k , v \rangle_H e_k$,
let
$ ( H_r, \left< \cdot , \cdot \right>_{ H_r }, \left\| \cdot \right\|_{ H_r } ) $, $ r \in \R $,
be a family of interpolation spaces associated to $ -A $,
let $\xi \in H_{\nicefrac{1}{2}} $,
let
$F \in \mathcal{C}(H_{\varrho}, H_{-\alpha} )$,
$(P_n)_{n \in \N} \colon \N \to L(H_{-1}) $,
$(h_n)_{n \in \N} \colon \N \to (0, T] $,
and
$ \phi, \Phi \colon H_1 \to  [0,\infty) $
be functions which
satisfy
for all $ u \in H_1 $, $ n \in \N $, $ v, w \in P_n( H ) $ that
$\phi(u)= \gamma + \gamma \left[\sup\nolimits_{x \in (0,1)} |\und{u}(x)|^2 \right]$,
$\Phi(u)= \gamma + \gamma \left[\sup\nolimits_{x \in (0,1)} |\und{u}(x)|^{\gamma} \right]$,
$ P_n(u) = \sum_{k =1 }^n \langle e_k, u \rangle_H e_k $,
$ \limsup_{ m \to \infty} h_m =0$,
$ \left< v, P_n F( v + w ) \right>_H \leq \phi( w ) \| v \|^2_H + \varphi \| v \|^2_{ H_{ \nicefrac{1}{2} }} + \Phi( w ) $,
and
$ \left\| F(v) - F(w) \right\|_{ H_{ - \alpha } } \leq \theta \, ( 1 + \| v \|_{ H_{ \rho } }^{ \vartheta } + \|w\|_{H_{\rho}}^{\vartheta}) \, \|v-w\|_{H_{\rho}} $,
let $ ( \Omega, \F, \P ) $ be a probability space,
let $(W_t)_{t \in [0, T]}$ be an $\Id_H$-cylindrical $( \Omega, \F, \P )$-Wiener process,
and let
$ \Y^n, \mathcal{O}^n \colon [0, T] \times \Omega \to P_n(H)$, $ n \in \N$,
be stochastic processes which satisfy
for all $ n \in \N $, $ t \in [ 0, T ] $ that
$\left[\mathcal{O}_t^n \right]_{\P, \mathcal{B}(H)} = \int_0^t P_n \, e^{(t-s)A}  \, dW_s$
and
\begin{equation}\label{eq:set:abstract}
\P \Big( \Y_t^n = P_n \, e^{ t A } \, \xi + \smallint_0^t P_n \,  e^{  ( t - s ) A } \, \one_{ \{ \| \Y_{ \lf s \rf_{h_n} }^n \|_{ H_{\varrho} } + \| \mathcal{O}_{ \lf s \rf_{h_n} }^n +P_n \, e^{ \lf s \rf_{ h_n } A } \xi \|_{ H_{\varrho} } \leq | h_n|^{ - \chi } \}} \, F \big(  \Y_{ \lf s \rf_{ h_n } }^n \big) \, ds + \mathcal{O}_t^n \Big)=1. 
\end{equation}

\subsection{Properties of the stochastic convolution process}

The proof of the next result, Lemma~\ref{abs:phi(w)_finite} below, is a slight adaptation of the proof of Lemma~5.6 in Hutzenthaler et al.~\cite{Salimova2016}.

\begin{lemma}\label{abs:phi(w)_finite}
Assume the setting in Subsection~\ref{setting:example}, let $\beta \in (0, \nicefrac{1}{2}]$, $p \in (\nicefrac{1}{\beta}, \infty)$, $t \in [0,T]$, $n \in \N$, $\eta \in [0,\infty)$,
let
$ \mathbb{O} \colon \Omega \to P_n(H) $
be an $ \F $/$ \mathcal{B}(P_n(H)) $-measurable function which satisfies 
$\left[\mathbb{O} \right]_{\P, \mathcal{B}(H)} = \int_0^t P_n \, e^{(t-s)(A-\eta)}  \, dW_s$, and let $Y \colon \Omega \to \R$ be a standard normally distributed random variable. Then 
\begin{align}\label{abs:norm:eq}
\begin{split}
& \bigl(\E \! \left[\sup\nolimits_{x \in (0,1)} | \und{\mathbb{O}}(x) |^2 \right] \bigr)^{\nicefrac{1}{2}} \leq  \pi^{2}  \big( \E \big[|Y|^p \big] \big)^{\nicefrac{1}{p}} \left[ \sum_{k =1}^n \frac{ k^{4 \beta}}{\lambda_k+\eta}\right]^{\nicefrac{1}{2}} \\
& \quad \cdot \sup \! \Big(\Big\{ \! \sup\nolimits_{x \in (0,1)} |v(x)| \colon \big[v \in \mathcal{C}((0,1), \R) \text{ and } \|v\|_{\W^{\beta, p}((0,1), \mathbb{R})} \leq 1\big] \Big\}\Big)  < \infty.
\end{split}
\end{align}
\end{lemma}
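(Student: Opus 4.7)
The plan is to pass from the supremum norm to the Sobolev--Slobodeckij norm $\W^{\beta,p}$ via embedding, then exploit the Gaussianity of $\mathbb{O}$ after Fourier expansion in the eigenbasis of $A$. Since the hypothesis $p > \nicefrac{1}{\beta}$ ensures $\beta p > 1$, the embedding $\W^{\beta,p}((0,1), \R) \hookrightarrow \mathcal{C}((0,1), \R)$ is continuous with embedding constant equal to the supremum $C_{\mathrm{emb}}$ displayed on the right-hand side of~\eqref{abs:norm:eq}, whose finiteness is a standard consequence of the embedding theorem. Applying this bound $\omega$-wise, squaring, and invoking Jensen's inequality $(\E\|\cdot\|_{\W^{\beta,p}}^2)^{\nicefrac{1}{2}} \leq (\E\|\cdot\|_{\W^{\beta,p}}^p)^{\nicefrac{1}{p}}$ (valid because $p > \nicefrac{1}{\beta} \geq 2$), the task reduces to estimating $(\E\|\mathbb{O}\|_{\W^{\beta,p}}^p)^{\nicefrac{1}{p}}$.

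The process $\mathbb{O}$ admits the representation $\mathbb{O} = \sum_{k=1}^n Z_k e_k$, where $Z_k := \int_0^t e^{-(\lambda_k+\eta)(t-s)}\, dB^{(k)}_s$ and $B^{(k)}_s := \langle e_k, W_s\rangle_H$ are independent standard Brownian motions; the It\^o isometry shows the $Z_k$ are independent centered Gaussians with $\mathrm{Var}(Z_k) =: \sigma_k^2 \leq \frac{1}{2(\lambda_k+\eta)}$. Consequently $\und{\mathbb{O}}(x)$ and $\und{\mathbb{O}}(x) - \und{\mathbb{O}}(y)$ are centered Gaussian with explicitly computable variances $2 \sum_{k=1}^n \sigma_k^2 \sin^2(k\pi x)$ and $2 \sum_{k=1}^n \sigma_k^2 (\sin(k\pi x) - \sin(k\pi y))^2$, and the Gaussian moment identity $\E|G|^p = \E|Y|^p \, (\mathrm{Var}\, G)^{\nicefrac{p}{2}}$ applies. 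Unfolding the definition of $\|\cdot\|_{\W^{\beta,p}}^p$, using Fubini to swap expectation with the $x$- and $(x,y)$-integrals, and applying Minkowski's integral inequality in $L^{\nicefrac{p}{2}}$ (valid since $\nicefrac{p}{2} > 1$) to move the index sum outside after raising to the power $\nicefrac{2}{p}$, the bound reduces to $\sum_{k=1}^n 2\sigma_k^2 \bigl[\|\sin(k\pi\cdot)\|_{L^p((0,1))}^2 + ([\sin(k\pi\cdot)]_{\W^{\beta,p}}^p)^{\nicefrac{2}{p}}\bigr]$ multiplied by $(\E|Y|^p)^{\nicefrac{2}{p}}$.

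The decisive remaining estimate is the Slobodeckij seminorm bound $[\sin(k\pi\cdot)]_{\W^{\beta,p}}^p \leq C_{\beta,p}(k\pi)^{2\beta p}$, to be derived from the interpolation $|\sin(k\pi x) - \sin(k\pi y)|^p \leq 2^{p(1-2\beta)} (k\pi)^{2\beta p} |x-y|^{2\beta p}$ (valid because $\beta \leq \nicefrac{1}{2}$, via $|\sin(k\pi x) - \sin(k\pi y)| \leq \min\{2, k\pi|x-y|\}$) combined with $\int_0^1\!\int_0^1 |x-y|^{\beta p - 1}\, dx\, dy < \infty$. Together with $\|\sin(k\pi\cdot)\|_{L^p} \leq 1$ and $2\sigma_k^2 \leq \frac{1}{\lambda_k+\eta}$, this produces the desired sum $\sum_{k=1}^n \frac{k^{4\beta}}{\lambda_k+\eta}$ after raising to the power $\nicefrac{2}{p}$, while the arising factor $\pi^{4\beta}$ is absorbed into the prefactor $\pi^2$ by $\pi^{4\beta} \leq \pi^2$ for $\beta \leq \nicefrac{1}{2}$. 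The main obstacle is precisely this exponent bookkeeping: the interpolation exponent $2\beta$ (rather than the sharper $\beta + \varepsilon$) is tailored to produce the $k^{4\beta}$-growth matching the statement exactly, and the constants must collapse into $\pi^2$ cleanly; finiteness of the sum and of $C_{\mathrm{emb}}$ is automatic.
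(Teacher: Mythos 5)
Your argument is correct and follows the same backbone as the paper's proof: Sobolev embedding plus Jensen to reduce to $\bigl(\E\|\und{\mathbb{O}}\|_{\W^{\beta,p}}^p\bigr)^{\nicefrac{1}{p}}$, Gaussianity of $\und{\mathbb{O}}(x)$ and $\und{\mathbb{O}}(x)-\und{\mathbb{O}}(y)$ to pull out $\bigl(\E|Y|^p\bigr)^{\nicefrac{1}{p}}$ and work with variances, the interpolation estimate $|\sin(k\pi x)-\sin(k\pi y)|\leq\min\{2,k\pi|x-y|\}$ yielding the $k^{4\beta}$ growth, and the same final constant absorption into $\pi^2$. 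The one structural deviation is your invocation of Minkowski's integral inequality in $L^{\nicefrac{p}{2}}$ to pull the mode sum $\sum_{k=1}^n$ outside the $x$- and $(x,y)$-integrals; the paper avoids this entirely by noting that the relevant variances are already bounded \emph{pointwise} in $(x,y)$, namely $\operatorname{Var}(\und{\mathbb{O}}(x))\leq\sum_k\tfrac{1}{\lambda_k+\eta}$ uniformly in $x$ and $\operatorname{Var}(\und{\mathbb{O}}(x)-\und{\mathbb{O}}(y))\leq 2^{2-4\beta}\pi^{4\beta}|x-y|^{4\beta}\sum_k\tfrac{k^{4\beta}}{\lambda_k+\eta}$, after which the integration is immediate. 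Your Minkowski step is valid (it is just the triangle inequality for finite sums in $L^{\nicefrac{p}{2}}$, and $p>\nicefrac{1}{\beta}\geq 2$ does give $\nicefrac{p}{2}>1$) and leads to essentially the same constants, so this is a harmless detour rather than a genuinely distinct route; the pointwise-variance route is simply the more economical way to organize the same estimates.
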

\begin{proof}[Proof of Lemma~\ref{abs:phi(w)_finite}] 
First, observe that Jensen's inequality shows that
\begin{align*}\label{abs:phi(w)_eq1}
&\E \! \left[\sup\nolimits_{x \in (0,1)} | \und{\mathbb{O}}(x) |^2 \right] \\
&\leq \Big[\sup \! \Big(\Big\{ \! \sup\nolimits_{x \in (0,1)} |v(x)| \colon \big[v \in \mathcal{C}((0,1), \R) \text{ and } \|v\|_{\W^{\beta, p}((0,1), \mathbb{R})} \leq 1\big] \Big\}\Big)\Big]^2 \, \E  \big[\| \und{\mathbb{O}} \|_{\W^{\beta, p}((0,1), \mathbb{R})}^2 \big] \numberthis \\
& \leq \Big[\sup \!\Big(\Big\{ \! \sup\nolimits_{x \in (0,1)} |v(x)| \colon \big[v \in \mathcal{C}((0,1), \R) \text{ and } \|v\|_{\W^{\beta, p}((0,1), \mathbb{R})} \leq 1\big] \Big\}\Big)\Big]^2  \big( \E \big[\| \und{\mathbb{O}} \|_{\W^{\beta, p}((0,1), \mathbb{R})}^p \big]\big)^{\!\nicefrac{2}{p}}.
\end{align*}
In addition, note that
\begin{align}\label{abs:phi(w)_all}
\begin{split}
&\E \big[\| \und{\mathbb{O}} \|_{\W^{\beta, p}((0,1), \mathbb{R})}^p \big] = \E \! \left[ \int_0^1 |\und{\mathbb{O}} (x)|^p \, dx + \int_0^1 \int_0^1 \frac{|\und{\mathbb{O}} (x)- \und{\mathbb{O}}(y)|^p}{|x-y|^{1+ \beta p}} \, dx \, dy\right]\\
& = \E \big[|Y|^p \big] \int_0^1 \left( \E \! \left[ |\und{\mathbb{O}} (x)|^2 \right] \right)^{\nicefrac{p}{2}} dx  + \E \big[|Y|^p \big] \int_0^1 \int_0^1 \frac{\left( \E \! \left[ |\und{\mathbb{O}} (x) - \und{\mathbb{O}}(y) |^2 \right] \right)^{\nicefrac{p}{2}}}{|x-y|^{1+ \beta p}}\,  dx \, dy.
\end{split}
\end{align}
Furthermore, It\^o's isometry yields for all $ x \in (0,1)$  that
\begin{align}
\begin{split}
\E \! \left[ |\und{\mathbb{O}}(x)|^2 \right] & = \E  \Bigg[ \left| \sum\limits_{k =1}^n \und{e_k} (x) \int_0^t e^{-(\lambda_k+\eta)(t-s)}  \, d \! \left<  e_k, W_s \right>_H  \right|^2 \Bigg]= \sum\limits_{k =1}^n  |\und{e_k} (x)|^2 \int_0^t e^{-2(\lambda_k+\eta)(t-s)}  \, ds \\
& \leq \sum_{k =1}^n \frac{|\und{e_k}(x)|^2 }{ 2(\lambda_k+\eta)} \leq   \sum_{k =1}^n  \frac{1}{ \lambda_k+\eta}.
\end{split}
\end{align}
This implies that 
\begin{align}\label{abs:phi(w)_1}
\int_0^1\left( \E \! \left[ |\und{\mathbb{O}}(x)|^2 \right] \right)^{\nicefrac{p}{2}} dx \leq   \left[\sum_{k =1}^n  \frac{1}{\lambda_k+\eta}  \right]^{\nicefrac{p}{2}}.
\end{align}
Next note that again It\^o's isometry  ensures  for all  $ x, y \in (0,1)$  that
\begin{align}\label{eq:o_diff}
\begin{split}
\E \! \left[ |\und{\mathbb{O}} (x) - \und{\mathbb{O}}(y) |^2 \right] &=  \E  \Bigg[ \left| \sum\limits_{k =1}^n \left[\und{e_k}(x) - \und{e_k}(y)\right] \int_0^t e^{-(\lambda_k+\eta)(t-s)} \ d \! \left<  e_k, W_s \right>_H \right|^2 \Bigg] \\
&\leq  \sum_{k =1}^n  \frac{|\und{e_k}(x) - \und{e_k}(y)|^2 }{ 2(\lambda_k+\eta)}.
\end{split}
\end{align}
Moreover, the fact that $\beta \leq \nicefrac{1}{2}$ and  the fact that $\forall \, x, y \in \R \colon |\sin(x)-\sin(y)|\leq |x-y|$ prove that for all $x, y \in (0,1)$, $k \in \N$ it holds that
\begin{align}
\begin{split}
|\und{e_k}(x) - \und{e_k}(y)|^2 & =2 \, |\sin(k\pi x) - \sin(k \pi y)|^2 \\
&= 2 \, |\sin(k\pi x) - \sin(k \pi y)|^{2-4\beta} |\sin(k\pi x) - \sin(k \pi y)|^{4\beta}  \leq  2^{3-4\beta} |k\pi|^{4\beta}  |x-y|^{4\beta}.
\end{split}
\end{align}
This together with \eqref{eq:o_diff} establishes for all $x, y \in (0, 1)$ that
\begin{align}
\E \! \left[ |\und{\mathbb{O}} (x) - \und{\mathbb{O}}(y) |^2 \right] \leq 2^{2-4\beta} \,\pi^{4\beta} \,  |x-y|^{4 \beta}  \sum_{k =1}^n \frac{ k^{4 \beta} \, }{ \lambda_k+\eta}.
\end{align}
The fact that $\beta p \geq 1$ hence ensures   that
\begin{align}\label{abs:phi(w)_2}
\begin{split}
&\int_0^1\int_0^1 \frac{\left( \E \! \left[ |\und{\mathbb{O}} (x) - \und{\mathbb{O}}(y) |^2 \right] \right)^{\nicefrac{p}{2}}}{|x-y|^{1+ \beta p}} \,  dx \, dy \\
&\leq 2^{p(1-2\beta)} \pi^{2p\beta} \! \left[ \sum_{k =1}^n \frac{k^{4 \beta}  }{\lambda_k+\eta}\right]^{\nicefrac{p}{2}} \int_0^1 \int_0^1 |x-y|^{\beta p -1} \, dx \, dy   \leq 2^{p(1-2\beta)} \pi^{2p\beta} \! \left[ \sum_{k =1}^n \frac{k^{4 \beta}  }{\lambda_k+\eta}\right]^{\nicefrac{p}{2}}.
\end{split}
\end{align}
Combining this, \eqref{abs:phi(w)_all}, and \eqref{abs:phi(w)_1} proves that 
\begin{align}\label{abs:phi(w)_last}
\begin{split}
& \big( \E \big[\| \und{\mathbb{O}} \|_{\W^{\beta, p}((0,1), \mathbb{R})}^p \big]\big)^{\!\nicefrac{1}{p}}
\leq \big( \E \big[|Y|^p \big] \big)^{\nicefrac{1}{p}} \left\{  \left[\sum_{k =1}^n  \frac{1}{\lambda_k+\eta}  \right]^{\nicefrac{p}{2}} + 2^{p(1-2\beta)} \pi^{2p\beta}\! \left[ \sum_{k =1}^n \frac{k^{4 \beta}  }{\lambda_k+\eta}\right]^{\nicefrac{p}{2}} \right\}^{\!\nicefrac{1}{p}} \\
& \leq \big( \E \big[|Y|^p \big] \big)^{\nicefrac{1}{p}} \left\{   2^{p(1-2\beta)+1} \pi^{2p\beta}\! \left[ \sum_{k =1}^n \frac{k^{4 \beta}  }{\lambda_k+\eta}\right]^{\nicefrac{p}{2}} \right\}^{\!\nicefrac{1}{p}} \\
& \leq 2^{2-2\beta} \,\pi^{2\beta}  \big( \E \big[|Y|^p \big] \big)^{\nicefrac{1}{p}} \left[ \sum_{k =1}^n \frac{ k^{4 \beta}}{\lambda_k+\eta}\right]^{\nicefrac{1}{2}} \leq \pi^{2}  \big( \E \big[|Y|^p \big] \big)^{\nicefrac{1}{p}} \left[ \sum_{k =1}^n \frac{ k^{4 \beta}}{\lambda_k+\eta}\right]^{\nicefrac{1}{2}}.
\end{split}
\end{align}
In addition, note that the fact that $\beta p > 1$ and  the Sobolev embedding theorem  yield that
\begin{align}
\sup \! \Big(\Big\{ \! \sup\nolimits_{x \in (0,1)} |v(x)| \colon \big[v \in \mathcal{C}((0,1), \R) \text{ and } \|v\|_{\W^{\beta, p}((0,1), \mathbb{R})} \leq 1\big] \Big\}\Big) < \infty.
\end{align}
This, \eqref{abs:phi(w)_eq1}, and \eqref{abs:phi(w)_last} show \eqref{abs:norm:eq}. The proof of Lemma~\ref{abs:phi(w)_finite} is thus completed.
\end{proof}


\begin{lemma}
	\label{lem:conv:series}
Let $\alpha \in \R$, $\beta \in (1+ \alpha, \infty)$. Then it holds that
\begin{align}
\limsup_{\eta \to \infty} \left( \sum_{k = 1}^\infty \frac{k^{\alpha}}{k^{\beta} + \eta} \right) = 0.
\end{align}
\end{lemma}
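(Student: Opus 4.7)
The plan is to reduce this to a dominated convergence argument applied to the counting measure on $\N$. Set $a_k(\eta) := \frac{k^{\alpha}}{k^{\beta}+\eta}$. First I would observe the uniform pointwise bound
\begin{equation*}
0 \leq a_k(\eta) \leq \frac{k^{\alpha}}{k^{\beta}} = k^{\alpha - \beta}
\qquad \text{for all } k \in \N, \, \eta \in [0,\infty),
\end{equation*}
and note that the hypothesis $\beta > 1+\alpha$ gives $\alpha - \beta < -1$, so $\sum_{k=1}^{\infty} k^{\alpha - \beta} < \infty$ provides a summable, $\eta$-independent dominant. Second, for each fixed $k$, clearly $\lim_{\eta \to \infty} a_k(\eta) = 0$.

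From here I would either invoke the dominated convergence theorem on $(\N, \mathcal{P}(\N), \#)$ directly, or, to keep the argument self-contained, run the standard $\varepsilon$-split: given $\varepsilon > 0$, pick $N \in \N$ with $\sum_{k > N} k^{\alpha - \beta} < \nicefrac{\varepsilon}{2}$, then bound
\begin{equation*}
\sum_{k=1}^{\infty} a_k(\eta) \leq \sum_{k=1}^{N} \frac{k^{\alpha}}{k^{\beta}+\eta} + \sum_{k > N} k^{\alpha-\beta} \leq \frac{1}{\eta}\sum_{k=1}^{N} k^{\alpha} + \frac{\varepsilon}{2},
\end{equation*}
which is $\leq \varepsilon$ once $\eta$ is sufficiently large. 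Either route yields $\lim_{\eta \to \infty} \sum_{k=1}^{\infty} a_k(\eta) = 0$, in particular the stated limsup vanishes.

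There is no genuine obstacle: the hypothesis $\beta > 1+\alpha$ is precisely the condition that produces a summable majorant independent of $\eta$, and without it the claim would fail (e.g. for $\alpha = 0$, $\beta = 1$ the tail $\sum_{k=1}^{\infty} \frac{1}{k+\eta}$ diverges for every $\eta$). The only minor care needed is in the finite-head estimate when $\alpha$ is negative, but there one may simply use $k^{\alpha} \leq 1$ for $k \geq 1$, which makes the bound $\frac{N}{\eta}$ rather than $\frac{1}{\eta}\sum_{k=1}^{N}k^{\alpha}$; in both cases the head tends to zero as $\eta \to \infty$.
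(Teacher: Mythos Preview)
Your proposal is correct and follows essentially the same route as the paper: bound $\frac{k^{\alpha}}{k^{\beta}+\eta}$ by the $\eta$-independent, summable majorant $k^{\alpha-\beta}$ and apply dominated convergence (the paper invokes Lebesgue's theorem directly, while you additionally spell out the equivalent $\varepsilon$-split). The remark about ``minor care'' for negative $\alpha$ is unnecessary, since $\sum_{k=1}^{N} k^{\alpha}$ is a finite sum of positive reals for any $\alpha \in \R$ and finite $N$.
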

\begin{proof}[Proof of Lemma~\ref{lem:conv:series}]
Observe that for all $\eta \in [0, \infty)$, $k \in \N$ it holds that
\begin{align}
\frac{k^{\alpha}}{k^{\beta} + \eta} \leq \frac{1}{k^{\beta -\alpha}} \qquad \text{and} \qquad \sum_{n = 1}^\infty \frac{1}{n^{\beta - \alpha} } < \infty.
\end{align}
Lebesgue's theorem of dominated convergence hence ensures that
\begin{align}
\limsup_{\eta \to \infty} \left( \sum_{k = 1}^\infty \frac{k^{\alpha}}{k^{\beta} + \eta} \right) = \sum_{k = 1}^\infty \limsup_{\eta \to \infty} \left( \frac{k^{\alpha}}{k^{\beta} + \eta} \right) = 0.
\end{align} 
The proof of Lemma~\ref{lem:conv:series} is thus completed.
\end{proof}
\pagebreak

\begin{prop}\label{prop:abstract}
Assume the setting in Subsection~\ref{setting:example}, let  $\beta \in (0, \nicefrac{1}{4})$, $p \in (\nicefrac{1}{\beta}, \infty)$,  $\eta \in [0,\infty)$, let $\tilde{ \mathcal{O} }^n, \mathbb{O}^n \colon [0,T] \times \Omega \to P_n(H)$, $n \in \N$, be stochastic processes with continuous sample paths which satisfy for all $n \in \N$, $t \in [0,T]$ that $[\tilde{\mathcal{O}}_t^n ]_{\P, \mathcal{B}(H)} = \int_0^t P_n \, e^{(t-s)A}  \, dW_s$ and $\left[\mathbb{O}_t^n \right]_{\P, \mathcal{B}(H)} = \int_0^t P_n \, e^{(t-s)(A-\eta)}  \, dW_s$, and assume
\begin{align}\label{abstract:gamma}
\begin{split}
720  p^3 T  \gamma \pi^{4}  \left[ \sum_{k = 1}^\infty \frac{ k^{4 \beta}}{\lambda_k+\eta}\right]  \Big[\sup \! \Big(\Big\{ \! \sup\nolimits_{x \in (0,1)} |v(x)| \colon \big[v \in \mathcal{C}((0,1), \R) \text{ and } \|v\|_{\W^{\beta, p}((0,1), \mathbb{R})} \leq 1\big] \Big\}\Big)\Big]^2 \leq 1.
\end{split}
\end{align}
Then
\begin{enumerate}[(i)]
\item
\label{item:finite1}
it holds that $ \sup_{ n \in \N } \sup_{ s \in [0,T]} \E[ \| \mathbb{O}_s^n + P_n \, e^{s(A-\eta)}  \xi \|_H^p] < \infty$ and
\item
it holds that
\begin{align}
& \nonumber \sup_{ n \in \N } \E\biggl[ \int_0^T \exp \left( \smallint_s^T p\,\phi\big( \mathbb{O}_{\lf u \rf_{h_n} }^n +  P_n \, e^{\lf u \rf_{h_n} (A-\eta)} \xi\big) \, du \right) \max\Big\{ 1, \big|\Phi(\mathbb{O}_{ \lf s \rf_{h_n} }^n + P_n \, e^{\lf s \rf_{h_n} (A-\eta)} \xi)\big|^{\nicefrac{p}{2}}, \\
& \quad  \big\|\mathbb{O}_s^n+ P_n \, e^{s (A-\eta)} \xi\big\|_H^p, \smallint\nolimits_{0}^T \big\| \tilde{\mathcal{O}}_u^n+ P_n \, e^{uA} \xi \big\|_{H_{\varrho}}^{2p+ 2p\vartheta} \, du \Big\} \, ds \biggr]< \infty.
\end{align}
\end{enumerate}
\end{prop}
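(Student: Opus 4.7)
The strategy is to reduce everything to \emph{uniform} exponential moment bounds for the supremum norm $\|\und{\mathbb{O}^n_v}\|_{\infty} := \sup_{x\in(0,1)} |\und{\mathbb{O}^n_v}(x)|$, because the functions $\phi,\Phi$ are built from this supremum and because $\|u\|_H^2 \leq \|\und{u}\|_{\infty}^2$. First I would apply Lemma~\ref{abs:phi(w)_finite} (which uses the Sobolev embedding $\mathcal{W}^{\beta,p}((0,1),\R) \hookrightarrow \mathcal{C}((0,1),\R)$, available since $\beta p>1$) together with Lemma~\ref{lem:conv:series} (applied with exponent $4\beta<1$ against the denominator $\lambda_k+\eta = c_0\pi^2 k^2+\eta$, whose growth rate $2>1+4\beta$) to get
\begin{equation*}
\sup_{n\in\N,\,v\in[0,T]} \E\bigl[\|\und{\mathbb{O}^n_v}\|_{\infty}^2\bigr] \leq \pi^4 (\E[|Y|^p])^{\nicefrac{2}{p}}\, C_\beta^2 \sum_{k=1}^{\infty}\frac{k^{4\beta}}{\lambda_k+\eta},
\end{equation*}
where $C_\beta$ denotes the Sobolev embedding constant in \eqref{abs:norm:eq}. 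Assumption~\eqref{abstract:gamma} exactly states that this bound is $\leq (\E[|Y|^p])^{\nicefrac{2}{p}}/(720\,p^3 T\gamma)$; using the elementary Gaussian estimate $(\E[|Y|^p])^{\nicefrac{2}{p}}\leq p$ (valid for $p\geq 2$, which holds since $p>\nicefrac{1}{\beta}>4$), we obtain $\sup_{n,v}\E[\|\und{\mathbb{O}^n_v}\|_\infty^2]\leq 1/(720\,p^2 T\gamma)$.

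Next, by Markov's inequality the radius $R := \bigl(1/(72 p^2 T\gamma)\bigr)^{\nicefrac12}$ satisfies $\P(\|\und{\mathbb{O}^n_v}\|_\infty \leq R)\geq \nicefrac{9}{10}$ uniformly in $n,v$. Since $\und{\mathbb{O}^n_v}$ is a Gaussian random variable in the separable Banach space $\mathcal{W}^{\beta,p}((0,1),\R)$ (hence in $\mathcal{C}((0,1),\R)$ via the embedding), Fernique's theorem (Proposition~\ref{thm:fernique}) yields
\begin{equation*}
\sup_{n\in\N,\,v\in[0,T]} \E\bigl[\exp\bigl(4p^2 T\gamma\,\|\und{\mathbb{O}^n_v}\|_\infty^2\bigr)\bigr]\leq 13.
\end{equation*}
The deterministic shift is controlled by $\xi\in H_{\nicefrac{1}{2}}$ together with the Sobolev-type embedding $H_{\nicefrac{1}{2}} \hookrightarrow \mathcal{C}([0,1],\R)$ (which holds in this setup because $\nicefrac{1}{2}>\nicefrac{1}{4}$) and the contraction $\|P_n e^{t(A-\eta)}\xi\|_{H_{\nicefrac{1}{2}}}\leq \|\xi\|_{H_{\nicefrac{1}{2}}}$; this gives a constant $C_\xi$ with $\sup_{n,t}\|\und{P_n e^{t(A-\eta)}\xi}\|_\infty \leq C_\xi$. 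Combining via $(a+b)^2\leq 2a^2+2b^2$ yields a uniform bound $\sup_{n,v}\E[\exp(2p^2 T\gamma\,\|\und{\mathbb{O}^n_v+P_n e^{v(A-\eta)}\xi}\|_\infty^2)]<\infty$, which already implies~\eqref{item:finite1} of the proposition by extracting polynomial moments from this exponential bound.

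For part~(ii), I would first use $\phi(u)=\gamma+\gamma\|\und{u}\|_\infty^2$ to decompose
$\exp(\int_s^T p\phi(\cdot)\,du)\leq e^{p\gamma T}\exp(p\gamma \int_s^T\|\und{\mathbb{O}^n_{\lf u\rf_{h_n}}+P_n e^{\lf u\rf_{h_n}(A-\eta)}\xi}\|_\infty^2\,du)$, then apply Jensen's inequality to the convex function $\exp((T-s)\,p\gamma\,\cdot\,)$ to obtain
\begin{equation*}
\exp\!\Bigl(p\gamma\!\int_s^T\!\|\und{\cdot}\|_\infty^2 du\Bigr) \leq \frac{1}{T-s}\!\int_s^T\!\exp(T p\gamma\,\|\und{\mathbb{O}^n_{\lf u\rf_{h_n}}+P_n e^{\lf u\rf_{h_n}(A-\eta)}\xi}\|_\infty^2)\,du.
\end{equation*}
Next I would split the $\max$ as a sum of four nonnegative terms and apply Cauchy--Schwarz to each: $\E[\exp(\int_s^T p\phi\,du)\,X]\leq \E[\exp(2\int_s^T p\phi\,du)]^{\nicefrac12}\,\E[X^2]^{\nicefrac12}$. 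The first factor is bounded uniformly in $n,s$ by iterating Jensen and using the Fernique bound from paragraph 2 (which accommodates the doubled exponent $4pT\gamma\leq 4p^2 T\gamma$). The second factor is bounded for each of the four terms separately: for $|\Phi|^{p/2}$ and $\|\mathbb{O}^n_s+\cdot\|_H^{p}$ we extract polynomial moments from the uniform exponential bound on $\|\und{\cdot}\|_\infty^2$; for the integral $\int_0^T\|\tilde{\mathcal{O}}^n_u+P_n e^{uA}\xi\|_{H_\varrho}^{2p+2p\vartheta}\,du$ we use Fubini, the Gaussian structure of $\tilde{\mathcal{O}}^n_u$ in $P_n(H)$, and the uniform variance bound $\sum_{k}k^{4\varrho}/\lambda_k<\infty$ (valid because $4\varrho-2<-1$, i.e.\ $\varrho<\nicefrac14$), together with the deterministic bound $\|P_n e^{uA}\xi\|_{H_\varrho}\leq\|\xi\|_{H_\varrho}<\infty$. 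The main obstacle is matching constants: the factor $720 p^3$ in~\eqref{abstract:gamma} is engineered so that after one Markov step ($\times 10$), one Fernique step ($\times 18$) and one H\"older doubling of the exponent in the exponential moment ($\times 4$), there is exactly enough room to dominate the required $p T\gamma$ inside every integrand uniformly in $n$.
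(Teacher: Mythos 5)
Your proposal follows essentially the same route as the paper: reduce to uniform exponential moments of the sup-norm of $\mathbb{O}^n$ via Lemma~\ref{abs:phi(w)_finite} (whose summand $k^{4\beta}/(\lambda_k+\eta)$ is summable because $\lambda_k\sim k^2$ and $4\beta<1$), combine Markov's inequality with Fernique's theorem (Proposition~\ref{thm:fernique}) to obtain $\sup_{n,v}\E[\exp(c\,pT\gamma\,\|\und{\mathbb{O}^n_v}\|_\infty^2)]\leq 13$, handle the deterministic drift through $\xi\in H_{\nicefrac{1}{2}}$ and the embedding $H_{\nicefrac{1}{2}}\hookrightarrow\mathcal{C}((0,1),\R)$, and close with a Cauchy--Schwarz (Hölder) splitting followed by a Jensen step and polynomial-moment extraction. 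The sharper Gaussian moment bound you invoke, $(\E[|Y|^p])^{\nicefrac{2}{p}}\leq p$ (versus the paper's $(\E[|Y|^p])^{\nicefrac{1}{p}}\leq p$), yields an extra factor of $p$ in the Fernique exponent ($4p^2T\gamma$ versus the paper's $4pT\gamma$); that is harmless slack, and your constant-tracking works as well as the paper's.

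One place to tighten: the Jensen step you display produces a $(T-s)^{-1}$ prefactor, which is not integrable over $s\in[0,T]$ and so cannot simply be carried through the outer $ds$-integral. The resolution, which the paper carries out explicitly, is to apply Cauchy--Schwarz first, then use $\phi\geq 0$ to bound $\E\bigl[\exp\bigl(2\int_s^T p\,\phi\,du\bigr)\bigr]\leq\E\bigl[\exp\bigl(2\int_0^T p\,\phi\,du\bigr)\bigr]$, and only afterward apply Jensen on the fixed interval $[0,T]$ with the constant prefactor $T^{-1}$, which removes all $s$-dependence before any singular factor appears. Your remark that the first factor is ``bounded uniformly in $n,s$ by iterating Jensen'' suggests you intend something like this, but the displayed inequality $\exp\bigl(p\gamma\int_s^T\|\cdot\|_\infty^2\,du\bigr)\leq(T-s)^{-1}\int_s^T\exp(Tp\gamma\|\cdot\|_\infty^2)\,du$ would need to be replaced by the $\int_s^T\to\int_0^T$ extension followed by Jensen on the full interval.
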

\begin{proof}[Proof of Proposition~\ref{prop:abstract}]
First, note that the Burkholder-Davis-Gundy inequality proves that
for all standard normally distributed random variables $ Y \colon \Omega \to \R $ it holds that
\begin{equation}
\big( \E \big[|Y|^p \big] \big)^{\nicefrac{1}{p}}
\leq
\sqrt{\tfrac{p(p-1)}{2}}
\leq p.
\end{equation}
Markov's inequality, Lemma~\ref{abs:phi(w)_finite}, and \eqref{abstract:gamma} hence imply that for all $n \in \N$, $t \in [0,T]$ it holds that
\begin{align}
\begin{split}
& \P \! \left( \sup\nolimits_{x \in (0,1)} | \und{\mathbb{O}_t^n}(x) |^2 \geq \frac{1}{72 p  T  \gamma} \right) \leq 72  p T  \gamma \,  \E \! \left[\sup\nolimits_{x \in (0,1)} | \und{\mathbb{O}_t^n}(x) |^2  \right]  \leq 72  p^3 T  \gamma \pi^{4}  \left[ \sum_{k =1}^n \frac{ k^{4 \beta}}{\lambda_k+\eta}\right]\\
& \quad \cdot \Big[\sup \! \Big(\Big\{ \! \sup\nolimits_{x \in (0,1)} |v(x)| \colon \big[v \in \mathcal{C}((0,1), \R) \text{ and } \|v\|_{\W^{\beta, p}((0,1), \mathbb{R})} \leq 1\big] \Big\}\Big)\Big]^2 \leq \frac{1}{10}.
\end{split}
\end{align}
This and Proposition~\ref{thm:fernique} (with $V = P_n(H)$, $ \left\|\cdot \right\|_V = (P_n(H) \ni v \mapsto \sup_{x \in (0,1)} |\und{v}(x)| \in [ 0, \infty ) )$, $X= \mathbb{O}_t^n$, $R= (72pT\gamma)^{\nicefrac{-1}{2}}$ for $t \in [0,T]$, $n \in \N$ in the notation of Proposition~\ref{thm:fernique}) show that for all $n \in \N$, $t \in [0, T]$ it holds that
\begin{align}\label{eq:abstract:13}
\E \! \left[ \exp \! \left(4 p T \gamma  \Big\{ \! \sup\nolimits_{x \in (0,1)} | \und{\mathbb{O}_t^n}(x) |^2\Big\} \right)\right] \leq 13.
\end{align}
In addition, H\"older's inequality yields for all  $n \in \N$ that
\begin{align*}\label{eq:abstract:holder}
&\E\bigg[ \int_0^T \exp \left( \smallint_s^T p\,\phi\big( \mathbb{O}_{\lf u \rf_{h_n} }^n +  P_n \, e^{\lf u \rf_{h_n} (A-\eta)} \xi\big) \, du \right) \max\Big\{ 1, \big|\Phi(\mathbb{O}_{ \lf s \rf_{h_n} }^n + P_n \, e^{\lf s \rf_{h_n} (A-\eta)} \xi)\big|^{\nicefrac{p}{2}}, \\
& \big\|\mathbb{O}_s^n+ P_n \, e^{s (A-\eta)} \xi\big\|_H^p, \smallint\nolimits_{0}^T \big\| \tilde{\mathcal{O}}_u^n+ P_n \, e^{uA} \xi \big\|_{H_{\varrho}}^{2p+ 2p\vartheta} \, du \Big\} \, ds \bigg]^2 \\
& = \bigg( \int_0^T  \E\bigg[\! \exp \left( \smallint_s^T p\,\phi\big( \mathbb{O}_{\lf u \rf_{h_n} }^n +  P_n \, e^{\lf u \rf_{h_n} (A-\eta)} \xi\big) \, du \right) \max\Big\{ 1, \big|\Phi(\mathbb{O}_{ \lf s \rf_{h_n} }^n + P_n \, e^{\lf s \rf_{h_n} (A-\eta)} \xi)\big|^{\nicefrac{p}{2}}, \\
& \quad  \big\|\mathbb{O}_s^n+ P_n \, e^{s (A-\eta)} \xi\big\|_H^p, \smallint\nolimits_{0}^T \big\| \tilde{\mathcal{O}}_u^n+ P_n \, e^{uA} \xi \big\|_{H_{\varrho}}^{2p+ 2p\vartheta} \, du \Big\} \bigg] \, ds \bigg)^2 \\
& \leq T  \int_0^T  \E\bigg[\! \exp \left( \smallint_s^T p\,\phi\big( \mathbb{O}_{\lf u \rf_{h_n} }^n +  P_n \, e^{\lf u \rf_{h_n} (A-\eta)} \xi\big) \, du \right) \max\Big\{ 1, \big|\Phi(\mathbb{O}_{ \lf s \rf_{h_n} }^n + P_n \, e^{\lf s \rf_{h_n} (A-\eta)} \xi)\big|^{\nicefrac{p}{2}}, \numberthis \\
& \quad  \big\|\mathbb{O}_s^n+ P_n \, e^{s (A-\eta)} \xi\big\|_H^p, \smallint\nolimits_{0}^T \big\| \tilde{\mathcal{O}}_u^n+ P_n \, e^{uA} \xi \big\|_{H_{\varrho}}^{2p+ 2p\vartheta} \, du \Big\} \bigg]^2 \, ds \\
& \leq T  \int_0^T  \E\bigg[\! \exp \left( \smallint_s^T 2p \,\phi\big( \mathbb{O}_{\lf u \rf_{h_n} }^n +  P_n \, e^{\lf u \rf_{h_n} (A-\eta)} \xi\big) \, du \right) \bigg] \E \bigg[ \! \max\Big\{ 1, \big|\Phi(\mathbb{O}_{ \lf s \rf_{h_n} }^n + P_n \, e^{\lf s \rf_{h_n} (A-\eta)} \xi)\big|^{p}, \\
& \quad  \big\|\mathbb{O}_s^n+ P_n \, e^{s (A-\eta)} \xi\big\|_H^{2p}, T \smallint\nolimits_{0}^T \big\| \tilde{\mathcal{O}}_u^n+ P_n \, e^{uA} \xi \big\|_{H_{\varrho}}^{4p+ 4p\vartheta} \, du \Big\} \bigg] \, ds \\
& \leq T \, \E\bigg[ \! \exp \left( \smallint_0^T 2p \,\phi\big( \mathbb{O}_{\lf u \rf_{h_n} }^n +  P_n \, e^{\lf u \rf_{h_n} (A-\eta)} \xi\big) \, du \right) \bigg] \int_0^T  \E \bigg[ 1 +  \big|\Phi(\mathbb{O}_{ \lf s \rf_{h_n} }^n + P_n \, e^{\lf s \rf_{h_n} (A-\eta)} \xi)\big|^{p} \\
&  \quad   + \big\|\mathbb{O}_s^n+ P_n \, e^{s (A-\eta)} \xi\big\|_H^{2p} + T \smallint\nolimits_{0}^T \big\| \tilde{\mathcal{O}}_u^n+ P_n \, e^{uA} \xi \big\|_{H_{\varrho}}^{4p+ 4p\vartheta} \, du \bigg] \, ds.
\end{align*}
Next note that the fact that $ \forall \, x, y \in \R \colon |x+y|^2 \leq 2 x^2 + 2y^2$ ensures that for all $n \in \N$ it holds that
\begin{align}\label{eq:abstract:phi1}
\begin{split}
& \E \!\left[ \exp \! \left( \smallint_0^T  2p \,\phi\big(\mathbb{O}_{\lf u \rf_{h_n}}^n +  P_n \, e^{\lf u \rf_{h_n} (A-\eta)} \xi\big) \, du \right) \right] \\
&= \E \!\left[ \exp \! \left( \smallint_0^T  2p\gamma +  2p\gamma  \Big\{ \!\sup\nolimits_{x \in (0,1)} \big|\und{\mathbb{O}_{\lf u \rf_{h_n}}^n} \!(x) + \und{ P_n \, e^{\lf u \rf_{h_n} (A-\eta)} \xi}(x)\big|^2 \Big\} \, du \right) \right] \\
& \leq \exp \!\left(   2p\gamma T + 4p\gamma \smallint_0^T  \Big\{ \!\sup\nolimits_{x \in (0,1)} |\und{ P_n \, e^{\lf u \rf_{h_n} (A-\eta)} \xi}(x)|^2 \Big\} \, du \right) \\
& \quad \cdot \E \!\left[ \exp \! \left( \smallint_0^T    4p\gamma  \Big\{ \!\sup\nolimits_{x \in (0,1)} \big|\und{\mathbb{O}_{\lf u \rf_{h_n}}^n} \!(x)\big|^2 \Big\} \, du \right) \right].
\end{split}
\end{align}
Furthermore, e.g., Lemma~2.22 in Cox, Hutzenthaler, \& Jentzen~\cite{CoxHutzenthalerJentzen2014}  and  \eqref{eq:abstract:13} prove  for all $n \in \N$ that
\begin{equation}\label{abstract:prop:jen}
\begin{split}
& \E \! \left[ \exp \! \left(  \smallint_0^T 4 p \gamma \Big\{\!\sup\nolimits_{x \in (0,1)} \big|\und{\mathbb{O}_{\lf u \rf_{h_n}}^n} \! (x)\big|^2 \Big\} \,  du \right)\right] \\\
& \leq \frac{1}{T}  \int_0^T \E\! \left[ \exp \! \left(  4  p T \gamma \Big\{\!\sup\nolimits_{x \in (0,1)} \big|\und{\mathbb{O}_{\lf u \rf_{h_n}}^n}\!(x)\big|^2 \Big\} \right)\right] du \leq 13 .
\end{split}
\end{equation} 
This and \eqref{eq:abstract:phi1} show  for all $n \in \N$  that
\begin{align}
\label{eq:abs:phi0}
\begin{split}
& \E \!\left[ \exp \! \left( \smallint_0^T  2p \,\phi\big(\mathbb{O}_{\lf u \rf_{h_n}}^n +  P_n \, e^{\lf u \rf_{h_n} (A-\eta)} \xi\big) \, du \right) \right] \\
& \leq 13 \exp \!\left(   2p\gamma T + 4p\gamma \smallint_0^T  \Big\{ \!\sup\nolimits_{x \in (0,1)} |\und{ P_n \, e^{\lf u \rf_{h_n} (A-\eta)} \xi}(x)|^2 \Big\} \, du \right).
\end{split}
\end{align}
In addition, the Sobolev embedding theorem ensures that 
\begin{align}
\sup \! \Big(\Big\{\! \sup\nolimits_{x \in (0,1)} |\und{v}(x)| \colon \big[ v \in H_{\nicefrac{1}{2}} \text{ and } \|v\|_{H_{\nicefrac{1}{2}}} \leq 1\big] \Big\}\Big) < \infty.
\end{align}
This establishes for all $n \in \N$, $s \in [0, T]$  that
\begin{align}
\label{eq:abs:xi}
\begin{split}
&\sup\nolimits_{x \in (0,1)} | \und{P_n \, e^{s(A-\eta)} \xi} (x) | \\
&\leq  \Big[\sup \! \Big(\Big\{ \!\sup\nolimits_{x \in (0,1)} |\und{v}(x)| \colon \big[v \in H_{\nicefrac{1}{2}} \text{ and } \|v\|_{H_{\nicefrac{1}{2}}} \leq 1\big] \Big\}\Big) \Big]  \|P_n \, e^{s(A-\eta)}  \xi \|_{H_{\nicefrac{1}{2}}}  \\
& \leq \Big[\sup \! \Big(\Big\{ \sup\nolimits_{x \in (0,1)} |\und{v}(x)| \colon \big[v \in H_{\nicefrac{1}{2}} \text{ and } \|v\|_{H_{\nicefrac{1}{2}}} \leq 1\big] \Big\}\Big) \Big] \| \xi \|_{H_{\nicefrac{1}{2}}} < \infty.
\end{split}
\end{align}
Combing this with \eqref{eq:abs:phi0} implies that 
\begin{align}\label{eq:abstract:phi}
\sup_{ n \in \N } \E \!\left[ \exp \! \left( \smallint_0^T  2p \,\phi\big(\mathbb{O}_{\lf u \rf_{h_n}}^n +  P_n \, e^{\lf u \rf_{h_n} (A-\eta)} \xi\big) \, du \right) \right] < \infty.
\end{align}
Next note that the fact that $ \forall \, x, y \in \R, \, a \in [0, \infty) \colon |x+y|^a \leq 2^{\max\{a-1,0\}} |x|^a + 2^{\max\{a-1,0\}} |y|^a$
and the triangle inequality show that for all $n \in \N$, $s \in [0, T]$ it holds that
\begin{align}
\label{eq:abstract:Phi0}
\begin{split}
& \E \! \left[ \big| \Phi\big( \mathbb{O}_{ \lf s \rf_{h_n } }^n + P_n \, e^{\lf s \rf_{h_n} (A-\eta)} \xi\big) \big|^p \right]\\
&=  \E  \bigg[ \left|\gamma + \gamma \Big\{ \!\sup\nolimits_{x \in (0,1)} \big|\und{\mathbb{O}_{\lf s \rf_{h_n}}^n} \! (x) +  \und{ P_n \,e^{\lf s \rf_{h_n} (A-\eta)} \xi}(x)\big|^{\gamma} \Big\} \right|^p \bigg] \\
& \leq  \E \! \left[ 2^{p-1} \gamma^p + 2^{p-1} \gamma^p \Big\{ \!\sup\nolimits_{x \in (0,1)} \big|\und{\mathbb{O}_{\lf s \rf_{h_n}}^n} \!(x) +  \und{ P_n \,e^{\lf s \rf_{h_n} (A-\eta)} \xi}(x)\big|^{p\gamma} \Big\} \right] \\
& \leq  \E \! \left[ 2^{p-1} \gamma^p + 2^{p-1} \gamma^p \, 2^{\max\{p\gamma-1,0\}} \Big\{ \!\sup\nolimits_{x \in (0,1)} \big|\und{\mathbb{O}_{\lf s \rf_{h_n}}^n} \!(x)\big|^{p\gamma}+ \!\sup\nolimits_{x \in (0,1)} \big|\und{ P_n \,e^{\lf s \rf_{h_n} (A-\eta)} \xi}(x)\big|^{p\gamma} \Big\} \right] \\
& \leq  2^{p-1} \gamma^p +  2^{p(\gamma+1)-1} \gamma^p \, \E \! \left[  \Big\{ \!\sup\nolimits_{x \in (0,1)} \big|\und{\mathbb{O}_{\lf s \rf_{h_n}}^n} \!(x)\big|^{p\gamma} \Big\}+ \Big\{ \!\sup\nolimits_{x \in (0,1)} \big|\und{ P_n \,e^{\lf s \rf_{h_n} (A-\eta)} \xi}(x)\big|^{p\gamma} \Big\} \right] .
\end{split}
\end{align}
Furthermore, observe that, e.g., Lemma~5.7 in Hutzenthaler et al.~\cite{Salimova2016} (with $a=4pT\gamma$, $x=\sup\nolimits_{x \in (0,1)} |\und{\mathbb{O}_{s}^n (\omega)}(x)|^2$, $r=\nicefrac{r}{2}$ for $\omega \in \Omega$, $s \in [0, T]$, $n \in \N$, $ r \in [ 0, \infty ) $ in the notation of Lemma~5.7 in Hutzenthaler et al.~\cite{Salimova2016})
and \eqref{eq:abstract:13}
ensure that for all $ r \in [ 0, \infty ) $, $n \in \N$, $s \in [0, T]$ it holds that
\begin{equation}
\label{eq:abstract:estimate-exponential}
\E \! \left[\Big\{ \!\sup\nolimits_{x \in (0,1)} \big|\und{\mathbb{O}_{s}^n}(x)\big|^{r} \Big\}\right]
\leq
\tfrac{(\lf \nicefrac{r}{2} \rf_1 +1)!}{ |4pT\gamma|^{\nicefrac{r}{2}}} \,
\E \! \left[\exp \! \left( 4pT\gamma\Big\{ \!\sup\nolimits_{x \in (0,1)} \big|\und{\mathbb{O}_{s}^n}(x)\big|^{2} \Big\} \right)\right]
\leq
\tfrac{ 13 \, (\lf \nicefrac{r}{2} \rf_1 +1)!}{ |4pT\gamma|^{\nicefrac{r}{2}}}.
\end{equation}
Combining this and \eqref{eq:abstract:Phi0} proves for all $n \in \N$, $s \in [0, T]$ that
\begin{align}
\begin{split}
&\E \! \left[ \big| \Phi\big( \mathbb{O}_{ \lf s \rf_{h_n } }^n + P_n \, e^{\lf s \rf_{h_n} (A-\eta)} \xi\big) \big|^p \right] \\
&\leq 2^{p-1} \gamma^p+  2^{p(\gamma+1)-1} \gamma^p  \Big\{ \!\sup\nolimits_{x \in (0,1)} \big|\und{ P_n \,e^{\lf s \rf_{h_n} (A-\eta)} \xi}(x)\big|^{p\gamma} \Big\} +\tfrac{13 \cdot 2^{p(\gamma+1)-1} \gamma^p (\lf \nicefrac{p\gamma}{2} \rf_1 +1)!}{ |4pT\gamma|^{\nicefrac{p\gamma}{2}}} .
\end{split}
\end{align}
This together with \eqref{eq:abs:xi} yields that 
\begin{align}\label{eq:abstract:Phi}
\begin{split}
\sup_{ n \in \N } \int_0^T \E \! \left[ \big| \Phi\big( \mathbb{O}_{ \lf s \rf_{h_n } }^n + P_n \, e^{\lf s \rf_{h_n} (A-\eta)} \xi\big) \big|^p \right] ds < \infty.
\end{split}
\end{align}
Moreover, \eqref{eq:abstract:estimate-exponential} establishes for all $ r \in [ 1, \infty ) $, $n \in \N$, $s \in [0, T]$ that
\begin{align}
\label{eq:abstract:O0}
\begin{split}
\E \!\left[\big\|\mathbb{O}_s^n+ P_n \, e^{s (A-\eta)} \xi\big\|_H^{r} \right] & \leq  \E\! \left[ 2^{r-1}  \|\mathbb{O}_s^n\|_H^{r} + 2^{r-1} \| P_n \, e^{s (A-\eta)} \xi\|_H^{r} \right]\\
& \leq 2^{r-1} \, \E \! \left[\Big\{ \!\sup\nolimits_{x \in (0,1)} \big|\und{\mathbb{O}_{s}^n}(x)\big|^{r} \Big\}\right] + 2^{r-1} \| P_n \, e^{s (A-\eta)} \xi\|_H^{r}\\
& \leq  \tfrac{13 \cdot 2^{r-1}  (\lf \nicefrac{r}{2} \rf_1 +1)!}{ |4pT\gamma|^{\nicefrac{r}{2}}}  + 2^{r-1} \|\xi\|_H^{r}.
\end{split}
\end{align}
Observe that this implies that 
\begin{equation}
\sup_{ n \in \N }  \sup_{s \in [0,T]} \E \big[ \big\|  \mathbb{O}_s^n  + P_n \, e^{s(A-\eta)}  \xi  \big\|_H^p \big]
\leq \tfrac{13 \cdot 2^{p-1}  (\lf \nicefrac{p}{2} \rf_1 +1)!}{ |4pT\gamma|^{\nicefrac{p}{2}}}  + 2^{p-1} \|\xi\|_H^{p}
< \infty.
\end{equation}
This proves \eqref{item:finite1}.
In addition, \eqref{eq:abstract:O0} shows that 
\begin{align}\label{eq:abstract:O}
\begin{split}
\sup_{ n \in \N } \int_0^T \E \! \left[ \big\|\mathbb{O}_s^n+ P_n \, e^{s (A-\eta)} \xi\big\|_H^{2p} \right] ds < \infty.
\end{split}
\end{align}
In the next step note that for all $n \in \N$ it holds that
\begin{align}
\label{eq:abstract:int}
\begin{split}
\E \! \left[\smallint_{0}^T \big\| \tilde{\mathcal{O}}_u^n+ P_n \, e^{uA} \xi \big\|_{H_{\varrho}}^{4p+ 4p\vartheta} \, du  \right] &\leq 2^{4p+ 4p\vartheta-1} \, \E \!\left[\smallint_{0}^T \big\| \tilde{\mathcal{O}}_u^n  \big\|_{H_{\varrho}}^{4p+ 4p\vartheta}+ \|P_n \, e^{uA} \xi \|_{H_{\varrho}}^{4p+ 4p\vartheta} \, du  \right] \\
&\leq 2^{4p+ 4p\vartheta-1} \, \E \!\left[\smallint_{0}^T \big\| \tilde{\mathcal{O}}_u^n  \big\|_{H_{\varrho}}^{4p+ 4p\vartheta}+ \| \xi \|_{H_{\varrho}}^{4p+ 4p\vartheta} \, du  \right].
\end{split}
\end{align}
Furthermore, observe that the Burkholder-Davis-Gundy-type inequality in Da~Prato \& Zabczyk~\cite[Lemma~7.7]{dz92} proves  for all  $n \in \N$, $u \in [0, T]$  that
\begin{align}
\begin{split}
\E \!\left[ \big\| \tilde{\mathcal{O}}_u^n  \big\|_{H_{\varrho}}^{4p+ 4p\vartheta} \right] & = \E \!\left[\left\|  \int_0^u P_n \, e^{(u-s)A}  \, dW_s \right\|_{H_{\varrho}}^{4p+ 4p\vartheta} \right]  \\
&\leq \left[\tfrac{(4p+ 4p\vartheta)(4p+ 4p\vartheta-1)}{2}\right]^{2p+ 2p\vartheta} \left[ \int_0^u \|P_n\, e^{(u-s)A} \|_{\mathrm{HS}(H, H_{\varrho})}^2 \,ds\right]^{2p+ 2p\vartheta} \\
& \leq \left[4p+ 4p\vartheta  \right]^{4p+ 4p\vartheta} \left[ \int_0^u \| (-A)^{\varrho} \,e^{(u-s)A} \|_{\mathrm{HS}(H)}^2 \, ds\right]^{2p+ 2p\vartheta}\\
& = \left[4p+ 4p\vartheta \right]^{4p+ 4p\vartheta} \left[ \sum_{k = 1}^\infty \int_0^u (\lambda_k)^{2\varrho} \,e^{-2\lambda_k s} \, ds\right]^{2p+ 2p\vartheta}\\
& = \left[4p+ 4p\vartheta\right]^{4p+ 4p\vartheta} \left[ \sum_{k = 1}^\infty  \frac{(\lambda_k)^{2\varrho} (1-e^{-2\lambda_k u})}{2\lambda_k} \right]^{2p+ 2p\vartheta}\\
& \leq \left[4p+ 4p\vartheta \right]^{4p+ 4p\vartheta} \left[ \sum_{k = 1}^\infty  (\lambda_k)^{2\varrho-1}  \right]^{2p+ 2p\vartheta} < \infty.
\end{split}
\end{align}
Combining this with \eqref{eq:abstract:int}  yields that
\begin{align}
\begin{split}
\sup_{ n \in \N } \E \! \left[ \smallint_{0}^T \big\| \tilde{\mathcal{O}}_u^n+ P_n \, e^{uA} \xi \big\|_{H_{\varrho}}^{4p+ 4p\vartheta} \, du \right] < \infty.
\end{split}
\end{align}
This, \eqref{eq:abstract:holder}, \eqref{eq:abstract:phi}, \eqref{eq:abstract:Phi}, and \eqref{eq:abstract:O} ensure that
\begin{align}\label{eq:abstract:last}
\begin{split}
&\sup_{ n \in \N } \E\biggl[ \int_0^T \exp \left( \smallint_s^T p\,\phi\big( \mathbb{O}_{\lf u \rf_{h_n} }^n +  P_n \, e^{\lf u \rf_{h_n} (A-\eta)} \xi\big) \, du \right) \max\Big\{ 1, \big|\Phi(\mathbb{O}_{ \lf s \rf_{h_n} }^n + P_n \, e^{\lf s \rf_{h_n} (A-\eta)} \xi)\big|^{\nicefrac{p}{2}}, \\
& \quad  \big\|\mathbb{O}_s^n+ P_n \, e^{s (A-\eta)} \xi\big\|_H^p, \smallint\nolimits_{0}^T \big\| \tilde{\mathcal{O}}_u^n+ P_n \, e^{uA} \xi \big\|_{H_{\varrho}}^{2p+ 2p\vartheta} \, du \Big\} \, ds \biggr]< \infty.
\end{split}
\end{align}
The proof of Proposition~\ref{prop:abstract} is thus completed.
\end{proof}


The proof of the next elementary result, Lemma~\ref{lemma:conv:rate}, is a slight adaptation of the proof of Lemma~5.9 in Hutzenthaler et al.~\cite{Salimova2016}.

\begin{lemma}\label{lemma:conv:rate}
Assume the setting in Subsection~\ref{setting:example}, let  $p \in [2, \infty)$, $n \in \N$, $\varepsilon \in [0, \nicefrac{1}{4} -\varrho)$, and  let $O \colon  [0, T] \times \Omega \to H_{\varrho}$  be a  stochastic process which satisfies for all $t \in [0, T]$ that $ [O_t ]_{\P, \mathcal{B}(H)} =  \int_0^t e^{(t-s)A}  \, dW_s$. Then 
\begin{align}
\sup_{t \in [0, T]} \bigl( \E \bigl[ \|O_t  - \mathcal{O}_t^n \|_{ H_{\varrho} }^p \bigr] \bigr)^{ \nicefrac{1}{p} }
\leq
\left[\tfrac{p(p-1)}{4 ( c_0 \pi^2 )^{2\varepsilon} } \sum_{k = 1}^\infty  (\lambda_k)^{2\varrho+2\varepsilon-1}\right]^{\nicefrac{1}{2}} n^{-2\varepsilon} < \infty.
\end{align}
\end{lemma}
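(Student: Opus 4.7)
The plan is to write $O_t - \mathcal{O}_t^n$ as a single stochastic integral of the residual projection and then apply a Burkholder-Davis-Gundy-type inequality together with an eigenvalue trick that converts the tail of the sum into the desired $n^{-2\varepsilon}$ decay rate.

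First, from the assumption on $(\mathcal{O}_t^n)_{t \in [0,T]}$ and the given form of $O_t$, I would observe that for every $t \in [0,T]$ it holds $\P$-a.s.\ that
\begin{equation*}
O_t - \mathcal{O}_t^n = \int_0^t (\mathrm{Id}_H - P_n)\, e^{(t-s)A}\, dW_s.
\end{equation*}
The Burkholder-Davis-Gundy-type inequality in Da~Prato \& Zabczyk~\cite[Lemma~7.7]{dz92} (as already used in the proof of Proposition~\ref{prop:abstract}) then yields, for every $t \in [0,T]$, that
\begin{equation*}
\bigl(\E\bigl[\|O_t - \mathcal{O}_t^n\|_{H_\varrho}^p\bigr]\bigr)^{\nicefrac{2}{p}}
\leq \tfrac{p(p-1)}{2}\int_0^t \|(\mathrm{Id}_H - P_n)\, e^{(t-s)A}\|_{\mathrm{HS}(H,H_\varrho)}^2\, ds.
\end{equation*}

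Next, since $(e_k)_{k\in\N}$ is the eigenbasis of $-A$ with eigenvalues $(\lambda_k)_{k\in\N}$ and $P_n$ projects onto $\mathrm{span}\{e_1,\ldots,e_n\}$, I would compute
\begin{equation*}
\int_0^t \|(\mathrm{Id}_H - P_n)\, e^{(t-s)A}\|_{\mathrm{HS}(H,H_\varrho)}^2\, ds
= \sum_{k=n+1}^\infty \int_0^t (\lambda_k)^{2\varrho}\, e^{-2\lambda_k(t-s)}\, ds
\leq \sum_{k=n+1}^\infty \frac{(\lambda_k)^{2\varrho - 1}}{2}.
\end{equation*}
The key trick is then: for every $k \geq n+1$ and $\varepsilon \geq 0$ one has $\lambda_k \geq \lambda_{n+1}$, hence $1 \leq (\lambda_k/\lambda_{n+1})^{2\varepsilon}$, so
\begin{equation*}
\sum_{k=n+1}^\infty \frac{(\lambda_k)^{2\varrho - 1}}{2}
\leq \frac{1}{2(\lambda_{n+1})^{2\varepsilon}}\sum_{k=n+1}^\infty (\lambda_k)^{2\varrho + 2\varepsilon - 1}
\leq \frac{1}{2(\lambda_{n+1})^{2\varepsilon}}\sum_{k=1}^\infty (\lambda_k)^{2\varrho + 2\varepsilon - 1}.
\end{equation*}
Using $\lambda_{n+1} = c_0 \pi^2 (n+1)^2 \geq c_0 \pi^2 n^2$ (which holds since $n\in\N$) I obtain $(\lambda_{n+1})^{2\varepsilon} \geq (c_0\pi^2)^{2\varepsilon} n^{4\varepsilon}$.

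Finally, combining the previous two displays and taking the $\nicefrac{p}{2}$-th root gives exactly the claimed bound
\begin{equation*}
\bigl(\E\bigl[\|O_t - \mathcal{O}_t^n\|_{H_\varrho}^p\bigr]\bigr)^{\nicefrac{1}{p}}
\leq \left[\tfrac{p(p-1)}{4(c_0\pi^2)^{2\varepsilon}}\sum_{k=1}^\infty (\lambda_k)^{2\varrho+2\varepsilon-1}\right]^{\nicefrac{1}{2}} n^{-2\varepsilon},
\end{equation*}
uniformly in $t \in [0,T]$. For finiteness of the constant I would note that $\lambda_k = c_0\pi^2 k^2$ reduces the series to $\sum_{k\geq 1} k^{4\varrho + 4\varepsilon - 2}$, which converges precisely because the standing assumption $\varepsilon < \nicefrac{1}{4} - \varrho$ ensures $4\varrho + 4\varepsilon - 2 < -1$. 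There is no serious obstacle here; the only point requiring a little care is the insertion of the factor $(\lambda_k/\lambda_{n+1})^{2\varepsilon}$ and the verification that the exponent on $k$ in the resulting series is strictly less than $-1$.
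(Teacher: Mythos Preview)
Your proof is correct and follows essentially the same approach as the paper: apply the Burkholder--Davis--Gundy inequality, then extract the factor $(\lambda_{n+1})^{-2\varepsilon}$ from the tail and bound the remaining series. The only cosmetic difference is that the paper phrases the extraction step via the operator-norm factorization $\|(\mathrm{Id}_{H_\varrho}-P_n)\,e^{sA}\|_{\mathrm{HS}(H,H_\varrho)} \leq \|(-A)^{-\varepsilon}(\mathrm{Id}_H-P_n)\|_{L(H)}\,\|e^{sA}\|_{\mathrm{HS}(H,H_{\varrho+\varepsilon})}$ before passing to eigenvalues, whereas you work directly in the eigenbasis and insert $(\lambda_k/\lambda_{n+1})^{2\varepsilon}\geq 1$; these are the same computation.
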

\begin{proof}[Proof of Lemma~\ref{lemma:conv:rate}]
First, note that the Burkholder-Davis-Gundy-type inequality in Da~Prato \& Zabczyk~\cite[Lemma~7.7]{dz92} shows for all  $t \in [0, T]$   that
\begin{align}\label{eq:burk}
\begin{split}
\bigl( \E \bigl[ \|O_t  - \mathcal{O}_t^n \|_{ H_{\varrho} }^p \bigr] \bigr)^{ \nicefrac{1}{p} }
& =
\biggl( \E \biggl[
    \biggl\| \int_0^t (\mathrm{Id}_{H_{\varrho}}- P_n) \, e^{(t-s)A} \,  d W_s \biggr\|_{ H_{\varrho} }^p
\biggr] \biggr)^{ \nicefrac{1}{p} } \\
& \leq
\left[ \tfrac{p(p-1)}{2} \int_0^t \big \|(\mathrm{Id}_{H_{\varrho}}- P_n) \,  e^{(t-s)A} \big\|^2_{\mathrm{HS}(H, H_{\varrho})} \, ds \right]^{\nicefrac{1}{2}}.
\end{split}
\end{align}
Next observe that for all  $ t \in [0, T] $ it holds that
\begin{align}
\begin{split}
\int_0^t \big \|(\mathrm{Id}_{H_{\varrho}}- P_n) \, e^{(t-s)A} \big\|^2_{\mathrm{HS}(H, H_{\varrho})} \, ds
& \leq  \int_0^t  \|\mathrm{Id}_{H_{\varrho+\varepsilon}}- P_n|_{H_{\varrho+\varepsilon}} \|^2_{L(H_{\varrho+\varepsilon}, H_{\varrho})} \, \| e^{(t-s)A} \|^2_{\mathrm{HS}(H, H_{\varrho+\varepsilon})} \, ds\\ 
& = \| (  - A )^{ - \varepsilon } ( \Id_H - P_n |_H ) \|^2_{ L( H ) } \int_0^t \| e^{sA} \|^2_{\mathrm{HS}(H, H_{\varrho+\varepsilon})} \, ds\\
& =  \| ( - A )^{ - 1} ( \Id_H - P_n |_H ) \|^{2\varepsilon}_{ L( H ) } \sum_{k = 1}^\infty \int_0^t (\lambda_k)^{2\varrho+2\varepsilon} e^{-2\lambda_k s} \, ds \\ 
& \leq |\lambda_{n+1}|^{-2\varepsilon} \sum_{k = 1}^\infty \frac{ (\lambda_k)^{2\varrho+2\varepsilon}}{2\lambda_k } \leq \tfrac{1}{2} ( c_0 \pi^2 n^2 )^{-2\varepsilon} \sum_{k = 1}^\infty (\lambda_k)^{2\varrho+2\varepsilon-1}.
\end{split}
\end{align}
This  and \eqref{eq:burk} ensure that for all  $t \in [0, T]$ it holds that
\begin{align}
\bigl( \E \bigl[ \|O_t  - \mathcal{O}_t^n \|_{ H_{\varrho} }^p \bigr] \bigr)^{ \nicefrac{1}{p} }
\leq
\left[\tfrac{p(p-1)}{4(c_0 \pi^2)^{2\varepsilon}} \sum_{k = 1}^\infty  (\lambda_k)^{2\varrho+2\varepsilon-1} \right]^{\nicefrac{1}{2}} n^{-2\varepsilon} < \infty.
\end{align}
The proof of Lemma~\ref{lemma:conv:rate} is thus completed.
\end{proof}

Some of the arguments in the proof of the next result, Proposition~\ref{prop:exists} below, are similar to the arguments in the proof of Corollary~5.10 in Hutzenthaler et al.~\cite{Salimova2016}.

\begin{prop}
\label{prop:exists}
Assume the setting in Subsection~\ref{setting:example} and let $p \in (0, \infty)$. Then there exist a  real number $\eta \in [0,\infty)$ and stochastic processes
$ O \colon [0, T] \times \Omega \to H_{\varrho}$
and $ \tilde{\mathcal{O}}^n, \mathbb{O}^n \colon [0, T] \times \Omega \to P_n(H)$, $n \in \N$,
with continuous sample paths
such that
\begin{enumerate}[(i)]
	\item \label{item:O}
	it holds for all $t \in [0, T]$ that
	$[O_t]_{\P, \mathcal{B}(H)} =  \int_0^{t}  e^{(t-s)A} \,  dW_s$,
	\item \label{item:mathcal:O}
	it holds for all $n \in \N$, $t \in [0, T]$ that
	$[\tilde{\mathcal{O}}^n_t]_{\P, \mathcal{B}(H)} =  \int_0^{t} P_n \, e^{(t-s)A} \, dW_s$,
	\item \label{item:mathbb:O}
	it holds for all $n \in \N$, $t \in [0, T]$ that
	$ \mathbb{O}^n_t
	= \tilde{\mathcal{O}}_t^n + P_n\, e^{tA} \xi
	- \int_0^t e^{(t-s)(A-\eta)} \, \eta \, ( \tilde{\mathcal{O}}_s^n + P_n \, e^{sA} \xi ) \, ds $,
	\item \label{item:conv}
	it holds that
	$\P \bigl( \limsup_{n \to \infty} \sup_{s \in [0, T]} \| (O_s + e^{sA}  \xi) - (\tilde{\mathcal{O}}_s^n + P_n \, e^{sA}  \xi ) \|_{H_{\varrho}} =0 \bigr)=1$,
	\item \label{item:scheme}
	it holds for all $n \in \N$, $t \in [0, T]$ that
	\begin{equation*}
    \P \Bigl( \Y_t^n = P_n \, e^{ t A } \xi + \smallint_0^t P_n \,  e^{  ( t - s ) A } \, \one_{ \{ \| \Y_{ \lf s \rf_{h_n} }^n \|_{ H_{\varrho} } + \| \tilde{\mathcal{O}}_{ \lf s \rf_{h_n} }^n +P_n  \, e^{ \lf s \rf_{ h_n } A } \xi \|_{ H_{\varrho} } \leq | h_n|^{ - \chi } \}} \, F \big(  \Y_{ \lf s \rf_{ h_n } }^n \big) \, ds + \tilde{\mathcal{O}}_t^n \Bigr)=1,
	\end{equation*}
	and
	\item \label{item:regularity}
	it holds that 
	\begin{align}
	\nonumber
	& \sup_{ n \in \N } \E\biggl[ \int_0^T e^{ \int_s^T p\, \phi( \mathbb{O}_{\lf u \rf_{h_n} }^n )  \, du} \max\bigl\{ 1, |\Phi(\mathbb{O}_{ \lf s \rf_{h_n} }^n )|^{\nicefrac{p}{2}}, \|\mathbb{O}_s^n\|_H^p, \smallint\nolimits_{0}^T \| \tilde{\mathcal{O}}_u^n+ P_n \, e^{uA} \xi \|_{H_{\varrho}}^{2p+ 2p\vartheta} \, du \bigr\} \, ds \biggr]
	\\ & +
	\sup_{ n \in \N } \sup_{ s \in [0,T]} \E[ \| \mathbb{O}_s^n \|_H^p]
	< \infty.  
	\end{align}
\end{enumerate}
\end{prop}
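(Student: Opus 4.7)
My plan is as follows. First I would choose parameters: take $\tilde{p} := \max\{p,5\}$ and any $\beta \in (\nicefrac{1}{\tilde{p}}, \nicefrac{1}{4})$ (nonempty since $\tilde{p}>4$); as $4\beta<1$, Lemma~\ref{lem:conv:series} (applied with $\alpha=4\beta$ and exponent $2$, using $\lambda_k=c_0\pi^2 k^2$) gives $\limsup_{\eta\to\infty}\sum_{k=1}^{\infty}\nicefrac{k^{4\beta}}{(\lambda_k+\eta)}=0$, so one can pick $\eta\in[0,\infty)$ large enough that the smallness condition~\eqref{abstract:gamma} of Proposition~\ref{prop:abstract} holds for these $\tilde{p},\beta,\eta$. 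I would then construct $O$ via the factorization method as a continuous $H_\varrho$-valued modification of $\int_0^{\cdot} e^{(\cdot-s)A}\,dW_s$ (possible since $\varrho<\nicefrac{1}{4}$), set $\tilde{\mathcal{O}}^n := P_n O$ (which automatically has continuous $P_n(H)$-valued paths and the required distributional identity, as $P_n$ is bounded linear), and define $\mathbb{O}^n$ by the formula in (iii); its continuity in $P_n(H)$ then follows from Lemma~\ref{lemma:fund:gen} applied to $(A-\eta)|_{P_n(H)} \in L(P_n(H))$. This settles (i)-(iii) by construction.

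For (v), at each fixed time the two versions $\mathcal{O}^n$ and $\tilde{\mathcal{O}}^n$ of $\int_0^{\cdot} P_n e^{(\cdot-s)A}\,dW_s$ coincide $\P$-a.s., and the right-hand side of~\eqref{eq:set:abstract} reads $\mathcal{O}^n$ only at the finitely many points $\{\lf s \rf_{h_n}:s\in[0,t]\}\cup\{t\}$; intersecting the corresponding finitely many null sets transfers~\eqref{eq:set:abstract} to the formulation with $\tilde{\mathcal{O}}^n$. For (iv), the key observation is that $O+e^{\cdot A}\xi$ has continuous $H_\varrho$-valued paths (using $\xi\in H_{\nicefrac{1}{2}}\subseteq H_\varrho$), hence $\P$-a.s.\ compact image in $H_\varrho$; the hypothesis $\liminf_{n\to\infty}\lambda_{n+1}=\infty$ produces strong convergence $P_n\to\Id$ on $H_\varrho$ with $\|P_n\|_{L(H_\varrho)}\leq 1$, which upgrades to uniform convergence on compacta by a standard equicontinuity argument, so the supremum in (iv) vanishes a.s.

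The heart of the argument is (vi). Let $\hat{\mathbb{O}}^n$ be a continuous $P_n(H)$-valued modification of $\int_0^{\cdot} P_n e^{(\cdot-s)(A-\eta)}\,dW_s$. The key identity I would verify is that $\mathbb{O}^n_t = \hat{\mathbb{O}}^n_t + P_n e^{t(A-\eta)}\xi$ for all $t\in[0,T]$, $n\in\N$, $\P$-a.s. This splits into a deterministic half, $P_n e^{tA}\xi - \int_0^t e^{(t-s)(A-\eta)} \eta\, P_n e^{sA}\xi\,ds = P_n e^{t(A-\eta)}\xi$, immediate from Lemma~\ref{lemma:fund:gen} applied in $P_n(H)$, and a stochastic half, $\tilde{\mathcal{O}}^n_t - \int_0^t e^{(t-s)(A-\eta)} \eta\, \tilde{\mathcal{O}}^n_s\,ds = \hat{\mathbb{O}}^n_t$ $\P$-a.s., which is a variant of Proposition~\ref{prop:transform_SG} (or a direct stochastic Fubini computation). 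With this identity, the expressions $\mathbb{O}^n_\cdot + P_n e^{\cdot(A-\eta)}\xi$ appearing in the conclusion of Proposition~\ref{prop:abstract} are precisely our $\mathbb{O}^n_\cdot$ from Proposition~\ref{prop:exists}; invoking Proposition~\ref{prop:abstract} with the chosen $\tilde{p},\beta,\eta$ directly yields the integrability statement in~(vi) at moment $\tilde{p}\geq p$, while its item~\eqref{item:finite1} supplies the companion bound $\sup_n\sup_s\E[\|\mathbb{O}^n_s\|_H^p]<\infty$. The main subtle step is thus pinning down this identity and arranging the parameters so that the smallness condition~\eqref{abstract:gamma} of Proposition~\ref{prop:abstract} is actually achievable; everything else is essentially bookkeeping around the standard factorization construction and the elementary compactness argument for~(iv).
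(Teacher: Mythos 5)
Your proof is correct, and for items (i)--(iii), (v), and (vi) it follows essentially the same route as the paper: choose $\beta < \nicefrac{1}{4}$ and a sufficiently large moment exponent so the interval for $\beta$ is nonempty, invoke Lemma~\ref{lem:conv:series} to take $\eta$ large enough that the smallness hypothesis~\eqref{abstract:gamma} of Proposition~\ref{prop:abstract} holds, use Proposition~\ref{prop:transform_SG} for the identity $\mathbb{O}^n_t - P_n e^{t(A-\eta)}\xi = \int_0^t P_n e^{(t-s)(A-\eta)}\,dW_s$, and then apply Proposition~\ref{prop:abstract} (the passage from the higher moment to $p$ by Jensen/H\"older being the same in both treatments).

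Where you genuinely diverge from the paper is item~\eqref{item:conv}. The paper takes the continuous version $\tilde{\mathcal{O}}^n$ to be whatever Kolmogorov--Chentsov hands it, and then must prove the a.s.\ uniform convergence quantitatively: Lemma~\ref{lemma:conv:rate} gives a polynomial rate for $\sup_t \bigl(\E[\|O_t-\tilde{\mathcal{O}}^n_t\|_{H_\varrho}^q]\bigr)^{\nicefrac{1}{q}}$, Corollary~2.11 of Cox et al.\ upgrades this to a rate for the supremum over $t$, and a Borel--Cantelli type argument (Hutzenthaler \& Jentzen's Lemma~3.21) then yields the almost sure statement; this is precisely why the paper needs the extra parameter constraint $q > \nicefrac{4}{\varepsilon}$. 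Your coupling $\tilde{\mathcal{O}}^n := P_n O$ bypasses all of this: the difference $(O_s + e^{sA}\xi) - (\tilde{\mathcal{O}}^n_s + P_n e^{sA}\xi) = (\Id - P_n)(O_s + e^{sA}\xi)$ is then a deterministic function of the fixed continuous path, and the a.s.\ uniform convergence reduces to the elementary fact that a uniformly bounded sequence of operators converging strongly converges uniformly on compact sets. This is a cleaner and more economical argument; what it buys is a shorter proof with fewer parameter constraints, while what the paper's quantitative route buys (and you give up) is an explicit rate of convergence, which the paper does not actually use at this point. Both are correct; yours is arguably preferable for the purpose at hand.
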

\begin{proof}[Proof of Proposition~\ref{prop:exists}]
Throughout this proof let $\varepsilon \in (0, \nicefrac{1}{4}- \varrho)$, $\beta \in (0, \nicefrac{1}{4})$, $q \in (\max\{p, \nicefrac{1}{\beta},\nicefrac{4}{\varepsilon} \}, $ $ \infty)$.
Observe that Lemma~\ref{abs:phi(w)_finite} and Lemma~\ref{lem:conv:series} ensure that there exists a real number $\eta \in [0, \infty)$ such that
\begin{align}
\label{eq:eps:bound}
720  q^3 T  \gamma \pi^{4}  \left[ \sum_{k = 1}^\infty \frac{ k^{4 \beta}}{\lambda_k+\eta}\right]  \Big[\sup \! \Big(\Big\{ \! \sup\nolimits_{x \in (0,1)} |v(x)| \colon \big[v \in \mathcal{C}((0,1), \R) \text{ and } \|v\|_{\W^{\beta, q}((0,1), \mathbb{R})} \leq 1\big] \Big\}\Big)\Big]^2 \leq 1.
\end{align}
Next note that the Burkholder-Davis-Gundy-type inequality in Da~Prato \& Zabczyk~\cite[Lemma~7.7]{dz92} yields   for all $n \in \N$,  $t_1, t_2 \in [0, T]$ with $t_1 \leq t_2$  that
\begin{align}
\begin{split}
& \biggl( \E \biggl[
\biggl\| \int_0^{t_1}P_n \, e^{(t_1-s)A} \,  dW_s - \int_0^{t_2} P_n \, e^{(t_2-s)A} \,  dW_s \biggr\|_{  H_{\varrho} }^q
\biggr] \biggr)^{ \nicefrac{2}{q} } \\
& + \biggl( \E \biggl[
\biggl\| \int_0^{t_1} e^{(t_1-s)A} \,  dW_s - \int_0^{t_2}  e^{(t_2-s)A} \,  dW_s \biggr\|_{ H_{\varrho} }^q
\biggr] \biggr)^{ \nicefrac{2}{q} } \\
& =
\biggl( \E \biggl[
\biggl\| \int_0^{t_2}  \bigl( \one_{(-\infty,t_1)}(s)\, P_n \, e^{\max\{ t_1-s, 0 \}A} - P_n \, e^{(t_2-s)A} \bigr) \,  dW_s \biggr\|_{ H_{\varrho} }^q
\biggr] \biggr)^{ \nicefrac{2}{q} } \\
& \quad  +
\biggl( \E \biggl[
\biggl\| \int_0^{t_2} \bigl( \one_{(-\infty,t_1)}(s)\, e^{ \max\{ t_1-s, 0 \}A} -   e^{(t_2-s)A} \bigr) \,  dW_s \biggr\|_{ H_{\varrho} }^q
\biggr] \biggr)^{ \nicefrac{2}{q} } \\
&\leq \tfrac{q(q-1)}{2} \int_0^{t_2}  \left\|  \one_{(-\infty,t_1)}(s)\, P_n \, e^{\max\{ t_1-s, 0 \}A} -  P_n \, e^{(t_2-s)A}   \right\|^2_{\mathrm{HS}(H, H_{\varrho})} ds \\
& \quad + \tfrac{q(q-1)}{2}   \int_0^{t_2} \left\|  \one_{(-\infty,t_1)}(s)\, e^{\max\{ t_1-s, 0 \} A} -   e^{(t_2-s)A}  \right\|^2_{\mathrm{HS}(H, H_{\varrho})} ds \\
& \leq q (q-1) \bigg[ \int_{t_1}^{t_2} \big \| e^{(t_2-s)A} \big\|^2_{\mathrm{HS}(H, H_{\varrho})} \, ds + \int_{0}^{t_1} \big \| e^{(t_1-s)A}- e^{(t_2-s)A} \big\|^2_{\mathrm{HS}(H, H_{\varrho})} \, ds \bigg] \\
& = q (q-1) \bigg[ \int_{t_1}^{t_2} \big \|  e^{(t_2-s)A}  \big\|^2_{\mathrm{HS}(H, H_{\varrho})} \, ds+ \int_0^{t_1} \big\| e^{(t_1-s)A} \big(\Id_H - e^{(t_2-t_1)A}\big)  \big\|_{\mathrm{HS}(H, H_{\varrho})}^2 \, ds \bigg].
\end{split}
\end{align}
This proves for all   $n \in \N$,  $t_1, t_2 \in [0, T]$ with $t_1 \leq t_2$  that
\begin{align}
\label{eq:O:Holder3}
\begin{split}
& \biggl( \E \biggl[
\biggl\| \int_0^{t_1}P_n \, e^{(t_1-s)A} \,  dW_s - \int_0^{t_2} P_n \, e^{(t_2-s)A} \,  dW_s \biggr\|_{  H_{\varrho} }^q
\biggr] \biggr)^{ \nicefrac{2}{q} } \\
& + \biggl( \E \biggl[
\biggl\| \int_0^{t_1} e^{(t_1-s)A} \,  dW_s - \int_0^{t_2}  e^{(t_2-s)A} \,  dW_s \biggr\|_{ H_{\varrho} }^q
\biggr] \biggr)^{ \nicefrac{2}{q} } \\
&   \leq q(q-1) \int_{t_1}^{t_2} \big \| (-A)^{\varrho} \, e^{(t_2-s)A}  \big\|^2_{\mathrm{HS}(H)} \, ds \\
&\quad + q (q-1) \int_0^{t_1} \big\| (-A)^{\varrho+\varepsilon} \, e^{(t_1-s)A}  \big\|_{\mathrm{HS}(H)}^2 \, \big\|(-A)^{-\varepsilon} \big(\Id_H - e^{(t_2-t_1)A}\big)  \big\|_{L(H)}^2 \, ds \\
& = q (q-1) \sum_{k = 1}^\infty \int_{t_1}^{t_2} (\lambda_k)^{2\varrho} \,e^{-2(t_2-s) \lambda_k} \, ds  \\
& \quad +  q (q-1) \big\|(-A)^{-\varepsilon} \big(\Id_H - e^{(t_2-t_1)A}\big)  \big\|_{L(H)}^2 \sum_{k = 1}^\infty \int_0^{t_1} (\lambda_k)^{2\varrho+2\varepsilon} \,e^{-2(t_1-s) \lambda_k} \, ds \\
& = q(q-1) \sum_{k = 1}^\infty\frac{( \lambda_k)^{2\varrho} (1-e^{-2\lambda_k(t_2-t_1)})}{2\lambda_k} \\
&\quad + q(q-1) \big\|(-A)^{-\varepsilon} \big(\Id_H - e^{(t_2-t_1)A}\big)  \big\|_{L(H)}^2 \sum_{k = 1}^\infty\frac{( \lambda_k)^{2\varrho+2\varepsilon} (1-e^{-2\lambda_k t_1})}{2\lambda_k}.
\end{split}
\end{align}
Moreover, note that the fact that
$ \forall \, r \in [ 0, 1 ], \, t \in [ 0, \infty ) \colon
\| (-A)^{-r} ( \operatorname{Id}_H - e^{ t A } ) \|_{L(H)} \leq t^r $
(cf., e.g., Lemma~11.36 in Renardy \& Rogers~\cite{RenardyRogers1993})
implies that 
\begin{align}
\label{eq:O:Holder2}
\begin{split}
& \sup_{ t_1, t_2 \in [ 0, T ], \, t_1 < t_2 }
\frac{\|(-A)^{-\varepsilon} (\Id_H - e^{(t_2-t_1)A})  \|_{L(H)}^2  }{(t_2-t_1)^{2\varepsilon}}
= \sup_{ t \in ( 0, T ] } \bigl( t^{ -\varepsilon }
\| (-A)^{-\varepsilon} (\Id_H - e^{tA})  \|_{L(H)} \bigr)^2 
\leq 1.
\end{split}
\end{align}
The fact that $\forall \, x\in \R \colon 1-e^{-x} \leq x$ and \eqref{eq:O:Holder3} hence establish for all   $n \in \N$,  $t_1, t_2 \in [0, T]$ with $t_1 < t_2$  that
\begin{align}\label{eq:O:Holder}
\begin{split}
& \biggl( \E \biggl[
\biggl\| \int_0^{t_1}P_n \, e^{(t_1-s)A} \,  dW_s - \int_0^{t_2} P_n \, e^{(t_2-s)A} \,  dW_s \biggr\|_{  H_{\varrho} }^q
\biggr] \biggr)^{ \nicefrac{2}{q} } \\
& + \biggl( \E \biggl[
\biggl\| \int_0^{t_1} e^{(t_1-s)A} \,  dW_s - \int_0^{t_2}  e^{(t_2-s)A} \,  dW_s \biggr\|_{ H_{\varrho} }^q
\biggr] \biggr)^{ \nicefrac{2}{q} } \\
& \leq q(q-1) \sum_{k = 1}^\infty\frac{(\lambda_k)^{2\varrho-1} (1-e^{-2\lambda_k(t_2-t_1)})^{2\varepsilon}}{2} \\
&\quad + q(q-1) \big\|(-A)^{-\varepsilon} \big(\Id_H - e^{(t_2-t_1)A}\big)  \big\|_{L(H)}^2 \sum_{k = 1}^\infty ( \lambda_k)^{2\varrho+2\varepsilon-1} \\
& \leq q(q-1)   \Bigg[ \sum_{k = 1}^\infty( \lambda_k)^{2\varrho+2\varepsilon-1} \Bigg] \Bigg( 1+  \frac{\|(-A)^{-\varepsilon} (\Id_H - e^{(t_2-t_1)A})  \|_{L(H)}^2  }{(t_2-t_1)^{2\varepsilon}}\Bigg) (t_2-t_1)^{2\varepsilon} \\
& \leq 2 \, q(q-1)   \Bigg[ \sum_{k = 1}^\infty ( \lambda_k)^{2\varrho+2\varepsilon-1} \Bigg] (t_2-t_1)^{2\varepsilon} < \infty.
\end{split}
\end{align}
Combining this with the  fact that $ q \varepsilon > 1$ and the Kolmogorov-Chentsov continuity theorem  shows that there exist  stochastic processes
$ O \colon [0, T] \times \Omega \to H_{\varrho}$,
$ \tilde{\mathcal{O}}^n \colon [0, T] \times \Omega \to P_n(H)$, $n \in \N$,
and $ \mathbb{O}^n \colon [0, T] \times \Omega \to P_n(H)$, $n \in \N$,
with continuous sample paths
which satisfy
for all $n \in \N$, $t \in [0, T]$ that
$[O_t]_{\P, \mathcal{B}(H)} =  \int_0^{t}  e^{(t-s)A} \,  dW_s$,
$[\tilde{\mathcal{O}}^n_t]_{\P, \mathcal{B}(H)} =  \int_0^{t} P_n \, e^{(t-s)A} \, dW_s$,
and
\begin{equation}
\label{eq:mathbbO_def}
\mathbb{O}^n_t
= \tilde{\mathcal{O}}_t^n + P_n\, e^{tA} \xi
- \int_0^t e^{(t-s)(A-\eta)} \, \eta \, ( \tilde{\mathcal{O}}_s^n + P_n \, e^{sA} \xi ) \, ds.
\end{equation}
This proves \eqref{item:O}--\eqref{item:mathbb:O}.
Next observe that the fact that
$ \forall \, n \in \N, \, t \in [0, T] \colon \P(\mathcal{O}_t^n = \tilde{\mathcal{O}}^n_t )=1$
and
Lemma~\ref{lemma:conv:rate} demonstrate that 
\begin{align}\
\sup_{n \in \N} \biggl\{n^{\varepsilon} \sup_{t \in [0, T]} \bigl( \E \bigl[ \|O_t  - \tilde{\mathcal{O}}_t^n \|_{ H_{\varrho} }^q \bigr] \bigr)^{ \nicefrac{1}{q} } \biggr\} < \infty.
\end{align}
The fact that $ O \colon [0, T] \times \Omega \to H_{\varrho}$ and  $\tilde{\mathcal{O}}^n \colon [0, T] \times \Omega \to P_n(H)$, $n \in \N$, are stochastic processes with continuous sample paths, \eqref{eq:O:Holder}, and Cox et al.~\cite[Corollary~2.11]{CoxWelti2016} (with $T=T$, $p=q$, $\beta= \varepsilon$, $\theta^N= \{ \frac{k T}{N} \in [0, T] \colon k \in \{ 0, 1, \ldots, N \} \}$, $ E = H_{\varrho} $, $Y^N= ([0,T] \times \Omega  \ni (t, \omega) \mapsto \tilde{\mathcal{O}}^N_t(\omega) \in H_{\varrho})$, $Y^0= O$, $\alpha=0$, $\varepsilon= \nicefrac{\varepsilon}{2}$ for $N \in \N$ in the notation of   Cox et al.~\cite[Corollary~2.11]{CoxWelti2016}) hence prove that
\begin{align}
\sup_{n \in \N}
\biggl\{ n^{(\nicefrac{\varepsilon}{2}- \nicefrac{1}{q})}
\biggl(
\E \biggl[
    \sup_{t \in [0, T]} \|O_t - \tilde{\mathcal{O}}_t^n \|_{H_{\varrho}}^q
\biggr] \biggr)^{ \nicefrac{1}{q} }
\biggr\} < \infty.
\end{align}
This, the fact that $\nicefrac{\varepsilon}{2}- \nicefrac{1}{q} > \nicefrac{1}{q}$, and Hutzenthaler \& Jentzen~\cite[Lemma~3.21]{Hutzenthaler2015} (cf., e.g.,  Graham \& Talay~\cite[Theorem~7.12]{Graham2013} and  Kloeden \& Neuenkirch~\cite[Lemma~2.1]{Kloeden2007}) ensure that  
\begin{align}\label{eq:O:conv}
\P \bigg(\limsup_{n \to \infty} \sup_{s \in [0, T]} \| O_s - \tilde{\mathcal{O}}_s^n \|_{H_{\varrho}} =0 \bigg)=1.
\end{align}
Next note that  for all $n \in \N$ it holds that 
\begin{align}
\begin{split}
\sup_{ s \in [ 0, T ] }
\|(\Id_H- P_n) \, e^{sA}  \xi \|_{H_{\varrho}} &\leq \|(-A)^{(\varrho-\nicefrac{1}{2})} (\Id_H-P_n |_H ) \|_{L(H)} \|\xi\|_{H_{\nicefrac{1}{2}}} \\
&=  \|(-A)^{-1} (\Id_H-P_n |_H ) \|_{L(H)}^{(\nicefrac{1}{2}-\varrho)} \|\xi\|_{H_{\nicefrac{1}{2}}}
\leq ( c_0 \pi^2 n^2 )^{(\varrho - \nicefrac{1}{2})} \|\xi\|_{H_{\nicefrac{1}{2}}} .
\end{split}
\end{align}
Combining this with \eqref{eq:O:conv} shows that 
\begin{align}
\P \bigg( \limsup_{n \to \infty} \sup_{s \in [0, T]} \| (O_s + e^{sA}  \xi) - (\tilde{\mathcal{O}}_s^n + P_n \, e^{sA}  \xi ) \|_{H_{\varrho}} =0 \bigg)=1.
\end{align}
This establishes~\eqref{item:conv}. Furthermore, the fact that  $ \forall \, n \in \N, \, t \in [0, T] \colon \P(\mathcal{O}_t^n = \tilde{\mathcal{O}}^n_t )=1$ and \eqref{eq:set:abstract}
prove~\eqref{item:scheme}.
Moreover, Proposition~\ref{prop:transform_SG}
(with $H=H$,
$U=H$,
$ \mathbb{H} = \{ e_k \in H \colon k \in \N \} $,
$T=T$,
$\alpha= 0$,
$\beta=0$,
$\gamma=0$,
$\eta=\eta$,
$\kappa=0$,
$A=A$,
$ ( W_t )_{ t \in [ 0, T ] } = ( W_t )_{ t \in [ 0, T ] } $,
$O=P_n(H)$,
$F= (P_n(H) \ni v \mapsto 0 \in H)$,
$ \tilde{F} = ( P_n(H) \ni v \mapsto \eta v \in H) $,
$B= ( P_n(H) \ni v \mapsto ( H \ni u \mapsto P_n(u) \in H ) \in \mathrm{HS}(H) ) $,
$\xi= ( \Omega \ni \omega \mapsto P_n \, \xi \in P_n(H) ) $,
$X= ( [0,T] \times \Omega \ni (t,\omega) \mapsto (\tilde{\mathcal{O}}^n_t( \omega ) + P_n \, e^{tA} \xi) \in P_n( H ) ) $
for $n \in \N$
in the notation of  Proposition~\ref{prop:transform_SG})
ensures that for all $n \in \N$, $t \in [0,T]$ it holds that 
\begin{equation}
\begin{split}
[ \tilde{\mathcal{O}}_t^n + P_n\, e^{tA} \xi ]_{\P, \mathcal{B}(H)}
& =
\left[
P_n \, e^{t(A-\eta)} \xi
+ \int_0^t e^{(t-s)(A-\eta)} \, \eta \, ( \tilde{\mathcal{O}}_s^n + P_n \, e^{sA} \xi ) \, ds
\right]_{\P, \mathcal{B}(H)}
\\ & \quad 
+ \int_0^{t} P_n \, e^{(t-s)(A-\eta)} \, dW_s.
\end{split}
\end{equation}
This and \eqref{eq:mathbbO_def} imply for all $n \in \N$, $t \in [0,T]$ that
\begin{equation}
\label{eq:trans:O}
[ \mathbb{O}^n_t - P_n \, e^{t(A-\eta)} \xi ]_{\P, \mathcal{B}(H)}
= \int_0^{t} P_n \, e^{(t-s)(A-\eta)} \, dW_s.
\end{equation}
In addition, note that for all $ n \in \N $ it holds that
$ [0,T] \times \Omega \ni (t,\omega) \mapsto ( \mathbb{O}^n_t( \omega ) - P_n \, e^{t(A-\eta)} \xi ) \in P_n( H ) $
is a stochastic process with continuous sample paths.
Proposition~\ref{prop:abstract}, \eqref{eq:trans:O}, and \eqref{eq:eps:bound} hence show that 
\begin{align}
\begin{split}
& \sup_{ n \in \N } \E\biggl[ \int_0^T e^{ \int_s^T q\, \phi( \mathbb{O}_{\lf u \rf_{h_n} }^n )  \, du} \max\bigl\{ 1, |\Phi(\mathbb{O}_{ \lf s \rf_{h_n} }^n )|^{\nicefrac{q}{2}}, \|\mathbb{O}_s^n\|_H^q, \smallint\nolimits_{0}^T \| \tilde{\mathcal{O}}_u^n+ P_n \, e^{uA} \xi \|_{H_{\varrho}}^{2q+ 2q\vartheta} \, du \bigr\} \, ds \biggr]
\\ & +
\sup_{ n \in \N } \sup_{ s \in [0,T]} \E[ \| \mathbb{O}_s^n \|_H^q]
< \infty.
\end{split}
\end{align}
This establishes~\eqref{item:regularity}.
The proof of Proposition~\ref{prop:exists} is thus completed. 
\end{proof}

\subsection{Strong convergence}

\begin{prop}
\label{abs:prop:last}
Assume the setting in Subsection~\ref{setting:example} and let $ X\colon [0, T] \times \Omega \to H_{\varrho}$  be a  stochastic process with continuous sample paths which satisfies for all $t \in [0, T]$  that  $ [X_t ]_{\P, \mathcal{B}(H)} =    [ e^{ t A }   \xi  + \smallint_0^t e^{  ( t - s ) A}  \, F (  X_s ) \, ds ]_{\P, \mathcal{B}(H)} + \int_0^t e^{(t-s)A} \,  dW_s$. Then it holds for all $p \in (0, \infty)$ that
\begin{align}
\limsup_{n \to \infty} \sup_{t \in [0,T]} \E \big[ \| X_t -\Y_t^n \|_H^p \big] = 0.
\end{align}
\end{prop}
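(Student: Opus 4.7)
The strategy is a direct application of Theorem~\ref{thm:strong} with the smoothing constant $\eta$ and the stochastic convolution processes furnished by Proposition~\ref{prop:exists}. Since $\P$ is a probability measure, $L^q$-convergence is monotone in $q$, so it is enough to prove the conclusion for every $p \in [2,\infty)$. Fix such a $p$ and invoke Proposition~\ref{prop:exists} to obtain a number $\eta \in [0,\infty)$ together with processes $O$, $\tilde{\mathcal{O}}^n$, $\mathbb{O}^n$ satisfying items (i)--(vi) there.

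To fit the abstract framework I would absorb the deterministic initial datum into the convolution: define $\hat{O}_t = O_t + e^{tA}\xi$ and $\hat{\mathcal{O}}^n_t = \tilde{\mathcal{O}}^n_t + P_n e^{tA}\xi$, both of which have continuous sample paths. With this change, Proposition~\ref{prop:exists}(iii) reads $\mathbb{O}^n_t = \hat{\mathcal{O}}^n_t - \int_0^t e^{(t-s)(A-\eta)}\eta\,\hat{\mathcal{O}}^n_s\,ds$, matching precisely the smoothing recipe used in Theorem~\ref{thm:strong}. Proposition~\ref{prop:exists}(v) rewrites the scheme as
\begin{equation*}
\P\Big(\mathcal{X}^n_t = \smallint_0^t P_n e^{(t-s)A}\one_{\{\|\mathcal{X}^n_{\lf s\rf_{h_n}}\|_{H_\varrho} + \|\hat{\mathcal{O}}^n_{\lf s\rf_{h_n}}\|_{H_\varrho} \leq |h_n|^{-\chi}\}} F(\mathcal{X}^n_{\lf s\rf_{h_n}})\,ds + \hat{\mathcal{O}}^n_t\Big)=1,
\end{equation*}
and, via Proposition~\ref{prop:exists}(i), the hypothesized equation for $X$ becomes $\P(X_t = \int_0^t e^{(t-s)A} F(X_s)\,ds + \hat{O}_t)=1$ for every $t \in [0,T]$.

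Next I would verify, one by one, the hypotheses of Theorem~\ref{thm:strong} with $\kappa = 0$, $\mathbb{H} = \{e_n\colon n\in\N\}$, the given $F,\phi,\Phi,(P_n),(h_n),\vartheta,\varrho,\rho,\alpha,\varphi,\chi$, together with $\eta$, $\hat{O}$, $\hat{\mathcal{O}}^n$, and $\mathbb{O}^n$ as just introduced. The spectral bound $\inf_{b\in\mathbb{H}}\lambda_b = c_0\pi^2 > 0 \geq -\min\{\eta,0\}$ is immediate; the coercivity estimate from the setting gives $\langle v, P_n F(v+w)\rangle_H \leq \phi(w)\|v\|_H^2 + \varphi\|v\|_{H_{1/2}}^2 + \Phi(w)$, and since $-A$ has non-negative spectrum, $\|v\|_{H_{1/2}}^2 = \sum_k \lambda_k |\langle e_k,v\rangle_H|^2 \leq \sum_k (\lambda_k+\eta)|\langle e_k,v\rangle_H|^2 = \|(\eta-A)^{1/2}v\|_H^2$, upgrading the estimate to the form used in Theorem~\ref{thm:strong}. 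The polynomial Lipschitz condition on $F$ is taken verbatim from the setting, and the restrictions $\alpha\in[0,1/2]$, $\rho\in[0,1/4)$, $\varrho\in(\rho,1/4)$ yield
\begin{equation*}
(1+2\vartheta)(\varrho-\rho) < (1+2\vartheta)(\tfrac{1}{4}-\rho) \leq (1+\vartheta)(\tfrac{1}{2}-\rho) \leq (1+\vartheta)(1-\alpha-\rho),
\end{equation*}
so the setting's bound $\chi \leq (\varrho-\rho)/(1+\vartheta)$ already implies the stricter requirement $\chi \leq (1-\alpha-\rho)/(1+2\vartheta)$ of Theorem~\ref{thm:strong}. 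Almost-sure uniform convergence $\hat{\mathcal{O}}^n \to \hat{O}$ in $H_\varrho$ on $[0,T]$ is Proposition~\ref{prop:exists}(iv), which together with Lebesgue's dominated convergence theorem provides $\limsup_{n\to\infty}\E[\min\{1,\sup_t \|\hat{O}_t - \hat{\mathcal{O}}^n_t\|_{H_\varrho}\}] = 0$. The key integrability hypothesis \eqref{eq:integrability} in Theorem~\ref{thm:strong} and the uniform $L^p$-bound on $\|\mathbb{O}^n_s\|_H$ are exactly the content of Proposition~\ref{prop:exists}(vi) (with $\mathcal{O}^m$ in the theorem interpreted as $\hat{\mathcal{O}}^m$).

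Having checked every hypothesis, Theorem~\ref{thm:strong}\eqref{item:strong} yields $\limsup_{n\to\infty}\sup_{t\in[0,T]}\E[\|X_t-\mathcal{X}^n_t\|_H^q] = 0$ for every $q\in(0,p)$; letting $p$ vary gives the conclusion for every $p\in(0,\infty)$. I expect the genuinely non-trivial work to lie not in the final invocation of Theorem~\ref{thm:strong} but rather in the careful bookkeeping that identifies the initial-condition-shifted convolutions $\hat{O}, \hat{\mathcal{O}}^n$ with the $O, \mathcal{O}^n$ of the theorem and in ensuring that the resulting smoothed process $\mathbb{O}^n$ is the one constructed in Proposition~\ref{prop:exists}(iii); once this identification is made, the remaining hypotheses reduce either to direct quotations from Proposition~\ref{prop:exists} or to elementary spectral-calculus facts about $A$ and $\eta-A$.
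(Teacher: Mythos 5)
Your proposal is correct and follows essentially the same route as the paper's proof: invoke Proposition~\ref{prop:exists} for a suitable exponent, absorb $e^{tA}\xi$ and $P_n e^{tA}\xi$ into the convolution processes so that they play the roles of $O$ and $\mathcal{O}^n$ in Theorem~\ref{thm:strong} with $\kappa=0$, upgrade $\|v\|_{H_{1/2}}^2\leq\|(\eta-A)^{1/2}v\|_H^2$ using $\eta\geq 0$, check that $\chi\leq\nicefrac{(\varrho-\rho)}{(1+\vartheta)}\leq\nicefrac{(1-\alpha-\rho)}{(1+2\vartheta)}$, and conclude via item~\eqref{item:strong} of Theorem~\ref{thm:strong}. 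The only (cosmetic) differences from the paper are that the paper fixes $q\in(\max\{p,2\},\infty)$ up front and applies Proposition~\ref{prop:exists} with exponent $q$, whereas you first reduce to $p\geq 2$ and then invoke the same "let the exponent vary" step afterward — which makes the initial reduction redundant, though harmless — and your inequality chain for the $\chi$-constraint passes through $\nicefrac{1}{4}$ and $\nicefrac{1}{2}$ rather than $2\varrho$, but both chains are valid.
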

\begin{proof}[Proof of Proposition~\ref{abs:prop:last}]
Throughout this proof let  $p \in (0,\infty)$, $ q \in ( \max\{ p, 2 \}, \infty ) $.
Note that Proposition~\ref{prop:exists}
shows that there exist a real number $\eta \in [0,\infty)$ and stochastic processes
$ O \colon [0, T] \times \Omega \to H_{\varrho}$,
$ \tilde{\mathcal{O}}^n \colon [0, T] \times \Omega \to P_n(H)$, $n \in \N$,
and $ \mathbb{O}^n \colon [0, T] \times \Omega \to P_n(H)$, $n \in \N$,
with continuous sample paths
which satisfy
for all $n \in \N$, $t \in [0, T]$ that
\begin{gather}
\label{eq:O:as}
[O_t]_{\P, \mathcal{B}(H)} =  \int_0^{t}  e^{(t-s)A} \,  dW_s,
\qquad\qquad\!\!
[\tilde{\mathcal{O}}^n_t]_{\P, \mathcal{B}(H)} =  \int_0^{t} P_n \, e^{(t-s)A} \, dW_s,
\\
\mathbb{O}^n_t
= \tilde{\mathcal{O}}_t^n + P_n\, e^{tA} \xi
- \int_0^t e^{(t-s)(A-\eta)} \, \eta \, ( \tilde{\mathcal{O}}_s^n + P_n \, e^{sA} \xi ) \, ds,
\\
\P \bigg( \limsup_{m \to \infty} \sup_{s \in [0, T]} \| (O_s + e^{sA}  \xi) - (\tilde{\mathcal{O}}_s^m + P_m \, e^{sA}  \xi ) \|_{H_{\varrho}} =0 \bigg)=1,
\\\label{eq:X^n}
\!\P \Big( \Y_t^n = P_n \, e^{ t A } \xi \!+\! \smallint_0^t P_n \,  e^{  ( t - s ) A } \, \one_{ \{ \| \Y_{ \lf s \rf_{h_n} }^n \|_{ H_{\varrho} } + \| \tilde{\mathcal{O}}_{ \lf s \rf_{h_n} }^n +P_n  \, e^{ \lf s \rf_{ h_n } A } \xi \|_{ H_{\varrho} } \leq | h_n|^{ - \chi } \}} \, F \big(  \Y_{ \lf s \rf_{ h_n } }^n \big) \, ds \!+\! \tilde{\mathcal{O}}_t^n \Big)=1,
\end{gather}
and
\begin{align}\label{eq:strong:limsup}
\nonumber
& \limsup_{ m \to \infty} \E\biggl[ \int_0^T e^{ \int_s^T q\, \phi( \mathbb{O}_{\lf u \rf_{h_m} }^m )  \, du} \max\bigl\{ 1, |\Phi(\mathbb{O}_{ \lf s \rf_{h_m} }^m )|^{\nicefrac{q}{2}}, \|\mathbb{O}_s^m\|_H^q, \smallint\nolimits_{0}^T \| \tilde{\mathcal{O}}_u^m+ P_m \, e^{uA} \xi \|_{H_{\varrho}}^{2q+ 2q\vartheta} \, du \bigr\} \, ds \biggr]
\\ & +
\limsup_{ m \to \infty} \sup_{ s \in [0,T]} \E[ \| \mathbb{O}_s^m \|_H^q]
< \infty.
\end{align}
In addition, observe that the fact that
$\eta \in [0, \infty)$
and the assumption that
$\forall \, n \in \N, \, v, w \in P_n( H ) \colon \left< v, P_n F( v + w ) \right>_H \leq \phi( w ) \| v \|^2_H + \varphi \| v \|^2_{ H_{ \nicefrac{1}{2} } } + \Phi( w )$
establish
that for all $ n \in \N $, $ v, w \in P_n( H ) $ it holds that
\begin{equation}
\left< v, P_n F( v + w ) \right>_H \leq \phi( w ) \| v \|^2_H + \varphi \| (\eta-A)^{\nicefrac{1}{2}} v \|^2_{ H} + \Phi( w ).
\end{equation}
Combining this,
the fact that
$ \nicefrac{ ( 1 - \alpha - \rho ) }{( 1 + 2 \vartheta ) }
\geq
\nicefrac{ ( 2 \varrho - \rho ) }{( 1 + 2 \vartheta ) }
\geq
\nicefrac{ ( 2 \varrho - 2 \rho ) }{( 2 + 2 \vartheta ) }
=
\nicefrac{( \varrho - \rho ) }{( 1 + \vartheta) } $,
the fact that $ \forall \, t \in [0, T] \colon \P(X_t = \int_0^t e^{(t-s)A} \, F(X_s) \, ds +O_t + e^{tA}  \xi)=1$,
and
\eqref{eq:O:as}--\eqref{eq:strong:limsup}
with \eqref{item:strong} in Theorem~\ref{thm:strong} (with
$ H = H $,
$\mathbb{H}= \{ e_k \in H \colon k \in \mathbb{N} \}$,
$\eta=\eta$,
$ \theta=  \theta$,
$\kappa= 0$,
$ A = A $,
$\varphi= \varphi$,
$\alpha= \alpha$,
$\rho= \rho$,
$\varrho =\varrho$,
$\vartheta=\vartheta$,
$ T = T $,
$ \chi = \chi $,
$p=q$,
$F=F$,
$\phi=\phi$,
$\Phi=\Phi$,
$ ( \mathbb{H}_n )_{ n \in \N } = ( \{ e_k \in H \colon k \in \{1, 2, \ldots, n\}\} )_{ n \in \N } $,
$ ( P_n )_{ n \in \N } = ( P_n )_{ n \in \N } $,
$ ( h_n )_{ n \in \N } = ( h_n )_{ n \in \N } $,
$ ( \mathbb{O}^n )_{ n \in \N } = ( [0,T] \times \Omega \ni (t,\omega) \mapsto \mathbb{O}^n_t( \omega ) \in H_{\varrho} )_{ n \in \N } $,
$ ( \Y^n )_{ n \in \N } = ([0, T] \times \Omega \ni (t,\omega) \mapsto \Y_t^n(\omega) \in H_{\varrho})_{ n \in \N } $,
$ ( \mathcal{O}^n )_{ n \in \N } = ( [0,T] \times \Omega \ni (t,\omega) \mapsto (\tilde{\mathcal{O}}^n_t( \omega ) + P_n \, e^{tA} \xi) \in P_n(H) )_{ n \in \N } $,
$X=X$,
$ O = ( [0,T] \times \Omega \ni (t,\omega) \mapsto (O_t( \omega ) + e^{tA}  \xi) \in H_{\varrho}) $,
$q=p$
in the notation of \eqref{item:strong} in Theorem~\ref{thm:strong})
completes the proof of Proposition~\ref{abs:prop:last}. 
\end{proof}

\section{Examples}
\label{sec:examples}

In this section we demonstrate how Proposition~\ref{abs:prop:last} can be applied to  stochastic Burgers and stochastic Allen-Cahn equations (see Corollary~\ref{cor:burgers:short} and Corollary~\ref{cor:cahn:short} below).

\subsection{Stochastic Burgers equations}
\label{sec:Burgers}

\subsubsection{Setting}\label{setting:burgers}
Consider the notation in Subsection~\ref{sec:notation},
let $c_1 \in \R$, $ T, c_0 \in (0,\infty)$, 
$\varrho \in (\nicefrac{1}{8}, \nicefrac{1}{4})$,
$ \chi \in  (0, \nicefrac{\varrho }{2 } - \nicefrac{1}{16}] $,
$( H, \left< \cdot , \cdot \right>_H, \left\| \cdot \right\|_H ) = (L^2(\lambda_{(0,1)}; \R), \langle \cdot , \cdot \rangle_{L^2(\lambda_{(0,1)}; \R)}, \left\| \cdot \right\|_{L^2(\lambda_{(0,1)}; \R)} )$,
let $(e_n)_{n \in \N} \colon \N \to H $
be the function which satisfies
for all $ n \in \N $ that
$ e_n = [ (\sqrt{2} \sin(n \pi x) )_{x \in (0,1)}]_{\lambda_{(0,1)} , \mathcal{B}(\R)}$,
let $ A \colon D(A) \subseteq H \to H $ be the linear operator
which satisfies
$ D(A) = \{ v \in H \colon $ $ \sum_{k = 1}^\infty k^4 | \langle e_k , v \rangle_H |^2 < \infty \} $
and $ \forall \, v \in D(A) \colon A v = - c_0 \pi^2 \sum_{k = 1}^\infty k^2 \langle e_k , v \rangle_H e_k$,
let $ ( H_r, \left< \cdot , \cdot \right>_{ H_r }, \left\| \cdot \right\|_{ H_r } ) $, $ r \in \R $, be a family of interpolation spaces associated to $ -A $,
let
$\xi \in H_{\nicefrac{1}{2}} $,
let
$F \colon H_{\nicefrac{1}{8}} \to H_{-\nicefrac{1}{2}} $,
$(P_n)_{n \in \N} \colon \N \to L(H) $,
and
$(h_n)_{n \in \N} \colon \N \to (0, T] $
be functions which satisfy
for all $v \in H_{\nicefrac{1}{8}}$, $n \in \N$ that
$F(v)= c_1(v^2)'$,
$ P_n(v) = \sum_{k = 1 }^n \langle e_k, v \rangle_H e_k $,
and
$ \limsup_{ m \to \infty} h_m =0$,
let $ ( \Omega, \F, \P ) $ be a probability space,
let $(W_t)_{t \in [0, T]}$ be an $\Id_H$-cylindrical $( \Omega, \F, \P )$-Wiener process,
and let $ \Y^n, \mathcal{O}^n \colon [0, T] \times \Omega \to P_n(H)$, $ n \in \N$, be stochastic processes
which satisfy
for all $ n \in \N $, $ t \in [0,T] $ that
$\left[\mathcal{O}_t^n \right]_{\P, \mathcal{B}(H)} = \int_0^t P_n \, e^{(t-s)A}  \, dW_s$
and
\begin{equation}\label{eq:set:burgers}
\P \Big( \Y_t^n = P_n \, e^{ t A } \xi + \smallint_0^t P_n \,  e^{  ( t - s ) A } \, \one_{ \{ \| \Y_{ \lf s \rf_{h_n} }^n \|_{ H_{\varrho} } + \| \mathcal{O}_{ \lf s \rf_{h_n} }^n +P_n \, e^{ \lf s \rf_{ h_n } A } \xi \|_{ H_{\varrho} } \leq | h_n|^{ - \chi } \}} \, F \big(  \Y_{ \lf s \rf_{ h_n } }^n \big) \, ds + \mathcal{O}_t^n \Big)=1. 
\end{equation}

\subsubsection{Properties of the nonlinearity}

In this subsection we establish a few elementary properties of the function $F$ in Subsection~\ref{setting:burgers} above, see Lemmas~\ref{coer:burgers}--\ref{nonlin:burgers} below.
For the proof of these properties we present two elementary and well-known facts in Lemmas~\ref{burgers_norms}--\ref{lem:sup:v} below.
See, e.g., Section~4 in Jentzen, Kloeden, \& Winkel~\cite{jkw09} for the next result, Lemma~\ref{burgers_norms}.
\begin{lemma}\label{burgers_norms}
Assume the setting in Subsection~\ref{setting:burgers}. Then it holds for all $v \in H_{\nicefrac{1}{2}}$ that $\|v\|_{H_{\nicefrac{1}{2}}} = \sqrt{c_0} \|v'\|_H$.
\end{lemma}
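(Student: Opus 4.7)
The plan is to unpack the definition of the norm on $H_{\nicefrac{1}{2}}$ via the spectral calculus for $-A$, and then compare the resulting sum with the Fourier expansion of $v'$ with respect to the cosine basis.

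First I would use the fact that $(H_r,\langle\cdot,\cdot\rangle_{H_r},\|\cdot\|_{H_r})_{r\in\R}$ is the family of interpolation spaces associated with $-A$, so that for every $v\in H_{\nicefrac{1}{2}}$ one has $\|v\|_{H_{\nicefrac{1}{2}}}=\|(-A)^{\nicefrac{1}{2}}v\|_H$. Since the orthonormal basis $(e_k)_{k\in\N}$ of $H$ diagonalises $-A$ with eigenvalues $\lambda_k=c_0\pi^2 k^2$, the spectral theorem yields
\begin{equation}
\|v\|_{H_{\nicefrac{1}{2}}}^2=\sum_{k=1}^{\infty}\lambda_k\,|\langle e_k,v\rangle_H|^2=c_0\pi^2\sum_{k=1}^{\infty}k^2\,|\langle e_k,v\rangle_H|^2.
\end{equation}

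Next I would compute $\|v'\|_H^2$ in the same coefficients. For every $k\in\N$ the classical derivative of the representative $(0,1)\ni x\mapsto\sqrt{2}\sin(k\pi x)\in\R$ equals $(0,1)\ni x\mapsto\sqrt{2}\,k\pi\cos(k\pi x)\in\R$, and the family $\{f_k\}_{k\in\N}$ defined by $f_k(x)=\sqrt{2}\cos(k\pi x)$ is an orthonormal system in $H$. The convergence of $\sum_{k=1}^{\infty}\langle e_k,v\rangle_H\,e_k$ in $H_{\nicefrac{1}{2}}$ guarantees convergence of the formally differentiated series in $H$; term-by-term differentiation therefore gives the identity
\begin{equation}
v'=\sum_{k=1}^{\infty}\langle e_k,v\rangle_H\,k\pi\,f_k\qquad\text{in }H,
\end{equation}
and Parseval's identity applied to the orthonormal system $(f_k)_{k\in\N}$ yields $\|v'\|_H^2=\pi^2\sum_{k=1}^{\infty}k^2|\langle e_k,v\rangle_H|^2$. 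Combining with the previous display produces $\|v\|_{H_{\nicefrac{1}{2}}}^2=c_0\|v'\|_H^2$, which is the claim.

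The only mildly non-routine point is the justification of the termwise differentiation, but it is not a real obstacle: for $v\in H_{\nicefrac{1}{2}}$ the partial sums $S_N=\sum_{k=1}^{N}\langle e_k,v\rangle_H\,e_k$ converge in $H_{\nicefrac{1}{2}}$, hence the sequence $(S_N')_{N\in\N}$ is Cauchy in $H$ by the computation of $\|S_M-S_N\|_{H_{\nicefrac{1}{2}}}$ above, its $H$-limit coincides with the distributional derivative of $v$, and Parseval applies to the limit. This keeps the argument to essentially one page and reduces the whole lemma to a bookkeeping exercise in the spectral decomposition of $-A$.
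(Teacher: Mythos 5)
Your proof is correct, but it takes a genuinely different route from the paper's. The paper first establishes the identity for $v\in H_1$ via integration by parts,
\begin{equation*}
\|v\|_{H_{\nicefrac{1}{2}}}^2 = -\langle v, Av\rangle_H = -c_0\langle v, v''\rangle_H = c_0\langle v',v'\rangle_H,
\end{equation*}
and then extends to all of $H_{\nicefrac{1}{2}}$ by density of $H_1$ and boundedness of the derivative operator $H_{\nicefrac{1}{2}}\ni v\mapsto v'\in H$. You instead work directly from the spectral expansion of $\|\cdot\|_{H_{\nicefrac{1}{2}}}$, differentiate the sine series term by term against the orthonormal cosine system $(f_k)$, and invoke the Pythagorean identity. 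Both arguments are sound. The paper's version is shorter once one accepts the boundedness of differentiation $H_{\nicefrac{1}{2}}\to H$ as known; your version makes this boundedness explicit and essentially proves it en route (the Cauchy-ness of $(S_N')_{N}$ in $H$ is exactly that fact in Fourier coordinates), at the cost of one extra step justifying termwise differentiation and the identification of the $H$-limit of $(S_N')_N$ with the weak derivative of $v$. One small point to be careful about in your write-up: $(f_k)_{k\in\N}$ is orthonormal but not complete in $H$ (the constant function is missing), so "Parseval" should be read as the Pythagorean identity for a convergent series in an orthonormal system, which is what you actually need and is indeed valid here.
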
	
\begin{proof}[Proof of Lemma~\ref{burgers_norms}]
Note that integration by parts proves for all $v \in H_1$ that 
\begin{align}\label{burgers_norms_1}
\begin{split}
\|v\|_{H_{\nicefrac{1}{2}}}^2  & =\| (-A)^{\nicefrac{1}{2}} v \|^2_{ H} = \langle (-A)^{\nicefrac{1}{2}} v, (-A)^{\nicefrac{1}{2}} v \rangle_H=  -\langle v, Av \rangle_H \\ & =  -c_0 \langle v, v'' \rangle_H
= c_0 \langle v', v' \rangle_H =  c_0 \|v'\|_H^2.
\end{split}
\end{align}
The fact that $H_1 \subseteq H_{\nicefrac{1}{2}}$ is dense in $H_{\nicefrac{1}{2}}$ and the fact that $(H_{\nicefrac{1}{2}} \ni v \mapsto v' \in H) \in L(H_{\nicefrac{1}{2}}, H)$ thus complete the proof of Lemma~\ref{burgers_norms}.
\end{proof}
Cf., e.g., Lemma~4.7 in Bl\"omker \& Jentzen~\cite{BloemkerJentzen2013} for the next lemma.
\begin{lemma}
	\label{lem:sup:v}
Assume the setting in Subsection~\ref{setting:burgers}. Then it holds that
\begin{align}
\sup_{v \in H \backslash \{0\}} \frac{\|v'\|_{H_{-\nicefrac{1}{2}}}}{\|v\|_H} = (c_0)^{-\nicefrac{1}{2}}.
\end{align}
\end{lemma}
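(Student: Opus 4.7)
The plan is to establish the claimed supremum $(c_0)^{-\nicefrac{1}{2}}$ by sandwiching it between matching upper and lower bounds, both obtained from the standard duality $H_{-\nicefrac{1}{2}} \cong (H_{\nicefrac{1}{2}})^*$ realized with the pivot space $H$.

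First, I would verify the upper bound $\|v'\|_{H_{-\nicefrac{1}{2}}} \leq (c_0)^{-\nicefrac{1}{2}} \|v\|_H$ for every $v \in H$. For smooth test functions $\psi \in H_1$ (which in particular satisfies $\psi(0) = \psi(1) = 0$) and for $v \in C^\infty_c((0,1))$, classical integration by parts yields $\langle v', \psi\rangle_H = -\langle v, \psi'\rangle_H$ with vanishing boundary term. Cauchy--Schwarz combined with Lemma~\ref{burgers_norms} then gives $|\langle v', \psi\rangle_H| \leq \|v\|_H \|\psi'\|_H = (c_0)^{-\nicefrac{1}{2}} \|v\|_H \|\psi\|_{H_{\nicefrac{1}{2}}}$. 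Density of $H_1$ in $H_{\nicefrac{1}{2}}$ upgrades this estimate to all test functions in $H_{\nicefrac{1}{2}}$, showing that $d/dx$ viewed as a map from $C^\infty_c((0,1))$ into $H_{-\nicefrac{1}{2}}$ has operator norm at most $(c_0)^{-\nicefrac{1}{2}}$ when the domain carries $\|\cdot\|_H$. Density of $C^\infty_c((0,1))$ in $H$ then supplies a unique bounded extension $d/dx \colon H \to H_{-\nicefrac{1}{2}}$ of the same operator norm, and this is the operator to which the lemma refers.

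For the matching lower bound, I would exhibit a single $v \in H$ that attains the ratio. The natural choice is $v = e_2$: since $\int_0^1 \sqrt{2}\sin(2\pi x)\,dx = 0$, its antiderivative $\psi(x) := \int_0^x \sqrt{2}\sin(2\pi y)\,dy = \frac{\sqrt{2}}{2\pi}(1 - \cos(2\pi x))$ satisfies $\psi(0) = \psi(1) = 0$, belongs to $H_1$, and obeys $\psi' = e_2$ in $H$. Testing the duality pairing against this $\psi$ gives $\langle e_2', \psi\rangle = -\langle e_2, \psi'\rangle_H = -\|e_2\|_H^2 = -1$, while Lemma~\ref{burgers_norms} gives $\|\psi\|_{H_{\nicefrac{1}{2}}} = (c_0)^{\nicefrac{1}{2}} \|\psi'\|_H = (c_0)^{\nicefrac{1}{2}} \|e_2\|_H = (c_0)^{\nicefrac{1}{2}}$. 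Consequently $\|e_2'\|_{H_{-\nicefrac{1}{2}}} \geq |\langle e_2', \psi\rangle|/\|\psi\|_{H_{\nicefrac{1}{2}}} = (c_0)^{-\nicefrac{1}{2}} = (c_0)^{-\nicefrac{1}{2}}\|e_2\|_H$, which combined with the first step forces equality at $v = e_2$ and therefore pins down the supremum exactly.

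The main subtlety is making rigorous sense of $v'$ as an element of $H_{-\nicefrac{1}{2}}$ for a generic $v \in H$ (for which $v'$ need not lie in $H$), together with the identification of $H_{-\nicefrac{1}{2}}$ with the continuous dual of $H_{\nicefrac{1}{2}}$ via the pivot $H$. Both are handled cleanly by the density extension sketched in the first paragraph: establish the estimate on $C^\infty_c((0,1))$ where the derivative is classical, and extend by continuity. Lemma~\ref{burgers_norms} is the single decisive ingredient, since it converts the Dirichlet-type norm $\|\psi'\|_H$ of the test function into the interpolation norm $\|\psi\|_{H_{\nicefrac{1}{2}}}$ with exactly the sharp constant $(c_0)^{-\nicefrac{1}{2}}$ needed for the two bounds to coincide.
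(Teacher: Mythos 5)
Your proof is correct, and it takes a genuinely different route from the paper's. The paper's proof also begins from the dual characterization $\|v'\|_{H_{-\nicefrac{1}{2}}} = \sup_{w \in H_{\nicefrac{1}{2}} \backslash \{0\}} |\langle v', w \rangle_H| / \|w\|_{H_{\nicefrac{1}{2}}}$ and the integration-by-parts identity $\langle v', w \rangle_H = -\langle v, w' \rangle_H$, but instead of splitting into an upper bound plus a witness, it simply swaps the two suprema: after the swap, the inner supremum over $v \in H$ is exactly $\|w'\|_H$ (Riesz in $H$), and Lemma~\ref{burgers_norms} collapses $\|w'\|_H / \|w\|_{H_{\nicefrac{1}{2}}}$ to the constant $(c_0)^{-\nicefrac{1}{2}}$, giving the equality in one stroke with no test element needed. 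Your approach buys nothing beyond what the paper gets, and it costs you the extra work of producing and verifying a witness — in particular, you correctly noticed that $e_1$ fails because $\int_0^1 e_1 \neq 0$ and chose $e_2$ instead, and you need $\psi(0)=\psi(1)=0$ so that $\psi \in H_{\nicefrac{1}{2}}$ for the pairing and Lemma~\ref{burgers_norms} to apply; the swap-of-suprema argument bypasses all of this. Both proofs hinge on the same key input (Lemma~\ref{burgers_norms}), and both are sound; the paper's is just tighter. One small remark on your write-up: you only ever need $\psi \in H_{\nicefrac{1}{2}}$, not $\psi \in H_1$, so the stronger membership claim is unnecessary (though true, since $\psi$ is smooth and vanishes at both endpoints).
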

\begin{proof}[Proof of Lemma~\ref{lem:sup:v}]
Observe that, e.g., (iii) in Lemma~3.10 in Jacobe de Naurois et al.~\cite{Naurois2015} and Lemma~\ref{burgers_norms} show that
\begin{equation}
\begin{split}
\sup_{v \in H \backslash \{0\}} \frac{\|v'\|_{H_{-\nicefrac{1}{2}}}}{\|v\|_H} & = \sup_{v \in H \backslash \{0\}} \sup_{w \in H_{\nicefrac{1}{2}} \backslash  \{0\}} \frac{|\langle v',w  \rangle_H |}{\|v\|_H \|w \|_{H_{\nicefrac{1}{2}}}} \\
&=  \sup_{w \in H_{\nicefrac{1}{2}} \backslash  \{0\}} \sup_{v \in H \backslash \{0\}} \frac{|\langle v,w'  \rangle_H |}{\|v\|_H \|w \|_{H_{\nicefrac{1}{2}}}}\\
& = \sup_{w \in H_{\nicefrac{1}{2}} \backslash  \{0\}} \frac{\|w'\|_H}{\|w\|_{H_{\nicefrac{1}{2}}}}= (c_0)^{-\nicefrac{1}{2}}.
\end{split}
\end{equation}
The proof of Lemma~\ref{lem:sup:v} is thus completed.
\end{proof}

The next simple lemma is a slight modification of Lemma~5.7 in Bl\"omker \& Jentzen~\cite{BloemkerJentzen2013}.
\begin{lemma}
\label{coer:burgers}
Assume the setting in Subsection~\ref{setting:burgers} and let $v, w \in H_{ \nicefrac{1}{2} }$. Then it holds that $F(v+w) \in H$ and
\begin{align}
\label{eq:coer:burgers}
\begin{split}
\bigl| \left< v, F( v + w ) \right>_H \bigr| &\leq \max\big\{\tfrac{2|c_1|^2 }{c_0} , 4\big\} \| v \|_H^2 \big[\!\sup\nolimits_{x \in (0,1)} |\und{w}(x)|^2\big]+ \tfrac{3}{4}\| v \|^2_{ H_{ \nicefrac{1}{2} } } \\
&\quad + \max\big\{\tfrac{2|c_1|^2 }{c_0} , 4\big\} \big[1+\sup\nolimits_{x \in (0,1)} |\und{w}(x)|^{ \max\{\nicefrac{2|c_1|^2 }{c_0} , 4\}}\big].
\end{split}
\end{align}
\end{lemma}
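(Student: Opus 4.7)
The strategy is to expand $F(v+w) = 2c_1(v+w)(v+w)'$, pair against $v$, integrate by parts, observe that the pure cubic term in $v$ vanishes thanks to Dirichlet boundary data, and then control the two remaining cross terms by Young's inequality with weights calibrated so that all contributions to $\|v\|_{H_{\nicefrac{1}{2}}}^2$ sum to exactly $\nicefrac{3}{4}$.

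First I would check that $F(v+w)\in H$: since $-A$ is the Dirichlet Laplacian on $(0,1)$, the interpolation space $H_{\nicefrac{1}{2}}$ equals $H^1_0((0,1),\R)$, which by the $1$D Sobolev embedding is contained in $\mathcal{C}_0((0,1),\R)$; hence $v+w$ has a bounded continuous representative, while $(v+w)'\in H$ by Lemma~\ref{burgers_norms}. Consequently the product $(v+w)(v+w)'$ lies in $H$, so $F(v+w)=2c_1(v+w)(v+w)'\in H$ and the inner product $\langle v,F(v+w)\rangle_H$ is well defined. If $\sup_{x\in(0,1)}|\und{w}(x)|=\infty$ the right-hand side of~\eqref{eq:coer:burgers} is $+\infty$ and there is nothing to prove, so I may assume $w$ (and hence $v+w$) continuous and bounded.

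Next I would compute, using integration by parts against the zero boundary values of $v$,
\begin{equation*}
\langle v,F(v+w)\rangle_H
= c_1\langle v,((v+w)^2)'\rangle_H
= -c_1\langle v',(v+w)^2\rangle_H
= -c_1\langle v',v^2\rangle_H-2c_1\langle v',vw\rangle_H-c_1\langle v',w^2\rangle_H.
\end{equation*}
The cubic self-interaction vanishes because $\langle v',v^2\rangle_H=\tfrac{1}{3}\int_0^1(v^3)'(x)\,dx=0$ by Dirichlet boundary conditions, leaving only the two cross terms to estimate.

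For the mixed term I would use Cauchy-Schwarz, the pointwise bound on $w$, and Young's inequality $2ab\le \varepsilon a^2+\varepsilon^{-1}b^2$ with $\varepsilon=\nicefrac{c_0}{2}$, so that
\begin{equation*}
2|c_1|\,|\langle v',vw\rangle_H|
\le 2|c_1|\,\|v'\|_H\|v\|_H\bigl[\!\sup\nolimits_{x\in(0,1)}|\und{w}(x)|\bigr]
\le \tfrac{c_0}{2}\|v'\|_H^2+\tfrac{2|c_1|^2}{c_0}\|v\|_H^2\bigl[\!\sup\nolimits_{x\in(0,1)}|\und{w}(x)|^2\bigr].
\end{equation*}
For the purely $w$ term I would bound $\|w^2\|_H\le\sup_x|\und{w}(x)|^2$ and apply $2ab\le\varepsilon a^2+\varepsilon^{-1}b^2$ with the same $\varepsilon=\nicefrac{c_0}{2}$, yielding $|c_1|\,|\langle v',w^2\rangle_H|\le \tfrac{c_0}{4}\|v'\|_H^2+\tfrac{|c_1|^2}{c_0}\sup_x|\und{w}(x)|^4$. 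Applying Lemma~\ref{burgers_norms} converts $\tfrac{c_0}{2}\|v'\|_H^2+\tfrac{c_0}{4}\|v'\|_H^2=\tfrac{3}{4}\|v\|_{H_{\nicefrac{1}{2}}}^2$, which matches the target coefficient.

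Finally, I would package the remaining terms into the required form. The coefficient $\tfrac{2|c_1|^2}{c_0}$ on $\|v\|_H^2\sup|w|^2$ is already dominated by $\max\{\tfrac{2|c_1|^2}{c_0},4\}$, and for the constant part I would use $\sup|w|^4\le 1+\sup|w|^{\max\{2|c_1|^2/c_0,\,4\}}$ (splitting on whether $\sup|w|\le 1$) together with $\tfrac{|c_1|^2}{c_0}\le\tfrac{1}{2}\max\{\tfrac{2|c_1|^2}{c_0},4\}$ to obtain the claimed bound. The main nuisance — rather than a genuine obstacle — will be keeping track of constants so that the somewhat artificial exponent $\max\{\tfrac{2|c_1|^2}{c_0},4\}$ appearing in~\eqref{eq:coer:burgers} emerges cleanly; this is why the Young parameter is chosen to be exactly $\nicefrac{c_0}{2}$ (to produce the factor $\nicefrac{3}{4}$) and the constant is expressed as a maximum with $4$ (to simultaneously absorb the quartic-in-$w$ term).
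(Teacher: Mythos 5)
Your proposal is correct and follows essentially the same route as the paper: integrate by parts, observe that the cubic self-interaction vanishes under Dirichlet data, split into the two cross terms, apply Cauchy--Schwarz and Young with weights $\nicefrac{c_0}{2}$ and $\nicefrac{c_0}{4}$, and repackage via Lemma~\ref{burgers_norms} and the $\max\{\nicefrac{2|c_1|^2}{c_0},4\}$ dominating constant. The only minor deviations are that you argue $F(v+w)\in H$ directly from the identification $H_{\nicefrac{1}{2}}=H^1_0\hookrightarrow\mathcal{C}_0$ rather than citing Jentzen \& Pu\v{s}nik's Lemma~4.5, and derive $\langle v',v^2\rangle_H=0$ from $\int_0^1(v^3)'=0$ instead of the paper's double integration-by-parts; both are equivalent.
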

\begin{proof}[Proof of Lemma~\ref{coer:burgers}]
Observe that, e.g., Lemma~4.5 in Jentzen \& Pu\v{s}nik~\cite{JentzenPusnik2016} ensures for all $ u \in H_{ \nicefrac{1}{2} } $ that $F(u) \in H$. Next note that integration by parts and, e.g., again Lemma~4.5 in Jentzen \& Pu\v{s}nik~\cite{JentzenPusnik2016} yield that
\begin{align}
\begin{split}
3 \, \langle  v', v^2 \rangle_H &= 2 \, \langle v \cdot v', v \rangle_H + \langle v', v^2 \rangle_H = \langle (v^2)', v \rangle_H + \langle v', v^2 \rangle_H \\
& = - \langle v^2, v' \rangle_H + \langle v', v^2 \rangle_H  = 0.
\end{split}
\end{align}
Applying integration by parts again hence shows that
\begin{align}
\begin{split}
\langle v, F(v+w) \rangle_H &= c_1 \langle v, [(v+w)^2]' \rangle_H = - c_1 \langle v', (v+w)^2 \rangle_H\\
& = - c_1 \langle v', v^2 \rangle_H - 2 \, c_1 \langle v', v \cdot w \rangle_H - c_1 \langle v', w^2 \rangle_H\\
& = - 2 \, c_1 \langle v', v \cdot w \rangle_H - c_1 \langle v', w^2 \rangle_H.
\end{split}
\end{align}
This, the Cauchy-Schwartz inequality, the fact that $\forall \, a,b \in \R, \, \varepsilon \in (0, \infty) \colon 2ab \leq \varepsilon a^2 + \frac{b^2}{\varepsilon}$, and Lemma~\ref{burgers_norms}  prove that
\begin{align}
\begin{split}
\bigl| \left< v, F( v + w ) \right>_H \bigr| &\leq 2 \, |c_1|\|v'\|_H \| v \cdot w \|_H + |c_1| \|v'\|_H \|w^2\|_H \\
& \leq \tfrac{c_0}{2} \|v'\|_H^2 + \tfrac{2|c_1|^2}{c_0} \| v \cdot w \|_H^2 + \tfrac{c_0}{4}\|v'\|_H^2 + \tfrac{|c_1|^2 }{c_0}\| w^2 \|_H^2 \\
& \leq \tfrac{2|c_1|^2}{c_0} \| v \|_H^2 \big[\!\sup\nolimits_{x \in (0,1)} |\und{w}(x)|^2\big]+ \tfrac{3 c_0}{4}\|v'\|_H^2 + \tfrac{|c_1|^2 }{c_0} \big[\!\sup\nolimits_{x \in (0,1)} |\und{w}(x)|^4\big] \\
& = \tfrac{2|c_1|^2}{c_0} \| v \|_H^2 \big[\!\sup\nolimits_{x \in (0,1)} |\und{w}(x)|^2\big]+ \tfrac{3}{4}\| v \|^2_{ H_{ \nicefrac{1}{2} } } + \tfrac{|c_1|^2 }{c_0} \big[\!\sup\nolimits_{x \in (0,1)} |\und{w}(x)|^4\big]\\
& \leq \max\big\{\tfrac{2|c_1|^2 }{c_0} , 4\big\} \| v \|_H^2 \big[\!\sup\nolimits_{x \in (0,1)} |\und{w}(x)|^2\big]+ \tfrac{3}{4}\| v \|^2_{ H_{ \nicefrac{1}{2} } } \\
&\quad + \max\big\{\tfrac{2|c_1|^2 }{c_0} , 4\big\} \big[1+ \sup\nolimits_{x \in (0,1)} |\und{w}(x)|^{ \max\{\nicefrac{2|c_1|^2 }{c_0} , 4\}}\big] . 
\end{split}
\end{align}
The proof of Lemma~\ref{coer:burgers} is thus completed.
\end{proof}

\begin{lemma}\label{nonlin:burgers}
Assume the setting in Subsection~\ref{setting:burgers} and let $v, w \in H_{\nicefrac{1}{8}}$. Then it holds that $F \in \mathcal{C}(H_{\nicefrac{1}{8}}, H_{-\nicefrac{1}{2}})$ and
\begin{align}
\begin{split}
&\|F(v) - F(w) \|_{H_{-\nicefrac{1}{2}}} \\
& \leq |c_1|  |c_0|^{-\nicefrac{1}{2}} \left[\sup_{u \in  H_{\nicefrac{1}{8}} \setminus \{0\}} \frac{\|u\|^2_{L^4(\lambda_{(0,1)}; \R)}}{\|u\|^2_{H_{\nicefrac{1}{8}}}} \right] \big( 1+ \left\| v \right\|_{ H_{\nicefrac{1}{8}}} +  \left\| w \right\|_{ H_{\nicefrac{1}{8}}} \big) \left\| v - w \right\|_{ H_{\nicefrac{1}{8}}} < \infty.
\end{split}
\end{align}	
\end{lemma}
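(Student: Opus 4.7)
The plan is to reduce the problem to three ingredients: a factorization of $F(v)-F(w)$, a bound on the derivative in $H_{-\nicefrac12}$ (supplied by Lemma~\ref{lem:sup:v}), and a Sobolev embedding $H_{\nicefrac18}\hookrightarrow L^4(\lambda_{(0,1)};\R)$. First I would write
\begin{equation}
F(v)-F(w)=c_1\bigl((v^2)'-(w^2)'\bigr)=c_1\bigl((v-w)(v+w)\bigr)'.
\end{equation}
Taking the $H_{-\nicefrac12}$-norm of both sides and applying Lemma~\ref{lem:sup:v} (with the map $u\mapsto u'$ extended by density from $H$ to the duality argument there) yields
\begin{equation}
\|F(v)-F(w)\|_{H_{-\nicefrac12}}\leq |c_1|\,|c_0|^{-\nicefrac12}\,\|(v-w)(v+w)\|_H.
\end{equation}

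Next I would estimate $\|(v-w)(v+w)\|_H$ by a product of $L^4$-norms via the Cauchy-Schwartz inequality:
\begin{equation}
\|(v-w)(v+w)\|_H^2=\int_0^1 |\und{(v-w)}(x)|^2|\und{(v+w)}(x)|^2\,dx\leq \|v-w\|_{L^4(\lambda_{(0,1)};\R)}^2\,\|v+w\|_{L^4(\lambda_{(0,1)};\R)}^2.
\end{equation}
Setting $C=\sup_{u\in H_{\nicefrac18}\setminus\{0\}}\|u\|_{L^4(\lambda_{(0,1)};\R)}^2/\|u\|_{H_{\nicefrac18}}^2$, which is finite by the one-dimensional Sobolev embedding $H_{\nicefrac18}\hookrightarrow L^4(\lambda_{(0,1)};\R)$ (the critical exponent at regularity $\nicefrac14$ is exactly $4$), I would then obtain
\begin{equation}
\|v-w\|_{L^4(\lambda_{(0,1)};\R)}\,\|v+w\|_{L^4(\lambda_{(0,1)};\R)}\leq C\,\|v-w\|_{H_{\nicefrac18}}\,\|v+w\|_{H_{\nicefrac18}}.
\end{equation}
Finally, using the triangle inequality $\|v+w\|_{H_{\nicefrac18}}\leq 1+\|v\|_{H_{\nicefrac18}}+\|w\|_{H_{\nicefrac18}}$ and combining the above three displays yields the claimed inequality.

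The continuity $F\in\mathcal{C}(H_{\nicefrac18},H_{-\nicefrac12})$ then follows at once from the derived Lipschitz-on-bounded-sets estimate: for any $v_0\in H_{\nicefrac18}$ and any sequence $v_n\to v_0$ in $H_{\nicefrac18}$, the sequence $(\|v_n\|_{H_{\nicefrac18}})_{n\in\N}$ is bounded, so $\|F(v_n)-F(v_0)\|_{H_{-\nicefrac12}}\to 0$. The only mildly nontrivial step is the finiteness of $C$, i.e.\ the Sobolev embedding $H_{\nicefrac18}\hookrightarrow L^4(\lambda_{(0,1)};\R)$; everything else is a chain of elementary inequalities. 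I expect no real obstacle—the Sobolev embedding in dimension one at the critical exponent $4$ for $\nicefrac14$ derivatives is classical and can be invoked directly (or reduced to Fourier-sine-series estimates using the explicit basis $(e_n)_{n\in\N}$ if a self-contained argument is preferred).
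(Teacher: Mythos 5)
Your proposal is correct and follows essentially the same route as the paper: Lemma~\ref{lem:sup:v} to control the derivative in $H_{-\nicefrac{1}{2}}$, the Cauchy--Schwartz inequality in $L^2$ to produce $L^4$-norms, the Sobolev embedding $H_{\nicefrac{1}{8}}\hookrightarrow L^4(\lambda_{(0,1)};\R)$, and the crude triangle-inequality bound $\|v+w\|_{H_{\nicefrac{1}{8}}}\leq 1+\|v\|_{H_{\nicefrac{1}{8}}}+\|w\|_{H_{\nicefrac{1}{8}}}$. The only cosmetic difference is that you factor $v^2-w^2=(v-w)(v+w)$ before taking the derivative, whereas the paper first passes to $\|v^2-w^2\|_H$ via Lemma~\ref{lem:sup:v} and factors afterward; the content of the two arguments is identical.
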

\begin{proof}[Proof of Lemma~\ref{nonlin:burgers}]
Observe that the fact that $v^2, w^2 \in H$ and Lemma~\ref{lem:sup:v} establish that
\begin{align}
\begin{split}
\|(v^2)' - (w^2)' \|_{H_{-\nicefrac{1}{2}}} &\leq |c_0|^{-\nicefrac{1}{2}} \| v^2 - w^2 \|_H  \\
&\leq |c_0|^{-\nicefrac{1}{2}} \|v+w \|_{L^4(\lambda_{(0,1)}; \R)} \|v -w \|_{L^4(\lambda_{(0,1)}; \R)} \\
& \leq |c_0|^{-\nicefrac{1}{2}} \left[\sup_{u \in  H_{ \nicefrac{1}{8} } \setminus \{0\}} \frac{\|u\|_{L^4(\lambda_{(0,1)}; \R)}}{\|u\|_{H_{\nicefrac{1}{8}}}} \right]^2 \! \! \left\| v + w \right\|_{ H_{ \nicefrac{1}{8} }} \left\| v - w \right\|_{ H_{\nicefrac{1}{8} }} \\
& \leq  |c_0|^{-\nicefrac{1}{2}} \left[\sup_{u \in  H_{ \nicefrac{1}{8} } \setminus \{0\}} \frac{\|u\|^2_{L^4(\lambda_{(0,1)}; \R)}}{\|u\|^2_{H_{\nicefrac{1}{8}}}} \right] \big( 1+ \left\| v \right\|_{ H_{\nicefrac{1}{8} }} +  \left\| w \right\|_{ H_{\nicefrac{1}{8}}} \big) \left\| v - w \right\|_{ H_{\nicefrac{1}{8}}}.
\end{split}
\end{align}
This shows that
\begin{align}
\label{nonlin:bur:last}
\begin{split}
& \|F(v) - F(w) \|_{H_{-\nicefrac{1}{2}}}  = \left\| c_1\! \left((v^2)' - (w^2)'\right) \right\|_{H_{-\nicefrac{1}{2}}} \\
& \leq |c_1|  |c_0|^{-\nicefrac{1}{2}} \left[\sup_{u \in  H_{ \nicefrac{1}{8}} \setminus \{0\}} \frac{\|u\|^2_{L^4(\lambda_{(0,1)}; \R)}}{\|u\|^2_{H_{\nicefrac{1}{8}}}} \right] \big( 1+ \left\| v \right\|_{ H_{\nicefrac{1}{8}}} +  \left\| w \right\|_{ H_{\nicefrac{1}{8}}} \big) \left\| v - w \right\|_{ H_{\nicefrac{1}{8}}}.
\end{split}
\end{align}
Next note that the Sobolev embedding theorem yields that
\begin{align}
\sup_{u \in  H_{ \nicefrac{1}{8} }\setminus \{0\}} \frac{\|u\|^2_{L^4(\lambda_{(0,1)}; \R)}}{\|u\|^2_{H_{\nicefrac{1}{8}}}}  < \infty.
\end{align}
Inequality \eqref{nonlin:bur:last} hence completes the proof of Lemma~\ref{nonlin:burgers}.
\end{proof}

\subsubsection{Strong convergence}

\begin{cor}
\label{burgers:cor:last}
Assume the setting in Subsection~\ref{setting:burgers} and let $ X\colon [0, T] \times \Omega \to H_{\varrho}$  be a  stochastic process with continuous sample paths which satisfies for all $t \in [0, T]$  that  $ [X_t ]_{\P, \mathcal{B}(H)} =    [ e^{ t A }   \xi  + \smallint_0^t e^{  ( t - s ) A}  \, F (  X_s ) \, ds ]_{\P, \mathcal{B}(H)} + \int_0^t e^{(t-s)A} \, dW_s$. Then it holds for all $p \in (0, \infty)$ that
\begin{align}
\limsup_{n \to \infty} \sup_{t \in [0,T]} \E \big[ \| X_t -\Y_t^n \|_H^p \big] = 0.
\end{align}
\end{cor}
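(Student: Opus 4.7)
The approach is to realise the Burgers setting of Subsection~\ref{setting:burgers} as a concrete instance of the abstract setting of Subsection~\ref{setting:example} and then invoke Proposition~\ref{abs:prop:last}. Concretely, I would make the parameter identifications
\[
  \alpha = \tfrac{1}{2}, \qquad \rho = \tfrac{1}{8}, \qquad \vartheta = 1, \qquad \varphi = \tfrac{3}{4}, \qquad \gamma = \max\!\bigl\{\tfrac{2|c_1|^2}{c_0},\, 4\bigr\},
\]
and keep all other data ($H$, $A$, $(e_n)_{n \in \N}$, $\xi$, $F$, $(P_n)_{n \in \N}$, $(h_n)_{n \in \N}$, $\varrho$, $\chi$, $W$, $(\mathcal{X}^n)_{n \in \N}$, $(\mathcal{O}^n)_{n \in \N}$) as specified in Subsection~\ref{setting:burgers}, with $\phi, \Phi$ defined by the formulas in Subsection~\ref{setting:example}. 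The abstract parameter constraints $\alpha \in [0,\nicefrac{1}{2}]$, $\varphi \in [0,1)$, $\rho \in [0,\nicefrac{1}{4})$, $\varrho \in (\rho,\nicefrac{1}{4})$, $\vartheta \in (0,\infty)$, and $\chi \in (0, \nicefrac{(\varrho-\rho)}{(1+\vartheta)}]$ are then all satisfied; in particular, the Burgers hypothesis $\chi \leq \nicefrac{\varrho}{2} - \nicefrac{1}{16}$ rewrites exactly as $\chi \leq \nicefrac{(\varrho - \nicefrac{1}{8})}{(1+1)} = \nicefrac{(\varrho-\rho)}{(1+\vartheta)}$.

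After this identification the only substantive hypotheses to verify concern the nonlinearity $F$. For the continuity I would use Lemma~\ref{nonlin:burgers}, which gives $F \in \mathcal{C}(H_{\nicefrac{1}{8}}, H_{-\nicefrac{1}{2}})$, together with the continuous embedding $H_\varrho \hookrightarrow H_{\nicefrac{1}{8}}$ (available because $\varrho > \nicefrac{1}{8}$), to conclude $F \in \mathcal{C}(H_\varrho, H_{-\alpha})$. For the Lipschitz-type bound
\[
  \|F(v) - F(w)\|_{H_{-\alpha}} \leq \theta\bigl(1 + \|v\|_{H_\rho}^{\vartheta} + \|w\|_{H_\rho}^{\vartheta}\bigr)\|v-w\|_{H_\rho},
\]
I would read off Lemma~\ref{nonlin:burgers} with $\alpha = \nicefrac{1}{2}$, $\rho = \nicefrac{1}{8}$, $\vartheta = 1$ and let $\theta \in (0,\infty)$ be the finite constant that lemma provides. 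For the coercivity-type bound I would first observe that, since $P_n$ is the $H$-orthogonal projection onto $P_n(H)$, it holds for every $n \in \N$ and all $v, w \in P_n(H)$ that $\langle v, P_n F(v+w)\rangle_H = \langle v, F(v+w)\rangle_H$, so that Lemma~\ref{coer:burgers} (with $M = \max\{\nicefrac{2|c_1|^2}{c_0}, 4\} = \gamma$) yields
\[
  \langle v, P_n F(v+w)\rangle_H \leq \gamma\, \sup\nolimits_{x \in (0,1)} |\und{w}(x)|^2 \, \|v\|_H^2 + \tfrac{3}{4}\|v\|_{H_{1/2}}^2 + \gamma\bigl(1 + \sup\nolimits_{x \in (0,1)}|\und{w}(x)|^\gamma\bigr),
\]
which is bounded above by $\phi(w)\|v\|_H^2 + \varphi\|v\|_{H_{\nicefrac{1}{2}}}^2 + \Phi(w)$ with the $\phi$, $\Phi$, $\varphi$ fixed above.

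The remaining hypotheses of Subsection~\ref{setting:example} (the spectral data $\lambda_n = c_0 \pi^2 n^2$ of $-A$, the sine basis $(e_n)_{n \in \N}$, the prescribed forms of $\phi$, $\Phi$, $P_n$, the vanishing $\limsup_{m\to\infty} h_m = 0$, the existence of the cylindrical Wiener process $W$, and the mild equation~\eqref{eq:set:burgers} satisfied by $\mathcal{X}^n$) transfer verbatim from Subsection~\ref{setting:burgers}. Once every hypothesis has been checked, a direct application of Proposition~\ref{abs:prop:last} with the stochastic process $X$ supplied by the corollary gives the conclusion $\limsup_{n \to \infty} \sup_{t \in [0,T]} \E[\|X_t - \mathcal{X}_t^n\|_H^p] = 0$ for all $p \in (0,\infty)$. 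The one point requiring genuine care — and really the only nonobvious ingredient — is the compatibility of the quadratic nonlinearity with the abstract step-size constraint: Lemma~\ref{nonlin:burgers} forces $\vartheta = 1$, and it is precisely for this $\vartheta$ and $\rho = \nicefrac{1}{8}$ that the ceiling $\nicefrac{(\varrho-\rho)}{(1+\vartheta)}$ reduces to the Burgers-specific upper bound $\nicefrac{\varrho}{2} - \nicefrac{1}{16}$ on $\chi$ postulated in Subsection~\ref{setting:burgers}.
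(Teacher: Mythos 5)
Your proposal is correct and follows essentially the same route as the paper's own proof: identify $\alpha=\nicefrac{1}{2}$, $\rho=\nicefrac{1}{8}$, $\vartheta=1$, $\varphi=\nicefrac{3}{4}$, $\gamma=\max\{\nicefrac{2|c_1|^2}{c_0},4\}$, verify the coercivity assumption via Lemma~\ref{coer:burgers}, verify the Lipschitz-type and continuity assumptions via Lemma~\ref{nonlin:burgers}, and then invoke Proposition~\ref{abs:prop:last}. Two small cosmetic points the paper handles explicitly that you gloss over: it passes to the trivial extension $\tilde{P}_n \in L(H_{-1})$ of $P_n$ to match the codomain required in Subsection~\ref{setting:example}, and it takes $\theta = 1 + |c_1||c_0|^{-\nicefrac{1}{2}}\bigl[\sup_{u}\nicefrac{\|u\|^2_{L^4}}{\|u\|^2_{H_{\nicefrac{1}{8}}}}\bigr]$ (with the added $1$) so that $\theta>0$ is guaranteed even when $c_1=0$.
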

\begin{proof}[Proof of Corollary~\ref{burgers:cor:last}]
Throughout this proof
let $(\tilde{P}_n)_{n \in \N} \colon \N \to L(H_{-1}) $
be the function which satisfies
for all $ n \in \N $, $ v \in H $ that
$ \tilde{P}_n(v) = P_n(v) $
and let $ \phi , \Phi \colon H_1 \to [0,\infty) $ be the functions which satisfy
for all $ v \in H_{1}$ that  $\phi(v) =  \max\big\{\frac{2|c_1|^2 }{c_0} , 4\big\}   \big[1+\sup\nolimits_{x \in (0,1)} |\und{v}(x)|^2\big]$ and $\Phi(v)= \max\big\{\frac{2|c_1|^2 }{c_0} , 4\big\} \big[1+ \sup\nolimits_{x \in (0,1)} |\und{v}(x)|^{ \max\{\nicefrac{2|c_1|^2 }{c_0} , 4\}}\big]$.
Next note that Lemma~\ref{coer:burgers} proves  for all $n \in \N$, $v, w \in \tilde{P}_n(H) = P_n(H)$  that
\begin{align}\label{burgers:cor:coer}
\langle v, \tilde{P}_n F( v + w ) \rangle_H = \left< v, F( v + w ) \right>_H \leq \phi(w) \| v \|^2_H+ \tfrac{3}{4} \| v \|^2_{ H_{ \nicefrac{1}{2} } }+ \Phi( w ).
\end{align}
Furthermore, Lemma~\ref{nonlin:burgers} ensures that for all $n \in \N$, $v, w \in \tilde{P}_n(H)$ it holds that $F \in \mathcal{C}(H_{\nicefrac{1}{8}}, H_{-\nicefrac{1}{2}})$ and
\begin{align}\label{burgers:cor:con}
\begin{split}
&\|F(v) - F(w) \|_{H_{-\nicefrac{1}{2}}} \\
& \leq |c_1|  |c_0|^{-\nicefrac{1}{2}} \left[\sup_{u \in  H_{ \nicefrac{1}{8} } \setminus \{0\}} \frac{\|u\|^2_{L^4(\lambda_{(0,1)}; \R)}}{\|u\|^2_{H_{\nicefrac{1}{8}}}} \right] \big( 1+ \left\| v \right\|_{ H_{\nicefrac{1}{8} }} +  \left\| w \right\|_{ H_{\nicefrac{1}{8} }} \big) \left\| v - w \right\|_{ H_{\nicefrac{1}{8} }} < \infty.
\end{split}
\end{align}
Combining  \eqref{burgers:cor:coer}--\eqref{burgers:cor:con}  and  Proposition~\ref{abs:prop:last} (with
$T=T$,
$c_0 = c_0$,	
$\gamma=\max\{\nicefrac{2|c_1|^2 }{c_0} , 4\}$,	
$ \theta=  1 + |c_1|  |c_0|^{-\nicefrac{1}{2}}  [\sup_{u \in  H_{ \nicefrac{1}{8}}\backslash \{0\}}  \nicefrac{\|u\|^2_{L^4(\lambda_{(0,1)}; \R)}}{\|u\|^2_{H_{\nicefrac{1}{8}}}} ]$,
$\vartheta=1$,
$\alpha= \nicefrac{1}{2}$,
$\varphi= \nicefrac{3}{4}$,
$\rho= \nicefrac{1}{8}$,
$\varrho =\varrho$, 
$\chi = \chi$,
$\xi = \xi$,
 $F=F|_{H_{\varrho}}$,
 $(P_n)_{n \in \N} = (\tilde{P}_n)_{n \in \N}$,
  $(h_n)_{n \in \N} = (h_n)_{n \in \N}$,
$\phi=\phi$, $\Phi=\Phi$,  $(\Y^n)_{n \in \N} = (\Y^n)_{n \in \N}$,    
$ (\mathcal{O}^n)_{n \in \N} = (\mathcal{O}^n)_{n \in \N}$,  $X=X$ in the notation of Proposition~\ref{abs:prop:last}) completes the proof of Corollary~\ref{burgers:cor:last}. 
\end{proof}

The next result, Corollary~\ref{cor:burgers:short} below, is a direct consequence of Corollary~\ref{burgers:cor:last}.
It establishes strong convergence for the stochastic Burgers equation, also see Remark~\ref{rem:burgers} below.

\begin{cor}\label{cor:burgers:short}
Consider the notation in Subsection~\ref{sec:notation},
let  $ T \in (0,\infty)$, 
$\varrho \in (\nicefrac{1}{8}, \nicefrac{1}{4})$,
$ \chi \in  (0, \nicefrac{\varrho }{2 } - \nicefrac{1}{16}] $,
$( H, \left< \cdot , \cdot \right>_H, \left\| \cdot \right\|_H ) = (L^2(\lambda_{(0,1)}; \R), \langle \cdot , \cdot \rangle_{L^2(\lambda_{(0,1)}; \R)}, \left\| \cdot \right\|_{L^2(\lambda_{(0,1)}; \R)} )$,
let $ A \colon D(A) \subseteq H \to H $ be the Laplace operator with Dirichlet boundary conditions on $H$,
let $ ( H_r, \left< \cdot , \cdot \right>_{ H_r }, \left\| \cdot \right\|_{ H_r } ) $, $ r \in \R $, be a family of interpolation spaces associated to $ -A $,
let
$\xi \in H_{\nicefrac{1}{2}} $,
let
$F \colon H_{\nicefrac{1}{8}} \to H_{-\nicefrac{1}{2}} $,
$(e_n)_{n \in \N} \colon \N \to H $,
$(P_n)_{n \in \N} \colon \N \to L(H) $,
and
$(h_n)_{n \in \N} \colon \N \to (0, T] $
be functions which satisfy
for all $v \in H_{\nicefrac{1}{8}}$, $n \in \N$ that
$F(v)= -\frac{1}{2}(v^2)'$,
$ e_n = [ (\sqrt{2} \sin(n \pi x) )_{x \in (0,1)}]_{\lambda_{(0,1)} , \mathcal{B}(\R)}$,
$ P_n(v) = \sum_{k = 1 }^n \langle e_k, v \rangle_H e_k $,
and
$ \limsup_{ m \to \infty} h_m =0$,
let $ ( \Omega, \F, \P ) $ be a probability space,
let $(W_t)_{t \in [0, T]}$ be an $\Id_H$-cylindrical $( \Omega, \F, \P )$-Wiener process,
let $ \Y^n, \mathcal{O}^n \colon [0, T] \times \Omega \to P_n(H)$, $ n \in \N$, be stochastic processes, let $ X\colon [0, T] \times \Omega \to H_{\varrho}$  be a  stochastic process with continuous sample paths, and assume  for all  $ n \in \N $, $t \in [0, T]$  that   $ [X_t ]_{\P, \mathcal{B}(H)} =    [ e^{ t A }   \xi  + \smallint_0^t e^{  ( t - s ) A}  \, F (  X_s ) \, ds ]_{\P, \mathcal{B}(H)} + \int_0^t e^{(t-s)A} \, dW_s$, $\left[\mathcal{O}_t^n \right]_{\P, \mathcal{B}(H)} = \int_0^t P_n \, e^{(t-s)A}  \, dW_s$,
and
\begin{equation}\label{eq:cor:burgers}
\P \Big( \Y_t^n = P_n \, e^{ t A } \xi + \smallint_0^t P_n \,  e^{  ( t - s ) A } \, \one_{ \{ \| \Y_{ \lf s \rf_{h_n} }^n \|_{ H_{\varrho} } + \| \mathcal{O}_{ \lf s \rf_{h_n} }^n +P_n \, e^{ \lf s \rf_{ h_n } A } \xi \|_{ H_{\varrho} } \leq | h_n|^{ - \chi } \}} \, F \big(  \Y_{ \lf s \rf_{ h_n } }^n \big) \, ds + \mathcal{O}_t^n \Big)=1. 
\end{equation} Then it holds for all $p \in (0, \infty)$ that
\begin{align}
\limsup_{n \to \infty} \sup_{t \in [0,T]} \E \big[ \| X_t -\Y_t^n \|_H^p \big] = 0.
\end{align}
\end{cor}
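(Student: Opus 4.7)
The plan is to deduce Corollary~\ref{cor:burgers:short} as an immediate consequence of Corollary~\ref{burgers:cor:last} by specializing the parameters $c_0$ and $c_1$ in the setting of Subsection~\ref{setting:burgers} to $c_0 = 1$ and $c_1 = -\tfrac{1}{2}$, respectively. With these choices the nonlinearity $F(v) = c_1 (v^2)'$ from Subsection~\ref{setting:burgers} becomes $F(v) = -\tfrac{1}{2}(v^2)'$, which matches the nonlinearity in Corollary~\ref{cor:burgers:short}.

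The only piece that really needs to be checked is that the linear operator $A$ in Corollary~\ref{cor:burgers:short}, which is defined as the Laplace operator with Dirichlet boundary conditions on $H = L^2(\lambda_{(0,1)}; \R)$, agrees with the operator constructed in Subsection~\ref{setting:burgers} when $c_0 = 1$. For this, I would recall the well-known spectral decomposition of the Dirichlet Laplacian on $(0,1)$: its eigenfunctions are precisely $[(\sqrt{2}\sin(n \pi x))_{x \in (0,1)}]_{\lambda_{(0,1)}, \mathcal{B}(\R)}$ for $n \in \N$ with associated eigenvalues $-\pi^2 n^2$. Hence, with $c_0 = 1$, the operator $A$ in Corollary~\ref{cor:burgers:short} satisfies $D(A) = \{ v \in H \colon \sum_{k=1}^\infty k^4 |\langle e_k, v\rangle_H|^2 < \infty\}$ and $A v = -\pi^2 \sum_{k=1}^\infty k^2 \langle e_k, v\rangle_H e_k$ for $v \in D(A)$, matching the definition in Subsection~\ref{setting:burgers}. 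The family of interpolation spaces $(H_r, \langle \cdot , \cdot\rangle_{H_r}, \|\cdot\|_{H_r})$, $r \in \R$, associated to $-A$ is then the same in both settings.

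Once the operators and the nonlinearity are identified, all remaining objects appearing in the statement of Corollary~\ref{cor:burgers:short} ($T$, $\varrho$, $\chi$, $\xi$, the functions $(P_n)_{n\in \N}$, $(h_n)_{n\in \N}$, the Wiener process $(W_t)_{t \in [0,T]}$, and the stochastic processes $\Y^n$ and $\mathcal{O}^n$) coincide literally with their counterparts in Subsection~\ref{setting:burgers}. In particular, the defining identity \eqref{eq:cor:burgers} for $\Y^n$ coincides with \eqref{eq:set:burgers}, and the mild-form equation for $X$ assumed in Corollary~\ref{cor:burgers:short} is identical to the hypothesis imposed on $X$ in Corollary~\ref{burgers:cor:last}. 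Applying Corollary~\ref{burgers:cor:last} therefore yields $\limsup_{n\to\infty} \sup_{t\in [0,T]} \E[\|X_t - \Y_t^n\|_H^p] = 0$ for every $p \in (0,\infty)$, which is exactly the conclusion of Corollary~\ref{cor:burgers:short}.

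There is no real obstacle here beyond the bookkeeping of matching the two settings; the entire analytic content has already been discharged in Corollary~\ref{burgers:cor:last} (and ultimately in Proposition~\ref{abs:prop:last} and Theorem~\ref{thm:strong}). If anything, the only slightly non-trivial point worth spelling out explicitly is the spectral identification of the Dirichlet Laplacian alluded to above, which is standard.
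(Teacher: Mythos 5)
Your proposal is correct and takes essentially the same route as the paper: the paper itself states that Corollary~\ref{cor:burgers:short} is a direct consequence of Corollary~\ref{burgers:cor:last}, obtained by specializing $c_0 = 1$ and $c_1 = -\nicefrac{1}{2}$ in the setting of Subsection~\ref{setting:burgers} and invoking the standard spectral identification of the Dirichlet Laplacian on $(0,1)$.
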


\begin{remark}\label{rem:burgers}
Consider the setting in Corollary~\ref{cor:burgers:short}.
Roughly speaking, Corollary~\ref{cor:burgers:short} demonstrates that
the full-discrete explicit numerical approximation scheme described by~\eqref{eq:cor:burgers}
converges strongly to a mild solution process of the stochastic Burgers equation 
\begin{align}
\tfrac{\partial}{\partial t} X_t(x) = \tfrac{\partial^2}{\partial x^2} X_t(x) - X_t(x) \cdot \tfrac{\partial}{\partial x} X_t(x) + \tfrac{\partial}{\partial t} W_t(x)
\end{align}
with  $X_0(x) = \xi(x)$ and $X_t(0)= X_t(1)=0$
for $t \in [0,T]$, $x \in (0,1)$
(cf., e.g.,
Da~Prato, Debussche, \& Temam~\cite[Section~1]{DaPrato1994}).
\end{remark}

\subsection{Stochastic Allen-Cahn equations}
\label{sec:cahn}

\subsubsection{Setting}
\label{setting:cahn}
Consider the notation in Subsection~\ref{sec:notation},
let $c_1 \in \R$, $ c_0, c_2, T \in (0,\infty)$, 
$\varrho \in (\nicefrac{1}{6}, \nicefrac{1}{4})$,
$ \chi \in  (0, \nicefrac{\varrho}{3}-\nicefrac{1}{18}] $,
$( H, \left< \cdot , \cdot \right>_H, \left\| \cdot \right\|_H )
= (L^2(\lambda_{(0,1)}; \R), \langle \cdot , \cdot \rangle_{L^2(\lambda_{(0,1)}; \R)}, \left\| \cdot \right\|_{L^2(\lambda_{(0,1)}; \R)} )$,
let $(e_n)_{n \in \N} \colon \N \to H $
be the function which satisfies
for all $ n \in \N $ that
$e_n = [ (\sqrt{2} \sin(n \pi x) )_{x \in (0,1)}]_{\lambda_{(0,1)} , \mathcal{B}(\R)}$,
let $ A \colon D(A) \subseteq H \to H $
be the linear operator which satisfies
$ D(A) = \{ v \in H \colon $ $ \sum_{k = 1}^\infty k^4 | \langle e_k , v \rangle_H |^2 < \infty \} $
and
$ \forall \, v \in D(A) \colon A v = -c_0 \pi^2 \sum_{k = 1}^\infty k^2 \langle e_k , v \rangle_H e_k$,
let $ ( H_r, \left< \cdot , \cdot \right>_{ H_r }, \left\| \cdot \right\|_{ H_r } ) $, $ r \in \R $,
be a family of interpolation spaces associated to $ - A $,
let $\xi \in H_{\nicefrac{1}{2}} $,
let
$F \colon H_{\nicefrac{1}{6}} \to H $,
$(P_n)_{n \in \N} \colon \N \to L(H) $,
and
$(h_n)_{n \in \N} \colon \N \to (0, T] $
be functions which satisfy
for all $v \in H_{\nicefrac{1}{6}}$, $n \in \N$ that
$F(v)= c_1 v - c_2 v^3$,
$ P_n(v) = \sum_{k = 1 }^n \langle e_k, v \rangle_H e_k $,
and $ \limsup_{ m \to \infty} h_m =0$,
let $ ( \Omega, \F, \P ) $ be a probability space,
let $(W_t)_{t \in [0, T]}$ be an $\Id_H$-cylindrical $( \Omega, \F, \P )$-Wiener process,
and let $ \Y^n, \mathcal{O}^n \colon [0, T] \times \Omega \to P_n(H)$, $ n \in \N$,
be stochastic processes which satisfy
for all $ n \in \N $, $ t \in [0,T] $ that
$\left[\mathcal{O}_t^n \right]_{\P, \mathcal{B}(H)} = \int_0^t P_n \, e^{(t-s)A}  \, dW_s$
and 
\begin{equation}\label{eq:set:cahn}
\P \Big( \Y_t^n = P_n \, e^{ t A } \xi + \smallint_0^t P_n \,  e^{  ( t - s ) A } \, \one_{ \{ \| \Y_{ \lf s \rf_{h_n} }^n \|_{ H_{\varrho} } + \| \mathcal{O}_{ \lf s \rf_{h_n} }^n +P_n \, e^{ \lf s \rf_{ h_n } A } \xi \|_{ H_{\varrho} } \leq | h_n|^{ - \chi } \}} \, F \big(  \Y_{ \lf s \rf_{ h_n } }^n \big) \, ds + \mathcal{O}_t^n \Big)=1. 
\end{equation}

\subsubsection{Properties of the nonlinearity}

In the results in this subsection, Lemmas~\ref{coer:cahn}--\ref{nonlin:cahn} below,
we collect a few elementary properties of the function $F$ in Subsection~\ref{setting:cahn} above.

\begin{lemma}
\label{coer:cahn}
Assume the setting in Subsection~\ref{setting:cahn} and let $r \in (\nicefrac{1}{4}, \infty)$, $v, w \in H_r$. Then
\begin{align}
\begin{split}
\left< v, F( v + w ) \right>_H &\leq \max\big\{6,  c_1+ |c_1|^2+|c_2|^2\big\}\big[\|v\|_H^2  + 1+ \sup\nolimits_{x \in (0,1)} |\und{w}(x)|^{\max\{6,  c_1+ |c_1|^2+|c_2|^2\}}\big].
\end{split}
\end{align}
\begin{proof}[Proof of Lemma~\ref{coer:cahn}]
Note that the fact that $\forall \, a,b \in \R \colon a^2 + 3ab+3b^2 \geq 0$ and the Cauchy-Schwartz inequality ensure that
\begin{align}
\begin{split}
&\left< v, F( v + w ) \right>_H = \left< v, c_1( v + w ) - c_2( v + w )^3 \right>_H = c_1 \|v\|_H^2 + c_1 \left<v,w \right>_H -c_2 \left< v, ( v + w )^3 \right>_H\\
&=c_1 \|v\|_H^2 + c_1 \left<v,w \right>_H -c_2 \int_0^1 [\und{v}(x)]^2 \big(  [\und{v}(x)]^2 + 3 \, \und{v}(x) \und{w}(x) + 3 \, [\und{w}(x)]^2\big) + \und{v}(x)  [\und{w}(x)]^3 \, dx\\
& \leq c_1 \|v\|_H^2 + c_1 \left<v,w \right>_H -c_2 \int_0^1 \und{v}(x)  [\und{w}(x)]^3 \, dx\\
& \leq c_1 \|v\|_H^2 + |c_1| \|v\|_H \|w\|_H + c_2 \|v\|_H \|w^3\|_H.
\end{split}
\end{align}
The fact that $\forall \,  a, b \in \R \colon ab \leq  a^2 + b^2$ and the fact that $\forall \, a \in \R, \, b \in [1, \infty) \colon a \leq 1+|a|^b$ hence prove that
\begin{align}
\begin{split}
\left< v, F( v + w ) \right>_H & \leq  c_1 \|v\|_H^2 + |c_1|^2\|v\|_H^2 + \|w\|_H^2 + |c_2|^2\|v\|_H^2 + \|w^3\|_H^2 \\
& \leq (c_1+ |c_1|^2+|c_2|^2) \|v\|_H^2  + \big[\!\sup\nolimits_{x \in (0,1)} |\und{w}(x)|^2\big]+ \big[\!\sup\nolimits_{x \in (0,1)} |\und{w}(x)|^6\big]\\
& \leq (c_1+ |c_1|^2+|c_2|^2) \|v\|_H^2  + 2 \, \big( 1+ \sup\nolimits_{x \in (0,1)} |\und{w}(x)|^{\max\{6,  c_1+ |c_1|^2+|c_2|^2\}} \big)\\
& \leq \max\big\{6,  c_1+ |c_1|^2+|c_2|^2\big\}\big[\|v\|_H^2  + 1+ \sup\nolimits_{x \in (0,1)} |\und{w}(x)|^{\max\{6,  c_1+ |c_1|^2+|c_2|^2\}}\big].
\end{split}
\end{align}
The proof of Lemma~\ref{coer:cahn} is thus completed.
\end{proof}
\end{lemma}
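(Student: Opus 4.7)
The plan is to expand the inner product using the explicit form $F(u) = c_1 u - c_2 u^3$, exploit a pointwise algebraic nonnegativity to discard the most singular cubic terms, and close with Cauchy--Schwarz and Young's inequality.

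First, I note that the hypothesis $r > \nicefrac{1}{4}$, together with the fact that $H_r$ is the domain of $(-A)^r$ for the Dirichlet Laplacian and therefore corresponds to a Sobolev space of order $2r > \nicefrac{1}{2}$, yields a continuous embedding $H_r \hookrightarrow \mathcal{C}((0,1),\R)$. In particular $\und{v}$ and $\und{w}$ are well-defined, continuous, and bounded on $(0,1)$, and $v, w \in H$ with $w^3 \in H$. Writing $\langle v, F(v+w)\rangle_H = c_1 \|v\|_H^2 + c_1 \langle v, w\rangle_H - c_2 \int_0^1 \und{v}(x)\,[\und{v}(x)+\und{w}(x)]^3\,dx$ and expanding the cube, the integrand becomes $[\und{v}(x)]^2\bigl([\und{v}(x)]^2 + 3\,\und{v}(x)\und{w}(x) + 3[\und{w}(x)]^2\bigr) + \und{v}(x)[\und{w}(x)]^3$.

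The crucial observation is the elementary identity $a^2 + 3ab + 3b^2 = (a + \tfrac{3}{2}b)^2 + \tfrac{3}{4}b^2 \geq 0$ valid for all $a,b \in \R$. Multiplying by $a^2 \geq 0$ shows that the quartic-and-higher part of the integrand is pointwise nonnegative, so since $c_2 > 0$ it contributes a nonpositive quantity after multiplication by $-c_2$. Thus only the cross term $-c_2 \int_0^1 \und{v}(x)[\und{w}(x)]^3\,dx$ survives with an unfavourable sign, and Cauchy--Schwarz bounds it by $c_2 \|v\|_H \|w^3\|_H$. Similarly $c_1\langle v,w\rangle_H \leq |c_1|\,\|v\|_H\|w\|_H$.

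Applying Young's inequality in the form $ab \leq a^2+b^2$ to each cross term then gives
\begin{equation*}
\langle v, F(v+w)\rangle_H \leq (c_1 + |c_1|^2 + |c_2|^2)\|v\|_H^2 + \|w\|_H^2 + \|w^3\|_H^2.
\end{equation*}
Finally, I would bound $\|w\|_H^2 \leq \sup_{x \in (0,1)}|\und{w}(x)|^2$ and $\|w^3\|_H^2 \leq \sup_{x \in (0,1)}|\und{w}(x)|^6$, and use the trivial inequality $a \leq 1 + a^b$ for $a \geq 0$, $b \geq 1$ to absorb both exponents $2$ and $6$ into the single exponent $\max\{6, c_1 + |c_1|^2 + |c_2|^2\}$ used in the statement, after which collecting constants under the same maximum yields the desired bound. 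No step is really an obstacle, but the one that would not be obvious without prior experience is spotting that $a^2 + 3ab + 3b^2$ is a positive semi-definite quadratic form; this is exactly what allows the dangerous $w$-only $L^6$ contribution (which cannot be controlled by a sup-norm squared against $\|v\|_H^2$) to be eliminated and replaced by a cheap $w^3$-contribution that only needs to be controlled in $L^2$.
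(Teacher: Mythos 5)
Your proof is correct and takes essentially the same route as the paper: expand $F(v+w)$, drop the pointwise nonnegative quantity $[\und{v}(x)]^2\bigl([\und{v}(x)]^2+3\,\und{v}(x)\und{w}(x)+3[\und{w}(x)]^2\bigr)$ which enters with the favourable sign $-c_2\le 0$, estimate the two surviving cross terms by Cauchy--Schwarz and $ab\le a^2+b^2$, and close with sup-norm bounds on $w$ and $a\le 1+|a|^b$. Only your closing commentary is slightly muddled: what the nonnegativity of $a^2+3ab+3b^2$ actually eliminates is the block $\int\und{v}^4$, $\int\und{v}^3\und{w}$, $\int\und{v}^2\und{w}^2$ (the first of which genuinely cannot be absorbed into a term of the form $\phi(w)\|v\|_H^2$), whereas the $\|w^3\|_H^2=\|w\|_{L^6(\lambda_{(0,1)};\R)}^6$ contribution is what \emph{survives} and is harmless precisely because it is $v$-free.
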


\begin{lemma}\label{nonlin:cahn}
Assume the setting in Subsection~\ref{setting:cahn} and let $v, w \in H_{\nicefrac{1}{6}}$. Then it holds that $F \in \mathcal{C}(H_{\nicefrac{1}{6}}, H)$ and
\begin{align}
\label{eq:cahn-cont}
\begin{split}
&\|F(v) - F(w) \|_{H} \\
& \leq  \left(\frac{|c_1|}{(c_0 \pi^2)^{\nicefrac{1}{6}}}+ 2 \, c_2 \!\left[\sup_{u \in  H_{ \nicefrac{1}{6}} \setminus \{0\}} \frac{\|u\|^3_{L^6(\lambda_{(0,1)}; \R)}}{\|u\|^3_{H_{\nicefrac{1}{6}}}} \right] \right)\big( 1+ \left\| v \right\|_{ H_{\nicefrac{1}{6} }}^2 +  \left\| w \right\|_{ H_{\nicefrac{1}{6} }}^2 \big) \left\| v - w \right\|_{ H_{\nicefrac{1}{6}}} < \infty.
\end{split}
\end{align}	
\end{lemma}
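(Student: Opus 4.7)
My plan is to split the difference as $F(v)-F(w) = c_1(v-w) - c_2(v^3-w^3)$ and bound the two pieces separately. For the linear part, I would use that $\|v-w\|_H = \|(-A)^{-\nicefrac{1}{6}}(-A)^{\nicefrac{1}{6}}(v-w)\|_H \leq \|(-A)^{-\nicefrac{1}{6}}\|_{L(H)} \|v-w\|_{H_{\nicefrac{1}{6}}}$, and then identify $\|(-A)^{-\nicefrac{1}{6}}\|_{L(H)}$ with the smallest eigenvalue raised to the power $-\nicefrac{1}{6}$, namely $(\lambda_1)^{-\nicefrac{1}{6}} = (c_0\pi^2)^{-\nicefrac{1}{6}}$, using the spectral representation of $A$. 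This yields the first summand $|c_1|(c_0\pi^2)^{-\nicefrac{1}{6}}\|v-w\|_{H_{\nicefrac{1}{6}}}$ exactly as in \eqref{eq:cahn-cont}.

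For the cubic part, I would factor $v^3 - w^3 = (v-w)(v^2 + vw + w^2)$ and apply H\"older's inequality with exponents $\nicefrac{1}{6} + \nicefrac{1}{3} = \nicefrac{1}{2}$ to obtain
\begin{equation}
\|v^3 - w^3\|_H \leq \|v-w\|_{L^6(\lambda_{(0,1)};\R)}\, \|v^2+vw+w^2\|_{L^3(\lambda_{(0,1)};\R)}.
\end{equation}
Using the triangle inequality and H\"older again with $\nicefrac{1}{6}+\nicefrac{1}{6} = \nicefrac{1}{3}$ gives $\|v^2+vw+w^2\|_{L^3} \leq \|v\|_{L^6}^2 + \|v\|_{L^6}\|w\|_{L^6} + \|w\|_{L^6}^2 \leq 2\|v\|_{L^6}^2 + 2\|w\|_{L^6}^2$. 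Denoting $C = \sup_{u \in H_{\nicefrac{1}{6}}\setminus\{0\}} \|u\|_{L^6(\lambda_{(0,1)};\R)}/\|u\|_{H_{\nicefrac{1}{6}}}$, three applications of the embedding $H_{\nicefrac{1}{6}} \hookrightarrow L^6(\lambda_{(0,1)};\R)$ yield $\|v^3-w^3\|_H \leq 2C^3(\|v\|_{H_{\nicefrac{1}{6}}}^2 + \|w\|_{H_{\nicefrac{1}{6}}}^2)\|v-w\|_{H_{\nicefrac{1}{6}}}$, which produces the second summand in \eqref{eq:cahn-cont} after multiplication by $c_2$.

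Combining the two bounds and inserting the trivial estimate $\|v\|_{H_{\nicefrac{1}{6}}}^2 + \|w\|_{H_{\nicefrac{1}{6}}}^2 \leq 1 + \|v\|_{H_{\nicefrac{1}{6}}}^2 + \|w\|_{H_{\nicefrac{1}{6}}}^2$ into the linear term gives the full bound. Finiteness of the constant $C$ (and therefore of the right-hand side of \eqref{eq:cahn-cont}) follows from the Sobolev embedding theorem, which in one space dimension provides the critical continuous embedding $H^{1/3}((0,1),\R) \hookrightarrow L^6(\lambda_{(0,1)};\R)$; combined with the identification of $H_{\nicefrac{1}{6}}$ as (a subspace of) $H^{1/3}((0,1),\R)$ via the spectral calculus for $-A$, this gives $C < \infty$. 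The same bound, applied to arbitrary bounded subsets of $H_{\nicefrac{1}{6}}$, shows that $F$ is locally Lipschitz, hence in $\mathcal{C}(H_{\nicefrac{1}{6}}, H)$. No serious obstacle is anticipated; the only point requiring care is the correct identification of $H_{\nicefrac{1}{6}}$ with a standard fractional Sobolev space so that the embedding constant $C$ is genuinely finite.
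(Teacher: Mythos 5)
Your proposal is correct and follows essentially the same route as the paper: the same decomposition $F(v)-F(w)=c_1(v-w)-c_2(v^3-w^3)$, the same spectral bound $\|(-A)^{-\nicefrac{1}{6}}\|_{L(H)}=(c_0\pi^2)^{-\nicefrac{1}{6}}$ for the linear part, the same factorization $v^3-w^3=(v-w)(v^2+vw+w^2)$ combined with H\"older ($L^6\times L^3\to L^2$ and $L^6\times L^6\to L^3$), the same embedding constant $C=\sup_{u\neq 0}\|u\|_{L^6}/\|u\|_{H_{\nicefrac{1}{6}}}$ applied three times, and finiteness of $C$ from the Sobolev embedding. The only (minor) additions are your explicit remark that $H_{\nicefrac{1}{6}}\hookrightarrow H^{\nicefrac{1}{3}}\hookrightarrow L^6$ is the critical embedding and your observation that local Lipschitz continuity gives $F\in\mathcal{C}(H_{\nicefrac{1}{6}},H)$; the paper leaves both of these implicit.
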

\begin{proof}[Proof of Lemma~\ref{nonlin:cahn}]
First, observe that
\begin{align}\label{v-w:cahn}
\begin{split}
\|v-w \|_{H} &\leq \|(-A)^{\nicefrac{-1}{6}} \|_{L(H)} \|v-w \|_{H_{\nicefrac{1}{6}}} = (c_0 \pi^2)^{\nicefrac{-1}{6}} \|v-w \|_{H_{\nicefrac{1}{6}}} \\
& \leq (c_0 \pi^2)^{\nicefrac{-1}{6}} \big( 1+ \left\| v \right\|_{ H_{\nicefrac{1}{6} }}^2 +  \left\| w \right\|_{ H_{\nicefrac{1}{6} }}^2 \big) \|v-w \|_{H_{\nicefrac{1}{6}}}.
\end{split}
\end{align}
In addition, note that H\"older's inequality implies that
\begin{align}
\begin{split}
&\|v^3 -w^3\|_H = \|(v-w) \cdot (v^2 + v \cdot w + w^2)\|_{L^2(\lambda_{(0,1)}; \R)}\\
&\leq \|v-w\|_{L^6(\lambda_{(0,1)}; \R)} \|v^2+v \cdot w+w^2\|_{L^3(\lambda_{(0,1)}; \R)}\\
& \leq \|v-w\|_{L^6(\lambda_{(0,1)}; \R)} \big(\|v\|_{L^6(\lambda_{(0,1)}; \R)}^2 + \|v\|_{L^6(\lambda_{(0,1)}; \R)} \|w\|_{L^6(\lambda_{(0,1)}; \R)} + \|w\|_{L^6(\lambda_{(0,1)}; \R)}^2\big)\\
& \leq 2\, \|v-w\|_{L^6(\lambda_{(0,1)}; \R)} \big(\|v\|_{L^6(\lambda_{(0,1)}; \R)}^2  + \|w\|_{L^6(\lambda_{(0,1)}; \R)}^2\big)\\
& \leq 2 \left[\sup_{u \in  H_{\nicefrac{1}{6} } \setminus \{0\}} \frac{\|u\|^3_{L^6(\lambda_{(0,1)}; \R)}}{\|u\|^3_{H_{\nicefrac{1}{6}}}} \right] \big( 1+ \left\| v \right\|_{ H_{\nicefrac{1}{6} }}^2 +  \left\| w \right\|_{ H_{\nicefrac{1}{6} }}^2 \big) \left\| v - w \right\|_{ H_{\nicefrac{1}{6} }}.
\end{split}
\end{align}
Combining this with \eqref{v-w:cahn} shows that
\begin{align}
\label{nonlin:cahn:last}
\begin{split}
& \|F(v) - F(w) \|_{H}  = \|c_1(v-w)-c_2(v^3-w^3) \|_{H} \leq |c_1| \|v-w\|_H+ c_2 \|v^3-w^3\|_H \\
& \leq  \left(\frac{|c_1|}{(c_0 \pi^2)^{\nicefrac{1}{6}}}+ 2 \, c_2 \!\left[\sup_{u \in  H_{ \nicefrac{1}{6}} \setminus \{0\}} \frac{\|u\|^3_{L^6(\lambda_{(0,1)}; \R)}}{\|u\|^3_{H_{\nicefrac{1}{6}}}} \right] \right)\big( 1+ \left\| v \right\|_{ H_{\nicefrac{1}{6} }}^2 +  \left\| w \right\|_{ H_{\nicefrac{1}{6} }}^2 \big) \left\| v - w \right\|_{ H_{\nicefrac{1}{6}}} .
\end{split}
\end{align}
Furthermore, observe that the Sobolev embedding theorem proves that
\begin{align}
\sup_{u \in  H_{ \nicefrac{1}{6} }\setminus \{0\}} \frac{\|u\|^3_{L^6(\lambda_{(0,1)}; \R)}}{\|u\|^3_{H_{\nicefrac{1}{6}}}}  < \infty.
\end{align}
This and \eqref{nonlin:cahn:last} establish \eqref{eq:cahn-cont}.
The proof of Lemma~\ref{nonlin:cahn} is thus completed.
\end{proof}

\subsubsection{Strong convergence}

\begin{cor}
\label{cahn:cor:last}
Assume the setting in Subsection~\ref{setting:cahn} and let $ X\colon [0, T] \times \Omega \to H_{\varrho}$  be a  stochastic process with continuous sample paths which satisfies for all $t \in [0, T]$  that  $ [X_t ]_{\P, \mathcal{B}(H)} =    [ e^{ t A }   \xi  + \smallint_0^t e^{  ( t - s ) A}  \, F (  X_s ) \, ds ]_{\P, \mathcal{B}(H)} + \int_0^t e^{(t-s)A} \, dW_s$. Then it holds for all $p \in (0, \infty)$ that
\begin{align}
\limsup_{n \to \infty} \sup_{t \in [0,T]} \E \big[ \| X_t -\Y_t^n \|_H^p \big] = 0.
\end{align}
\end{cor}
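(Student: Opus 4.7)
The plan is to apply Proposition~\ref{abs:prop:last} exactly as in the proof of Corollary~\ref{burgers:cor:last}, after verifying the coercivity and Lipschitz-type hypotheses of the setting in Subsection~\ref{setting:example} by means of Lemmas~\ref{coer:cahn}--\ref{nonlin:cahn}. First I would set $c_3 = \max\{6, c_1 + |c_1|^2 + |c_2|^2\}$ and introduce, for each $n \in \N$, the operator $\tilde{P}_n \in L(H_{-1})$ as the canonical extension of $P_n$. Next I would define $\phi, \Phi \colon H_1 \to [0,\infty)$ by $\phi(v) = c_3 + c_3 \sup_{x \in (0,1)} |\und{v}(x)|^2$ and $\Phi(v) = c_3 + c_3 \sup_{x \in (0,1)} |\und{v}(x)|^{c_3}$, so that Lemma~\ref{coer:cahn}, together with the self-adjointness of $P_n$ (which gives $\langle v, \tilde{P}_n F(v+w) \rangle_H = \langle v, F(v+w) \rangle_H$ for every $v, w \in P_n(H) \subseteq H_r$, $r > \nicefrac{1}{4}$), yields
\begin{equation}
\langle v, \tilde{P}_n F(v+w) \rangle_H \leq \phi(w) \|v\|_H^2 + 0 \cdot \|v\|_{H_{\nicefrac{1}{2}}}^2 + \Phi(w)
\end{equation}
for all $n \in \N$ and all $v, w \in \tilde{P}_n(H) = P_n(H)$.

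Then I would verify the Lipschitz-type bound in the form required by Proposition~\ref{abs:prop:last}: Lemma~\ref{nonlin:cahn} shows that $F \in \mathcal{C}(H_{\nicefrac{1}{6}}, H)$ and that
\begin{equation}
\|F(v) - F(w)\|_H \leq \theta \bigl( 1 + \|v\|_{H_{\nicefrac{1}{6}}}^{2} + \|w\|_{H_{\nicefrac{1}{6}}}^{2} \bigr) \|v-w\|_{H_{\nicefrac{1}{6}}}
\end{equation}
for a suitable finite constant $\theta \in [0,\infty)$ depending only on $c_0, c_1, c_2$. This is precisely the hypothesis of the setting in Subsection~\ref{setting:example} with $\alpha = 0$, $\rho = \nicefrac{1}{6}$, and $\vartheta = 2$.

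Finally, I would check the remaining parameter constraints: the hypothesis $\varrho \in (\nicefrac{1}{6}, \nicefrac{1}{4})$ gives $\varrho \in (\rho, \nicefrac{1}{4})$, while $\chi \in (0, \nicefrac{\varrho}{3} - \nicefrac{1}{18}]$ translates into $\chi \in \bigl(0, \nicefrac{(\varrho - \rho)}{(1 + \vartheta)}\bigr]$, and $\varphi = 0 \in [0,1)$, $\alpha = 0 \in [0, \nicefrac{1}{2}]$ are immediate. Proposition~\ref{abs:prop:last} applied with these parameter choices, with $F$ replaced by $F|_{H_{\varrho}}$, with $(P_n)_{n \in \N} = (\tilde{P}_n)_{n \in \N}$, with $(h_n)_{n \in \N}$, $(\mathcal{O}^n)_{n \in \N}$, $(\Y^n)_{n \in \N}$, $\xi$, and $X$ as given, and with $\gamma = c_3$, then yields $\limsup_{n \to \infty} \sup_{t \in [0,T]} \E[\|X_t - \Y_t^n\|_H^p] = 0$ for every $p \in (0,\infty)$. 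The only mildly delicate point is the correct matching of the Lyapunov exponent $\vartheta$ to the growth in Lemma~\ref{nonlin:cahn} and the consequent verification that the upper bound on $\chi$ in the present Allen-Cahn setting coincides with $\nicefrac{(\varrho - \rho)}{(1 + \vartheta)} = \nicefrac{\varrho}{3} - \nicefrac{1}{18}$; everything else is a routine translation of the Burgers argument.
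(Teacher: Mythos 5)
Your proposal is correct and follows essentially the same route as the paper's proof: it defines the same $\phi$, $\Phi$, and extended projections $\tilde{P}_n$, invokes Lemmas~\ref{coer:cahn} and~\ref{nonlin:cahn} for the coercivity and local Lipschitz bounds, and applies Proposition~\ref{abs:prop:last} with the parameter choices $\alpha = 0$, $\varphi = 0$, $\rho = \nicefrac{1}{6}$, $\vartheta = 2$, and $\gamma = \max\{6, c_1 + |c_1|^2 + |c_2|^2\}$. Your observation that $\nicefrac{(\varrho - \rho)}{(1 + \vartheta)} = \nicefrac{\varrho}{3} - \nicefrac{1}{18}$ correctly reconciles the $\chi$-constraint of the Allen-Cahn setting with that of Subsection~\ref{setting:example}.
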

\begin{proof}[Proof of Corollary~\ref{cahn:cor:last}]
Throughout this proof
let $(\tilde{P}_n)_{n \in \N} \colon \N \to L(H_{-1}) $
be the function which satisfies
for all $ n \in \N $, $ v \in H $ that
$ \tilde{P}_n(v) = P_n(v) $
and let $ \phi , \Phi \colon H_1 \to [0,\infty) $ be the functions which satisfy for all $ v \in H_{1}$ that  $\phi(v) =  \max\{6,  c_1+ |c_1|^2+|c_2|^2\} \big[1+\sup\nolimits_{x \in (0,1)} |\und{v}(x)|^2\big]$ and $\Phi(v)=  \max\{6,  c_1+ |c_1|^2+|c_2|^2\} \big[1+ \sup\nolimits_{x \in (0,1)} |\und{v}(x)|^{  \max\{6,  c_1+ |c_1|^2+|c_2|^2\}}\big]$.
Observe that Lemma~\ref{coer:cahn} ensures for all $n \in \N$, $v, w \in \tilde{P}_n(H) = P_n(H)$  that
\begin{align}\label{cahn:cor:coer}
\langle v, \tilde{P}_n F( v + w ) \rangle_H = \left< v, F( v + w ) \right>_H \leq \phi(w) \| v \|^2_H+\Phi( w ).
\end{align}
In addition, Lemma~\ref{nonlin:cahn} shows that  for all $n \in \N$, $v, w \in \tilde{P}_n(H)$ it holds  that $F \in \mathcal{C}(H_{\nicefrac{1}{6}}, H)$ and
\begin{align}\label{cahn:cor:con}
\begin{split}
&\|F(v) - F(w) \|_H \\
& \leq  \left(\frac{|c_1|}{(c_0 \pi^2)^{\nicefrac{1}{6}}}+ 2 \, c_2 \!\left[\sup_{u \in  H_{ \nicefrac{1}{6}} \setminus \{0\}} \frac{\|u\|^3_{L^6(\lambda_{(0,1)}; \R)}}{\|u\|^3_{H_{\nicefrac{1}{6}}}} \right] \right)\big( 1+ \left\| v \right\|_{ H_{\nicefrac{1}{6} }}^2 +  \left\| w \right\|_{ H_{\nicefrac{1}{6} }}^2 \big) \left\| v - w \right\|_{ H_{\nicefrac{1}{6}}} < \infty.
\end{split}
\end{align}
Combining  \eqref{cahn:cor:coer}--\eqref{cahn:cor:con}  with  Proposition~\ref{abs:prop:last}
(with
$T=T$,
$c_0 = c_0$,	
$\gamma=\max\{6,  c_1+ |c_1|^2+|c_2|^2\}$,	
$ \theta= \nicefrac{|c_1|}{(c_0 \pi^2)^{\nicefrac{1}{6}}}+  2 \, c_2  [\sup_{u \in  H_{ \nicefrac{1}{6} }\backslash \{0\}}  \nicefrac{\|u\|^3_{L^6(\lambda_{(0,1)}; \R)}}{\|u\|^3_{H_{\nicefrac{1}{6}}}} ]$,
$\vartheta=2$,
$\alpha= 0$,
$\varphi= 0$,
$\rho= \nicefrac{1}{6}$,
$\varrho =\varrho$, 
$\chi = \chi$,
$\xi = \xi$,
$F=F|_{H_{\varrho}}$,
$(P_n)_{n \in \N} = (\tilde{P}_n)_{n \in \N}$,
$(h_n)_{n \in \N} = (h_n)_{n \in \N}$,
$\phi=\phi$, $\Phi=\Phi$,  $(\Y^n)_{n \in \N} = (\Y^n)_{n \in \N}$,    
$ (\mathcal{O}^n)_{n \in \N} = (\mathcal{O}^n)_{n \in \N}$,  $X=X$ in the notation of Proposition~\ref{abs:prop:last})
completes the proof of Corollary~\ref{cahn:cor:last}. 
\end{proof}

The next result, Corollary~\ref{cor:cahn:short} below,
proves strong convergence for the stochastic Allen-Cahn equation, also see Remark~\ref{rem:allen-cahn} below.
Corollary~\ref{cor:cahn:short} follows immediately from Corollary~\ref{cahn:cor:last} above.

\begin{cor}\label{cor:cahn:short}
Consider the notation in Subsection~\ref{sec:notation},
let  $ T \in (0,\infty)$, 
$\varrho \in (\nicefrac{1}{6}, \nicefrac{1}{4})$,
$ \chi \in  (0, \nicefrac{\varrho }{3 } - \nicefrac{1}{18}] $,
$( H, \left< \cdot , \cdot \right>_H, \left\| \cdot \right\|_H ) = (L^2(\lambda_{(0,1)}; \R), \langle \cdot , \cdot \rangle_{L^2(\lambda_{(0,1)}; \R)}, \left\| \cdot \right\|_{L^2(\lambda_{(0,1)}; \R)} )$,
let $ A \colon D(A) \subseteq H \to H $ be the Laplace operator with Dirichlet boundary conditions on $H$,
let $ ( H_r, \left< \cdot , \cdot \right>_{ H_r }, \left\| \cdot \right\|_{ H_r } ) $, $ r \in \R $, be a family of interpolation spaces associated to $ -A $,
let
$\xi \in H_{\nicefrac{1}{2}} $,
let
$F \colon H_{\nicefrac{1}{6}} \to H$,
$(e_n)_{n \in \N} \colon \N \to H $,
$(P_n)_{n \in \N} \colon \N \to L(H) $,
and
$(h_n)_{n \in \N} \colon \N \to (0, T] $
be functions which satisfy
for all $v \in H_{\nicefrac{1}{6}}$, $n \in \N$ that
$F(v)= v- v^3$,
$ e_n = [ (\sqrt{2} \sin(n \pi x) )_{x \in (0,1)}]_{\lambda_{(0,1)} , \mathcal{B}(\R)}$,
$ P_n(v) = \sum_{k = 1 }^n \langle e_k, v \rangle_H e_k $,
and
$ \limsup_{ m \to \infty} h_m =0$,
let $ ( \Omega, \F, \P ) $ be a probability space,
let $(W_t)_{t \in [0, T]}$ be an $\Id_H$-cylindrical $( \Omega, \F, \P )$-Wiener process,
let $ \Y^n, \mathcal{O}^n \colon [0, T] \times \Omega \to P_n(H)$, $ n \in \N$, be stochastic processes, let $ X\colon [0, T] \times \Omega \to H_{\varrho}$  be a  stochastic process with continuous sample paths, and assume  for all  $ n \in \N $, $t \in [0, T]$ that   $ [X_t ]_{\P, \mathcal{B}(H)} =    [ e^{ t A }   \xi  + \smallint_0^t e^{  ( t - s ) A}  \, F (  X_s ) \, ds ]_{\P, \mathcal{B}(H)} + \int_0^t e^{(t-s)A} \, dW_s$, $\left[\mathcal{O}_t^n \right]_{\P, \mathcal{B}(H)} = \int_0^t P_n \, e^{(t-s)A}  \, dW_s$,
and
\begin{equation}\label{eq:short:cahn}
	\P \Big( \Y_t^n = P_n \, e^{ t A } \xi + \smallint_0^t P_n \,  e^{  ( t - s ) A } \, \one_{ \{ \| \Y_{ \lf s \rf_{h_n} }^n \|_{ H_{\varrho} } + \| \mathcal{O}_{ \lf s \rf_{h_n} }^n +P_n \, e^{ \lf s \rf_{ h_n } A } \xi \|_{ H_{\varrho} } \leq | h_n|^{ - \chi } \}} \, F \big(  \Y_{ \lf s \rf_{ h_n } }^n \big) \, ds + \mathcal{O}_t^n \Big)=1. 
\end{equation} Then it holds for all $p \in (0, \infty)$ that
\begin{align}
	\limsup_{n \to \infty} \sup_{t \in [0,T]} \E \big[ \| X_t -\Y_t^n \|_H^p \big] = 0.
\end{align}
\end{cor}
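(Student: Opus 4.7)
The plan is to recognize that Corollary~\ref{cor:cahn:short} is the specialization of Corollary~\ref{cahn:cor:last} obtained by pinning down the free scalar parameters in the setting of Subsection~\ref{setting:cahn}. First I would verify that the Laplace operator with Dirichlet boundary conditions on $L^2(\lambda_{(0,1)}; \R)$ coincides with the operator $A$ defined in Subsection~\ref{setting:cahn} with $c_0 = 1$. This follows from the classical spectral decomposition of the Dirichlet Laplacian on $(0,1)$, whose eigenvalues are $\pi^2 k^2$, $k \in \N$, with associated $L^2$-orthonormal eigenbasis $(\sqrt{2}\sin(k\pi \cdot))_{k \in \N}$, which is exactly the sequence $(e_n)_{n \in \N}$ appearing in the statement of Corollary~\ref{cor:cahn:short}.

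Next I would set $c_1 = 1$ and $c_2 = 1$ so that the nonlinearity $F \colon H_{\nicefrac{1}{6}} \to H$, $F(v) = v - v^3$, of Corollary~\ref{cor:cahn:short} agrees with the nonlinearity $F(v) = c_1 v - c_2 v^3$ of Subsection~\ref{setting:cahn}. With these identifications, I would check one by one that the remaining ingredients of Subsection~\ref{setting:cahn} are supplied verbatim by Corollary~\ref{cor:cahn:short}: the time horizon $T \in (0,\infty)$, the range $\varrho \in (\nicefrac{1}{6}, \nicefrac{1}{4})$, the range $\chi \in (0, \nicefrac{\varrho}{3} - \nicefrac{1}{18}]$, the initial condition $\xi \in H_{\nicefrac{1}{2}}$, the interpolation spaces $(H_r)_{r \in \R}$ associated to $-A$, the orthogonal projections $(P_n)_{n \in \N}$ defined by $P_n(v) = \sum_{k=1}^n \langle e_k, v\rangle_H e_k$, the step sizes $(h_n)_{n \in \N}$ with $\limsup_{m \to \infty} h_m = 0$, the probability space $(\Omega, \F, \P)$, the $\Id_H$-cylindrical Wiener process $(W_t)_{t \in [0,T]}$, and the stochastic processes $\mathcal{O}^n$ and $\Y^n$ satisfying the identities for $[\mathcal{O}_t^n]_{\P, \mathcal{B}(H)}$ and the defining equation~\eqref{eq:short:cahn} (which is identical to~\eqref{eq:set:cahn}).

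There is no real obstacle here; the step that requires the slightest care is merely confirming the spectral identification of the Dirichlet Laplacian, since the rest of the hypotheses of Subsection~\ref{setting:cahn} and the conclusion of Corollary~\ref{cahn:cor:last} match the hypotheses and conclusion of Corollary~\ref{cor:cahn:short} symbol by symbol. Once this is in place, I would simply invoke Corollary~\ref{cahn:cor:last} to read off that for every $p \in (0,\infty)$ it holds that $\limsup_{n \to \infty} \sup_{t \in [0,T]} \E[\|X_t - \Y_t^n\|_H^p] = 0$, which is exactly the conclusion of Corollary~\ref{cor:cahn:short}.
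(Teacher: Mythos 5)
Your proposal is correct and follows the same route the paper takes: the paper declares Corollary~\ref{cor:cahn:short} to be an immediate consequence of Corollary~\ref{cahn:cor:last}, and your specialization $c_0 = c_1 = c_2 = 1$ together with the standard spectral identification of the Dirichlet Laplacian on $(0,1)$ is precisely the verification that makes this ``immediate'' step rigorous.
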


\begin{remark}\label{rem:allen-cahn}
Consider the setting in Corollary~\ref{cor:cahn:short}.
Roughly speaking, Corollary~\ref{cor:cahn:short}
reveals that the full-discrete explicit numerical approximation scheme described by \eqref{eq:short:cahn}
converges strongly to a mild solution process of the stochastic Allen-Cahn equation 
\begin{align}
\tfrac{\partial}{\partial t} X_t(x) = \tfrac{\partial^2}{\partial x^2} X_t(x) + X_t(x) - [X_t(x)]^3 + \tfrac{\partial}{\partial t} W_t(x)
\end{align}
with $X_0(x) = \xi(x)$ and $X_t(0)= X_t(1) = 0$ 
for $x \in (0,1)$,  $t \in [0,T]$
(cf., e.g., Kov\'acs, Larsson, \& Lindgren~\cite[Section~1]{Kovacs2015}).
\end{remark}

\section*{Acknowledgements}

Mario Hefter is gratefully acknowledged for bringing a few typos into our notice.
This project has been partially supported through the ETH Research Grant \mbox{ETH-47 15-2}
``Mild stochastic calculus and numerical approximations for nonlinear stochastic evolution equations with L\'evy noise''.

\bibliographystyle{acm}
\bibliography{bibfile}

\end{document}